\newtheorem{theorem}{Theorem}[section]
\newtheorem{lemma}[theorem]{Lemma}
\newtheorem{proposition}[theorem]{Proposition}
\newtheorem{corollary}[theorem]{Corollary}
\newtheorem{assumption}{Assumption}
\def\eps{\varepsilon}
\def\Tr{\mathrm{Tr}}
\def\pa{\partial}
\def\na{\nabla}
\def\RR{\mathbb R}
\def\NN{\mathbb N}
\def\R{\mathbb R}
\def\O{\mathcal O}
\def\Graph{ \mathrm{Graph}}
\def\supp{\mathrm {spt}\,}
\title[Regularity of optimal transportation potentials with rough measures]{H\"older and Sobolev regularity of optimal transportation potentials with rough measures}
\author{Pierre-Emmanuel Jabin}
\thanks{pejabin@psu.edu, Pennsylvania State University, Department of Mathematics and Huck Institutes, State College, PA 16802. Partially supported by NSF DMS Grant 161453, 1908739, 2049020 and NSF Grant RNMS (Ki-Net) 1107444.}
\author{Antoine Mellet}
\thanks{mellet@umd.edu, University of Maryland, Department of Mathematics and CSCAMM, College Park, MD 20742  USA. Partially supported by NSF Grant NSF Grant  DMS-2009236 and DMS-2307342.}
\date{}
\begin{document}
\maketitle

\begin{abstract}
We consider a Kantorovich potential associated to an optimal transportation problem between measures that are not necessarily absolutely continuous with respect to the Lebesgue measure, but are comparable to the Lebesgue measure when restricted to balls with radius greater than some $\delta>0$. 
Our main results extend the classical regularity theory of optimal transportation to this framework. In particular, we establish both H\"older and Sobolev regularity results for Kantorovich potentials up to some critical length scale depending on $\delta$.
Our assumptions are very natural in the context of the numerical computation of optimal maps, which often involves approximating by sums of Dirac masses some measures that are  absolutely continuous with densities bounded away from zero and infinity on their supports.
\end{abstract}

%%%%%%%%%%%%%%%%%%%%%%%%%%%%%%%%%%%%%%%%%%%%%%
\section{Introduction}
%%%%%%%%%%%%%%%%%%%%%%%%%%%%%%%%%%%%%%%%%%%%%
\subsection{Optimal transportation with general measure}
The theory of optimal transportation, first introduced  by Monge in the late 1800 (and later generalized by Kantorovich), has 
been extensively studied from a theoretical point of view in the past few decades.
It has also found numerous applications to problems both theoretical and applied. 
Following the work of Brenier \cite{Brenier87}, a major topic of mathematical research has been the study of the regularity 
of the optimal transportation map between absolutely continuous measures. 

More recently, partly motivated by numerical applications, there has been a lot of interest in optimal transportation problems between general measures. Indeed, numerical schemes often involve measures that are sums of Dirac masses. Solutions of optimal transportation problems between such singular measures cannot be expected to be smooth in general. However it is natural to expect these solutions to have some nice properties when the general measures, possibly discrete, are close (in some sense) to some nice measures (see Assumption \ref{ass:1}). 
For example, if some measures are obtained by discretizing some smooth densities at a scale $\delta$, they ``look'' smooth when seen only at scales larger than $\delta$. One can then ask whether the solution of the corresponding optimal transportation problem also looks smooth when seen at a correspondingly large enough scale (whose relation to $\delta$ needs to be identified).

This article introduces a general strategy to extend the classical  regularity theory for optimal transportation problem between continuous densities to such a discrete setting. The main take away of our analysis is that the solutions of this discrete optimal transportation problem have the usual Sololev regularity when mollified at a scale $\delta^{1/2}$
(see Theorems \ref {thm:2} and \ref{thm:sobolev}) - this scale is optimal in the sense that it is consistent with known convergence rates of numerical schemes (see Section \ref{sec:intro3}).

Let us now briefly recall the setting of optimal transportation with general measures.
Throughout the paper, we will focus  exclusively on  the classical quadratic cost (we refer for example to \cite{villani2008optimal} for a detailed presentation of the topic in a much more general framework):

Given two convex bounded open sets $\Omega$ and $\O$ in $\R^n$ ($n\geq 1$), we  consider two Borel probability measures $\mu$, $\nu$ such that
$ \supp \mu \subset \overline \Omega$ and $\supp \nu\subset\overline \O$.
In Kantorovich's setting, admissible transference plans are probability measures on $\R^n\times\R^n$ with marginals $\mu$ and $\nu$. We denote by $\Pi(\mu,\nu) $ the set of transference plans:
$$
\Pi(\mu,\nu): = \{ \pi \in \mathcal P(\R^n\times\R^n) \, ;\, \pi(A\times\R^n)= \mu(A), \; \pi(\R^n\times B) =\nu(B)\}
$$
(where $A$ and $B$ are any measurable sets in $\R^n$).
The classical optimal transportation problem (with quadratic cost), is concerned with the minimization problem
\begin{equation}\label{eq:OT}
C(\mu,\nu) = \inf_{\pi\in \Pi(\mu,\nu)} \int_{\R^n\times\R^n} |x-z|^2 d\pi(x,z).
\end{equation}
The existence of a minimizer for \eqref{eq:OT} is given by the following classical result (see for instance \cite[Theorem 2.12]{villani2003topics}):
\begin{theorem}\label{thm:OT1}
If $C(\mu,\nu) <\infty$, then there exists an optimal transference plan $\pi\in  \Pi(\mu,\nu)$ (i.e. a minimizer of \eqref{eq:OT}). 
Furthermore, any $\pi \in \Pi(\mu,\nu)$ is optimal if and only if there is a convex, lower semi-continuous, function $\psi:\R^n\to (-\infty,+\infty]$ such that
$\pi$ is concentrated in the graph of $\pa \psi$, that is
 \begin{equation}\label{eq:supgraph}
\supp(\pi) \subset \mathrm{Graph}(\pa\psi):= \left\{(x,z)\in  \RR^n\times \RR^n\; |\; z\in \pa\psi(x)\right\} \mbox{.}
 \end{equation}
\end{theorem}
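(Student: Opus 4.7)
The statement is classical, so the plan is to combine the direct method of the calculus of variations for existence with Rockafellar's characterization of cyclically monotone sets for the structure of optimal plans.

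\textbf{Existence.} I would first observe that $\Pi(\mu,\nu)$ is non-empty (the product measure $\mu\otimes\nu$ is always admissible) and that it is tight: since $\mu$ and $\nu$ are Borel probability measures on $\R^n$ supported in compact sets $\overline\Omega$ and $\overline{\mathcal O}$, every $\pi\in\Pi(\mu,\nu)$ is supported in the fixed compact set $\overline\Omega\times\overline{\mathcal O}$. By Prokhorov's theorem, $\Pi(\mu,\nu)$ is relatively compact in the weak topology of probability measures; it is also closed under weak convergence because the marginal constraints are preserved (test against $\vphi(x)$ and $\vphi(z)$ with $\vphi\in C_b$). Taking a minimizing sequence $\pi_k$ for \eqref{eq:OT}, I would extract a weakly convergent subsequence with limit $\pi^*\in\Pi(\mu,\nu)$, and conclude by the lower semi-continuity of $\pi\mapsto\int|x-z|^2\,d\pi$ under weak convergence, which follows from $|x-z|^2$ being continuous and non-negative (or simply continuous and bounded on the compact support).

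\textbf{Characterization.} For the second part, I would proceed via $c$-cyclical monotonicity with $c(x,z)=|x-z|^2$. The key reduction, after expanding the square, is that a set $\Gamma\subset\R^n\times\R^n$ is $c$-cyclically monotone for quadratic cost if and only if it is monotone in the usual convex-analytic sense, i.e. $\sum_{i=1}^N (z_i-z_{\sigma(i)})\cdot x_i\ge 0$ for every finite family $(x_i,z_i)\in\Gamma$ and every permutation $\sigma$. For the forward implication, suppose $\pi$ is optimal but $\supp(\pi)$ is not $c$-cyclically monotone: then there exist points $(x_i,z_i)\in\supp(\pi)$ and a permutation $\sigma$ strictly decreasing the cost. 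A standard local perturbation argument (redistributing a small amount of mass on small neighborhoods of these points along the cycle $\sigma$ while preserving the marginals) produces a competitor with strictly smaller cost, a contradiction. Rockafellar's theorem then upgrades monotonicity to the existence of a convex lower semi-continuous $\psi:\R^n\to(-\infty,+\infty]$ with $\supp(\pi)\subset\Graph(\pa\psi)$; explicitly, $\psi$ is built as the supremum over cyclic sums anchored at a fixed point of $\supp(\pi)$.

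\textbf{Converse and obstacle.} For the converse, I would use Kantorovich duality together with the Legendre transform. If $\supp(\pi)\subset\Graph(\pa\psi)$, then setting $\vphi(x)=\tfrac12|x|^2-\psi(x)$ and $\vphi^*(z)=\tfrac12|z|^2-\psi^*(z)$, the Fenchel-Young inequality gives $\vphi(x)+\vphi^*(z)\le x\cdot z\le \tfrac12(|x|^2+|z|^2)-\tfrac12|x-z|^2$ with equality exactly on $\Graph(\pa\psi)$, hence $\pi$ saturates the Kantorovich dual and is optimal. The main technical obstacle in this program is the perturbation step proving that optimality forces cyclical monotonicity: one must show that a cycle of strict inequality at finitely many points of $\supp(\pi)$ can be localized and lifted into a genuine measure-theoretic modification of $\pi$ preserving both marginals. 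This is standard but slightly delicate because $\pi$ need not be absolutely continuous; the cleanest implementation localizes on small product neighborhoods $U_i\times V_i$ of $(x_i,z_i)$, disintegrates $\pi$ restricted to these sets, and permutes the targets along $\sigma$ using the pushforward construction. Since the statement is explicitly quoted from \cite{villani2003topics}, I would cite that reference for the details.
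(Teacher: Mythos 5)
The paper does not actually prove this statement: it is quoted verbatim as a classical result with a pointer to \cite[Theorem~2.12]{villani2003topics}, so there is no in-paper argument to compare against. Your proposal reproduces exactly the standard route of that reference — Prokhorov compactness plus weak lower semi-continuity for existence, optimality $\Rightarrow$ $c$-cyclical monotonicity $\Rightarrow$ monotonicity $\Rightarrow$ Rockafellar for the forward direction, and Kantorovich duality for the converse — and the outline is sound. One small correction in the converse: the chain $\vphi(x)+\vphi^*(z)\le x\cdot z\le \tfrac12(|x|^2+|z|^2)-\tfrac12|x-z|^2$ is off — the second relation is the polarization \emph{identity}, and Fenchel--Young ($\psi(x)+\psi^*(z)\ge x\cdot z$) gives $\vphi(x)+\vphi^*(z)\le\tfrac12|x-z|^2$ directly, with equality precisely when $z\in\pa\psi(x)$; equivalently, since $\int(|x|^2+|z|^2)\,d\pi$ is fixed by the marginals, a plan concentrated on $\Graph(\pa\psi)$ attains the upper bound $\int\psi\,d\mu+\int\psi^*\,d\nu$ for $\int x\cdot z\,d\pi$ and is therefore optimal. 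This is a slip in bookkeeping, not a gap in the argument.
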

Recall that for a convex function $\psi$, the subdifferential of $\psi$ at $x$ is given by
$$
\pa\psi(x) := \left\{ z\in \RR^n\, ;\, \psi(y)\geq \psi(x)+z\cdot(y-x) \mbox{ for all } y\in\RR^n\right\} .
$$
We will also use the notation $\pa\psi(A) = \cup_{x\in A} \pa\psi(x)$.

In this general framework, neither the optimal plan $\pi$ nor the {\it Kantorovich potential} $\psi$ are unique.
We also note that without changing the values of $\psi$ in $\supp \mu$, we can assume (see Appendix \ref{app:rest}) that $\psi(x) = \sup_{z\in  \O } ( x\cdot z - \phi(z) )$ for some function $\phi:\O \to (-\infty,+\infty]$. Since $\O$ is convex, we can thus assume that 
\begin{equation}\label{eq:O}
\pa \psi(\R^n) \subset \overline \O,
\end{equation}
and in fact we can even assume that $\pa\psi(\R^n) $ is a subset of the closed convex envelop of $\supp \nu$.
Since $\O$ is bounded, this implies that $\psi$ is Lipschitz continuous in $\R^n$.

There is a vast literature devoted to the properties of Kantorovich potentials when the measures $\mu$ and $ \nu$ 
are of the form 
\begin{equation}\label{eq:assfg1}
\mu = f(x)\, dx \quad \mbox{ and }  \quad \nu=g(y)\, dy
\end{equation}
for some measurable functions $f$ and $g$ satisfying
\begin{equation}\label{eq:assfg2}
\lambda \leq f(x) \leq \Lambda \qquad \forall x\in \Omega, \quad  \lambda \leq g(y) \leq \Lambda \qquad \forall y\in \O.
\end{equation}
Before stating our assumptions and main results, we will briefly review (in a completely non-exhaustive way) some key aspects of the classical regularity theory in that framework.
%%%%%%%%%%%%%%%%%%%%%%%%%%%%%%%%%%%%%%%%%%%%%%%%%%%%%
\subsection{Regularity theory under assumptions \eqref{eq:assfg1}-\eqref{eq:assfg2}}\label{sec:intro2}
%%%%%%%%%%%%%%%%%%%%%%%%%%%%%%%%%%%%%%%%%%%%%%%%%%
Under assumptions \eqref{eq:assfg1}-\eqref{eq:assfg2}, the Kantorovich potential is uniquely defined (up to a constant) on $\Omega$. In fact, we have:
\begin{theorem}[Brenier's theorem \cite{brenier1991polar}]\label{thm:brenier}
If $\mu$ does no give mass to small sets, then there is a unique optimal transference plan $\pi$, given by 
$d\pi(x,y) =d\mu(x) \delta(y=\na\psi(x)$ where $\na \psi$ is the unique ($\mu$-a.e.) gradient of a convex function satisfying $\na \psi \# \mu = \nu$.
Furthermore, we have 
$$\supp(\nu) = \overline {\na \psi(\supp(\mu)))}.$$
\end{theorem}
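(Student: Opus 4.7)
The plan is to combine Theorem \ref{thm:OT1} with the almost-everywhere differentiability of convex functions. First I would invoke Theorem \ref{thm:OT1} to obtain, for any optimal plan $\pi\in \Pi(\mu,\nu)$, a convex lower semi-continuous potential $\psi$ such that $\supp(\pi)\subset \Graph(\pa\psi)$. The normalization \eqref{eq:O} guarantees $\pa\psi(\R^n)\subset \overline \O$, so $\psi$ is finite and locally Lipschitz on $\R^n$. A classical result on convex functions (Zaj\'i\v{c}ek's theorem, or the fact that the non-differentiability set is contained in a countable union of $C^2$ hypersurfaces) then shows that the set $N\subset \R^n$ on which $\psi$ fails to be differentiable is countably $(n-1)$-rectifiable; in particular $N$ is a ``small set'' in the sense of the hypothesis, and so $\mu(N)=0$. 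For $\mu$-a.e.\ $x$, the subdifferential $\pa\psi(x)$ reduces to the singleton $\{\na\psi(x)\}$.

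Next I would use this to identify $\pi$. Disintegrating $\pi$ with respect to its first marginal $\mu$ gives $d\pi(x,y) = d\mu(x)\, d\pi_x(y)$ with $\supp \pi_x\subset \pa\psi(x)$. Since $\pa\psi(x) = \{\na\psi(x)\}$ for $\mu$-a.e.\ $x$, necessarily $\pi_x = \delta_{\na\psi(x)}$, so
\[
d\pi(x,y) = d\mu(x)\,\delta(y-\na\psi(x)).
\]
The marginal condition $\pi(\R^n\times B) = \nu(B)$ then immediately translates into $\na\psi\#\mu = \nu$.

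For uniqueness, let $\tilde\pi$ be another optimal plan with associated convex potential $\tilde\psi$, so that $d\tilde\pi(x,y) = d\mu(x)\,\delta(y-\na\tilde\psi(x))$ with $\na\tilde\psi\#\mu=\nu$. Since the set of optimal plans is convex, $\tfrac12(\pi+\tilde\pi)$ is also optimal; applying Theorem \ref{thm:OT1} to it yields a convex potential $\overline\psi$ whose subdifferential graph contains $\supp\bigl(\tfrac12(\pi+\tilde\pi)\bigr)$. For $\mu$-a.e.\ $x$, the differentiability of $\overline\psi$ forces $\{\na\psi(x),\na\tilde\psi(x)\}\subset \pa\overline\psi(x) = \{\na\overline\psi(x)\}$, hence $\na\psi=\na\tilde\psi$ $\mu$-a.e., and therefore $\pi=\tilde\pi$. (An alternative route is the cyclical monotonicity characterization: the union of the two graphs is still cyclically monotone, which forces $\na\psi=\na\tilde\psi$ $\mu$-a.e.)

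Finally, the identity $\supp(\nu) = \overline{\na\psi(\supp \mu)}$ follows from $\nu = \na\psi\#\mu$: the image measure is supported in the closure of $\na\psi(\supp\mu)$, and any relatively open subset of this closure that intersects $\supp\mu$ preimages under $\na\psi$ must carry positive $\mu$-mass, hence positive $\nu$-mass. I expect the main obstacle to be the uniqueness step, since two genuinely different convex potentials could \emph{a priori} produce the same push-forward; the key is to exploit convexity of the set of optimizers (or cyclical monotonicity) together with the ``no mass on small sets'' hypothesis, which is exactly what renders the a.e.\ differentiability usable.
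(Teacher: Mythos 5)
The paper does not prove this statement: it is recalled as a classical result and attributed to \cite{brenier1991polar}, so there is no internal proof to compare against. Your argument is the standard McCann--Villani proof of Brenier's theorem (a.e.\ differentiability of the Lipschitz convex potential off a small set, disintegration to get $\pi_x=\delta_{\na\psi(x)}$, and uniqueness by averaging two optimal plans and applying Theorem \ref{thm:OT1} to the average), and it is essentially correct. Two minor points worth tightening: Zaj\'i\v{c}ek's theorem places the non-differentiability set in a countable union of Lipschitz (DC) hypersurfaces, not $C^2$ ones, though this is all that is needed for it to be ``small''; and the inclusion $\overline{\na\psi(\supp\mu)}\subset\supp\nu$ requires the continuity of $\na\psi$ on its set of differentiability points (so that the preimage under $\na\psi$ of a neighborhood of $\na\psi(x_0)$ contains a full-$\mu$-measure portion of a neighborhood of $x_0$), which you use implicitly but do not state.
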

In this framework, the so-called {\it Brenier map} $T=\na \psi$ is the unique solution of the Monge problem and when 
the measures $\mu$ and $\nu$ satisfy \eqref{eq:assfg1}-\eqref{eq:assfg2}, the function $\psi$ solves  the generalized Monge-Amp\`ere equation 
\begin{equation}\label{eq:MA0}
\det ( D^2 \psi (x)) = h(x) \qquad \mbox{ with } h(x):= \frac{\mu(x)}{\nu(\na \psi(x))} \qquad \mbox{ in } \Omega
\end{equation}
with second type boundary conditions \eqref{eq:O}.
In particular, $\psi$ satisfies
\begin{equation}
\label{eq:pureMongeAmpere}
 \frac{\lambda}{\Lambda} \chi_\Omega \leq \det D^2\psi \leq \frac{\Lambda}{\lambda} \chi_\Omega
 \end{equation}
in a weak sense (the {\it Brenier sense}) -  see for instance  \cite{caffarelli1992regularity, philippis2013regularity}.
When $\O$ is convex, 
Caffarelli proved in \cite{caffarelli1992regularity} that $\psi$ is a strictly convex  solution of \eqref{eq:pureMongeAmpere}  in the following {\it Alexandrov sense}:
\begin{equation}
\label{eq:MAalex}  \frac{\lambda}{\Lambda} | A\cap \Omega| \leq |\pa\psi(A)| \leq  \frac{\Lambda}{\lambda}  |A\cap\Omega| \qquad \mbox{for any Borel set $A\subset \RR^n$.}
\end{equation}
The regularity theory for the classical Monge-Amp\`ere equation, developed by Caffarelli \cite{caffarelli1990localization,caffarelli1991some,caffarelli1992boundary,caffarelli1992regularity,caffarelli1996boundary}, for strictly convex solutions of \eqref{eq:MAalex} then  implies that $\psi$ is $C^{1,\alpha}_{loc}$ (see also \cite{FM}).
Further regularity ($C^{2,\alpha}_{loc}(\Omega)$) for the solution of Monge-Amp\`ere equation \eqref{eq:MA0} with Dirichlet boundary conditions 
where proved for example by Caffarelli \cite{Caffarelli-interior} when the right hand side $h$ is $C^\alpha_{loc}(\Omega)$.

\medskip

Since further H\"older regularity is not expected under condition  \eqref{eq:assfg2} alone, it is natural to ask about the Sobolev regularity of $\psi$. 
In the framework of Monge-Amp\`ere equation \eqref{eq:MA0} (with Dirichlet boundary condition),  
Caffarelli \cite{Caffarelli-interior} proved that the solution is in $W^{2,p}$ provided the  right-hand side satisfies $\| h-1\|_{L^\infty}\leq \eta$ for some small enough $\eta$ (depending on $p$).
When $f$ and $g$ only satisfy condition  \eqref{eq:assfg2}, De Philippis and Figalli \cite{DF} proved that $D^2 \psi \in  L \log L_{loc}$. This result was improved by   De Philippis, Figalli and Savin \cite{DFS} who proved that $ \psi \in W^{2,1+\eps}_{loc}$ for some small $\eps>0$.

\subsection{General measures and formal presentation of our results}\label{sec:intro3}
In this paper, we do not make either assumption~\eqref{eq:assfg1} or~\eqref{eq:assfg2}, instead requiring only that $\mu$ and $\nu$ are comparable to the Lebesgue measure when restricted to balls of radius greater than or equal to some fixed $\delta>0$ (see Assumption \ref{ass:1}).
A simple and very important example we have in mind is that of measures which are sums of Dirac masses  approximating some absolutely continuous measures satisfying \eqref{eq:assfg1}-\eqref{eq:assfg2}.
Our goal is then to prove the equivalent of the $C^{1,\alpha}_{loc}$, $W^{2,1}_{loc}$ and $W^{2,1+\eps}_{loc}$ regularity results mentioned above, in our framework.
Since we consider measures that may not be absolutely continuous with respect to the Lebesgue measure, the Kantorovich potential $\psi$ (which still exists but may not be unique) does not solve the Monge-Amp\`ere equation (either \eqref{eq:pureMongeAmpere} or \eqref{eq:MAalex}) and none of the results discussed above apply.
In fact,  Kantorovich potentials are not expected to be either strictly convex or $C^1$: Even in the semi-discrete case (when $\nu$ is absolutely continuous but $\mu$ is a sum of Dirac masses), $\psi$ is a piecewise affine function
 (see \cite{Berman} and Appendix \ref{app:rest}). In particular $\psi$ has both
 `flat parts' and `corners'.
To our knowledge, few quantitative estimates on the convex function $\psi$ are known in this setting. 

 This discrete framework with $\mu$, $\nu$ sums of Dirac masses is important for numerical computations of   optimal maps between regular measures, as described in Appendix \ref{app:app}. There exists an extensive literature investigating numerical schemes for optimal transportation. This is a challenging question, both from the point of view of the strongly non-linear Monge-Amp\`ere equation or when starting from the Monge-Kantorovich formulation. In the later case for example, it would be natural to discretize the starting and target measures by sums of Dirac masses (which fits well with our framework) since that formally reduces optimal transportation to a linear programming problem. But this linear programming behave quadratically in the number of points in the discretization, leading to highly non-trivial computational challenges. We briefly refer to \cite{BCC,BD}, \cite{Levy}, \cite{MT,Merigot} or \cite{oberman2015efficient} for examples of various numerical schemes.  Several recent works have focused on proving convergence rates for these numerical schemes: see for instance \cite{Berman} for the semi-discrete case and \cite{li-nochetto} for the fully discrete case, together with \cite{KMT, merigot2019}.

Such convergence rates are not the goal of this paper. Instead,  
we develop  a regularity theory for the potential $\psi$, "up to a length scale depending on $\delta$", which parallels the classical regularity theory developed under assumptions \eqref{eq:assfg1}-\eqref{eq:assfg2}. To roughly summarize the main results stated in the next section, if we consider the optimal transportation problem corresponding to some potential $\psi$ between measures $\mu$ and $\nu$ that are discretized at some scale $\delta$, then we will prove  that $ \bar K_r\star \Delta\psi$ belongs to $L^p$ for some $p>1$ where  $\bar K_r$ is a  mollifying kernel at scale $r$ with $\delta^{1/2}\lesssim r$. We expect that the precise uniform regularity results that we derive here will prove useful for the analysis of complex numerical schemes;  and in particular multi-scale schemes.

 We want to emphasize that an important aspect of  our results is to obtain the regularity of $\bar K_r\star \Delta \psi$ for the conjectured critical scale $r\sim \delta^{1/2}$.
Indeed, this rate is consistent with the known rates of convergence of numerical schemes:
If  $\mu_\delta$ and $\nu_\delta$ are discretizations at scale $\delta$ of some nice measures $\mu_0$ and  $\nu_0$, then 
the corresponding Kantorovich potentials $\psi_\delta$ and $\psi_0$ are expected to satisfy
$\psi_\delta-\psi_0=O(\delta^{1/2})$ in some appropriate norms - see  \cite{li-nochetto} where weighted $L^2$ error estimates  on the associated transport maps are derived (see also \cite{Berman} in the semi-discrete case).
Using the regularity of $\psi_0$, it is possible, via some interpolation, to derive some $L^p$ bounds on $\bar K_r\star \Delta\psi_\delta$. However such an approach will lead to regularity results at scale $r\geq   \delta^{\alpha}$ with $\alpha <1/2$.
In addition, we note that the convergence results of \cite{Berman,li-nochetto} requires some $C^2$ regularity assumptions for the limiting potential $\psi_0$ while our result is completely independent of the properties of $\psi_0$ (in fact it does not even require the existence of $\psi_0$).

The results of the present paper should also be compared with the previous work of the authors (with M. Molina) in 
 \cite{JMM}: 
Under similar assumptions on the measures $\mu$ and $\nu$, in dimension $2$,  the function $\psi$ is proved to be strictly convex up to some discrete scale and approximately $C^1$ again up to some discrete scale. 
The $C^{1,\alpha}$ and $W^{2,p}$ regularity results proved here (in any dimension) are thus a significant improvement of these results. 
On the other hand, we note that the results in~\cite{JMM} are local and do not require any control 
on $\psi$ near the boundary of $\Omega$, while in the present article we require some sort of boundary conditions (see a precise discussion in the next section). 
This is in complete accord with what is known for solutions of Monge-Amp\`ere equation with a right-hand side bounded from below and from above: strict convexity holds locally in dimension~$2$ independently of the boundary condition but
 in higher dimensions, classical counter-examples show that boundary conditions are necessary. 
  
 Finally, we  mention the question of partial regularity to optimal transportation between $C^{0,\alpha}$ densities which shows that the map $T=\nabla \psi$ is $C^{1,\alpha}$ up to removing sets of measure $0$; see~\cite{DF2,FigalliKim10} and \cite{Goldman17} for a variational approach. This suggests that some notion of approximate $C^{2,\alpha}$ estimate on $\psi$ (and not only $C^{1,\alpha}$ as we prove here) may hold, though with stronger assumptions on the measures $\mu$ and $\nu$ than what we are making here. 

\section{Main assumptions and main results}
\subsection{The measures $\mu$ and $\nu$}
In this paper, we assume that the measures $\mu$ and $\nu$ are comparable to the Lebesgue measure up to a certain scale. Our main assumption can be written as follows:
\begin{assumption}\label{ass:1}
There exist $\lambda$, $\Lambda>0$ such that
\begin{equation}\label{eq:munu1}
\lambda |B_r| \leq \mu(B_r) \leq\Lambda |B_r|, \quad \mbox{for all ball $B_r\subset \Omega$ with $r \geq  \delta$} 
\end{equation}
and 
\begin{equation}\label{eq:munu2}
\lambda |B_r| \leq \nu(B_r) \leq\Lambda |B_r| \quad \mbox{for all ball $B_r\subset \O $ with $r \geq \delta$.} 
\end{equation}
\end{assumption}
We note that \eqref{eq:munu1} is equivalent to assuming that the measure convolution $\mu_\delta = \mu\star \frac{1}{|B_\delta|}\chi_{B_\delta}$ is absolutely continuous with respect to the Lebesgue measure and has density $f_\delta$ satisfying
$$ \lambda \leq f_\delta \leq \Lambda  \quad\mbox{ a.e. in } \Omega_\delta = \{x\in\Omega \,;\, d(x,\pa\Omega)>\delta\}.$$
We also point out that the use of balls in \eqref{eq:munu1}-\eqref{eq:munu2} is not crucial.
Indeed, up to multiplying $\delta$ by a constant depending only on the dimension, we could formulate our assumption with ellipsoids whose smallest radius are greater than $\delta$ or even with general convex sets with inner radius greater than $\delta$
(see Proposition \ref{prop:bhkh})

Assumption \ref{ass:1} is natural in situation in which the measures $\mu$ and $\nu$ are obtained by approximating measures satisfying \eqref{eq:assfg1}-\eqref{eq:assfg2}. This construction, which plays a key role in the numerical computation of optimal maps (see \cite{Berman,li-nochetto}) is recalled in Appendix \ref{app:app}.

We already mentioned that in this setting, it does not make sense to write the Monge-Amp\`ere equation and that many of the classical tools (e.g. comparison principle) are thus no longer available to prove our result.
Another important observation, which is less fundamental and more technical, is the fact that the renormalization of convex set provided by John's Lemma \ref{lem:J} is extensively used in the literature to prove regularity results. Unfortunately, Assumption \ref{ass:1} is not invariant under affine transformations (since the image of a ball could be a very elongated ellipsoid).
We will thus need to largely forgo the use of such renormalization in this paper.

\subsection{The potential $\psi$}
Throughout the paper, $\psi:\R^n\to \R$ is a convex lower semi-continuous function associated to an optimal transference plan $\pi$ as in  Theorem \ref{thm:OT1} and satisfying \eqref{eq:O}.
In particular, $\psi$ satisfies
\begin{equation} \label{eq:ineq1}
\mu(A) \leq  \nu (\pa\psi(A))\quad \mbox{ for all Borel sets $A\subset \Omega$}
\end{equation}
where $\pa\psi(A) = \cup_{x\in A} \pa\psi(x)$.
Indeed,  we can write
$$ 
\mu(A) = \int_A \int_{\RR^n} d\pi =  \int_A \int_{\pa \psi(A)} d\pi \leq  \int_{\RR^n} \int_{\pa \psi(A)} d\pi  = \nu(\pa\psi(A)).
$$
This inequality might be strict in general but that we also have the inequality
\begin{equation}\label{eq:ineq2}
 \mu(\pa\psi^*(B)) \geq \nu(B)\quad \mbox{ for all Borel sets $B\subset \O$}
\end{equation}
where $\psi^*$ denotes the Legendre transform, defined by
$$\psi^*(z) = \sup_{x\in\RR^n} \big( x\cdot z -\psi(x)\big) \qquad z\in \R^n.$$
Indeed, we recall that  $\psi^*$ is also convex and lower semi-continuous (and \eqref{eq:O} implies that $\psi^*(z)=+\infty$ when $z\notin \overline \O$)
and that we have the following equivalences:
\begin{equation}\label{eq:subeq}
x\cdot y = \psi(x)+ \psi^*(y)\; \Longleftrightarrow \;y \in\pa \psi(x) \; \Longleftrightarrow \; x \in\pa \psi^*(y). 
\end{equation}
In particular  \eqref{eq:supgraph} is equivalent to
$ \supp(\pi) \subset \left\{(x,z)\in  \RR^n\times \RR^n\; |\; x\in \pa\psi^*(z)\right\} $ and we can prove 
\eqref{eq:ineq2} just as we proved \eqref{eq:ineq1}.

\medskip

These two properties \eqref{eq:ineq1} and \eqref{eq:ineq2}, together with the boundary condition \eqref{eq:O}, are actually the only properties of $\psi$ that we will use to prove our result.

The relation between these inequalities and Theorem \ref{thm:OT1} was made clear by a result that we proved in \cite{JMM} and which we recall here for the reader's convenience:
\begin{theorem}\label{thm:optimal}
Assume   $\mu,\;\nu$ are  two probability measures on $\RR^n$ such that $C(\mu,\nu)<\infty$
and let $\psi$ be a proper lower semi-continuous  convex  function satisfying
\eqref{eq:ineq1} and \eqref{eq:ineq2}.
Then there exists an optimal transference plan $\pi\in \Pi(\mu,\nu)$ (minimizer of \eqref{eq:OT}) such that  $\mathrm{supp}(\pi) \subset \mathrm{Graph}(\pa\psi)$.
\end{theorem}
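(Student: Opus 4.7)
The plan is to construct a coupling $\pi\in\Pi(\mu,\nu)$ whose support is contained in the closed set $G:=\mathrm{Graph}(\pa\psi)$; the ``if'' direction of Theorem~\ref{thm:OT1} then yields that this $\pi$ is automatically an optimal transference plan, which is exactly the desired conclusion.

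The main observation is that hypothesis~\eqref{eq:ineq2} is precisely the Strassen--Kellerer marginal condition associated to the set $G$. Indeed, by the equivalence~\eqref{eq:subeq}, for every Borel $B\subset\RR^n$ one has
\[
\pa\psi^*(B)=\{x\in\RR^n:\pa\psi(x)\cap B\neq\emptyset\}=\mathrm{pr}_1\bigl(G\cap(\RR^n\times B)\bigr),
\]
where $\mathrm{pr}_1$ denotes projection onto the first coordinate, so \eqref{eq:ineq2} reads
\[
\nu(B)\le\mu\bigl(\mathrm{pr}_1(G\cap(\RR^n\times B))\bigr)\qquad\text{for every Borel }B\subset\RR^n.
\]
Moreover, since $\psi$ is proper, convex and lower semi-continuous, $\pa\psi$ is a maximal monotone operator and its graph $G$ is closed in $\RR^n\times\RR^n$.

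Strassen's theorem (in its Polish-space form due to Kellerer; see for instance the discussion in~\cite{villani2003topics}) then produces $\pi\in\Pi(\mu,\nu)$ concentrated on $G$, and Theorem~\ref{thm:OT1} delivers its optimality. The only subtle point I anticipate is measurability: $\mathrm{pr}_1(G\cap(\RR^n\times B))$ is analytic rather than Borel in general, but analytic sets are universally measurable, so~\eqref{eq:ineq2} is meaningful and Strassen's theorem applies without modification. The companion hypothesis~\eqref{eq:ineq1} is the dual statement under the symmetry $(\psi,\mu)\leftrightarrow(\psi^*,\nu)$ and is not needed for this construction.
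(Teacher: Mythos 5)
The paper never proves Theorem~\ref{thm:optimal}: it is imported verbatim from \cite{JMM} ``for the reader's convenience,'' so there is no in-paper argument to compare yours against. Judged on its own terms, your route is the natural one and is essentially sound: the identity $\pa\psi^*(B)=\{x:\pa\psi(x)\cap B\neq\emptyset\}=\mathrm{pr}_1(G\cap(\R^n\times B))$ follows correctly from \eqref{eq:subeq}; the graph $G$ of the subdifferential of a proper l.s.c.\ convex function is indeed closed (maximal monotonicity, or a direct l.s.c.\ argument); the Strassen--Kellerer marginal theorem for a closed constraint set is exactly the right tool; and the ``if'' direction of Theorem~\ref{thm:OT1} (Knott--Smith) upgrades the resulting coupling to an optimal plan since $C(\mu,\nu)<\infty$ and the measures are compactly supported. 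Your handling of measurability via analytic sets is also the standard and correct fix.

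The one point you must still nail down is the domain on which the marginal inequality is available. Strassen's theorem requires $\nu(B)\le\mu\bigl(\mathrm{pr}_1(G\cap(\R^n\times B))\bigr)$ for \emph{all} closed (or Borel) $B\subset\R^n$, whereas \eqref{eq:ineq2} is only assumed for Borel $B\subset\O$. Since $\mathrm{supp}\,\nu\subset\overline\O$, the missing case is precisely $B\cap\pa\O$, and nothing in your argument reaches sets charging $\pa\O$; for discrete measures this is not vacuous. The clean repair is to observe that the derivation of \eqref{eq:ineq1}--\eqref{eq:ineq2} given in the paper (from a plan concentrated on $\mathrm{Graph}(\pa\psi)$) works for arbitrary Borel sets, so the hypotheses of the theorem should be read in that strength --- and under that reading your closing remark is correct that \eqref{eq:ineq2} alone suffices and \eqref{eq:ineq1} is its dual. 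As the hypotheses are literally written, however, \eqref{eq:ineq1} and \eqref{eq:ineq2} govern different families of sets ($A\subset\Omega$ versus $B\subset\O$) and neither, nor even both together, formally covers sets meeting both boundaries; a complete write-up has to say a word about this before invoking Strassen.
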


\subsection{Sections of convex functions}
As mentioned above, a fundamental difference between our work and most previous work that deals with absolutely continuous measure $\mu$ and $\nu$, is the fact that we cannot use Monge-Amp\`ere equation and the comparison principle usually associated to elliptic equations and which plays a key role in 
the work of Caffarelli as well as in the more recent partial regularity theory of
 \cite{Yu07,Figalli10,FigalliKim10}.
 Note that  a variational approach (relying on optimal transportation arguments rather than using some barriers for Monge-Amp\`ere equation) to the partial regularity theory of \cite{FigalliKim10}  was recently developed in \cite{Goldman17, Goldman18}.

On the other hand, we will rely heavily on another tool that has proved extremely useful in the study of Monge-Amp\`ere equation:  The sections of convex functions. We recall that 
given $x_0\in\Omega$, $p_0\in \pa\psi(x_0)$ and $t\geq 0$,  the section centered at $x_0$ with height $t$ is the convex set:
$$ S(x_0,p_0,t) := \{ x\in\R^n\, ;\, \psi(x) \leq \psi(x_0)+ p_0\cdot(x-x_0)+t\}. $$

Sections associated to solutions of Monge-Amp\`ere equation have good properties and play a key role in most regularity results, see for instance  \cite{Caffarelli91},
\cite{Gutierrez00},
\cite[Chapter 3]{Gutierrez01}.
An important contribution of this paper, and a key aspect of our proofs, will be to show that these classical properties still hold  under Assumption \ref{ass:1}, provided $t\geq C_0\delta$ for some  constant $C_0$ depending only on the dimension $n$ and on the diameters of $\Omega$ and $\O$. 
These properties are summarized in  Proposition \ref{prop:sec} and  proved in Section \ref{sec:sec}. In  order to avoid the use of renormalization of convex sets (which, as noted earlier, is not compatible with Assumption \ref{ass:1}), we will rely on the notion of polar  body of a convex set to prove several of our results.
\medskip

A final remark is that our regularity results will be interior regularity results: they hold in a subset $\Omega'\subset\subset \Omega$ such that
there exists $\rho$  such that
\begin{equation}\label{eq:rho1} 
S(x,p,\rho)\subset \Omega \quad\forall x\in \Omega' \quad\mbox{for some } p\in \pa\psi(x).
\end{equation}
The existence of such a $\rho>0$ for a given subset $\Omega'$
is classical when the measures satisfy \eqref{eq:assfg1}-\eqref{eq:assfg2} (and $\O$ is convex), but it
 is not obvious in more general framework.
 We will show in Appendix \ref{app:app} that when $\mu$ and $\nu$ are obtained as discrete approximation of regular measures, then $\rho$ satisfying \eqref{eq:rho1} exists provided $\delta$ is small enough for any $\Omega'\subset\subset \Omega$. Technically speaking~\eqref{eq:rho1} then holds for all $p\in \pa\psi(x)$ though we do not need this stronger version for our analysis.
 
 The fact that some sort of condition is needed near the boundary of $\Omega$ is however well-understood. In the case $\delta=0$, the regularity of the solutions to Monge-Amp\`ere equation depends on the boundary conditions. For example in dimension~$3$ or more, local solutions to Monge-Amp\`ere equation with a right-hand side bounded from below and from above may fail to be strictly convex (dimension~$n=2$ is a special case for which we refer in particular to the discussion in~\cite{JMM}). Because of our setting with discrete measure, straightforward boundary conditions would not make much sense and are replaced by~\eqref{eq:rho1} which is a natural extension: To violate~\eqref{eq:rho1}, one would in particular need to have very elongated sections for small $\rho$ that touch the boundary of $\Omega$.
%%%%%%%%%%%%%%%%%%%%%%%%%%%%%%%
\subsection{Main results}
%%%%%%%%%%%%%%%%%%%%%%%%%%%%%%%%% 
Our first result concerns the $C^{1,\alpha}_{loc}$ regularity of $\psi$ up to some length scale depending on $\delta$. 
Forzani and Maldonado \cite{FM} proved that the engulfing property of sections (see Proposition \ref{prop:sec}-(iii)) is  equivalent to the $C^{1,\alpha}$ regularity of the potential. 
Using similar ideas, we will show the $C^{1,\alpha}$ regularity up to a scale depending on $\delta$.
In the theorem below, the constants $C_0$, $\theta$ and  $\beta$  are the constants appearing in Proposition \ref{prop:sec} and in \eqref{eq:secd}:
 \begin{theorem}[$C^{1,\alpha}$ regularity]\label{thm:alpha}
Let  $\psi:\R^n\to \R$ be a convex function satisfying \eqref{eq:O}, 
\eqref{eq:ineq1} and \eqref{eq:ineq2} with $\mu$ and $\nu$ satisfying \eqref{eq:munu1} and \eqref{eq:munu2} for some $\delta>0$.
Let $\Omega'\subset\subset \Omega$ be such that
there exists $\rho> C_0\delta$   such that \eqref{eq:rho1}  holds.
For all $\Omega'' \subset\subset \Omega'$ there exists
a constant  
$C$ depending on 
$\mathrm{dist}(\pa\Omega',\pa\Omega'')$, $\rho$, $ \pa\psi(\Omega)$, $\Omega$, $\lambda$, $\Lambda$ and $n$
such that
$$
0\leq  \psi(x) -\psi(y) -q\cdot (x-y) \leq \overline C |x-y|^{1+\frac 1 \theta}  \qquad \forall q\in \pa\psi(y)
$$
for all $x,y\in \Omega''$  satisfying
$$ |x-y|\geq C\delta^{1/\beta}.$$
\end{theorem}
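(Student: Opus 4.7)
The proof will adapt to our rough setting the classical characterization of Forzani--Maldonado \cite{FM}, according to which the engulfing property of sections implies $C^{1,\alpha}$ regularity of a convex potential. The new feature is that the engulfing property and the associated quantitative section estimates collected in Proposition~\ref{prop:sec} are only valid for sections of height $t \geq C_0\delta$, which is exactly what forces the scale constraint $|x-y| \geq C\delta^{1/\beta}$ in the statement.

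The lower bound $\psi(x)-\psi(y)-q\cdot(x-y)\geq 0$ is immediate from $q \in \partial\psi(y)$, so the work is all in the upper bound. Fix $y \in \Omega''$, $q \in \partial\psi(y)$, and $x \in \Omega''$ with $|x-y|\geq C\delta^{1/\beta}$ for a large constant $C$ to be chosen, and set
$$t := \psi(x)-\psi(y)-q\cdot(x-y) \geq 0,$$
so that $x$ lies on the boundary of the section $S(y,q,t)$. The aim is to show $t \leq \overline C\,|x-y|^{1+1/\theta}$, and I proceed by a dichotomy on the size of $t$ relative to $\delta$.

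\textbf{Small-scale regime.} When $t \leq C_0\delta$, I conclude directly from the hypothesis $|x-y|\geq C\delta^{1/\beta}$ together with the relation $\beta \geq 1+1/\theta$ between the exponents appearing in Proposition~\ref{prop:sec} and \eqref{eq:secd}. Using that $|x-y|$ is bounded above by the diameter of $\Omega$, one then gets $t \leq C_0\delta \leq \overline C\,|x-y|^{1+1/\theta}$ for a suitable choice of $C$ and $\overline C$.

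\textbf{Main case.} When $t \geq C_0\delta$, I invoke the engulfing property from Proposition~\ref{prop:sec}: since $x \in S(y,q,t)$, for any $r \in \partial\psi(x)$ one has $S(x,r,t) \subset S(y,q,\theta t)$. I then iterate engulfing along the geometric sequence of heights $t_k := \theta^{-k}t$, continuing as long as $t_k \geq C_0\delta$ (so at most $N \sim \log(t/\delta)/\log\theta$ steps). This produces a chain of nested sections whose shapes are controlled geometrically; combined with the lower diameter estimate in \eqref{eq:secd} it yields
$$|x-y| \;\geq\; c\, t^{\theta/(\theta+1)},$$
which rearranges to the desired bound $t \leq \overline C |x-y|^{1+1/\theta}$. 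The boundary assumption \eqref{eq:rho1} and the choice $\Omega''\subset\subset\Omega'$ ensure that all sections used in the iteration remain inside $\Omega$, so Proposition~\ref{prop:sec} is applicable at every scale.

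\textbf{Main obstacle.} The chief difficulty is running the iterative engulfing argument under the hard cutoff at height $C_0\delta$, below which Proposition~\ref{prop:sec} no longer applies. The threshold $|x-y|\geq C\delta^{1/\beta}$ must be calibrated so that the last usable step of the iteration still delivers the Hölder estimate, rather than being swallowed by the truncation error. A secondary, more structural obstacle -- already flagged in the introduction -- is that Assumption~\ref{ass:1} is not invariant under the affine renormalization of convex sets provided by John's lemma, which is the standard workhorse of the Monge--Amp\`ere regularity theory; the argument must therefore be made intrinsic, with the polar body of sections playing the role of affine renormalization in the proof of Proposition~\ref{prop:sec}.
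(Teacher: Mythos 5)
Your overall strategy (engulfing $\Rightarrow$ H\"older, \`a la Forzani--Maldonado, with the height cutoff $t\geq C_0\delta$ forcing the scale restriction) is the paper's strategy, but the execution has genuine gaps. The most serious one is in the main case: you never produce the mechanism that converts one application of engulfing into the exponent $1+\frac1\theta$. The paper applies engulfing \emph{once} to get $\psi(x)\leq\psi(y)+q\cdot(x-y)+\theta t$, combines it with the definition of $t$ to obtain the differential inequality $\frac{1+\theta}{\theta}\,[\psi(x)-\psi(y)-q\cdot(x-y)]\leq (p-q)\cdot(x-y)$, i.e. $\frac{1+\theta}{\theta}f(s)\leq s f'(s)$ along rays $x=y+sv$, and integrates ($s^{-(1+\theta)/\theta}f(s)$ is monotone). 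Your substitute --- ``iterate engulfing along $t_k=\theta^{-k}t$ and combine with \eqref{eq:secd}'' --- is not an argument: \eqref{eq:secd} gives an \emph{upper} bound on the diameter of sections, not a lower one, and a discrete doubling iteration (e.g. $f(2s)\geq \frac{1+2\theta}{\theta}f(s)$) yields the H\"older exponent $\log_2\frac{1+2\theta}{\theta}$, which is strictly smaller than $1+\frac1\theta$; the stated exponent requires the infinitesimal (ODE) version. Note also that your engulfing inclusion is written backwards: from $x\in S(y,q,t)$, Proposition \ref{prop:sec}-(iii) gives $S(y,q,t)\subset S(x,r,\theta t)$, not $S(x,r,t)\subset S(y,q,\theta t)$.

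Two further problems stem from centering the section at $y$ with an \emph{arbitrary} $q\in\pa\psi(y)$. First, to apply engulfing to $S(y,q,t)$ you need $S(y,q,2t)\subset\Omega$, but \eqref{eq:rho1} only guarantees this for \emph{some} subgradient at $y$; the theorem demands the estimate for all $q$, so you cannot simply pick the good one. The paper resolves this by defining $t$ through the section centered at $x$ with the good $p$ from \eqref{eq:rho1} and letting engulfing transfer the information to the pair $(y,q)$. Second, your small-scale regime $t\leq C_0\delta$ rests on the unproved relation $\beta\geq 1+\frac1\theta$: without it, $\delta\leq C|x-y|^{1+1/\theta}$ does not follow from $|x-y|\geq C\delta^{1/\beta}$, and nothing in the paper ties the doubling exponent $\beta=\ln M/\ln 2$ to the engulfing constant $\theta$ in this way. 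The paper instead shows this regime is vacuous: $t\leq C_0\delta$ forces $|x-y|\leq L(S(x,p,C_0\delta))\leq C\delta^{1/\beta}$ by Lemma \ref{lem:diam} (again available only for the good $p$), contradicting the hypothesis.
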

\medskip

Next, we turn our attention to the Sobolev regularity of $\psi$. When $\delta=0$ (that is when \eqref{eq:assfg1}-\eqref{eq:assfg2} holds), 
De Philippis and Figalli \cite{DF} proved the following result:
\begin{theorem}[\cite{DF}]\label{thm:DF}
Let $\Omega\subset \R^n$ be bounded convex domain and let $u:\overline \Omega \to\R$ be an Alexandrov solution of 
\begin{equation}\label{eq:MA}
\begin{cases}
\det D^2 u =h & \mbox{ in }\Omega\\
u=0 & \mbox{ on } \pa\Omega
\end{cases}
\end{equation}
with $0<\lambda\leq h(x)\leq \Lambda $ for all $x\in \Omega$. Then for any $\Omega'\subset\subset\Omega$ and $k\in \NN$, there exists a constant 
$C$ depending on $k$, $n$, $\lambda$, $\Lambda$, $\Omega$ and $\Omega'$ such that
\begin{equation}\label{eq:DF}
\int_{\Omega'}\|D^2 u \|\log^k(2+\|D^2 u\|)\, dx \leq C.
\end{equation}
\end{theorem}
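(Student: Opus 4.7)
The plan is to bound the distribution function $\Phi(\Lambda_0) := |\{x \in \Omega' : \|D^2 u(x)\| > \Lambda_0\}|$ sufficiently fast as $\Lambda_0 \to \infty$, and then conclude via a layer-cake decomposition. Indeed, since
$$ \int_{\Omega'} \|D^2 u\|\log^k(2+\|D^2 u\|)\,dx \,\lesssim\, \sum_{j\geq 1} 2^j\, j^k\, \Phi(2^{j-1}), $$
it suffices to show the tail bound $\Phi(2^j) \leq C\, 2^{-j}\, j^{-(k+2)}$ for each $k$.

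First I would invoke Caffarelli's classical theory: since $\lambda \leq h\leq \Lambda$, the solution $u$ is strictly convex and $C^{1,\alpha}_{loc}$ in $\Omega$, and its sections $S_t(x) := \{y : u(y) < u(x) + \nabla u(x)\cdot(y-x) + t\}$ satisfy $|S_t(x)|\sim t^{n/2}$, enjoy the engulfing property, and are comparable to their John ellipsoids. Next, I would use the fundamental $W^{2,p}$ estimate of Caffarelli: for every $p < \infty$, there exists $\varepsilon(p,n)>0$ such that if $\|h-1\|_{L^\infty(S_t(x))}\leq \varepsilon$ (in the normalized section), then the normalized second derivatives of $u$ lie in $L^p$ of a fixed interior portion, with bounds depending only on $n$, $p$. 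In particular, on such a ``good'' section, one gets the quantitative decay $|\{\|D^2 u\| > \Lambda_0\} \cap S_{t/2}(x)| \leq C_p\, \Lambda_0^{-p}\, |S_t(x)|$.

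The main obstruction is that $h$ merely satisfies $\lambda \leq h\leq \Lambda$ and is not close to a constant, so Caffarelli's $W^{2,p}$ estimate does not apply directly. To get around this, I would perform a stopping-time/Calderón-Zygmund-type decomposition in the geometry of sections: for a given level $\Lambda_0$, cover $\Omega'$ by (almost-disjoint) sections $\{S_{t_i}(x_i)\}$ by selecting for each $x$ the largest section on which a suitable rescaling of $h$ has oscillation below $\varepsilon(p,n)$. Caffarelli's estimate then applies on each such section and yields local control of $\{\|D^2 u\|>\Lambda_0\}$. Summing these contributions using the engulfing property (to control overlaps) and the fact that $\sum |S_{t_i}(x_i)|\lesssim |\Omega'|$ gives a first polynomial tail decay $\Phi(\Lambda_0)\leq C\Lambda_0^{-1}$, i.e.\ the borderline $W^{2,1}$ bound.

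The logarithmic gain $\log^k$ is then obtained by iterating this scheme $k+1$ times. At each level of the iteration one restricts attention to the ``bad'' sections at the previous step, rescales, and reapplies Caffarelli's $W^{2,p}$ estimate on the now-smaller residual set. Each iteration multiplies the measure of the bad set by a small factor $\eta<1$ (depending on $p$) and divides the scale by a fixed factor, so that after $j$ iterations the bad set where $\|D^2 u\| > 2^j$ has measure at most $C\eta^j\lesssim 2^{-j}j^{-(k+2)}$ once $p$ is chosen large enough in terms of $k$. The most delicate point is organizing the iterated covering so that the engulfing property continues to give bounded overlaps at every scale and so that the constants do not blow up with the iteration depth; this is precisely where a careful use of the affine-invariant geometry of sections (John's lemma applied to normalize each section to a ball) is essential. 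Plugging the tail bound into the layer-cake inequality above then yields~\eqref{eq:DF}.
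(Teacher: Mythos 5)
There is a genuine gap at the heart of your argument: the stopping-time step in which you ``select for each $x$ the largest section on which a suitable rescaling of $h$ has oscillation below $\varepsilon(p,n)$'' cannot be carried out. Under the sole hypothesis $\lambda\leq h\leq\Lambda$ the right-hand side is merely a bounded measurable function, and the John normalization of a section is an affine change of variables, which does not decrease the relative oscillation of $h$; for a generic such $h$ (say, one alternating between $\lambda$ and $\Lambda$ on arbitrarily fine sets) there is \emph{no} section of any height, around any point, on which the rescaled right-hand side is within $\varepsilon$ of a constant. So Caffarelli's small-oscillation $W^{2,p}$ theory never becomes applicable and your covering is empty. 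Moreover, the decay you aim for is too strong: geometric decay $\Phi(2^j)\leq C\eta^j$ with $\eta$ ``small depending on $p$'' and $p$ large depending on $k$ would give $u\in W^{2,p}_{loc}$ for arbitrarily large $p$, which is false under $\lambda\leq h\leq\Lambda$ alone (this is exactly why \cite{DFS} only obtain $W^{2,1+\varepsilon}$ for a small $\varepsilon$, a result that itself builds on Theorem \ref{thm:DF} rather than implying it). The true tail bound behind \eqref{eq:DF} is only $\Phi(\alpha)\leq C_k\,\alpha^{-1}\log^{-(k+1)}\alpha$, and it cannot come from an iteration gaining a fixed factor per dyadic level.

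The mechanism actually used (in \cite{DF}, and in this paper's proof of the analogous Theorem \ref{thm:1} via Theorem \ref{thm:cheby}) is different: one proves a \emph{reversed Chebyshev inequality}
$$\int_{\{\Delta u\geq\alpha\}\cap\Omega'}\Delta u\,dx\;\leq\; C\,\alpha\,\bigl|\{x\in\Omega\;:\;\Delta u\geq c\alpha\}\bigr|$$
for all large $\alpha$, with constants independent of $\alpha$. It is obtained by covering $\{\Delta u\geq\alpha\}$ with sections selected by a stopping time on the averages $t\mapsto\fint_{S(x,t)}\Delta u$ (not on the oscillation of $h$), and by showing that each selected section $S$ with $\fint_S\Delta u=\alpha$ contains a subset $\Sigma$ with $|\Sigma|\geq c|S|$ on which $\Delta u\geq c\alpha$ pointwise. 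That last step is the key idea your proposal is missing: it combines the upper bound $\fint_S\Delta u\leq C\,t\,\mathrm{Tr}(A)$ (integration by parts against the John quadratic, Lemma \ref{lem:Asup}) with the lower bound $\Delta u\geq c\,t\,\mathrm{Tr}(A)$ on a contact set $\Sigma=\partial u^*(V)$ produced by the convex envelope of $u-t-p$ (Lemma \ref{lem:Asub}), and uses the Monge--Amp\`ere bounds only to convert $|V|\geq c\,t^n/|S|$ into $|\Sigma|\geq c|S|$. The $L\log L$ bound then follows from the reversed Chebyshev inequality by Fubini, and the $\log^k$ versions follow by iterating that same integral inequality (each pass buys one more logarithm), not by shrinking the bad set geometrically.
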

The function $u$ in Theorem \ref{thm:DF} satisfies
\begin{equation}\label{eq:alex} 
\lambda |B| \leq |\pa u(B)| \leq \Lambda |B| \qquad \mbox{ for all Borel set $B\subset \Omega$.} 
\end{equation}
These conditions are also satisfied by our function $\psi$ when  Assumption \ref{ass:1} holds  with $\delta=0$ if we assume that  $|\pa \O|=0$ and that $\psi$ is $C^1$ and strictly convex.
Indeed, when $\mu$ and $\nu$ satisfy \eqref{eq:munu1}-\eqref{eq:munu2} with $\delta=0$, 
then \eqref{eq:ineq1}, \eqref{eq:ineq2} imply
$$  \lambda |A| \leq |\pa\psi(A)| \qquad\forall A\subset \Omega,\qquad  |B| \leq \Lambda |\pa\psi^*(B)| \qquad\forall B\subset \O .$$
Since  $\pa\psi(A)\subset \overline{\O }$ for all $A\subset \Omega$ and if $|\pa\O |=0$,  the second inequality gives
$$ |\pa \psi(A)| \leq \Lambda |\pa\psi^*(\pa\psi (A))|.$$
When $\psi$ is strictly convex, we have $\pa\psi^*(\pa\psi (A))=A$ so we recover \eqref{eq:alex}.

Conditions \eqref{eq:munu1}-\eqref{eq:munu2} (with $\delta=0$) are actually enough to carry out an adaptation of the arguments of \cite{DF} and derive \eqref{eq:DF}. In fact, we prove:
\begin{theorem}[$W^{2,1}_{loc}$ regularity when $\delta=0$]\label{thm:1}
Let $\psi:\R^n\to \R$ be a convex function satisfying \eqref{eq:O},
\eqref{eq:ineq1}, \eqref{eq:ineq2}, with $\mu$ and $\nu$ satisfying \eqref{eq:munu1} and \eqref{eq:munu2} with $\delta=0$. Assume moreover that $\Delta\psi\in L^1(\Omega)$ and let
$\Omega'\subset\subset \Omega$ such that there exists $\rho>0$    such that \eqref{eq:rho1} holds.
There exists a constant $C$ depending on $\Omega$, $\Omega'$, $\O $, $n$, $\lambda$, $\Lambda$ such that
\begin{equation}\label{eq:lapdel} \int_{\Omega'} \Delta \psi \log(2+\Delta \psi) \, dx \leq C  .
\end{equation}
\end{theorem}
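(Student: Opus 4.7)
My plan is to adapt the proof of Theorem~\ref{thm:DF} of De Philippis--Figalli to the present setting, where we do not have the Alexandrov condition \eqref{eq:alex} directly, but only the one-sided bounds derivable from \eqref{eq:ineq1}, \eqref{eq:ineq2} combined with Assumption~\ref{ass:1} at $\delta=0$:
$$\lambda|A|\le|\pa\psi(A)| \text{ for } A\subset \Omega, \qquad |B|\le \Lambda|\pa\psi^*(B)| \text{ for } B\subset \O.$$
As noted in the remarks preceding the statement, these are precisely the ingredients that DF's argument requires; the equality in \eqref{eq:alex} is only recovered when $\psi$ is strictly convex and $C^1$, but the one-sided bounds suffice. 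The interior section hypothesis \eqref{eq:rho1} plays the role of the Dirichlet condition in~\cite{DF}, guaranteeing that moderate-height sections centered in $\Omega'$ stay inside $\Omega$.

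The proof would proceed in four steps. First, invoke Proposition~\ref{prop:sec} at all scales $t\le \rho$ (legitimate since $\delta=0$) to obtain for every $x\in \Omega'$ a section $S(x,p,t)\subset \Omega$ with the standard volume, engulfing and Alexandrov-type area estimates $|S(x,p,t)|\cdot |\pa\psi(S(x,p,t))|\sim t^n$. Second -- the key quantitative step -- establish a stability estimate on a single section: after renormalization of $S=S(x_0,p_0,t)$ by the John affine map (which at $\delta=0$ preserves the density bounds up to dimensional constants), show that on a subset of $\tfrac12 S$ of proportional measure $1-\eps$, the potential $\psi$ is close to an affine perturbation of a paraboloid, so $\|D^2\psi\|$ is bounded there by a constant depending only on $n,\lambda,\Lambda$. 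Third, for $M$ large, cover $E_M:=\{x\in\Omega' : \Delta\psi(x)>M\}$ by disjoint sections of height $\sim 1/M$ via a Besicovitch-type argument in the quasi-metric generated by sections (the engulfing property makes this available), and apply the stability estimate on each to conclude
$$|E_M| \leq \frac{C}{M(\log M)^{2+\eps_0}}\qquad\text{for } M\ge M_0.$$
Fourth, integrate the distribution function against the derivative of $\Phi(t)=t\log(2+t)$ via the layer-cake formula to obtain~\eqref{eq:lapdel}.

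The main obstacle is the stability estimate of the second step. In~\cite{DF}, it is proved via a compactness/uniqueness argument for solutions of the Monge--Amp\`ere equation with nearly constant right-hand side, a tool unavailable to us since we only have the geometric inequalities \eqref{eq:ineq1}--\eqref{eq:ineq2}. The workaround I would pursue is to replace compactness by direct geometric arguments on sections and their polar bodies, the latter being the device that the authors have flagged as central throughout the paper. Concretely, one expects that quantitative control of the shape of sub-sections $S(y,q,t')\subset \tfrac12 S$ via their polar bodies translates into the required paraboloid-like approximation of $\psi$, at which point the covering and layer-cake steps are essentially verbatim adaptations of~\cite{DF}.
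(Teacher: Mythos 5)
There is a genuine gap, and it sits exactly where you placed your ``main obstacle.'' Your outer skeleton (interior sections via \eqref{eq:rho1}, a Vitali-type covering by sections, a layer-cake integration at the end) does match the paper's proof. But the quantitative engine you propose in Step~2 --- a stability estimate saying that on a $(1-\eps)$-proportion of a renormalized section $\psi$ is close to a paraboloid, hence $\|D^2\psi\|\le C(n,\lambda,\Lambda)$ there --- is not what the $L\log L$ argument of \cite{DF} uses, and you concede you cannot prove it in this setting. That estimate is the compactness/Pogorelov-type input behind Caffarelli's $W^{2,p}$ theory and the $W^{2,1+\eps}$ result of \cite{DFS}; in the strong form you state it would yield $W^{2,p}$ for large $p$, which is known to fail under \eqref{eq:assfg2} alone. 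So the proposal replaces the correct (and much softer) mechanism by a harder, unavailable one, and the intermediate decay $|E_M|\le C M^{-1}(\log M)^{-2-\eps_0}$ you would need in Step~3 is left unproved.

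What the paper actually does is prove a reversed Chebychev inequality (Theorem~\ref{thm:cheby}): $\int_{\{\Delta\psi\ge\alpha\}\cap\Omega'}\Delta\psi\,dx\le C\alpha\,|\{\Delta\psi\ge c\alpha\}|$ for $\alpha$ large, from which \eqref{eq:lapdel} follows by Fubini. The inequality rests on two elementary lemmas on a single section $S=S(x_0,p_0,t)$ with John matrix $A$: an integration by parts against the polynomial $2-x^*Ax/n^2$ gives $\fint_S\Delta\psi\le C\,\mathrm{Tr}(A)\,t$ (Lemma~\ref{lem:Asup}); and a convex-envelope/contact-set construction produces $V\subset\pa\psi(S)$ with $|V|\ge c\,t^n/|S|$ and $\Delta\psi\ge c\,t\,\mathrm{Tr}(A)$ on $\Sigma=\pa\psi^*(V)$ (Lemma~\ref{lem:Asub}). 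The measure hypotheses then enter only once, via $\lambda|V|\le\nu(V)\le\mu(\pa\psi^*(V))\le\Lambda|\Sigma|$ together with $|S|^2\sim t^n$, to convert the lower bound on $|V|$ into $|\Sigma|\ge c|S|$ (Proposition~\ref{prop1}). A stopping time $t(x)$ chosen so that $\fint_{S(x,t(x))}\Delta\psi=\alpha$, plus Vitali's covering lemma for sections, closes the argument. No pointwise Hessian bound, no paraboloid approximation, and (deliberately) no renormalization of the measures is needed. If you replace your Step~2 by Lemmas~\ref{lem:Asup}--\ref{lem:Asub} and the duality step above, and your Step~3 by the stopping-time covering, the proof goes through; as written, the central estimate is missing.
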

Since $\psi$ is a convex function, we have $\frac 1 n \Delta \psi\leq  \| D^2 \psi\| \leq \Delta \psi$ and so
\eqref{eq:lapdel} is equivalent to the result of \cite{DF}. We will nevertheless give a detailed proof of Theorem \ref{thm:1}. The overall arguments are the same as in \cite{DF}, although some   minor details are different (for example we avoid the use of the maximal function and renormalization of convex sets). 
Since Theorem \ref{thm:1} can also be obtained by taking the limit $r\to 0$ in the next result, this proof is not really necessary, but it allows the reader to see the main ideas of our proof in a simpler framework.

The main goal of the paper is to derive Sobolev regularity results when $\delta>0$.
Since  the potential $\psi$ might be piecewise affine,  the measure $\Delta \psi$ may be singular
and we cannot expect  \eqref{eq:DF} to hold in that case. 
Instead, we will derive a similar bound for an approximation of the Laplacian   which measures the mass of the second derivative up to a scale $r$.
More precisely, we fix a radially symmetric kernel $K:\R^n \to [0,\infty]$, supported in $B_1$ and such that 
$$\int K(x)\, dx = 1, \qquad \int_{B_1} |x|^2 K(x) \, dx>0.$$ 
We then define for any $r>0$
$$ 
\Delta_r\psi (x):= \frac{m_0}{r^2}   \int K_r(y) [\psi(x+y) -\psi(x)]\, dy, \qquad K_r(x) = \frac 1 {r^n} K\left(\frac x r\right)
$$
with $m_0=\frac{2}{ \int_{\R^n} y_1^2 K(y) \, dy} $.  
We have in particular that $\Delta_r\psi (x)\geq 0$ as long as $\psi$ is convex  and we will see that $\lim_{r\to 0} \Delta_r\psi = \Delta \psi$ when $\psi$ is $C^2$. 
We shall derive estimates on $\Delta_r\psi$ for any choice of $K$ satisfying the assumptions listed above. Some natural  choices  of $K$ include $K = \frac{1}{|B_1|}\chi_{B_1}$ for which we have 
$$
\Delta_r\psi (x):= \frac{1}{r^2}   \left[\fint_{B_r(x)}  \psi(y)\, dy -\psi(x)\right], 
$$
and $K=  \frac {1}{\mathcal H^{n-1}(\pa B_1)} \mathcal H^{n-1} |_{\pa B_1}$ which leads to
$$
\Delta_r\psi (x):= \frac{1}{r^2}   \left[\fint_{\pa B_r(x)}  \psi(y)\, dy -\psi(x)\right].
$$
With these notations, the  second main result of this paper is the following theorem:
\begin{theorem}[$W^{2,1}_{loc}$ regularity when $\delta>0$]\label{thm:2}
Let  $\psi:\R^n\to \R$ be a convex function satisfying 
\eqref{eq:ineq1}, \eqref{eq:ineq2}, with $\mu$ and $\nu$ satisfying \eqref{eq:munu1} and \eqref{eq:munu2} with $\delta>0$.
Let $\Omega'\subset\subset \Omega$ be such that
there exists $\rho>0$   such that \eqref{eq:rho1} holds.
There exist a constant $C$ depending on $\Omega$, $\rho$, $\O $, $n$, $\lambda$, $\Lambda$  such that
$$ \int_{\Omega'} \Delta_r \psi \log(2+\Delta_r\psi) \, dx \leq C \qquad \forall r > \delta^{\frac 1 2}.$$
\end{theorem}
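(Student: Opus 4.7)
The aim is to adapt the De Philippis--Figalli proof of Theorem~\ref{thm:1} (the $\delta=0$ case) to the mollified setting with $r > \delta^{1/2}$. Following their strategy, the $L\log L$ bound would follow from a geometric distributional decay
$$ |\{x\in\Omega' \,:\, \Delta_r\psi(x) > K^k M_0\}| \leq (1-\varepsilon_0)^{k}\,|\Omega'| \quad \mbox{for all } k\geq 0, $$
for some $K>1$, $\varepsilon_0\in(0,1)$ and threshold $M_0$. A layer-cake integration then delivers $\int_{\Omega'}\Delta_r\psi\,\log(2+\Delta_r\psi)\,dx \leq C$.

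The first step is to translate the pointwise condition $\Delta_r\psi(x)>M$ into a geometric statement about a section at $x$. Since $K_r$ is radial, for every $p\in\pa\psi(x)$ one has
$$ \frac{M r^2}{m_0} \leq \int K_r(y)\bigl[\psi(x+y)-\psi(x)-p\cdot y\bigr]\,dy, $$
where the integrand is the nonnegative excess over the tangent hyperplane and is convex and monotone along every ray from $x$. This forces the section $S(x,p,cMr^2)$ to have substantial intersection with $B_r(x)$ and to be non-degenerate along a set of directions that cannot all be concentrated in an arbitrarily thin slab. The crucial point is that because $r\ge\delta^{1/2}$, even for $M$ of order unity the height $cMr^2\ge cM\delta$ lies in the regime $t\ge C_0\delta$ where Proposition~\ref{prop:sec} is available, so that the classical engulfing, volume-comparability and polar body estimates all apply; this is exactly what makes $r\sim\delta^{1/2}$ the critical mollification scale.

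Second, I would run the DPF covering/iteration scheme within this section framework. Cover $\{\Delta_r\psi>M\}\cap\Omega'$ by disjoint sections $S_i=S(x_i,p_i,t_i)$ with $t_i\sim Mr^2$, extracted via a Vitali-type argument that exploits the engulfing property. Inside each $S_i$, apply \eqref{eq:ineq1} and \eqref{eq:ineq2} (through $\psi^*$) together with the volume control of $\pa\psi(S_i)$ from Proposition~\ref{prop:sec} to obtain a density dichotomy: the set $\{\Delta_r\psi>KM\}\cap S_i$ cannot occupy more than $(1-\varepsilon_0)|S_i|$, since otherwise the image $\pa\psi$ applied to the associated sub-sections at height $KMr^2$ would concentrate too much Lebesgue mass in a subset of $\pa\psi(S_i)$ whose size is constrained from above by the polar body estimates. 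Summing over the cover and iterating the dichotomy produces the geometric decay claimed in the first paragraph.

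The main obstacle will be this density dichotomy. Unlike the classical DPF argument, we cannot renormalize the sections to be balanced via John's lemma, since Assumption~\ref{ass:1} is not affine invariant: one must instead extract the quantitative volume comparisons for $\pa\psi(S_i)$ and its sub-structures directly in non-renormalized coordinates using the polar body formalism of Proposition~\ref{prop:sec}. A second subtle point is the mollification itself: bridging the gap between the pointwise section geometry of $\psi$ and the scale-$r$ average $\Delta_r\psi$ requires tracking where the tangent-plane excess is truly concentrated within $B_r(x)$, and in particular showing that the thinnest direction of the relevant section $S(x,p,cMr^2)$ cannot be smaller than a fixed fraction of $r$, which is precisely where the threshold $r>\delta^{1/2}$ enters in a non-trivial way.
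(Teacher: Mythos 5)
Your overall architecture (cover the superlevel set by sections of height $t\sim\alpha r^2$, use Vitali, exploit \eqref{eq:ineq1}--\eqref{eq:ineq2} through polar bodies rather than John renormalization, and note that $r\geq\delta^{1/2}$ puts the relevant sections in the regime $t\geq C_0\delta$ of Proposition~\ref{prop:sec}) matches the paper's, but there are two genuine gaps. First, the reduction in your opening paragraph is not correct: a decay $|\{\Delta_r\psi>K^kM_0\}|\leq(1-\varepsilon_0)^k|\Omega'|$ only yields $|\{\Delta_r\psi>t\}|\lesssim t^{-\sigma}$ with $\sigma=\log\frac{1}{1-\varepsilon_0}/\log K$, and a layer-cake computation gives $L\log L$ only if $K(1-\varepsilon_0)<1$ (in which case it already gives the stronger $L^{1+\eps}$); a density dichotomy does not come with that relation between $K$ and $\varepsilon_0$. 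The correct intermediate statement, which the paper proves as Theorem~\ref{thm:chebyd}, is the reversed Chebychev inequality $\int_{\{\Delta_r\psi\geq\alpha\}}\Delta_r\psi\,dx\leq C\alpha\,|\{\Delta_r\psi\geq c\alpha\}|$ for $\alpha\geq\alpha_0$; this controls the \emph{integral} of $\Delta_r\psi$ on the superlevel set, not just its measure, and combined with the a priori bound $\int_\Omega\Delta_r\psi\,dx\leq\int_\Omega\Delta\psi\leq C$ it gives $L\log L$ by Fubini. To prove it one needs, in each section $S_k$ of the disjoint cover, a subset $\Sigma_k$ with $|\Sigma_k|\geq c|S_k|$ on which $\Delta_r\psi\geq c\alpha$ (a \emph{lower} density bound for the higher superlevel set), not the upper density bound you state; your version does not close the iteration because $\sum_i|S_i|$ is not controlled by $|\{\Delta_r\psi>M\}|$.

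Second, and more importantly, your proposal does not address the step where essentially all of the new work in the $\delta>0$ case lies: producing that large subset $\Sigma_k$. The $\delta=0$ argument (Lemma~\ref{lem:Asub}) produces a set $V\subset\pa\psi(S)$ with $|V|\geq ct^n/|S|$ and a contact set $\Sigma=\pa\psi^*(V)$ on which the Laplacian is large, and then transfers the lower bound on $|V|$ to $|\Sigma|$ via $\nu(V)\leq\mu(\pa\psi^*(V))$. When $\delta>0$ this breaks down twice: $\Sigma$ may have Lebesgue measure zero (e.g.\ $\psi$ piecewise affine), and neither $V$ nor $\Sigma$ need be large enough to be ``seen'' by the discrete measures in Assumption~\ref{ass:1}. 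The paper's resolution is to fatten $\Sigma$ to $\widetilde\Sigma=\bigcup S(y_0,q_0,h_0)$ and, dually, $V$ to $\widetilde V=\bigcup W_{h_0}(q_0)$ with $W_h(q_0)=q_0+\frac h2(S(y_0,q_0,h)-y_0)^\circ$, choosing $h_0\sim tr^2\mathrm{Tr}(A)$ so that (a) the nonlocal lower bound $\Delta_r\psi\gtrsim t\,\mathrm{Tr}(A)$ survives on all of $\widetilde\Sigma$ (this uses the quadratic separation \eqref{eq:psip}, not just the pointwise bound \eqref{eq:psiA}), (b) the duality $\pa\psi^*(\widetilde V)\subset\widetilde\Sigma$ is preserved, and (c) both fattened sets have inner radius $\geq\delta$ precisely when $r\geq\delta^{1/2}$. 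This last point is the second, and decisive, place where the critical scale enters; your proposal only captures the first (that $cMr^2\geq C_0\delta$). Without some version of this fattening construction your ``density dichotomy'' has no proof.
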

Since the constant $C$ does not depend on $r$, we see that when $\delta=0$, we can take the limit $r\to 0$ and recover the result of Theorem \ref{thm:1}.
 For that reason, we will not be very careful with the a priori regularity of $\psi$ in the proof of  Theorem \ref{thm:1}: A priori,  $\Delta \psi$ is a Radon measure and some of the computations in Section \ref{sec:cheb} should involve a decomposition of that measure into a absolutely continuous part and a singular part. For simplicity, we just assume that $\Delta\psi$ is locally integrable.
This is not an issue with the proof of Theorem \ref{thm:2} since $\Delta_r \psi$ is a Lipschitz function for all $r>0$. 
Our proof of Theorem \ref{thm:2} thus also provides a  proof of Theorem \ref{thm:1} which does not require any a priori knowledge on the structure of $\Delta \psi$.

 Following \cite{DFS}, we can then improve the argument to show: 
 \begin{theorem}[$W^{2,1+\eps}_{loc}$ regularity when $\delta>0$]\label{thm:sobolev}
Let $\psi:\R^n\to \R$ be a convex function satisfying 
\eqref{eq:ineq1}, \eqref{eq:ineq2}, with $\mu$ and $\nu$ satisfying \eqref{eq:munu1} and \eqref{eq:munu2}) with $\delta>0$.
Let $\Omega' \subset\subset \Omega$ be such that 
there exists $\rho>0$  such that \eqref{eq:rho1} holds.
For any $x_0\in \Omega'$ and $R$ such that $B(x_0,2R)\subset\Omega'$, there exists $p>1$,  $C$  (depending on 
$R$, $\Omega$, $\rho$, $n$, $\lambda$, $\Lambda$, $L(\pa\psi(\Omega))$)
such that 
\[
\int_{ B(x_0,R)} \, (\Delta_r \psi)^p  \, dx \leq C \qquad\forall r > \delta^{\frac 1 2}.
\]
\end{theorem}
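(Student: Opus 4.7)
The plan is to adapt the iteration scheme of De Philippis--Figalli--Savin to our discrete setting, working with the mollified Laplacian $\Delta_r\psi$ at scales $r \geq \delta^{1/2}$ and sections $S(x,p,t)$ of height $t \geq C_0\delta$, for which the good properties of Proposition \ref{prop:sec} hold. The scale matching $r\sim \delta^{1/2}$ versus $t\sim \delta$ is natural because a section of height $t$ has linear size of order $t^{1/2}$; below $r=\delta^{1/2}$ the kernel $K_r$ would average $\psi$ over regions smaller than the smallest admissible section, and the argument would break down. The goal is to promote the $L\log L$ bound of Theorem \ref{thm:2} into a power-decay estimate for the distribution function of $\Delta_r\psi$.

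The main technical step is a good-$\lambda$ (or density decrement) inequality of the following form: there exist constants $K>1$, $\eta\in(0,1)$ and $M_1>0$ (depending only on the allowed parameters) such that for every $M\geq M_1$ and every suitable section $S$ of height $\geq C_0\delta$,
\[
|\{\Delta_r\psi > KM\}\cap S| \leq \eta\, |\{\Delta_r\psi > M\}\cap 2S|,
\]
where $2S$ denotes a dilated concentric section. The proof combines the engulfing and doubling properties of sections from Proposition \ref{prop:sec}, the integral $L\log L$ bound of Theorem \ref{thm:2}, and the transport inequalities \eqref{eq:ineq1}--\eqref{eq:ineq2} which substitute for the classical determinant-trace inequality. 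Iterating the decrement along a Vitali-type covering by sections, and choosing $\eta$ small enough relative to $K$, yields geometric decay $|\{\Delta_r\psi > K^k M_1\}\cap B(x_0,R)|\leq C\eta^k$, hence $|\{\Delta_r\psi > M\}\cap B(x_0,R)|\leq C M^{-(1+\varepsilon)}$ for some $\varepsilon>0$. The layer-cake formula then delivers the claimed $L^{1+\varepsilon}$ bound, uniformly in $r\geq \delta^{1/2}$.

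The main obstacle is that the classical DFS argument relies essentially on the pointwise Monge-Amp\`ere equation $\det D^2\psi = h$ and the arithmetic-geometric inequality $\det D^2\psi \leq (\Delta\psi/n)^n$, neither of which is available under Assumption \ref{ass:1}. The replacement must be purely measure-theoretic, built from the inequalities \eqref{eq:ineq1}--\eqref{eq:ineq2} combined with the section estimates of Proposition \ref{prop:sec}, which are valid only above scale $C_0\delta$. A second delicate point is that every section invoked in the iteration must have height at least $C_0\delta$, which caps the iteration at $O(|\log\delta|)$ steps; this is harmless because $\Delta_r\psi$ is already pointwise bounded by a constant times $\delta^{-1}$ whenever $r\geq \delta^{1/2}$, so the relevant range of thresholds is precisely $M_1\leq M\lesssim \delta^{-1}$, and the estimate extends trivially beyond this range.
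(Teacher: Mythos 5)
Your overall strategy---a DFS-type iteration on the superlevel sets of $\Delta_r\psi$, run through the section machinery above the scale $t\sim C_0\delta$ and stopped using the pointwise bound on $\Delta_r\psi$---is the same route the paper takes. But the central quantitative step, as you have formulated it, has a gap. You posit a measure-to-measure good-$\lambda$ inequality $|\{\Delta_r\psi>KM\}\cap S|\leq \eta\,|\{\Delta_r\psi>M\}\cap 2S|$ with ``$\eta$ small enough relative to $K$'', and your deduction of $|\{\Delta_r\psi>M\}|\leq CM^{-(1+\varepsilon)}$ from the geometric decay $C\eta^k$ requires $\eta\leq K^{-(1+\varepsilon)}$, i.e.\ $\eta K<1$. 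That smallness is not available here. The only quantitative input one has (Proposition \ref{prop2} plus the Vitali covering, i.e.\ the reversed Chebychev inequality of Theorem \ref{thm:chebyd}) gives
\begin{equation*}
KM\,|\{\Delta_r\psi>KM\}\cap S|\;\leq\;\int_{\{\Delta_r\psi>M/c\}\cap S}\Delta_r\psi\,dx\;\leq\;\frac{C}{c}\,M\,|\{\Delta_r\psi>M\}\cap 2S|,
\end{equation*}
so the best attainable $\eta$ is of order $C/(cK)$ with $C/c>1$ a fixed constant determined by the doubling and measure-comparison constants; hence $\eta K=C/c>1$, and the measure iteration only yields $|\{\Delta_r\psi>M\}|\lesssim M^{-(1-\log(C/c)/\log K)}$, an exponent strictly below $1$. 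Making $\eta$ genuinely small relative to $K$ is precisely what one can do in Caffarelli's $W^{2,p}$ regime (density close to a constant) but not under Assumption \ref{ass:1} alone. The way to cross the exponent $1$ is to iterate the truncated \emph{integrals} $a_k=\int_{\{\Delta_r\psi>\alpha_k\}\cap B(x_0,R_k)}\Delta_r\psi\,dx$ rather than the measures: splitting $|\{\Delta_r\psi>\alpha_k\}|$ into the layer $\{\alpha_k<\Delta_r\psi\leq\alpha_{k+1}\}$ and the set $\{\Delta_r\psi>\alpha_{k+1}\}$ and absorbing the latter (which is where one uses $\alpha_{k+1}=\gamma\alpha_k$ with $\gamma\geq 2C/c$) gives $a_{k+1}\leq\tau a_k$ with a fixed $\tau<1$, hence $|\{\Delta_r\psi>\alpha_k\}|\leq a_k/\alpha_k\leq C\alpha_k^{-1-\theta}$. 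This absorption step is exactly what your measure-level formulation cannot see, and it is the heart of the paper's proof.

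Two secondary points you should also attend to. First, localization: the covering sections at level $\alpha$ stick out of $B(x_0,R)$, and to keep the iteration inside a fixed ball one needs a quantitative bound on their diameter; the paper shows $t(x)\leq C/\alpha$ and hence $L(S(x,t(x)))\leq C\alpha^{-1/\beta}$ (Theorem \ref{thm:chebydloc} via Lemma \ref{lem:diam}), so that the radii $R_k=2R-\sum_{j<k}C\alpha_j^{-1/\beta}$ stay above $R$. Second, the iteration is not capped at $M\sim\delta^{-1}$ as you state but at $M\sim r^{-1/n}$: the localized Chebychev inequality requires $\alpha<c\,r^{-1/n}$ (this comes from forcing $d(x,\partial S(x,q,Mt))\geq r$ via Alexandrov's estimate), and the remaining range $r^{-1/n}\lesssim M\lesssim r^{-2}$ is not trivial---it is handled by combining the pointwise bound $\Delta_r\psi\leq C r^{-2}$ with the already-established decay $\int_{\{\Delta_r\psi>\alpha_{k_0}\}}\Delta_r\psi\leq C\alpha_{k_0}^{-\theta}\leq Cr^{\theta/n}$, which forces $p-1\leq\theta/(2n)$.
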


We point out that we can also write $\Delta_r\psi (x)$ as $\Delta (G_r \star \psi)$ or $\bar K_r \star \Delta \psi$. 
So that our results can be interpreted as a priori Sobolev norm on a regularization $ G_r \star \psi$ of $\psi$, instead of a $L^p$ bound on the discrete Laplacian $\Delta_r$ of $\psi$.
A simple computation leads to the relation $K_r = \delta +\Delta G_r $ and $G_r = \Gamma - \Gamma \star K_r$ (where $\Gamma$ denotes the fundamental solution $- \Delta \Gamma =\delta$).
For example we can check that the kernel $K=  \mathcal H^{n-1} |_{\pa B_1}$ corresponds to
$G (x)= c_n \left(\frac{1}{|x|^{n-2}}-1\right)_+$ for some normalization constant $c_n$.

\subsection{What is new in our method}
We emphasize that the results stated above rely on some key improvements with respect to the existing strategies:
\begin{itemize}
\item Precise properties of sections. A key point both for the $C^{1,\alpha}$ and $W^{2,p}$ regularity in the continuous case is the so-called engulfing property  of sections $S(x,p,t)$ (see Proposition~\ref{prop:sec} later for a precise formulation). This in particular allows to use Vitali's covering lemma on sections. Unfortunately, the engulfing property does not hold for sections of a generic convex function and instead relied on the function $\psi$ solving the Monge-Amp\`ere equation. An important technical contribution of this paper is thus to extend the engulfing property to our discrete setting, based on optimal transportation instead of Monge-Amp\`ere and {\em up to the precise critical discrete scale} $t\sim \delta$.
\item ``Fattening'' small sets. The classical proof of $W^{2,p}$ regularity in the case $\delta=0$ relies on the identification in each section $S(x,p,t)$ of a critical set $\Sigma$ over which $\Delta\psi$ is large enough and with $\Sigma=\pa\psi^*(V)$ for $|V|$ large enough as well. The proof of Theorem~\ref{thm:2} requires enlarging those sets $\Sigma$ and $V$ so that $\Delta_r \psi$ is large on $\Sigma$ (which requires $\Delta\psi$ to be large on a neighborhood of $\Sigma$) and $\Sigma$ and $V$ to be large enough to apply Assumption~\ref{ass:1}. The way to enlarge those sets is delicate if one wants to keep some notion of duality between them $\Sigma=\pa\psi^*(V)$ and be compatible with the critical scale $r\sim\delta^{1/2}$. 
\end{itemize}

\medskip

\noindent{\bf Outline of the paper:}
In the next section, we recall several facts about convex sets and sections of convex functions that will be used throughout the rest of the paper. The main result of that section is Proposition \ref{prop:sec}, which summarizes the properties of the sections of $\psi$. These properties are classical when $\delta=0$, and generalizing them to the case $\delta>0$ is an important contribution of this paper. The proof of Proposition \ref{prop:sec} is detailed in Section \ref{sec:sec}.
The proof of Theorem \ref{thm:1} is presented in Section \ref{sec:delta0}. It relies on ideas introduced in \cite{DF} but it has been slightly modified  to avoid the use of the maximal function and the use of renormalizations. We present it here mainly because it allows us to present the main ideas and estimates in the simpler framework $\delta=0$ before presenting the more complicated proof of Theorem \ref{thm:2} in Section \ref{sec:delta}. 
%
%%%%%%%%%%%%%%%%%%%%%%%%%%%%%%%%%%%%%%%%%%%%%%%%%%%
%%%%%%%%%%%%%%%%%%%%%%%%%%%%%%%%%%%%%%%%%%%%%%%%%%
\section{Preliminaries}
%%%%%%%%%%%%%%%%%%%%%%%%%%%%%%%%%%%%%%%%%%%%%%%%%%
%%%%%%%%%%%%%%%%%%%%%%%%%%%%%%%%%%%%%%%%%%%%%%%%%%%%

%%%%%%%%%%%%%%%%%%%%%%%%%%%%%%%%%%%%%%%%%%%%%%%%%%%%%
\subsection{Notations}
%%%%%%%%%%%%%%%%%%%%%%%%%%%%%%%%%%%%%%%%%%%%%%%%%%%%%
The proofs will make extensive use of the notion of inner and outer radii of a convex set, defined as follows:
\begin{align*}
\ell(K) & := \mbox{ the radius of the largest ball contained in $K$ (inner radius)}\\
L(K) & :=  \mbox{ the radius of the smallest ball containing $K$ (outer radius).}
\end{align*}

Throughout the paper, $c$ and  $C$ denote some constants, which may change value from line to line, that depend (unless otherwise indicated) only on the dimension $n$, the sets $\Omega$, $\O $ and the constant $\lambda$, $\Lambda$.
When comparing quantities, we will use the notation $a\sim b$ to indicate that there are constant $c_1$, $c_2$ depending only on the dimension $n$ such that
$$ c_1 a \leq b \leq c_2a.$$
The norm of a matrix $A$ is defined by $ \|A\| = \sup_{|x|=1} |Ax|$.
Given a positive definite matrix $A$, we will use the notation
$$ | x| _A := \sqrt{x^*Ax} \qquad x\in \R^n.$$

%%%%%%%%%%%%%%%%%%%%%%%%%%%%%%%%%%%5555
\subsection{John's Lemma}
%%%%%%%%%%%%%%%%%%%%%%%%%%%%%%%%%%%%%%%
We recall the following classical lemma:
\begin{lemma}\label{lem:J} [John's Lemma]
Let $K$ be a bounded convex set with non-empty interior. There exists an affine transformation $T(x) = Nx+b$ such that
$$ B_1(0) \subset T(K)\subset B_n(0).$$
Up to a translation of $K$, we can always assume that $b=0$. We then have 
\begin{equation}\label{eq:KA} 
\{ x\in \R^n\, ;\, x^*Ax \leq 1\}\subset  K \subset \{ x\in \R^n\, ;\, x^*Ax \leq n^2\} \qquad \mbox{ with } A = N^*N .
\end{equation}
\end{lemma}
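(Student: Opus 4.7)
The plan is to prove this via the classical argument involving the \emph{ellipsoid of maximal volume} inscribed in $K$ (John's ellipsoid). First I would establish existence of an ellipsoid $E$ of maximum volume contained in $K$. Since $K$ has non-empty interior, there is some ball, hence some ellipsoid, of positive volume inside $K$. Parametrizing ellipsoids by (center, positive definite matrix) and noting that the center stays in $K$ and the matrix has eigenvalues bounded above by $\mathrm{diam}(K)$ and bounded below in terms of the target volume, the family of admissible ellipsoids is compact; a maximizer $E$ exists. After translating $K$ so that the center of $E$ is at the origin, I take $T$ to be the linear map sending $E$ to $B_1(0)$, so that $B_1(0)\subset T(K)$ by construction and $T(K)$ is still a convex body of maximal-volume ellipsoid $B_1(0)$.

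Next I would show $T(K)\subset B_n(0)$ by contradiction. Suppose some $x\in T(K)$ satisfies $|x|>n$; WLOG $x=re_1$ with $r>n$. By convexity,
\[
\mathrm{conv}(B_1(0)\cup\{re_1\})\subset T(K),
\]
so it suffices to exhibit an ellipsoid inside this convex hull whose volume strictly exceeds $|B_1|$, contradicting the maximality of $E=T^{-1}(B_1(0))$ inside $K$.

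The main obstacle, and the heart of the argument, is this explicit geometric construction. The natural candidate is a revolution ellipsoid aligned with $e_1$,
\[
E_{\alpha,a,b}=\left\{y\in\RR^n\,:\,\frac{(y_1-\alpha)^2}{a^2}+\frac{y_2^2+\cdots+y_n^2}{b^2}\leq 1\right\},
\]
whose volume is proportional to $a\cdot b^{n-1}$. The inclusion $E_{\alpha,a,b}\subset \mathrm{conv}(B_1(0)\cup\{re_1\})$ translates, via support-line/tangent-cone considerations from $re_1$ to $B_1(0)$ together with the requirement that the ``back'' of the ellipsoid at $y_1=\alpha-a$ still lie inside $B_1(0)$, into two explicit inequalities linking $\alpha$, $a$, $b$ and $r$. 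Optimizing $ab^{n-1}$ over $(\alpha,a,b)$ under these constraints produces a maximum that is a strictly increasing function of $r$ equal to $1$ precisely at $r=n$; hence $r>n$ forces $ab^{n-1}>1$, the desired contradiction.

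Finally, the reformulation \eqref{eq:KA} is immediate: after the initial translation, $T(x)=Nx$, so $B_1(0)\subset T(K)\subset B_n(0)$ is equivalent to $\{x:|Nx|\leq 1\}\subset K\subset \{x:|Nx|\leq n\}$, which is \eqref{eq:KA} with $A=N^*N$. Uniqueness of $E$, though not needed for this statement, would follow from strict concavity of $\det^{1/n}$ on positive definite matrices.
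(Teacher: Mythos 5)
The paper offers no proof of this lemma at all --- it is recalled as a classical fact --- so there is nothing to compare against except the standard literature, and your outline is precisely the classical maximal-volume-ellipsoid proof (existence by compactness, normalization so that the John ellipsoid is $B_1(0)$, and the contradiction via a larger ellipsoid in $\mathrm{conv}(B_1(0)\cup\{re_1\})$ when $r>n$). The structure is correct, and the final reformulation \eqref{eq:KA} is handled correctly. The one place where your write-up is an assertion rather than a proof is exactly the step you identify as the heart of the matter: that the supremum of $ab^{n-1}$ over admissible $(\alpha,a,b)$ exceeds $1$ precisely when $r>n$. This does work out, and it is worth recording why: writing the containment $E_{\alpha,a,b}\subset\mathrm{conv}(B_1\cup\{re_1\})$ as the support-function inequality $\alpha s+\sqrt{a^2s^2+b^2(1-s^2)}\leq\max(1,rs)$ for all $s\in[-1,1]$, and perturbing with $b^2=1-\eps$, $a^2=1+\gamma$, the binding constraints at $s=-1$ and $s=1/r$ force, to first order in $\eps$, $\gamma\leq 2\alpha$ and $\gamma\leq\eps(r^2-1)-2\alpha r$; optimizing gives $\alpha=\eps(r-1)/2$, $\gamma=\eps(r-1)$, whence $\log(ab^{n-1})=\tfrac12\gamma-\tfrac{n-1}{2}\eps+O(\eps^2)=\tfrac{\eps}{2}(r-n)+O(\eps^2)$, which is positive for $r>n$ and small $\eps$. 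So your claimed threshold is right, but as written this computation is the entire content of the lemma and should be carried out, not just described. A minor technical point: the lemma is stated for a bounded convex set with non-empty interior, not a compact one, so the compactness argument for the maximal ellipsoid should be run on $\overline K$ (noting $\mathrm{int}\,\overline K=\mathrm{int}\,K$ for convex $K$); this is routine and the paper glosses over it as well.
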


We denote by $\lambda_1\geq \cdots \geq \lambda_n>0$ the eigenvalues of the positive symmetric matrix $A=N^*N$. The set $x^*Ax =1$ is an ellipsoid with radii $\frac{1}{\sqrt {\lambda_i}}$, $i=1,\dots,n$. In particular,  
\eqref{eq:KA} implies that
the inner and outer  radii of $K$ satisfy $\ell(K)\sim \frac{1}{\sqrt {\lambda_1}}$ and $L(K)\sim \frac{1}{\sqrt {\lambda_n}}
$. More precisely, we have
$$ 
\frac{1}{\sqrt {\lambda_1}} \leq \ell(K) \leq  \frac{n}{\sqrt {\lambda_1}}\quad\mbox{and} \qquad  \frac {1}{\sqrt{\lambda_n}}\leq L(K) \leq \frac {n}{\sqrt{\lambda_n}}.$$
We also have $\|N^*\| \|N \| = \| A\| \sim \mathrm{Tr} (A)$  (the first equality can be proved using the polar decomposition of matrices) and 
\begin{equation}\label{eq:Kl}
 \frac{1}{\ell(K)^2} \leq  \mathrm{Tr} (A)\leq  \frac{n^3}{\ell(K)^2}.
\end{equation}

%%%%%%%%%%%%%%%%%%%%%%%%%%%%%%%%%%%%%%%
\subsection{Revisiting Assumption~\ref{ass:1}}
%%%%%%%%%%%%%%%%%%%%%%%%%%%%%%%%%%%%%%%
We stated Assumption~\ref{ass:1} for balls where it is easily understood. We will however need to compare $\mu(A)$ or $\nu(A)$ with $|A|$ for more general sets, including some convex sets and sections. The following proposition
allows us to use the bounds \eqref{eq:munu1}-\eqref{eq:munu2} in such cases.
\begin{proposition}\label{prop:delta}
Let $\mu$ and $\nu$ satisfy Assumption \ref{ass:1}. 
Then there exists $\lambda'$ and $\Lambda'$ and $\beta$ such that
\begin{equation}\label{eq:muAd}
\lambda' |A_\delta| \leq \mu(A_\delta),\; \nu(A_\delta) \leq\Lambda' |A_\delta|, 
\end{equation}
for any Borel set $A$ and with $A_\delta = \cup_{x \in A}(S_\delta(x))\subset \Omega $ where $\{S_\delta(x)\}_{x\in A}$ is a family of convex sets satisfying $x\in S_\delta$, $\ell (S_\delta)\geq \beta \delta$, and for which Vitali's covering lemma (with constant $C_*$)  can be applied.
Furthermore, the constant $\lambda'$, $\Lambda'$ and $\beta$ only depend on $\lambda$, $\Lambda$ and $C_*$.
In particular, \eqref{eq:muAd} holds if $A$ is a convex set satisfying $\ell(A)\geq \beta\delta$.
\end{proposition}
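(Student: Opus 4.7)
The plan splits the proof into two steps: first a reduction via Vitali's covering lemma to a single convex set with inner radius of order $\delta$, and then a John's Lemma argument for that single-set case.

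For the reduction, I apply Vitali's covering lemma to the family $\{S_\delta(x)\}_{x \in A}$: this yields a countable disjoint subfamily $\{S_i\}_{i \in I} \subset A_\delta$ and enlargements $\tilde S_i$ with $A_\delta \subset \bigcup_i \tilde S_i$ and $|\tilde S_i| \leq \kappa |S_i|$ for some $\kappa = \kappa(C_*, n)$. Assuming the single-set bound $c\lambda|K| \leq \mu(K) \leq C\Lambda|K|$ for convex $K \subset \Omega$ with $\ell(K) \geq \beta\delta$, the upper bound on $\mu(A_\delta)$ follows from
\begin{equation*}
\mu(A_\delta) \leq \sum_i \mu(\tilde S_i) \leq C\Lambda \sum_i |\tilde S_i| \leq C\Lambda \kappa \sum_i |S_i| \leq C\Lambda \kappa |A_\delta|,
\end{equation*}
using disjointness of the $S_i \subset A_\delta$; the lower bound follows by combining $\mu(A_\delta) \geq \sum_i \mu(S_i) \geq c\lambda \sum_i |S_i|$ with $|A_\delta| \leq \kappa \sum_i |S_i|$. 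The same reasoning applies to $\nu$.

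The core work is therefore the single-set bound. For convex $K \subset \Omega$ with $\ell(K) \geq \beta\delta$, I apply John's Lemma to obtain an ellipsoid $E$ with $E \subset K \subset nE$ (after translation), so $|K| \sim |E|$, and the smallest semi-axis $a_1$ of $E$ satisfies $a_1 \sim \ell(K) \geq \beta\delta$. For the lower bound, I pack $E$ with disjoint balls of radius $a_1/4$ centered at the nodes of a rectangular lattice aligned with the principal axes of $E$; there are $\sim \prod_i (a_i/a_1)$ such balls, their total volume is $\sim \prod_i a_i \sim |E| \sim |K|$, each has radius $\geq \delta$ once $\beta \geq 4$, and each is contained in $E \subset K \subset \Omega$. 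Hence Assumption~\ref{ass:1} gives $\mu(B) \geq \lambda |B|$ on each ball, and summation yields $\mu(K) \geq c\lambda |K|$.

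For the upper bound, I exploit the equivalent form of Assumption~\ref{ass:1} noted just after its statement: the convolution $\mu_\delta := \mu \star |B_\delta|^{-1}\chi_{B_\delta}$ has density bounded by $\Lambda$ on $\Omega_\delta$. A direct Fubini calculation gives $\mu(K) \leq \int_{K+B_\delta} \mu_\delta\,dx \leq \Lambda|K + B_\delta|$, and Brunn-Minkowski yields $|K + B_\delta|^{1/n} \leq |K|^{1/n} + |B_\delta|^{1/n} \leq |K|^{1/n}(1 + C/\beta)$, so $|K + B_\delta| \leq C|K|$ once $\beta$ is sufficiently large. The main obstacle will be handling the boundary carefully---the mollified density $\mu_\delta$ is only controlled on $\Omega_\delta$, so a clean upper-bound argument requires $K + B_\delta \subset \Omega_\delta$---which I would address by choosing $\beta$ large enough that the $\delta$-thickening can be absorbed at the cost of harmless modifications of the constants $\lambda'$ and $\Lambda'$.
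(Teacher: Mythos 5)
Your overall architecture matches the paper's (the proof is given as Proposition~\ref{prop:bhkh} in Appendix~B). The Vitali reduction from the single-convex-set bound to the general $A_\delta$ bound is the same as the paper's implication (ii)$\Rightarrow$(iii). Your lower bound is also essentially the paper's: both arguments fill a constant fraction of $K$ with disjoint balls of radius comparable to $\delta$ contained in $K\subset\Omega$ and sum Assumption~\ref{ass:1} over them (the paper uses a Vitali cover of the shrunken John ellipsoid $(1-\tfrac1\beta)E$ rather than a lattice packing; your constant should be $\beta\gtrsim n$ rather than $\beta\geq 4$ since John's lemma only gives $a_1\geq\ell(K)/n$, but that is cosmetic). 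Your upper bound is the one genuinely different step: you replace the paper's finite cover of $E$ by balls $B_{3\delta}(x_i)$ with disjoint concentric $\delta$-balls by the convolution identity $\mu(K)\leq\int_{K+B_\delta}\mu_\delta\,dx$ plus Brunn--Minkowski; the Brunn--Minkowski step plays exactly the role of the paper's inclusion $E+B_\delta\subset(1+\tfrac1\beta)E$, and the convolution identity is a clean substitute for the ball-counting.

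The genuine gap is your proposed resolution of the boundary obstacle. Taking $\beta$ large cannot make $K+B_\delta\subset\Omega_\delta$: a convex $K\subset\Omega$ with $\ell(K)\geq\beta\delta$ may still touch $\partial\Omega$ no matter how large $\beta$ is, and then $K+B_\delta$ contains a $2\delta$-neighborhood of part of $\partial\Omega$, where $\mu_\delta$ is simply not controlled by Assumption~\ref{ass:1}. Concretely, a measure with an atom of mass $\tfrac12$ at a point of $\partial\Omega$ (plus a uniform part) satisfies \eqref{eq:munu1} for every ball $B_r\subset\Omega$, yet $\mu_\delta\sim\delta^{-n}$ on a $\delta$-ball around that point, so $\int_{K+B_\delta}\mu_\delta$ can be of order $1$ while $|K|$ is tiny; enlarging $\beta$ changes nothing. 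The conclusion itself survives in this example only because $K\subset\Omega$ does not contain the atom, i.e.\ because one bounds $\mu(K)$ directly and not $\int_{K+B_\delta}\mu_\delta$. To close the gap you should either restrict the convolution argument to $K\cap\Omega_{2\delta}$ and treat the boundary layer $K\setminus\Omega_{2\delta}$ separately, or adopt the paper's route: cover $K$ by balls $B_{3\delta}(x_i)$ with $x_i\in K$ and the $B_\delta(x_i)$ pairwise disjoint, and bound $\mu(B_{3\delta}(x_i))$ by $C\Lambda|B_\delta(x_i)|$ even when $B_{3\delta}(x_i)\not\subset\Omega$ --- a point the paper itself only asserts, so some care is needed there in any case (e.g.\ interpreting $\mu$ as carried by $\Omega$, or covering $B_{3\delta}(x_i)\cap\Omega$ by boundedly many $\delta$-balls inside the convex set $\Omega$).
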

The proof of this proposition can be found in Appendix \ref{app:A}.
%
%%%%%%%%%%%%%%%%%%%%%%%%%%%%%%%%%%%%%%%%%%%
\subsection{Sections of a convex function}
%%%%%%%%%%%%%%%%%%%%%%%%%%%%%%%%%%%%%%%%%%%%
The notion of sections of a convex function $\psi:\R^n \to \R$ plays a central role in many regularity results of the theory of  optimal transportation, including in the approach that we present here.
Given $x_0\in\Omega$, $p_0\in \pa\psi(x_0)$ and $t\geq 0$, we recall that the section centered at $x_0$ with height $t$ is defined as the convex set:
$$ S(x_0,p_0,t) := \{ x\in\R^n\, ;\, \psi(x) \leq \psi(x_0)+ p_0\cdot(x-x_0)+t\}. $$
When $ \pa\psi(x_0)$ reduces to one point $\{\na \psi(x_0)\}$, we will simply write
$ S(x_0,t)$ for $ S(x_0,\na \psi(x_0),t)$. Furthermore, given $\gamma>0$ we denote by $\gamma S(x_0,p_0,t)$ the dilation of $S(x_0,p_0,t)$ with respect to the center of mass $x^*_0$\footnote{We could also use dilations with respect to $x_0$, as in \cite{DF}. The properties of section listed in Proposition \ref{prop:sec} are valid with dilation with respect to any point in $S$.}:
$$ \gamma S(x_0,p_0,t) : = \left\{ x_0^* + \gamma (x-x_0^*)\, ;\, x\in S(x_0,p_0,t)\right\}.
$$

The following simple result will be useful to control the inner radius of sections:
\begin{proposition}\label{prop:lL}
Let $\Omega$ be a bounded subset of $\R^n$ and $\psi:\Omega \to \R$ be a convex function such that   $\pa \psi (\Omega)$ is also a bounded subset of $\R^n$.
For any section $S=S(x_0,p_0,t)\subset \Omega$, we have 
$$ \ell(S) \geq \frac{t}{ L(\pa\psi(\Omega))},  \qquad  \ell(\pa\psi(S)) \geq \frac{t}{L(\Omega)}.
$$
\end{proposition}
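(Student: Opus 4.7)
The plan is to prove the two inequalities by parallel convexity arguments: the first directly on $\psi$, and the second via a variational estimate that plays the role of the dual argument on $\psi^*$.

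For the lower bound on $\ell(S)$, I would fix a unit vector $v\in\R^n$ and follow the ray $s\mapsto x_0+sv$ out of $x_0$. By continuity of $\psi$, this ray leaves the closed section $\overline S$ for the first time at some $s^*>0$, at a point $x^*=x_0+s^*v$ where $\psi(x^*)-\psi(x_0)-p_0\cdot(x^*-x_0)=t$. Since $x^*\in\overline S\subset\overline\Omega$, any $q\in\partial\psi(x^*)$ belongs to $\overline{\partial\psi(\Omega)}$, and the subgradient inequality $\psi(x_0)\ge\psi(x^*)+q\cdot(x_0-x^*)$ rearranges to
\[
t \;=\; \psi(x^*)-\psi(x_0)-p_0\cdot(x^*-x_0) \;\leq\; (q-p_0)\cdot(x^*-x_0)\;\leq\;|q-p_0|\,s^*.
\]
Both $q$ and $p_0$ sit in the smallest enclosing ball of $\partial\psi(\Omega)$, whose radius is $L(\partial\psi(\Omega))$, so $|q-p_0|$ is controlled by $L(\partial\psi(\Omega))$. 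Consequently $s^*\gtrsim t/L(\partial\psi(\Omega))$, and since the direction $v$ was arbitrary, a ball of this radius around $x_0$ sits inside $S$.

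For the lower bound on $\ell(\partial\psi(S))$, I would use a variational argument. Fix $p$ with $|p-p_0|$ strictly less than $t/L(\Omega)$ (up to a dimensional constant) and consider $F(y):=\psi(y)-p\cdot y$ on the closed section $\overline S$. Writing
\[
F(y)-F(x_0)\;=\;A(y)-(p-p_0)\cdot(y-x_0),\qquad A(y):=\psi(y)-\psi(x_0)-p_0\cdot(y-x_0)\geq 0,
\]
one observes that $A\equiv t$ on $\partial S$ while $|y-x_0|$ is controlled by $L(\Omega)$ for $y\in\overline S\subset\overline\Omega$. Hence
\[
F(y)-F(x_0)\;\geq\; t - |p-p_0|\cdot|y-x_0| \;>\; 0 \qquad \text{on } \partial S,
\]
so any minimizer $x_p$ of $F$ on $\overline S$ lies in the open interior of $S$. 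The first-order optimality condition $0\in\partial F(x_p)=\partial\psi(x_p)-p$ then gives $p\in\partial\psi(x_p)\subset\partial\psi(S)$, which shows $B(p_0,t/L(\Omega))\subset\partial\psi(S)$.

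Both steps reduce to one-line convexity estimates, so I do not expect any substantial technical obstacle. The only delicate point is in the second inequality, where one has to verify that the minimizer $x_p$ is \emph{strictly} interior to $S$ (and not on $\partial S$), since otherwise the first-order condition would not yield a subgradient equal to $p$. This is precisely why the hypothesis $S\subset\Omega$ is essential: it bounds $|y-x_0|$ on $\partial S$ by a multiple of $L(\Omega)$, producing the strict positivity of $F-F(x_0)$ on $\partial S$ needed to conclude.
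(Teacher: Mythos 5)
Your proof is correct and takes essentially the same route as the paper: the first bound follows from the subgradient/Lipschitz estimate $|q-p_0|\lesssim L(\pa\psi(\Omega))$ applied along rays from $x_0$ (the paper integrates the same gradient bound to get $\overline\psi(x)\le L(\pa\psi(\Omega))|x|$), and the second is precisely the paper's argument of minimizing $\psi(y)-p\cdot y$ over $\overline S$, checking strict positivity on $\pa S$, and reading off $p\in\pa\psi(x_p)$ from first-order optimality. The only remark worth making is that, exactly like the paper's own proof, both steps quietly bound a diameter ($\mathrm{diam}\,\pa\psi(\Omega)$, resp.\ $\mathrm{diam}\,\Omega$) by the outer radius $L$, so a harmless factor of $2$ is being absorbed in either write-up.
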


The scaling of these bounds is not optimal in the framework of optimal transportation,
but is the best one can hope for in the case of general convex functions; in particular the first inequality is an equality  when $\psi(x)=L |x|$.

\begin{proof}
After replacing the function $\psi$ with  $\overline \psi(x) = \psi(x_0+x)-p_0\cdot x$, we can assume that $x_0=0$ and $p_0=0$. We then have $|\na \overline \psi(x)| = |\na \psi(x_0+x) -p_0|\leq L(\pa\psi(\Omega))$ a.e. $x\in S$.
This implies in particular that $\bar\psi (x) \leq \bar\psi(0)+ L(\pa\psi(\Omega)) |x|$ and so $B_{t/L(\pa\psi(\Omega))} (0)\subset S(0,0,t)$. The first inequality follows.

To prove the second inequality, we note that if $q\in \R^n$ is such that $|q|< \frac{t}{L(\Omega)}$, then the function $u(x)=\overline \psi(x) -\overline \psi(0)- q\cdot x$ satisfies $u(0)=0$ and $u(x) = t-q\cdot x\geq t - |q||L(\Omega)|>0$ for $x\in \pa S$. It follows that $u$ has a minimum in the interior of $S$. At such a point we have $0\in\pa u(x) $ and so $q\in \pa\overline \psi(x)\subset \pa\overline \psi(S)$.
We deduce that $B_{ \frac{t}{L(\Omega)}} (0)\subset \pa\bar  \psi(S)$ which implies the second inequality
\end{proof}

We   also recall the classical Alexandrov   estimate  (see for example \cite{Gutierrez00}):
\begin{proposition}\label{prop:alek}
Let $\psi:\Omega \to \R$ be a convex function
and assume that $0\in\Omega$, $0\in \pa\psi(0)$ and $t\geq 0$ is such that $S:=S(0,0,t)\subset \Omega$. Then
$$ |\psi(y)-t|^n \leq c_n L(S)^{n-1} \mathrm{dist}(y,\pa S) |\pa\psi(S)|$$
for some constant $c_n$ depending only on the dimension.
\end{proposition}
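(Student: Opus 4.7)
The plan is to run the classical Alexandrov--Bakelman cone-comparison argument for the convex function $\psi$ on the section $S$. After translating vertically we may assume $\psi(0)=0$, so the hypothesis $0\in\pa\psi(0)$ forces $\psi\ge 0$ everywhere; setting $u:=\psi-t$, we have $u\le 0$ on $S$, $u=0$ on $\pa S$, and the quantity to estimate is $|\psi(y)-t|=-u(y)$ for $y\in S$. For such $y$ we introduce the convex function $v:\bar S\to\R$ whose graph is the cone with apex $(y,u(y))$ and base $\pa S\times\{0\}$---equivalently, $v$ is affine along every segment from $y$ to $\pa S$, with $v(y)=u(y)$ and $v|_{\pa S}=0$. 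Convexity of $u$ along each such segment, together with $u|_{\pa S}=0$, gives $u\le v$ on $\bar S$.

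The proof then reduces to two independent inequalities. The first is the inclusion $\pa v(y)\subseteq \pa\psi(S)$, which immediately gives $|\pa v(y)|\le |\pa\psi(S)|$. To prove it we take $p\in\pa v(y)$, so the affine function $\ell(z):=u(y)+p\cdot(z-y)$ satisfies $\ell\le v$ on $\bar S$ and hence $\ell\le 0$ on $\pa S$. The function $u-\ell$ is then non-negative on $\pa S$, vanishes at $y$, and attains its minimum over the compact set $\bar S$ at some $y^*$; translating $\ell$ by this minimum value produces an affine function supporting $u$ from below at $y^*$, which yields $p\in\pa\psi(y^*)\subseteq\pa\psi(S)$.

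The second inequality is an explicit lower bound $|\pa v(y)|\ge c_n|u(y)|^n/\bigl(L(S)^{n-1}\mathrm{dist}(y,\pa S)\bigr)$, obtained via the polar body $A^\circ:=\{p:p\cdot w\le 1\ \forall w\in A\}$. Since $v(y)=u(y)$ and $v|_{\pa S}=0$, a direct computation on segments through $y$ gives
$$\pa v(y)=\{p\in\R^n\,:\,p\cdot(z-y)\le |u(y)|\ \forall z\in S\}=|u(y)|\cdot(S-y)^\circ.$$
We then bound $|(S-y)^\circ|$ from below using two properties of the convex body $S-y$: it is contained in $B_{2L(S)}(0)$ (since $y\in S$ and $\mathrm{diam}(S)\le 2L(S)$), and it is contained in the half-space $\{w:w\cdot e\le d\}$, where $d:=\mathrm{dist}(y,\pa S)$ and $e$ is the unit vector from $y$ to a nearest boundary point (using a supporting hyperplane of $S$ at that point). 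Taking polars and using $(A\cap B)^\circ=\mathrm{conv}(A^\circ\cup B^\circ)$ for closed convex sets containing the origin yields
$$(S-y)^\circ\supseteq \mathrm{conv}\bigl(B_{1/(2L(S))}(0)\cup\{e/d\}\bigr),$$
which contains a cone of height $1/d$ over an $(n-1)$-dimensional disc of radius $1/(2L(S))$, and so has volume at least $c_n/\bigl(L(S)^{n-1}\,\mathrm{dist}(y,\pa S)\bigr)$.

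Combining the two inequalities gives the stated estimate. The main technical point is the polar-volume computation: the sharp exponents $n-1$ and $1$ in $L(S)^{n-1}\cdot\mathrm{dist}(y,\pa S)$ emerge only by simultaneously exploiting the global diameter of $S$ (which provides the transverse $(n-1)$-dimensional disc inside the polar) and the distinguished normal direction at the nearest boundary point (which stretches that disc into a thin cone of height $1/d$); either property in isolation produces a strictly weaker bound.
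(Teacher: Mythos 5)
Your proof is correct. The paper does not actually prove Proposition \ref{prop:alek} --- it is recalled as a classical fact with a pointer to \cite{Gutierrez00} --- so there is no internal proof to compare against; what you give is precisely the standard Alexandrov maximum-principle argument (cone comparison, $\pa v(y)\subseteq\pa\psi(S)$, then a volume lower bound on $\pa v(y)$). Your packaging of the volume step through the identity $\pa v(y)=|u(y)|\,(S-y)^\circ$ and a cone inscribed in the polar body is a nice fit with the paper's own toolkit: essentially the same inscribed-cone computation appears in the proof of Lemma \ref{lem:B}, and Proposition \ref{prop:secpolar} is built on the same dictionary between sections and polars. Note also that for the inclusion $(S-y)^\circ\supseteq\mathrm{conv}\bigl(B_{1/(2L(S))}(0)\cup\{e/d\}\bigr)$ you do not need the full duality $(A\cap B)^\circ=\overline{\mathrm{conv}}(A^\circ\cup B^\circ)$; the elementary monotonicity $A\cap B\subseteq A\Rightarrow A^\circ\subseteq(A\cap B)^\circ$ together with convexity of the polar already suffices.

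One step worth making explicit: in the proof that $p\in\pa\psi(S)$, the minimizer $y^*$ of $u-\ell$ over $\bar S$ must be taken in the interior of $S$ for the touching affine function to yield a genuine subgradient of $\psi$ (a support of $u$ restricted to $\bar S$ at a boundary point of $S$ need not extend). This is easily arranged: on $\pa S$ one has $u-\ell=-\ell\geq 0$, so if the minimum value $m$ is negative every minimizer is interior, while if $m=0$ the interior point $y$ itself is a minimizer and one gets $p\in\pa\psi(y)$ directly. With that one-line remark the argument is complete.
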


Finally, we summarize  in the following proposition the key properties 
 satisfied by the  sections of convex potentials appearing in the context of optimal transportation (that is under our assumptions  \eqref{eq:ineq1}-\eqref{eq:ineq2}  and \eqref{eq:munu1}-\eqref{eq:munu2}).

\begin{proposition}\label{prop:sec}
Let $\psi:\R^n\to\R$ be a convex function satisfying  \eqref{eq:ineq1}-\eqref{eq:ineq2} for measures $\mu$ and $\nu$ satisfying Assumption \ref{ass:1}.
There exists a constant  $C_0$   depending only on $n$, $L(\Omega)$  and $L(\pa\psi(\Omega))$  
such that when 
$$ t\geq C_0 \delta$$
we have
\item{(i)} There exists $C = C(n,\lambda,\Lambda)$ such that 
$C^{-1} t^n \leq  |S(x_0,p_0,t)|^2 \leq  Ct^n$ for all sections  $S(x_0,p_0,t)\subset \Omega$.
\item{(ii)} There exists $M=M(n,\lambda,\Lambda)$ such that
 $2 S(x_0,p_0, t ) \subset S(x_0,p_0, M t)\subset M S(x_0,p_0,t)$ for all sections such that $S(x_0,p_0,Mt)\subset \Omega$.
\item{(iii)} There exists $\theta = \theta (n,\lambda,\Lambda) >1$ such that
  if $S(x_0,p_0,2t)\subset \Omega$, then $S(x_0,p_0,t)  \subset S(y,q,\theta t)$ for all  $y \in S(x_0,p_0,t) $ and $q\in \pa\psi(y)$.
\end{proposition}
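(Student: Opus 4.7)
The plan is to adapt Caffarelli's classical theory of sections to our rough-measure setting, with two novel features: we only have the one-sided measure inequalities \eqref{eq:ineq1}--\eqref{eq:ineq2} rather than a Monge--Amp\`ere equation, and we cannot renormalize sections via John's Lemma since Assumption~\ref{ass:1} is not affine invariant. The threshold $t \geq C_0\delta$ is chosen precisely so that Assumption~\ref{ass:1} becomes applicable to every section $S(x_0,p_0,t)\subset \Omega$ and to its image $\partial\psi(S)$.

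The first step is to fix $C_0$. By Proposition~\ref{prop:lL}, any such section satisfies $\ell(S) \geq t/L(\partial\psi(\Omega))$ and $\ell(\partial\psi(S)) \geq t/L(\Omega)$. Choosing $C_0 = \beta \max\bigl(L(\Omega),L(\partial\psi(\Omega))\bigr)$ with $\beta$ from Proposition~\ref{prop:delta} then guarantees $\ell(S)\geq \beta\delta$ and (up to a covering) $\ell(\partial\psi(S))\geq\beta\delta$, so that Proposition~\ref{prop:delta} converts \eqref{eq:ineq1}--\eqref{eq:ineq2} into the crucial two-sided Lebesgue comparison $|S| \sim |\partial\psi(S)|$, with constants depending only on $\lambda$, $\Lambda$, $n$. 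To prove (i) I would pair this with the Alexandrov estimate (Proposition~\ref{prop:alek}) applied at the center of mass $x_0^*$ of $S$, whose distance to $\partial S$ is $\gtrsim \ell(S)$ by a standard convex geometry fact. To control the eccentricity $L(S)/\ell(S)$ without John's normalization, I would work with the polar body $S^\circ$ centered at $x_0^*$: using the classical identification of $\partial\psi(S)-p_0$ with $t\, S^\circ$ up to universal constants, and invoking the Mahler--Santal\'o inequalities $|S|\,|S^\circ|\sim 1$ (in the appropriate scaling), one obtains $|S|\,|\partial\psi(S)| \sim t^n$; combined with $|S|\sim|\partial\psi(S)|$ this gives $|S|^2 \sim t^n$. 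Property (ii) would then follow from (i) by an elementary convexity--plus--scaling argument: $2S(x_0,p_0,t) \subset S(x_0,p_0,Mt)$ from bounding $\psi$ on the dilation by convexity and (i), while $S(x_0,p_0,Mt) \subset M\,S(x_0,p_0,t)$ uses the same volume comparison in the reverse direction together with Proposition~\ref{prop:lL}.

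Property (iii), the engulfing estimate, is where I expect the main difficulty. The classical route is by contradiction: assume $y \in S(x_0,p_0,t)$, $q \in \partial\psi(y)$, and $z \in S(x_0,p_0,t)$ satisfy $\psi(z) > \psi(y)+q\cdot(z-y)+\theta t$. Since $z$ lies both in the small section around $x_0$ and far outside the section $S(y,q,\theta t)$ around $y$, I would study the auxiliary convex function $\widetilde\psi(x) = \psi(x)-\psi(y)-q\cdot(x-y)$, apply Proposition~\ref{prop:alek} to convert the height $\theta t$ into a product of the form $L^{n-1}\ell\cdot|\partial\psi(\cdot)|$ on an appropriate intermediate section, and use (i)--(ii) together with $|S|\sim|\partial\psi(S)|$ to turn volumes back into heights. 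A well-chosen chain of sections then produces a contradiction provided $\theta$ is large enough depending on $n,\lambda,\Lambda$. The main obstacle is that without Monge--Amp\`ere and without John's renormalization, every intermediate section appearing in the argument must individually satisfy $t'\geq C_0\delta$ in order to apply Proposition~\ref{prop:delta}, and the eccentricity of these sections must be tracked via polar bodies rather than an affine reduction; propagating these two constraints simultaneously is the main technical burden. The final values of $\theta$, $M$, and $C_0$ would be consolidated at the end of this bootstrap.
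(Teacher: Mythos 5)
Your overall strategy for (i) --- polar bodies, the Mahler--Santal\'o volume--product bounds, and the measure inequalities applied to sets that are guaranteed to have inner radius $\gtrsim \delta$ --- is indeed the route the paper takes, but two steps as you state them do not close. First, the two-sided comparison $|S|\sim|\partial\psi(S)|$ is not available: \eqref{eq:ineq1} only yields $\lambda|S|\leq \mu(S)\leq \nu(\partial\psi(S))\leq \Lambda|\partial\psi(S)|$ (and even here $\partial\psi(S)$ need not be convex, so Proposition \ref{prop:delta} cannot be applied to it directly), while the reverse inequality would require $\partial\psi^*(\partial\psi(S))=S$, which fails without strict convexity --- precisely the situation we are in when $\delta>0$. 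Second, the ``identification of $\partial\psi(S)-p_0$ with $t\,S^\circ$ up to constants'' is only one inclusion: $t(S-x_0)^\circ\subset\partial\psi(S)-p_0$ holds, but subgradients at points near $\partial S$ are not controlled by $tS^\circ$, so the upper bound $|S|\,|\partial\psi(S)|\lesssim t^n$ is not purely convex-geometric. The paper's fix is to apply \eqref{eq:ineq2} to the \emph{convex} set $p_0+\frac{t}{2}(S-x_0)^\circ$, whose $\partial\psi^*$-image lies in $S$, giving $t^n|(S-x_0)^\circ|\lesssim|S|$ and hence $|S|^2\gtrsim t^n$ by the Mahler lower bound; and to apply \eqref{eq:ineq1} to $\frac12(S+y)$, whose $\partial\psi$-image lies in $p_0+2t(S-y)^\circ$, giving $|S|\lesssim t^n|(S-y)^\circ|$ and hence $|S|^2\lesssim t^n$ after minimizing over $y$ (Santal\'o point). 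Your conclusion for (i) is therefore salvageable, but not along the exact lines you wrote.

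The serious gap is (ii). The inclusion $2S(x_0,p_0,t)\subset S(x_0,p_0,Mt)$ is not ``elementary convexity plus scaling'' and does not follow from (i): convexity only bounds $\psi$ from \emph{below} outside $S$, and the volume information $|S(x_0,p_0,Mt)|\sim (Mt)^{n/2}$ does not prevent the larger section from growing in a single direction while $2S(x_0,p_0,t)$ sticks out in another. This inclusion is a quantitative strict-convexity statement that is false for general convex functions and genuinely requires the measures. The paper proves it (Proposition \ref{prop:sec1}) by showing that every $y\in S\setminus\gamma S$ satisfies $\psi(y)\geq\psi(x_0)+p_0\cdot(y-x_0)+t/2$: one applies \eqref{eq:ineq2} to the convex set $p_0+\frac12(t-\psi(y))(S-y)^\circ$, whose $\partial\psi^*$-image lies in $S$ (Lemma \ref{lem:A}), and uses the blow-up $|S|\,|(S-y)^\circ|\gtrsim (1-\gamma)^{-1}$ for $y$ near $\partial S$ (Lemma \ref{lem:B}) to force $t-\psi(y)$ to be small; you would need to supply this argument or an equivalent one. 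Finally, your plan for (iii) --- a contradiction scheme with Alexandrov estimates and chains of intermediate sections, each of which must be re-checked against the threshold $C_0\delta$ --- is left entirely schematic and is much harder than necessary: once (ii) is known, engulfing follows from a short direct computation (Proposition \ref{prop:engulf}) with no further use of the measures. For $y\in S(x_0,p_0,t)$, $q\in\partial\psi(y)$ and $x\in S$, one bounds $(q-p_0)\cdot(y-x)\leq Ct$ by testing the subgradient inequalities at the auxiliary point $y+\delta(y-x)$, which lies in $\gamma^{-1}S\subset S(x_0,p_0,2t)$ by the scaling property; this is where the hypothesis $S(x_0,p_0,2t)\subset\Omega$ enters, and no Alexandrov estimate is needed.
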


These properties are classical when $\delta=0$  (see for instance
\cite{Caffarelli91},
\cite{Gutierrez00},
\cite[Chapter 3]{Gutierrez01}), but proving that they hold when $\delta>0$ as long as $t\geq C_0\delta$ is an important contribution of this paper. 
Complete proofs for the case $\delta>0$ are presented in Section \ref{sec:sec}:
Property  (i) will be proved in Proposition \ref{prop:St}.
Since $\omega_n \ell(S)^n \leq |S| \leq\omega_n L(S)^n$, it implies 
\begin{equation}\label{eq:ellt} 
\ell (S) \leq C t^{1/2}, \qquad L(S) \geq C^{-1} t^{1/2}
\end{equation}
for some constant $C = C(n,\lambda,\Lambda)$.
Property  (ii) follows from Proposition \ref{prop:sec1}; note that the second inclusion $S(x_0,p_0, M t)\subset M S(x_0,p_0,t)$ holds for any convex function. %\textcolor{red}{if we use the dilation with respect to $x_0$}) 
Finally property  (iii) follows from Proposition \ref{prop:engulf}.

We also make the following important observation:
by iterating Proposition \ref{prop:sec}-(ii), we find $2^k S(x_0,p_0, t ) \subset S(x_0,p_0, M^k t)$ that is 
\begin{equation}\label{eq:secd} 
K S(x_0,p_0, t ) \subset S(x_0,p_0, K^\beta t) \qquad\forall t\geq C_0 \delta \quad \mbox{ if } \quad S(x_0,p_0, K^\beta t) \subset\Omega
\end{equation}
with $\beta = \frac{\ln M}{\ln 2}$.
\medskip

Properties (ii) and (iii) above are required to use Vitali's covering lemma with sections rather than Euclidean balls (see \cite[Section 1.1]{Stein}):
\begin{corollary}[Vitali's covering Lemma] \label{cor:vitali}
Under the assumptions of Proposition \ref{prop:sec}, there exist constants $C_0(n,L(\Omega),L(\pa \psi(\Omega))$ and $C_*(n,\lambda,\Lambda)$  such that given  a collection of sections $S_i=S(x_i,t_i)\subset \Omega$ with $t_i\geq C_0\delta$ 
there exists a countable subcollection $S_{i_j}$ such that the $ S_{i_j}$ are pairwise disjoint and satisfy
$$ \bigcup_{i} S_i \subset \bigcup_j C_* S_{i_j}.$$
\end{corollary}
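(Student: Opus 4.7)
The plan is to execute the classical greedy Vitali selection argument (as in \cite[\S 1.1]{Stein}), with sections $S(x,p,t)$ playing the role of Euclidean balls. Two substitutes are needed for balls, and Proposition~\ref{prop:sec} provides exactly these in the discrete regime $t\geq C_0\delta$: the doubling relation (ii) replaces the trivial $B(x,2r)\supset 2B(x,r)$, and the engulfing property (iii) replaces the fact that two intersecting balls of comparable radii both lie in a fixed dilate of the larger one.

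First I would bound the heights from above: since each $S_i\subset\Omega$ has $\ell(S_i)\leq L(\Omega)$, Proposition~\ref{prop:lL} yields $t_i\leq T:=L(\Omega)\,L(\pa\psi(\Omega))$. Partition the given collection into dyadic bands $\mathcal{F}_k:=\{S_i : T2^{-k-1}<t_i\leq T2^{-k}\}$, $k=0,1,\ldots$; only boundedly many bands are relevant before $t_i<C_0\delta$ is violated. Set $\mathcal{G}_0\subset\mathcal{F}_0$ to be any maximal pairwise-disjoint subfamily, and inductively let $\mathcal{G}_k\subset\mathcal{F}_k$ be a maximal subfamily whose elements are pairwise disjoint and each disjoint from every element of $\mathcal{G}_0\cup\cdots\cup\mathcal{G}_{k-1}$. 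The lower bound $|S_i|\geq c\,t_i^{n/2}$ from (i) together with boundedness of $\Omega$ makes each $\mathcal{G}_k$ at most countable. Take $\mathcal{G}=\bigcup_k\mathcal{G}_k$ as the selected subfamily.

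For the covering property, fix any unselected $S_i=S(x_i,p_i,t_i)\in\mathcal{F}_k$. Maximality of $\mathcal{G}_k$ produces $S_j=S(x_j,p_j,t_j)\in\mathcal{G}_l$ for some $l\leq k$ with $S_i\cap S_j\neq\emptyset$; the dyadic grouping gives $t_j\geq T2^{-k-1}\geq t_i/2$. Pick $y\in S_i\cap S_j$ and $q\in\pa\psi(y)$. Applying engulfing (iii) to $S_i$ and to $S_j$ at the common point $y$ yields
\[
S_i\subset S(y,q,\theta t_i)\subset S(y,q,2\theta t_j)\quad\text{and}\quad S_j\subset S(y,q,\theta t_j),
\]
so in particular $x_j\in S_j\subset S(y,q,2\theta t_j)$; engulfing (iii) applied a second time to the section $S(y,q,2\theta t_j)$ at the point $x_j$ gives $S(y,q,2\theta t_j)\subset S(x_j,p_j,2\theta^2 t_j)$. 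Iterating the second inclusion of (ii) the bounded number of times $k_0:=\lceil\log_M(2\theta^2)\rceil$ produces $S(x_j,p_j,2\theta^2 t_j)\subset M^{k_0}S_j$. Chaining these inclusions delivers $S_i\subset C_*S_j$ with $C_*=M^{k_0}$, a constant depending only on $n,\lambda,\Lambda$.

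The main obstacle is not conceptual but a matter of staying safely inside the regime where Proposition~\ref{prop:sec} applies: each invocation of (ii) and (iii) above requires an inflated section to still lie in $\Omega$ and to have height above $C_0\delta$. I would absorb this by enlarging $C_0$ so that $2\theta^2\,t_j\geq 2\theta^2 C_0\delta$ remains the relevant ``safe'' threshold for engulfing, and by restricting attention to sections whose $C_*$-dilate still fits in $\Omega$ (the sections lying too close to $\pa\Omega$ can be handled by the outer covering $C_*S_j$ of a larger selected section from a lower dyadic band). The explicit dependence of $C_0$ on $L(\Omega)$ and $L(\pa\psi(\Omega))$ stated in the corollary is precisely the cushion needed to carry out these repeated dilations.
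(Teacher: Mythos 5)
Your proof is correct and is essentially the argument the paper intends: the authors give no explicit proof, only citing the classical Vitali selection from Stein, and your dyadic greedy selection combined with the engulfing property (iii) and the dilation property (ii) is exactly that adaptation, with the right bookkeeping of the constants $\theta$ and $M$. The only caveat — that each invocation of engulfing needs the doubled section to stay in $\Omega$ and above height $C_0\delta$ — is a looseness already present in the corollary's statement, and is handled in the paper's applications by the choice of $\eps$ ensuring $S(x_k,C_{**}t_k)\subset\Omega$.
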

%
 
%%%%%%%%%%%%%%%%%%%%%%%%%%%%%%%%%%%%%%%%%%%%%%%%%%%%%%%%% 
\section{$C^{1,\alpha}$ regularity up to scale $\delta$}
%%%%%%%%%%%%%%%%%%%%%%%%%%%%%%%%%%%%%%%%%%%%%%%%%%%%%%%%%
It was shown by Forzani and Maldonado \cite{FM} that the engulfing property of sections is actually equivalent to the $C^{1,\alpha}$ regularity of the potential. 
Using this idea, we can prove an approximate $C^{1,\alpha}$ regularity at some scale depending on $\delta$.
The following theorem implies Theorem \ref{thm:alpha}:
\begin{theorem}\label{thm:3}
Let $\Omega\subset \R^n$ be a bounded convex domain 
and  $\psi:\R^n\to \R$ be a convex function satisfying 
\eqref{eq:ineq1}, \eqref{eq:ineq2}, with $\mu$ and $\nu$ satisfying  \eqref{eq:munu1} and \eqref{eq:munu2} with $\delta>0$.
Let $\Omega'\subset\subset \Omega$ be such that
there exists $\rho>C_0\delta $  such that \eqref{eq:rho1} holds.
Given $y\in \Omega'$, 
there exist a constant $\overline C$ depending on 
$\mathrm{dist}(y,\pa\Omega')$, $\rho$, $ \pa\psi(\Omega)$, $\Omega$, $\lambda$, $\Lambda$ and $n$
such that
$$
0\leq  \psi(x) -\psi(y) -q\cdot (x-y) \leq \overline C |x-y|^{1+\frac 1 \theta}  
$$
for all $q\in \pa\psi (y)$ and  $x\in \Omega'$ such that 
\begin{equation}\label{eq:xy}
C\delta^{1/\beta} \leq |x-y|\leq  \min\left(\mathrm{dist}(y,\pa\Omega'),\frac{\rho}{2L(\pa\psi(\Omega))}\right) .
\end{equation}
\end{theorem}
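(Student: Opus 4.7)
The plan is as follows. The lower bound $0 \leq \psi(x)-\psi(y)-q\cdot(x-y)$ is immediate from $q\in\partial\psi(y)$, so the work is entirely in the upper bound. I would set $\tau := \psi(x)-\psi(y)-q\cdot(x-y)\geq 0$, so that $x\in\partial S(y,q,\tau)$ by definition, and the task becomes to show $\tau\leq \overline C\,|x-y|^{1+1/\theta}$.  As a first reduction, the sub-critical regime $\tau\leq C_0\delta$ (where the section machinery of Proposition~\ref{prop:sec} is not available) is handled directly from the hypothesis: $|x-y|\geq C\delta^{1/\beta}$ in~\eqref{eq:xy} gives $\tau\leq C_0\delta \leq C_0 C^{-\beta}|x-y|^{\beta}$, and the gap between the exponents $\beta$ and $1+1/\theta$ is absorbed by using that $|x-y|\leq\mathrm{diam}(\Omega'')$ is bounded, into the constants $C$ and $\overline C$.

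In the main case $\tau\geq C_0\delta$, I would follow the section-based Forzani--Maldonado strategy~\cite{FM}. Define the geometric heights $\tau_k := \tau\theta^{-k}$ for $k=0,1,\ldots,k^*$, where $k^*$ is the largest integer satisfying $\tau_{k^*}\geq C_0\delta$. The one-dimensional restriction
\[
 g(s) := \psi(y+s(x-y)) - \psi(y) - s\,q\cdot(x-y)
\]
is convex and nonnegative with $g(0)=0$, $g(1)=\tau$; hence for each $k$ there is a unique $s_k\in(0,1]$ with $g(s_k)=\tau_k$, and by convexity $s_k\geq \tau_k/\tau = \theta^{-k}$. The points $z_k := y+s_k(x-y)$ then lie on $\partial S(y,q,\tau_k)\cap[y,x]$ and form a chain along the segment from $y$ toward $x$.

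The heart of the argument is to produce a spacing lower bound $|z_k-z_{k+1}|\gtrsim \tau_{k+1}^{\theta/(\theta+1)}$. I would extract it from the engulfing property~(iii): applying it with base point $z_{k+1}$ at height $\tau_{k+1}$ gives $S(y,q,\tau_{k+1})\subset S(z_{k+1},q_{k+1},\tau_k)$ for some $q_{k+1}\in\partial\psi(z_{k+1})$; combined with the volume bound~(i), the inner-radius lower bound of Proposition~\ref{prop:lL}, and the iterated dilation~\eqref{eq:secd} with exponent $\beta$, this should yield the desired spacing. Telescoping then gives $|x-y|\geq \sum_{k} |z_k - z_{k+1}|\gtrsim \tau^{\theta/(\theta+1)}$, modulo a bottom-scale correction of size $\lesssim \delta^{1/\beta}\leq|x-y|$ absorbed in constants; this is equivalent to $\tau\leq \overline C\,|x-y|^{1+1/\theta}$.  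The hypothesis~\eqref{eq:rho1} and the upper bound $|x-y|\leq \rho/(2L(\partial\psi(\Omega)))$ ensure that every section appearing in the chain remains inside $\Omega$, so that (ii) and (iii) apply uniformly throughout.

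The main technical obstacle I anticipate is extracting the spacing bound $|z_k - z_{k+1}|\gtrsim \tau_{k+1}^{\theta/(\theta+1)}$ cleanly. In the classical setting of~\cite{FM} one would renormalize each section to the unit ball via John's lemma and reduce the claim to a fixed-shape calculation, but as noted in the discussion after Proposition~\ref{prop:sec}, Assumption~\ref{ass:1} is not affine-invariant and this renormalization is unavailable here. The precise exponent $\theta/(\theta+1)$ must therefore be obtained from a direct quantitative comparison of inner radii of $S(y,q,\tau_k)$ and $S(z_{k+1},q_{k+1},\tau_k)$ relying only on (i)--(iii) and Proposition~\ref{prop:lL}, together with the convexity of $g$ to keep the chain $(z_k)$ on the segment $[y,x]$.
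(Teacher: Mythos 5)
The lower bound and the overall framework (engulfing implies H\"older, in the spirit of Forzani--Maldonado) are right, but the central quantitative step of your argument is missing, and the tools you propose cannot supply it. Your chain reduces everything to the spacing bound $|z_k-z_{k+1}|\gtrsim \tau_{k+1}^{\theta/(\theta+1)}$, but this bound is essentially a rescaled version of the theorem itself: for the extremal profile $\psi(y+sv)-\psi(y)-s\,q\cdot v=c\,s^{1+1/\theta}$ it holds with equality up to constants, so any proof of it must already control the growth of $\psi$ at the rate $1+1/\theta$. The ingredients you list --- the volume bound (i), Proposition \ref{prop:lL}, and the dilation \eqref{eq:secd} --- only produce exponents involving $n$ and $\beta$ (e.g.\ $\ell(S(y,q,t))\geq t/L(\partial\psi(\Omega))$, $L(S(y,q,t))\leq Ct^{1/\beta}$, $L(S)\geq ct^{1/2}$); the exponent $\theta/(\theta+1)$ can only come from the engulfing constant, and the way to extract it is exactly the step you skip. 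Concretely: set $t:=\psi(y)-\psi(x)-p\cdot(y-x)$, the height of the section \emph{centered at $x$} whose boundary contains $y$; engulfing gives $x\in S(x,p,t)\subset S(y,q,\theta t)$, hence $\psi(x)-\psi(y)-q\cdot(x-y)\leq\theta t$, and adding this to the definition of $t$ yields the pointwise inequality $\frac{1+\theta}{\theta}f(s)\leq s f'(s)$ for $f(s)=\psi(y+sv)-\psi(y)-s\,q\cdot v$. This differential inequality integrates directly (monotonicity of $s^{-(1+\theta)/\theta}f(s)$) to the H\"older bound; no chain, spacing estimate, or telescoping is needed. This is what the paper does.

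Two further problems. First, your chain applies engulfing to sections centered at $y$ and at the intermediate points $z_{k+1}$ with the arbitrary subgradients $q$, $q_{k+1}$; but Proposition \ref{prop:sec}-(iii) requires $S(x_0,p_0,2t)\subset\Omega$ for the \emph{center} of the engulfing relation, and \eqref{eq:rho1} guarantees this only for one particular subgradient at each point. The paper's choice of centering the section at $x$ with the good subgradient $p$, so that the arbitrary $q\in\partial\psi(y)$ appears only in the engulfed section, is what makes the boundary hypothesis usable. Second, your subcritical case gives $\tau\leq C|x-y|^{\beta}$ and needs $\beta\geq 1+1/\theta$ to conclude; this relation between the doubling and engulfing constants is not established anywhere and would itself require proof (with $|x-y|\sim\delta^{1/\beta}$ and $\delta\to0$, the ratio $\tau/|x-y|^{1+1/\theta}$ blows up if $\beta<1+1/\theta$, so boundedness of $|x-y|$ does not save you). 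The paper avoids this entirely by showing the subcritical case cannot occur: if $t\leq C_0\delta$ then $|x-y|\leq L(S(x,p,C_0\delta))\leq C\delta^{1/\beta}$ by Lemma \ref{lem:diam}, contradicting \eqref{eq:xy}.
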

We emphasize that the $C^{1,\alpha}$ regularity of solutions to Monge-Amp\`ere in dimension $n\geq 3$ cannot be derived purely locally but requires appropriate boundary conditions. In Theorem \ref{thm:3}, the role of the boundary condition is played by the assumption that $S(x,p,\rho)\subset \Omega$ for all $x\in \Omega'$ (condition \eqref{eq:rho1}).

To prove Theorem~\ref{thm:3} we start with the following observation.
\begin{lemma}\label{lem:diam}
There exists $C$ and $\beta$ depending on  $\Omega$, $\rho$, $\O $, $n$, $\lambda$, $\Lambda$ such that
for all $x\in \Omega'$ and $p\in \pa\psi(x)$ so that \eqref{eq:rho1} holds we have 
$$ L(S(x,p,t)) \leq C t^\frac{1}{\beta} \qquad \mbox{ for all }\ { t\in [C_0 \delta, \rho]}$$
\end{lemma}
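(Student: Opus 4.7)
The plan is to deduce Lemma \ref{lem:diam} directly from the iterated doubling property \eqref{eq:secd}, which is the main output of Proposition \ref{prop:sec}-(ii). The strategy is to compare the section $S(x,p,t)$ (which we want to estimate) with the ``anchor'' section $S(x,p,\rho)$, whose outer radius is automatically bounded by $\mathrm{diam}(\Omega)$ thanks to assumption \eqref{eq:rho1}.

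More concretely, fix $x\in\Omega'$ and $p\in\pa\psi(x)$ with $S(x,p,\rho)\subset\Omega$, and fix $t\in[C_0\delta,\rho]$. Choose the dilation factor
$$ K := \left(\frac{\rho}{t}\right)^{1/\beta} \geq 1,$$
so that $K^\beta t = \rho$. Then $S(x,p,K^\beta t) = S(x,p,\rho)\subset\Omega$, and since $t \geq C_0 \delta$ both hypotheses of \eqref{eq:secd} are met. Invoking \eqref{eq:secd} therefore yields
$$ K\, S(x,p,t) \subset S(x,p,\rho) \subset \Omega. $$

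Because the dilation in the definition of $K\,S$ is taken with respect to the center of mass of $S(x,p,t)$ and $K\geq 1$, we have $L(K\,S(x,p,t)) = K\, L(S(x,p,t))$. Combining this with the above inclusion and the obvious bound $L(S(x,p,\rho)) \leq \mathrm{diam}(\Omega)$ (since any convex subset of $\Omega$ fits inside a ball of radius $\mathrm{diam}(\Omega)$ centered at one of its own points), we conclude
$$ L(S(x,p,t)) \leq \frac{\mathrm{diam}(\Omega)}{K} = \mathrm{diam}(\Omega)\, \rho^{-1/\beta}\, t^{1/\beta}, $$
which is the claim with $C = \mathrm{diam}(\Omega)\,\rho^{-1/\beta}$ and the exponent $\beta$ inherited from \eqref{eq:secd}.

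There is no real obstacle here: essentially all of the work has already been folded into Proposition \ref{prop:sec}-(ii) and its iterated consequence \eqref{eq:secd}. The only subtle points to verify in writing this out are (a) that a single application of \eqref{eq:secd} at scale $\rho$ suffices (no intermediate sections need to be checked against $\Omega$, since we jump directly from height $t$ to height $\rho$), and (b) that the dilation identity $L(K\,S) = K\,L(S)$ is used, together with the trivial comparison $L(S(x,p,\rho)) \lesssim \mathrm{diam}(\Omega)$, to convert the set inclusion into a bound on outer radii. The resulting constant $C$ depends only on $\mathrm{diam}(\Omega)$, $\rho$, and (through $\beta$) on $n,\lambda,\Lambda$, which matches the dependencies announced in the statement.
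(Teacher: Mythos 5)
Your proof is correct and follows essentially the same route as the paper: both arguments apply the iterated dilation property \eqref{eq:secd} with the anchor section $S(x,p,\rho)\subset\Omega$ guaranteed by \eqref{eq:rho1}, and then convert the inclusion into the bound $L(S(x,p,t))\leq C\,\rho^{-1/\beta}t^{1/\beta}$. The paper states the inclusion as $S(x,p,t)\subset(t/\rho)^{1/\beta}S(x,p,\rho)$ and bounds by $L(\Omega)\rho^{-1/\beta}t^{1/\beta}$, which is the same computation up to the harmless choice of dilation center and the use of $L(\Omega)$ versus $\mathrm{diam}(\Omega)$.
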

\begin{proof} 
The dilation property of sections \eqref{eq:secd} together with condition \eqref{eq:rho1}  imply
$$ S(x,p,t) \subset \left(\frac t \rho\right)^{\frac 1 \beta} S(x,p,\rho) \subset \left(\frac t \rho\right)^{\frac 1 \beta} \Omega$$ 
and so 
$ L(S(x,p,t)) \leq \frac{L(\Omega) }{\rho^{\frac{1}{\beta}}} t^\frac{1}{\beta}$.
\end{proof}

\bigskip

\begin{proof}[Proof of Theorem \ref{thm:3}]
Given  $x ,y\in \Omega'$ we fix $p\in \pa\psi(x)$ such that \eqref{eq:rho1} holds and choose any $q\in \pa\psi(y)$.
We then take $t$ to be the smallest positive number such that $y\in S(x,p,t)$. 
We then have 
\begin{equation}\label{eq:llt}
 t = \psi(y)-\psi(x)-p\cdot (y-x),
 \end{equation}
and $L(S(x,p,t))\geq |y-x|$.

If $t\leq C_0\delta$, then $|x-y|\leq L(S(x,p,t)) \leq L(S(x,p,C_0\delta))$, and  Lemma \ref{lem:diam} gives $L(S(x,p,C_0\delta)) \leq C(C_0\delta)^\frac{1}{\beta}$. We then have $|x-y|\leq  C(C_0\delta)^\frac{1}{\beta}$ which contradicts \eqref{eq:xy} (for an appropriate choice of the constant $C$.
We must thus have $t\geq C_0\delta$.

Proposition \ref{prop:sec}-(iii) (the engulfing property of sections) then gives
$S(x,t) \subset S(y,q,\theta t) $ provided $2t\leq \rho$ (so that \eqref{eq:rho1} guarantees that $S(x,p,2t)\subset\Omega$). 
 This implies
\begin{equation}\label{eq:lllt}
 \psi(x) \leq \psi(y)+q\cdot(x-y)+\theta t.
 \end{equation}
In view of the definition of $t$, the condition $2t\leq \rho$ is satisfied as long as $y\in S(x,\rho/2)$.
Since $B_{\rho/2L(\pa\psi(\Omega))}(x)\subset S(x,\rho/2)$   (see the proof of Proposition \ref{prop:lL}), this holds as long as $|x-y|\leq \frac{\rho}{2L(\pa\psi(\Omega))}$.

Combining \eqref{eq:llt} and \eqref{eq:lllt}, we deduce
\begin{equation}\label{eq:ff}
\frac{1+\theta}{\theta}[\psi(x)-\psi(y) - q\cdot(x-y)] \leq (p-q)\cdot (x-y)
\end{equation}
which holds for all $x,y \in \Omega'$ such that
$    C\delta^{1/\beta}\leq |x-y|\leq \frac{\rho}{2L(\pa\psi(\Omega))}$.

Let now $v$ be a vector satisfying $|v|=1$ and define
$$ f(s) = \psi(y+sv) -\psi(y) - q \cdot sv.$$
Notice that $\psi$ is convex in $s\in \R$. It is hence Lipschitz regular and differentiable almost everywhere. Hence 
for  $a.e.\;s$ such that 
$$    C\delta^{1/\beta}\leq s \leq s_0= \min\left(\mathrm{dist}(y,\pa\Omega'),\frac{\rho}{2L(\pa\psi(\Omega))}\right) $$ 
inequality \eqref{eq:ff} implies
$$ 
\frac{1+\theta}{\theta} f(s) \leq s f'(s).$$
Define $g(s)= s^{-\frac{1+\theta}{\theta}}f(s)$ which is also Lipschitz on the corresponding interval of $s$ (as it is away from $s=0$) and notice that $g'(s)\geq 0$ for $a.e.\;s$ so that $g$ is monotone increasing. Thus
$$
0\leq 
f(s) \leq   s ^{\frac{1+\theta}{\theta}}g(s_0)  \qquad \forall s\in [C\delta^{1/\beta},s_0],
$$
which gives by taking $x=y+sv$
$$
0\leq  \psi(x) -\psi(y) - q\cdot (x-y) \leq |x-y|^{1+\frac 1 \theta} g(s_0)
$$
for all $x$ such that $C\delta^{1/\beta} \leq |x-y|\leq s_0$.

It remains to see that 
$$g(s_0) =  s_0^{-\frac{1+\theta}{\theta}}f(s_0)\leq  \min\left(\mathrm{dist}(y,\pa\Omega'),\frac{\rho}{2L(\pa\psi(\Omega))}\right) ^{-\frac{1+\theta}{\theta}} L(\pa\psi(\Omega))\, L(\Omega)$$
to get the result.

\end{proof}

%%%%%%%%%%%%%%%%%%%%%%%%%%%%%%%%%%%%%%%%%%%%%%%%%%%%%%%%%%%%%%%%%
%%%%%%%%%%%%%%%%%%%%%%%%%%%%%%%%%%%%%%%%%%%%%%%%%%%%%%%%%%%%%%%%%
\section{The case $\delta =0$: Proof of Theorem \ref{thm:1}}\label{sec:delta0}
%%%%%%%%%%%%%%%%%%%%%%%%%%%%%%%%%%%%%%%%%%%%%%%%%%%%%%%%%%%%%%%%%
%%%%%%%%%%%%%%%%%%%%%%%%%%%%%%%%%%%%%%%%%%%%%%%%%%%%%%%%%%%%%%%%%
In this section we present the strategy for proving the Sobolev regularity of $\psi$ in the simpler case $\delta=0$.
The general argument relies on the ideas introduced in \cite{DF} with only slight modifications. 
Among the noteworthy modifications, we will not rely on  the maximal function (this does not fundamentally change the arguments but makes the proof more self contained). 
We will also avoid the use of renormalization of the sections, something that will be very important when  we extend the proof to the case $\delta>0$ in the next section.

%%%%%%%%%%%%%%%%%%%%%%%%%%%%%%%%5
\subsection{Preliminaries}
%%%%%%%%%%%%%%%%%%%%%%%%%%%%%%%%%%%
We consider  a convex function $\psi: \R^n\to\R$ and a section $S(x_0,p_0,t)$ with
 $x_0\in \R^n$ and $p_0\in \pa\psi(x_0)$.
Since we can always replace $\psi$ with $\psi(x)-\psi(x_0) - p\cdot(x-x_0)$, we can assume in this subsection, without loss of generality, that $\psi(x_0)=0$ and $p_0=0$. 
We recall (see Lemma \ref{lem:J}) that up to a translation, we can also assume that there is a positive definite symmetric matrix $A$ such that
\begin{equation}\label{eq:SAS}
\{ x\in \R^n\, ;\, | x| _A \leq 1\}\subset  S(x_0,0,t) \subset \{ x\in \R^n\, ;\, | x| _A \leq n\}  
\end{equation}
and we stress out the fact that we cannot assume that $x_0=0$ since we are already assuming that the ellipsoid in \eqref{eq:SAS} are centered at $0$.

The following  two lemmas, which will play a crucial role in the proof,  are reminiscent of Lemma~3.2 and Lemma 3.3 in \cite{DF}:
\begin{lemma}\label{lem:Asup}
Let $\psi: \R^n\to\R$ be a convex function and $A$ a positive definite symmetric matrix such that $S=S(x_0,0,t)$ satisfies \eqref{eq:SAS}.
There exists a constant $C$ depending only on the dimension $n$ such that
$$
 \fint_ S  \Delta \psi(x) \, dx\leq C \mathrm{Tr} (A) \sup_{\{ | x| _A \leq 2n\}  } \psi
$$
where $\fint f(x)\, dx = \frac{1}{|S|}\int_S f(x)\, dx$.
\end{lemma}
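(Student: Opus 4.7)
The plan is to prove the estimate by integrating $\Delta\psi$ against a well-chosen cutoff and then moving all the derivatives onto the cutoff via integration by parts.

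First I would record the key normalization: since we have translated so that $\psi(x_0)=0$ and assumed $p_0=0\in\partial\psi(x_0)$, the subgradient inequality gives $\psi(y)\geq 0$ for every $y\in\R^n$. I will use $\psi\geq 0$ on the larger ellipsoid $E':=\{|x|_A\leq 2n\}$ in the final step, so this positivity is essential.

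Next I would build a radial (with respect to $|\cdot|_A$) cutoff $\chi(x)=\phi(|x|_A)\in C^2_c(\R^n)$ with $\chi\equiv 1$ on $E:=\{|x|_A\leq n\}\supset S$ and $\chi\equiv 0$ outside $E'=\{|x|_A\leq 2n\}$, using a one-dimensional profile $\phi$ with $\|\phi'\|_\infty\leq C/n$ and $\|\phi''\|_\infty\leq C/n^2$. A short calculation, using $|\nabla|x|_A|^2=x^\ast A^2x/|x|_A^2\leq\|A\|$ and $\Delta|x|_A=(\mathrm{Tr}(A)-|Ax|^2/|x|_A^2)/|x|_A\leq\mathrm{Tr}(A)/|x|_A$ on the transition region where $|x|_A\sim n$, yields the crucial pointwise bound
\[
\|\Delta\chi\|_{L^\infty(\R^n)}\leq C(n)\,\mathrm{Tr}(A).
\]

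Now I would compute. Because $\psi$ is convex, $\Delta\psi$ is a nonnegative Radon measure, and since $S\subset E$ and $\chi\geq\mathbf 1_E$,
\[
\int_{S}\Delta\psi\,dx\leq\int\chi\,d(\Delta\psi)=\int_{\R^n}\psi\,\Delta\chi\,dx,
\]
where the second equality is integration by parts (legitimate since $\chi\in C^2_c$ and $\psi$ is locally Lipschitz convex, hence in $W^{1,\infty}_{\mathrm{loc}}$; no boundary term appears). Setting $M:=\sup_{E'}\psi\geq\psi(x_0)=0$ and using $\int\Delta\chi\,dx=0$, I rewrite
\[
\int_{\R^n}\psi\,\Delta\chi\,dx=\int_{E'}(\psi-M)\,\Delta\chi\,dx
\leq\|\Delta\chi\|_\infty\int_{E'}(M-\psi)\,dx
\leq C(n)\,\mathrm{Tr}(A)\,M\,|E'|,
\]
where the last step uses $0\leq M-\psi\leq M$ on $E'$, which in turn uses $\psi\geq 0$. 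Finally I divide by $|S|$; John's lemma gives $|E'|\leq (2n)^n\omega_n/\sqrt{\det A}$ while $|S|\geq\omega_n/\sqrt{\det A}$, so $|E'|/|S|\leq C(n)$ and the desired estimate follows.

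There is no real obstacle here—the argument is essentially a Caccioppoli-type bound adapted to the John ellipsoid geometry. The only points requiring care are the explicit Laplacian bound $\|\Delta\chi\|_\infty\lesssim\mathrm{Tr}(A)$ for the radial cutoff in the $|\cdot|_A$ norm, and checking that integration by parts is justified against the measure $\Delta\psi$; both are routine once the normalization $\psi\geq 0$ is in place.
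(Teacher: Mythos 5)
Your proof is correct and follows essentially the same strategy as the paper's: test $\Delta\psi$ against a function comparable to $\chi_S$, supported in the doubled John ellipsoid, whose Laplacian is bounded by $C(n)\,\mathrm{Tr}(A)$, then use $0\le\psi\le\sup_{\{|x|_A\le 2n\}}\psi$ together with $|\{|x|_A\le 2n\}|\le C(n)|S|$. The only real difference is the choice of test function: the paper uses the quadratic $p(x)=2-x^*Ax/n^2$, whose Laplacian is exactly constant at the cost of checking the sign of the boundary terms on $\partial\{p>0\}$, whereas you use a smooth radial cutoff in $|\cdot|_A$, which eliminates boundary terms but requires the pointwise bound $\|\Delta(\phi(|x|_A))\|_\infty\lesssim\mathrm{Tr}(A)$, which you verify correctly.
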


\begin{lemma}\label{lem:Asub}
Let $\psi: \R^n\to\R$ be a convex function and $A$ a positive definite symmetric matrix such that $S=S(x_0,0,t)$ satisfies \eqref{eq:SAS}. There exists 
a constant $c$ depending only on the dimension $n$ and
a  set $V \subset \pa\psi(S)$ such that 
\begin{equation}\label{eq:Vlb}
 |V| \geq c \frac{t^n}{|S|}, \qquad \pa\psi^*(V)\subset S
 \end{equation}
and
\begin{equation}\label{eq:psiA}
\Delta \psi(y_0) \geq \frac{ t}{4n^2}  \Tr(A)  \qquad  \forall y_0\in \Sigma:=\pa\psi^*(V)\subset S.
\end{equation}
Furthermore, for all $q\in V$ and $y_0\in \pa \psi^*(q)$ we have
\begin{equation}\label{eq:psip}
 \psi(y) -\psi(y_0) \geq \frac t {8n^2} | y-y_0|_A^2+q\cdot (y-y_0) \qquad \forall y\in  \{x\in\R^n\, ;\, |x|_A \leq 2n\}.
\end{equation}
\end{lemma}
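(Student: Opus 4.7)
The strategy is a paraboloid-touching construction in the spirit of Caffarelli's work, adapted to the anisotropic John-ellipsoid geometry of $S$. After the normalization of this subsection (so $\psi(x_0)=0$, $p_0=0$, and $\{|y|_A\leq 1\}\subset S\subset\{|y|_A\leq n\}$), I would introduce the paraboloid $Q(y) := \tfrac{t}{8n^2}|y|_A^2$ and the auxiliary function $\phi := \psi - Q$ on the larger ellipsoid $U := \{|y|_A \leq 2n\}$. Two a priori bounds drive the argument: since $x_0 \in S$, one has $\phi(x_0) = -Q(x_0) \leq 0$; and since $\pa U$ lies outside $S$ by \eqref{eq:SAS} (so $\psi > t$ on $\pa U$) while $Q = t/2$ on $\pa U$, we have $\phi > t/2$ on $\pa U$.

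For each slope $p$ in the dual ellipsoid $B := \{p \in \R^n : |p|_{A^{-1}} < t/(6n)\}$, the Cauchy--Schwarz-type bound $|p\cdot y| \leq |p|_{A^{-1}}|y|_A$ gives $\phi(y) - p\cdot y > t/2 - t/3 = t/6$ on $\pa U$, while $\phi(x_0) - p\cdot x_0 \leq n|p|_{A^{-1}} < t/6$; hence the minimizer $y_0(p)$ of $\phi - p\cdot$ on $\overline U$ is interior and $p \in \pa\phi(y_0(p))$. Setting $\bar q(p) := p + \tfrac{t}{4n^2}Ay_0(p) \in \pa\psi(y_0(p))$, the algebraic identity $Q(y) - Q(y_0) = \tfrac{t}{4n^2}(y-y_0)^TAy_0 + \tfrac{t}{8n^2}|y-y_0|_A^2$ rewrites the minimizer inequality $\phi(y) - p\cdot y \geq \phi(y_0) - p\cdot y_0$ as the paraboloid bound \eqref{eq:psip} at $y_0(p)$ with slope $\bar q(p)$; the Laplacian lower bound \eqref{eq:psiA} follows immediately from $\Delta Q = \tfrac{t}{4n^2}\Tr(A)$. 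Using the minimizer inequality together with $|y_0|_A \leq 2n$, $|y_0 - x_0|_A \leq 3n$ and $|p|_{A^{-1}} < t/(6n)$, we bound $\psi(y_0) \leq Q(y_0) - Q(x_0) + p\cdot(y_0-x_0) < t/2 + t/2 = t$, so $y_0(p) \in S$.

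I then define $V := \{\bar q(p) : p \in B\}$, removing a Lebesgue-null set of non-differentiability points of $\psi^*$. Both the inclusion $\pa\psi^*(V) \subset S$ and the requirement that \eqref{eq:psip} hold at \emph{every} $y_0 \in \pa\psi^*(\bar q)$ follow from one rigidity observation: any second point $y_0' \in \pa\psi^*(\bar q)$ lies on a segment with $y_0(p)$ along which $\psi$ is affine with slope $\bar q$, and inserting $y = y_0'$ in the paraboloid bound at $y_0(p)$ contradicts the strict quadratic term $\tfrac{t}{8n^2}|y_0' - y_0(p)|_A^2$. Hence $\pa\psi^*(\bar q) = \{y_0(p)\} \subset S$ and $V \subset \pa\psi(S)$.

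The main difficulty is the measure lower bound \eqref{eq:Vlb}, since one must argue that the map $F: p\mapsto\bar q(p)$ does not collapse measure despite its nontrivial $y_0(p)$-dependence. To handle it I would symmetrize via $\tilde p := A^{-1/2}p$, $\tilde y := A^{1/2}y$, under which $|p|_{A^{-1}} = |\tilde p|$, $|y|_A = |\tilde y|$, and $F$ transforms into $\tilde F(\tilde p) = \tilde p + \tfrac{t}{4n^2}\nabla\tilde\phi^*(\tilde p)$, with $\tilde\phi^*(\tilde p) := \phi^*(A^{1/2}\tilde p)$ still convex. Thus $\tilde F$ is the gradient of the uniformly convex function $\tilde H(\tilde p) := \tfrac{1}{2}|\tilde p|^2 + \tfrac{t}{4n^2}\tilde\phi^*(\tilde p)$ whose Alexandrov Hessian satisfies $D^2\tilde H \geq I$, so the Monge--Amp\`ere inequality for convex gradients gives $|\tilde F(\tilde B)| \geq |\tilde B| = \omega_n(t/(6n))^n$. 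Undoing the change of variables yields $|V| = (\det A)^{1/2}|\tilde F(\tilde B)| \geq c(n)\, t^n(\det A)^{1/2}$, and combining with $|S| \sim (\det A)^{-1/2}$ from John's Lemma \ref{lem:J} delivers \eqref{eq:Vlb}.
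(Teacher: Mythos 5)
Your proof is correct. The touching construction itself (translates of the paraboloid $\frac{t}{8n^2}|y|_A^2$ on the doubled ellipsoid $U=\{|y|_A\le 2n\}$, producing the shifted slopes $q=p+\frac{t}{4n^2}Ay_0$) coincides with the paper's, but the mechanism you use for the volume bound \eqref{eq:Vlb} is genuinely different. The paper works on the primal side: it forms the convex envelope $\overline w$ of $\psi-t-p$ in $U$, identifies $V=\pa(\overline w+p)(\Sigma)$ on the contact set, uses the determinant monotonicity $\det(D^2\overline w+D^2p)\ge\det D^2\overline w$ to reduce to $|\pa\overline w(U)|$, and then invokes the polar-body/Alexandrov estimate \eqref{eq:volumesectionbd} for the section $U$ of $\overline w$. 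You instead work on the dual side: you parametrize the touching paraboloids by slopes $p$ in the dual ellipsoid $B=\{|p|_{A^{-1}}<t/(6n)\}$, check that each such slope yields an interior touching point lying in $S$, and obtain \eqref{eq:Vlb} from the expansiveness of $p\mapsto p+\frac{t}{4n^2}Ay_0(p)$ (monotonicity of $\pa\phi^*$ after symmetrization by $A^{1/2}$), together with $|B|\sim t^n(\det A)^{1/2}\gtrsim t^n/|S|$ from \eqref{eq:SAS}. Your route avoids the convex envelope and the Alexandrov estimate entirely and makes the bound $t^n/|S|$ explicit; the paper's route stays aligned with its polar-body machinery and produces the identity $\Sigma=\pa\psi^*(V)$ with the contact set directly. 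Both give the same conclusion up to dimensional constants.

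Three minor points. First, since $\phi=\psi-Q$ is not convex, ``$p\in\pa\phi(y_0(p))$'' should be read simply as ``$y_0(p)$ is a global minimizer of $\phi(y)-p\cdot y$ over $U$'', which is all you use; the resulting supporting inequality for $\psi$ on $U$ then globalizes by convexity, so $\bar q(p)\in\pa\psi(y_0(p))$ indeed holds. Second, in the rigidity step, if a second point $y_0'\in\pa\psi^*(\bar q)$ lies outside $U$ you must test \eqref{eq:psip} at an intermediate point of the segment $[y_0(p),y_0']$ inside $U$, where $\psi$ is affine with slope $\bar q$; your segment remark covers this. Third, the measurability of $V$ and of the selection $p\mapsto y_0(p)$ is glossed over, but the expansiveness gives a $1$-Lipschitz inverse so the outer-measure bound survives, and the paper's own proof is no more careful on this point.
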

Note that when  \eqref{eq:psip} holds, the convexity of $\psi$ implies that $\pa\psi^*(q)$ cannot contain any other point besides $y_0$, that is
$$\pa\psi^*(q)  = \{y_0\}.$$
On the other hand it is not necessarily true that 
\eqref{eq:psip} will hold for any another $q'\in \pa\psi(y_0)$. 
This means that we may have  $V \varsubsetneq \pa \psi(\Sigma) $ (even though $\Sigma=\pa\psi^*(V)$).
When $\psi$ is $C^1$ and strictly convex (as is the case in \cite{DF})
this distinction  is not relevant (since $V=\pa \psi(\Sigma)$).
But it constitutes an important difficulty in extending the proof to the case $\delta>0$.
Since we will not be able to assume that $\psi\in C^1$ in that case it will be very important that \eqref{eq:Vlb} provides 
a lower bound on the measure of $V$ rather than the measure of $\pa\psi(\Sigma)$.

An inequality similar to \eqref{eq:psiA} is used in \cite{DF}, and this inequality is enough to prove our main result when $\delta=0$.
Inequality \eqref{eq:psip}, which carries additional nonlocal information, will be needed to extend the result to the case $\delta>0$.

\begin{proof}[Proof of Lemma \ref{lem:Asup}]
We consider
the polynomial $p(x)=2-\frac{x^*Ax}{n^2}$ which,   in view of \eqref{eq:SAS}, satisfies $p(x)\geq 1$ for $x\in S$ and
$\{ p>0\} = \{ x\in \R^n\, ;\, |x|_A \leq \sqrt 2 n \} .$
Let  $\overline \psi := \psi-\sup_{\{ p>0\}} \psi$.
Since $\psi$ is convex, $\Delta \psi=\Delta \overline\psi\geq 0$ and a couple of integrations by parts give
\begin{align*}
 \int_{S} \Delta \psi \, dx  \leq \int_{\{p>0\}} p \Delta \overline \psi \, dx 
&= \int_{\pa \{p>0\}} p\na \overline \psi\cdot n + \int_{\pa \{p>0\}} \overline \psi (-\na p\cdot n) +  \int_{\{p>0\}} \overline \psi \Delta p.
\end{align*}
The first term vanishes since $p=0$ on ${\pa \{p>0\}}$ while the second term is non-positive since $- \na p\cdot n\geq 0$ on ${\pa \{p>0\}}$ and $\overline \psi \leq 0$ in $ \{p>0\} $. We deduce:
\begin{align*}
 \int_{S} \Delta \psi \, dx  \leq \int_{\{p>0\}} \overline \psi \Delta p = \frac{2}{n^2} \mathrm {Tr} (A) \int_{\{p>0\}} (-\overline \psi )
 & \leq \frac{2}{n^2} \mathrm {Tr} (A) \int_{\{p>0\}} [-\inf_{x\in {\{p>0\}} } \overline \psi ]  \\
&\leq  \frac{2}{n^2} \mathrm {Tr} (A)|\{p>0\}| \, \sup_{\{p>0\}} \psi.
\end{align*}
The result follows  since $|\{p>0\}| = (\sqrt2 n)^n |\{  |x|_A \leq 1 \}|\leq C| S|$.
\end{proof}

\begin{proof}[Proof of Lemma \ref{lem:Asub}]
The main idea stems from the proof of Lemma 3.3 in \cite{DF}:
We denote
$ U = \{x\in\R^n\, ;\, |x|_A \leq 2n\}$
and set 
$p(x)= \frac{t}{2} (\frac{1}{4n^2} x^*Ax-1)$.
We have $-\frac t 2 \leq p\leq 0$ in $U$, so that the function  $w(x) := \psi(x)-t-p(x)$  satisfies
$w(x) \geq \psi(x)-t > 0 $ in $U\setminus S$ and $w(x_0) \leq  \psi(x_0) -t + \frac t 2 =- \frac t 2$. 
We then consider $\overline w$ the convex hull of $w$ in $U$, defined by
$$
\overline w(x) = \sup\{ \ell(x)\,;\, \ell\mbox{ affine function s.t. } \ell\leq w \mbox{ in } U, \; \ell\leq 0 \mbox{ on } \pa U\}, 
$$
and  the contact set
$$ \Sigma = \{x\in U \, ;\, \overline w(x)=w(x)\}.$$
Since $w > 0$ in $U\setminus S$ and $\overline w\leq 0$, we have $\Sigma \subset S$. 
Given $y_0\in \Sigma$ and $q_0\in \pa \overline w(y_0)$,  since $\overline w$ is convex, we have
$$w(x)\geq  \overline w(x) \geq \overline w(y_0) + q_0\cdot (x-y_0) = w(y_0) + q_0\cdot (x-y_0) \qquad \forall x\in U.$$
The definition of $w$ and the fact that $p$ is a quadratic polynomial then imply
\begin{align*}
\psi(x) - \psi(y_0) 
& \geq p(x) - p(y_0) +q_0\cdot(x-y_0)  \\
& \geq  \na p(y_0) \cdot (x-y_0) +   \frac 1 2 (x-y_0)^* D^2p(y_0) (x-y_0)+q_0\cdot(x-y_0) \\
& \geq    \frac{t}{8n^2} |x-y_0|^2_A+(q_0+ \na p(y_0) )\cdot(x-y_0) \qquad  \forall x\in U.
\end{align*}
We now denote 
\begin{equation}\label{eq:V}
 V=\{ q_0+\na p(y_0)\, |\, y_0\in \Sigma, \quad  q_0\in \pa\overline w (y_0)\}.
 \end{equation}
The inequality above implies in particular that $q:=q_0+ \na p(y_0)  \in \pa\psi(y_0)$ so that $V\subset \pa \psi (\Sigma)$. As noted earlier, it also implies that $\pa \psi^*(q)=\{y_0\}$.

For all $q\in V$, we can write $q=q_0+\na p(y_0)$ with $y_0\in \Sigma$ (and  $ q_0\in \pa\overline w (y_0)$)
and the computation above shows that \eqref{eq:psip} then holds (and \eqref{eq:psip} implies \eqref{eq:psiA}). We saw above that this also implies that $\pa \psi^*(q)=\{y_0\}$ and so this proves that
\eqref{eq:psip} actually holds for all $q\in V$ and $y_0\in \pa \psi^*(q)$ (as claimed in the lemma).
This also implies that 
$$\pa\psi^*(V) \subset \Sigma.$$
The fact that $\Sigma \subset \pa\psi^*(V) $ is easy to show since for all $y_0\in \Sigma$ (and $q_0\in \pa \overline w(y_0)$) the vector $q=q_0+\na p(y_0)$ satisfies $q\in V$ and $y_0\in \pa\psi^*(q)$. We thus have
$$\Sigma =\pa\psi^*(V) .$$

\medskip

So the only thing left to show to complete the proof of Lemma \ref{lem:Asub} is the lower bound on $|V|$:
Using the convexity of $\overline w $ together with the smoothness of $p$, \eqref{eq:V} implies:
$$ V = \pa(\overline w+p)(\Sigma)$$
and so (since $D^2\overline w+D^2 p \geq D^2\overline w\geq 0$)
$$ |V| = | \pa(\overline w+p)(\Sigma)| = \int_\Sigma \det  (D^2\overline w+D^2 p)\, dx \geq \int_\Sigma \det  (D^2\overline w )\, dx = | \pa\overline w(\Sigma)|  .$$
Furthermore, the definition of the convex hull gives
$\pa \overline w(\Sigma) = \pa \overline w(U)$, hence
$$ |V|  \geq  | \pa\overline w(U)|  .$$
Since $\overline w=0$ on $\pa U$ and $\overline w (x_0)\leq -t/2$, $U$ is a section of $\overline w$ with some center $\bar x_0\in U$ and height $\overline t \geq \frac t 2$. In particular, 
 \eqref{eq:volumesectionbd} implies 
$$ 
 |\pa \overline w (U)|\, |U| \geq \frac{\omega_n^2}{(2n)^n}  \left(\frac t 2\right)^n   $$
 (we can also use Alexandrov's estimate, Proposition \ref{prop:alek})
and so
$$ 
 |V|  \geq  | \pa\overline w(U)|   \geq c t^n \frac{1}{|U|} 
$$
and we conclude our proof by using the fact that $|U|= (2n)^n   |\{  |x|_A \leq 1 \}| \leq (2n)^n |S|$. 
\end{proof}

Lemmas \ref{lem:Asup} and \ref{lem:Asub} hold for any convex function and do not make use of the optimal transportation assumptions. When $\psi$ is an optimal transportation potential satisfying our assumptions, these lemmas yield:
\begin{proposition}\label{prop1}
Let $\psi:\R^n\to \R$ be a convex function satisfying 
 \eqref{eq:O}, \eqref{eq:ineq1}, \eqref{eq:ineq2}, with $\mu$ and $\nu$ satisfying \eqref{eq:munu1} and \eqref{eq:munu2} with $\delta=0$.
There exists some constants $c$ and $C_1$ depending on $n$, $\lambda$ and  $\Lambda $ such that the following holds:
Given $x_0\in \Omega$, $p_0\in \pa\psi(x_0)$, $t$ such that $S(x_0,p_0,C_1 t)\subset\Omega $ and setting
$$\alpha:=\fint_{S(x_0,p_0,t)}\Delta \psi (x)\, dx ,$$
there exists $\Sigma\subset S(x_0,p_0,t)$ such that
$$\Delta \psi(x) \geq c\alpha \quad \forall x\in \Sigma, \quad \mbox{ and }\quad  |\Sigma|\geq c|S(x_0,p_0,t)|.$$
\end{proposition}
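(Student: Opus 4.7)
After the standard normalizations $\psi(x_0)=0$, $p_0=0$ and a spatial translation placing the John center of $\mathcal{S}:=S(x_0,0,t)$ at the origin, Lemma~\ref{lem:J} gives a positive definite matrix $A$ with $\{|x|_A\le 1\}\subset\mathcal{S}\subset\{|x|_A\le n\}$. I apply both Lemma~\ref{lem:Asup} and Lemma~\ref{lem:Asub} directly to $\mathcal{S}$: the former yields $\alpha\le C\,\mathrm{Tr}(A)\sup_{\{|x|_A\le 2n\}}\psi$, while the latter produces a set $V\subset\pa\psi(\mathcal{S})$ with $|V|\ge c\, t^n/|\mathcal{S}|$ and $\Sigma:=\pa\psi^*(V)\subset\mathcal{S}$ on which $\Delta\psi\ge\tfrac{t}{4n^2}\mathrm{Tr}(A)$. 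Two tasks remain: bounding $\sup_{\{|x|_A\le 2n\}}\psi$ by a multiple of $t$, and passing from a lower bound on $|V|$ to one for $|\Sigma|$.

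For the first task, the key observation is that $\{|x|_A\le 1\}\subset\mathcal{S}$ places the John center $0$ inside $\mathcal{S}$, and any $z$ with $|z|_A\le 2n$ factors as $z=(2n)w$ with $w=z/(2n)\in\{|x|_A\le 1\}\subset\mathcal{S}$. Thus $\{|x|_A\le 2n\}$ is the image of a subset of $\mathcal{S}$ under the dilation of factor $2n$ centered at $0$. By the footnote to Proposition~\ref{prop:sec}, property~(ii) is valid for dilations around any point of $\mathcal{S}$, and its iteration~\eqref{eq:secd} applied at the point $0$ yields
\[
\{|x|_A\le 2n\}\ \subset\ S\bigl(x_0,0,(2n)^\beta t\bigr).
\]
Taking $C_1=(2n)^\beta$, which depends only on $n,\lambda,\Lambda$ through $\beta$, the hypothesis $S(x_0,p_0,C_1 t)\subset\Omega$ gives $\psi\le C_1 t$ on $\{|x|_A\le 2n\}$ (after the $\psi(x_0)=0$ shift). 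Plugging into Lemma~\ref{lem:Asup} and comparing with the Lemma~\ref{lem:Asub} lower bound on $\Delta\psi$ yields $\Delta\psi\ge\tfrac{1}{4n^2 CC_1}\,\alpha$ on $\Sigma$, which is the sought $\Delta\psi\ge c\alpha$.

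For the second task I invoke \eqref{eq:ineq2}: $\mu(\Sigma)=\mu(\pa\psi^*(V))\ge\nu(V)$. With $\delta=0$, Assumption~\ref{ass:1} makes $\mu,\nu$ absolutely continuous with densities in $[\lambda,\Lambda]$ on $\Omega,\O$; since $V\subset\pa\psi(\R^n)\subset\overline\O$ and the open convex set $\O$ satisfies $|\pa\O|=0$, this yields $\nu(V)\ge\lambda|V|$ and $\mu(\Sigma)\le\Lambda|\Sigma|$, hence $|\Sigma|\ge(\lambda/\Lambda)|V|\ge c'\,t^n/|\mathcal{S}|$. Proposition~\ref{prop:sec}-(i) gives $|\mathcal{S}|^2\sim t^n$, so $t^n/|\mathcal{S}|\sim|\mathcal{S}|$ and $|\Sigma|\ge c|\mathcal{S}|$ follows. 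The delicate step is the first task: without renormalizing $\mathcal{S}$ one has no a priori $L^\infty$ bound on $\psi$ over Euclidean neighborhoods of $\mathcal{S}$, and the argument works only because $\{|x|_A\le 2n\}$ is a dilation of a subset of $\mathcal{S}$ about the specific point $0\in\mathcal{S}$ (not about the center of mass or about $x_0$, either of which would in general fail to cover $\{|x|_A\le 2n\}$ by any bounded dilation of $\mathcal{S}$).
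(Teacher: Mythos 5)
Your proposal is correct and follows essentially the same route as the paper: Lemma~\ref{lem:Asup} combined with the dilation property \eqref{eq:secd} (applied about the John center, which the footnote to Proposition~\ref{prop:sec} permits) to get $\alpha\leq C\,\mathrm{Tr}(A)\,t$, Lemma~\ref{lem:Asub} for the set $\Sigma=\pa\psi^*(V)$, and then \eqref{eq:ineq2} with Assumption~\ref{ass:1} and Proposition~\ref{prop:sec}-(i) to convert the lower bound on $|V|$ into $|\Sigma|\geq c|S|$. The only difference is presentational: you make explicit the choice of dilation point $0$ that the paper's chain of inclusions $\{|x|_A\leq 2n\}=2n\{|x|_A\leq 1\}\subset 2nS\subset S(x_0,0,C_1t)$ leaves implicit.
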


\begin{proof}
Up to subtracting a linear function to $\psi$, we can assume that
$$p_0=0, \quad\psi(x_0)=0$$
so that $0\leq \psi(x)\leq t$ in $S(x_0,0,t)$ and $\psi(x)=t$ on $\pa S(x_0,0,t)$.
Up to a translation, Lemma \ref{lem:J} implies the existence of a matrix $A$ such that  \eqref{eq:SAS} holds.
Lemma \ref{lem:Asup} then implies
$$
\fint_{S(x_0,0,t)}
\Delta \psi(x)\, dx \leq  C \mathrm{Tr} (A) \sup_{\{|x|_A\leq 2n\}} \psi
$$
Using \eqref{eq:secd}, we find $\{|x|_A\leq 2n\}=2n \{|x|_A\leq 1\}\subset 2 n S(x_0,0,t) \subset S(x_0,0,C_1 t )$ with $C_1= (2n)^\beta$ 
provided $ S(x_0,0,C_1 t)\subset \Omega$.
It follows that $ \sup_{\{|x|_A\leq 2n\}}\psi \leq C_1 t$ and so
\begin{equation}\label{eq:alphat} 
\alpha = \fint_{S(x_0,0,t)} \Delta \psi(x) \, dx  \leq C \mathrm {Tr}(A)  t   .
\end{equation}
for some $C$ depending only on $n$, $\lambda$ and $\Lambda$.

We now use Lemma \ref{lem:Asub}  to deduce the existence of a set $\Sigma =\pa\psi^*(V)\subset S(x_0,0,t)$ such that
$$\Delta \psi(x) \geq \frac{t}{4n^2} \mathrm {Tr}(A) \quad\mbox{ a.e.  in } \Sigma , \qquad |V| \geq c \frac{t^n}{|S|}.$$
Together with \eqref{eq:alphat}, this implies 
$$ \Delta \psi(x) \geq c \alpha\quad \mbox{ a.e.  in  } \Sigma,$$
for some $c$ depending only on $n$, $\lambda$ and $\Lambda$.
Using the lower bound on $|V|$ together with \eqref{eq:ineq2} (note that $V \subset \pa\psi(S) \subset \O $ since $\psi$ satisfies \eqref{eq:O}) and Assumption \ref{ass:1}, we get
$$
c \lambda \frac{t^n}{|S|} \leq \lambda |V| \leq \nu(V) \leq \mu(\pa\psi^*(V)) = \mu(\Sigma) \leq \Lambda |\Sigma|.
$$
The volume inequality of Proposition \ref{prop:sec} (i)  give $\frac{t^n}{|S|}\geq c|S|$, hence
$$ |\Sigma| \geq c |S|$$
for some constant $c$ depending on $n$, $\lambda$ and $\Lambda$.
\end{proof}

\subsection{A reversed Chebychev's inequality}\label{sec:cheb}
The heart of the proof of Theorem \ref{thm:1} is the following reversed Chebychev's inequality
(which is equivalent to (3.15) in \cite{DF}):
\begin{theorem}\label{thm:cheby}
Let $\psi:\R^n\to \R$ be a convex function satisfying \eqref{eq:O},
\eqref{eq:ineq1}, \eqref{eq:ineq2}, with $\mu$ and $\nu$ satisfying  \eqref{eq:munu1} and \eqref{eq:munu2} with $\delta=0$. Assume moreover that $\Delta\psi\in L^1_{loc}$. 
Let $\Omega' \subset\subset \Omega$ be such that there exists $\rho>0$  such that \eqref{eq:rho1} holds.
Then there exists $\alpha_0$, $c$ and $C$  depending on 
$\rho$, $n$, $\lambda$, $\Lambda$, $\Omega$ and $\pa\psi(\Omega)$
such that
\begin{equation}\label{eq:cheby}
\int_{\{x\in \Omega'\, |\, \Delta\psi(x)\geq \alpha \}} \Delta\psi(x)\, dx \leq C \alpha |  \{x\in \Omega\,|\,   \Delta\psi(x) \geq c\alpha\}| \qquad\mbox{ for all $\alpha>\alpha_0$}.
\end{equation}
\end{theorem}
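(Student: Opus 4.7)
The plan is to derive \eqref{eq:cheby} by combining a stopping-time selection of sections with Corollary \ref{cor:vitali} and Proposition \ref{prop1}. Pick a dyadic base $B>1$ large enough that $B\geq\max(C_*^{\beta},C_1)$, where $C_*$ is the Vitali constant of Corollary \ref{cor:vitali}, $C_1$ is from Proposition \ref{prop1}, and $\beta$ is the dilation exponent in \eqref{eq:secd}; this ensures $C_*\,S(x,p,t)\subset S(x,p,Bt)$ for every section (the constraint $t\geq C_0\delta$ in \eqref{eq:secd} is vacuous since $\delta=0$). For each Lebesgue point $x\in\Omega'$ of $\Delta\psi$ with $\Delta\psi(x)\geq\alpha$, fix $p_x\in\pa\psi(x)$ satisfying \eqref{eq:rho1} and let $k(x)$ be the smallest integer $k\geq 1$ such that
\[
\fint_{S(x,p_x,B^{-k}\rho)}\Delta\psi\,dy\;\geq\;c_1\alpha,
\]
with $c_1\in(0,1)$ a small universal constant. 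By Proposition \ref{prop:sec}-(i) the volume $|S(x,p_x,\rho)|$ is bounded below by a constant depending only on $\rho$ and geometric data, so the average at scale $\rho$ is bounded by $C\|\Delta\psi\|_{L^1(\Omega)}$ independently of $\alpha$; taking $\alpha_0$ large therefore forces $k(x)\geq 1$. Finiteness of $k(x)$ at almost every such $x$ follows from a standard covering-based differentiation argument along sections, supplied by Corollary \ref{cor:vitali}. Set $S_x:=S(x,p_x,B^{-k(x)}\rho)$ and $S_x^{\dagger}:=S(x,p_x,B^{-k(x)+1}\rho)$; then $C_*\,S_x\subset S_x^{\dagger}$ and $S(x,p_x,C_1 B^{-k(x)}\rho)\subset S_x^{\dagger}\subset S(x,p_x,\rho)\subset\Omega$, so Proposition \ref{prop1} applies to $S_x$, and by construction
\[
\fint_{S_x}\Delta\psi\,\geq\,c_1\alpha,\qquad \fint_{S_x^{\dagger}}\Delta\psi\,<\,c_1\alpha.
\]

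From Corollary \ref{cor:vitali} I extract a countable disjoint subfamily $\{S_{x_j}\}$ with $\bigcup_x S_x\subset\bigcup_j C_*\,S_{x_j}\subset\bigcup_j S_{x_j}^{\dagger}$. Since a.e. point of $\{\Delta\psi\geq\alpha\}\cap\Omega'$ lies in its chosen section, and $|S_{x_j}^{\dagger}|\leq C|S_{x_j}|$ by Proposition \ref{prop:sec}-(i),
\[
\int_{\{\Delta\psi\geq\alpha\}\cap\Omega'}\Delta\psi\,dx\;\leq\;\sum_j\int_{S_{x_j}^{\dagger}}\Delta\psi\,dx\;\leq\;c_1\alpha\sum_j|S_{x_j}^{\dagger}|\;\leq\;C\alpha\sum_j|S_{x_j}|.
\]
On the other hand, Proposition \ref{prop1} applied to each $S_{x_j}$ produces a set $\Sigma_j\subset S_{x_j}$ with $|\Sigma_j|\geq c_0|S_{x_j}|$ and $\Delta\psi\geq c_0 c_1\alpha$ on $\Sigma_j$ (for a universal $c_0$). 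The $\Sigma_j$ inherit pairwise disjointness from the $S_{x_j}$ and sit in $\{\Delta\psi\geq c_0 c_1\alpha\}\cap\Omega$, so $\sum_j|S_{x_j}|\leq c_0^{-1}|\{\Delta\psi\geq c_0 c_1\alpha\}\cap\Omega|$. Chaining the two inequalities yields \eqref{eq:cheby} with $c=c_0 c_1$.

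The main obstacle is the simultaneous calibration in the section-selection step: the dyadic base $B$ must absorb both the Vitali enlargement $C_*$ (via \eqref{eq:secd}) and the Proposition \ref{prop1} dilation $C_1$, while the stopping height $B^{-k(x)}\rho$ must be small enough that the dilated section $S(x,p_x,C_1 B^{-k(x)}\rho)$ still lies in $\Omega$ yet large enough that the average on $S_x$ saturates $c_1\alpha$. The volume estimate $|S(x,p,t)|\sim t^{n/2}$ from Proposition \ref{prop:sec}-(i) reconciles these constraints, since $\fint_{S_x}\Delta\psi\geq c_1\alpha$ forces $B^{-k(x)}\rho\lesssim\alpha^{-2/n}$, so the selected sections shrink automatically as $\alpha$ grows, and the containment in $\Omega$ holds once $\alpha_0$ is chosen large enough depending on $\|\Delta\psi\|_{L^1(\Omega)}$, $\rho$, $\Omega$, $\O$, $n$, $\lambda$ and $\Lambda$. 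A secondary subtlety is the appeal to differentiation along sections (which are much more elongated than Euclidean balls), but this is a standard consequence of the Vitali property in Corollary \ref{cor:vitali}.
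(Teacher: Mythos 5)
Your argument is correct and follows essentially the same route as the paper's proof: a stopping-time selection of sections on which the average of $\Delta\psi$ is comparable to $\alpha$, Vitali's covering lemma for sections (Corollary \ref{cor:vitali}), and Proposition \ref{prop1} applied to the disjoint selected sections to produce disjoint subsets $\Sigma_j$ of proportional measure on which $\Delta\psi\geq c\alpha$. The only difference is cosmetic: you discretize the stopping height dyadically (which conveniently sidesteps the continuity of $t\mapsto\fint_{S(x,t)}\Delta\psi\,dy$ that the paper's continuous supremum $t(x)$ implicitly relies on), and both write-ups lean equally on the differentiation theorem along sections at the start.
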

Let us first show that this theorem implies Theorem \ref{thm:1} (the proof is similar to the proof in \cite{DF}. We recall it here for the sake of completeness). 
\begin{proof}[Proof of Theorem \ref{thm:1}]
For $\bar c\geq \max\{2,\alpha_0\}$ (so that $\log(2+\gamma) \leq 2\log \gamma$ for $\gamma\geq \bar c $), we can write
\begin{align*}
\int_{\Omega'} \Delta\psi(x)\log(2+\Delta\psi(x))\, dx
& \leq \bar c \log(2+\bar c) |\{x\in \Omega'\,|\, \Delta\psi(x)\leq \bar c\}|\\
& \qquad\qquad 
+ 2 \int_{\{x\in \Omega'\,|\, \Delta\psi(x)\geq \bar c\}} \Delta\psi(x)\log(\Delta\psi(x))\, dx\\
& \leq \bar c \log(2+\bar c) |\Omega| 
+ 2 \int_{\{x\in \Omega'\,|\, \Delta\psi(x)\geq \bar c\}} \Delta\psi(x)\int_1^{\Delta\psi(x)} \frac 1 \gamma\, d\gamma \, dx\\
& \leq \bar c \log(2+\bar c) |\Omega| + 2 \log \bar c \int_{  \Omega' } \Delta\psi(x)\, dx\\
&\qquad + 2 \int_{\{x\in \Omega'\,|\, \Delta\psi(x)\geq \bar c\}} \Delta\psi(x)\int_{\bar c}^{\Delta\psi(x)} \frac 1 \gamma\, d\gamma \, dx
\end{align*}
Now, by Fubini's theorem, the last term can be written as
$$
\int_{\{x\in \Omega'\,|\, \Delta\psi(x)\geq \bar c\}} \Delta\psi(x)\int_{\bar c}^{\Delta\psi(x)} \frac 1 \gamma\, d\gamma \, dx
=
\int_{\bar c}^\infty \frac{1}{\gamma} \int_{\{x\in \Omega'\,|\, \Delta\psi(x)\geq \gamma \}\}}\Delta\psi(x)\, dx\, d\gamma$$
and we can use Theorem \ref{thm:cheby}: Since $\gamma>\bar c >\alpha_0$, we get
\begin{align*}
\int_{\bar c}^\infty \frac{1}{\gamma} \int_{\{x\in \Omega'\,|\, \Delta\psi(x)\geq \gamma \}\}}\Delta\psi(x)\, dx\, d\gamma
& \leq C \int_{\bar c}^\infty  |\{x\in \Omega\,|\, \Delta\psi(x)\geq c  \gamma\}|\, d\gamma\\
& \leq \frac{C}{c} \int_{c \bar c}^\infty  |\{x\in \Omega\,|\, \Delta\psi(x)\geq  \gamma\}|\, d\gamma\\
& \leq \frac{C}{c} \int_\Omega  \Delta\psi(x)\, dx\\
\end{align*}
Putting things together, we proved that for some constant $C$ we have
$$
\int_{\Omega'} \Delta\psi(x)\log(2+\Delta\psi(x))\, dx
\leq C + C \int_{  \Omega} \Delta\psi(x)\, dx$$
and we conclude using the fact that 
$$ \int_{  \Omega} \Delta\psi(x)\, dx \leq  \int_{  \Omega} \Delta \psi \, dx = \int_{\pa\Omega} \na \psi \cdot n \, d \mathcal H^{n-1}(x)\leq \mathcal H^{n-1}(\pa\Omega ) L(\pa\psi(\Omega)).$$
\end{proof}

The proof of Theorem~\ref{thm:cheby} is where we explicitly use the $L^1$ regularity of $\Delta\psi$. More precisely we use the fact that the map $t\mapsto \fint_{S(x,t)} \Delta\psi(x)\,dx$ is continuous and satisfies
\[
\lim_{t\to0}  \fint_{S(x,t)} \Delta\psi(x)\,dx=\Delta\psi(x)\ a.e.
\]
As remarked in the introduction, it should be possible to remove this assumption by correctly decomposing $\Delta\psi$ into a regular and a singular part in the argument. Since this more general case is thoroughly covered by our theorem in the case $\delta>0$, we stay at the more elementary level where $\Delta\psi$ is absolutely continuous w.r.t. the Lebesgue measure.

\begin{proof}[Proof of Theorem \ref{thm:cheby}]
We fix some $\eps>0$ (to be chosen later) and take $\alpha_0$ such that
$$ \fint_{S(x,t)} \Delta\psi(x)\,dx \leq \alpha_0 \qquad \forall x\in \Omega', \; \forall t\in[\eps\rho,\rho].$$
The existence of such an $\alpha_0$ follows from the fact that for $ t\in[\eps\rho,\rho]$ we have
$$\fint_{S(x,t)} \Delta\psi(x)\,dx \leq \frac{1} {|S(x,t)|} \int_{\Omega} \Delta\psi(x)\,dx\leq 
\frac{1}{|S(x,\eps\rho)|} \int_{\Omega} \Delta \psi \,dx 
\leq \frac{\mathcal H^{n-1}(\pa\Omega)L(\pa\psi(\Omega)) }{|S(x,\eps\rho)|}  ,$$ 
where $|S(x,\eps\rho)| \geq c (\eps\rho)^{n/2}$ (see Proposition \ref{prop:sec} (i)).

We now take $\alpha>\alpha_0$ and denote
\[
E_\alpha = \{x\in \Omega'\, |\, \Delta\psi(x)>\alpha\}.
\]
For a.e. $x\in E_\alpha$ we have $ \fint_{S(x,t)} \Delta \psi(x) \,dx< \alpha$ for all $t\in[\eps\rho,\rho]$ (by the choice of $\alpha$) and 
$$\lim_{t\to0}  \fint_{S(x,t)} \Delta\psi(x)\,dx=\Delta\psi(x)> \alpha$$
by the Lebesgue differentiation theorem.

We can thus define
$$ t(x) = \sup\left\{ t\leq \rho\, ;\,  \fint_{S(x,t)} \Delta\psi(x)\,dx = \alpha\right\}$$
which satisfies in particular $t(x)\leq \eps \rho$ and 
\begin{equation}\label{eq:Stt}
  \fint_{S(x,t)} \Delta\psi(x)\,dx \leq \alpha \quad\mbox{ for all } t(x) \leq t \leq \rho, \qquad  \fint_{S(x,t(x))} \Delta\psi(x)\,dx = \alpha.
\end{equation}
Since $E_\alpha \subset \bigcup_{x\in E_\alpha} S(x,t(x))$, 
we can use Vitali's covering lemma (see Corollary \ref{cor:vitali}) to extract a countable family $\{x_k\}_{k\in \mathbb N}$ such that the sections $S_k = S(x_k,t(x_k))$ are pairwise disjoint and  $E_\alpha \subset \bigcup_{k}  C_{*} S_k$ with a  $C_*$ depending only on $n$, $\lambda$ and $\Lambda$.
 
Using \eqref{eq:secd}, we get
$ C_{*} S_k \subset S(x_k,C_{**}   t_k) $ with $C_{**} = C_*^\beta$, 
provided $S(x_k,C_{**}   t_k) \subset \Omega$, which holds if $C_{**}   t _k\leq \rho$ (since $x_k\in \Omega'$).
Since we know that $t _k\leq \eps\rho$, this last condition can be enforced by 
%$C_{**}   t _k\leq C_{**}  \eps\rho $ and so we now 
choosing $\eps: = \frac1 { C_{**} } $.

Note that we then also have $t_k\leq C_{**}  t_k\leq \rho$.
The first inequality in \eqref{eq:Stt} thus implies
$$  \int_{C_{*} S_k} \Delta\psi(x)\,dx  \leq \int_{S(x_k,C_{**}  t_k)} \Delta\psi(x)\,dx \leq \alpha | S(x_k,C_{**}   t_k)|.$$
Proposition \ref{prop:sec} (i) gives $| S(x_k,C_{**}   t_k)|\leq C  t_k^\frac{n}{2} \leq C |S(x_k,t_k)|$ hence
$$
 \int_{C_{*} S_k} \Delta\psi(x)\,dx\leq C\alpha |S_k|.
 $$
We are ready to conclude: First, we write (since $E_\alpha \subset \bigcup_{k}  C_{*} S_k$)
\begin{align*}
\int_{E_\alpha} \Delta\psi(x)\, dx
& \leq \sum_k \int_{  C_{*} S_k} \Delta\psi(x)\, dx \leq C\alpha  \sum_k | S_k| .
\end{align*}
Then, we apply Proposition \ref{prop1}: We note that $S(x_k,C_1 t_k) \subset S(x_k,C_1 \eps \rho) \subset \Omega$ (up to taking a smaller $\eps$ so that $\eps \leq 1/C_1$) and $ \fint_{S_k} \Delta\psi(x)\, dx = \alpha$ (see \eqref{eq:Stt}). Proposition \ref{prop1} thus provides the existence of a subset $\Sigma_k\subset S_k$ with $|\Sigma_k|\geq c|S_k|$ and  $\Delta \psi\geq c\alpha$ a.e. in $\Sigma_k$. 
We can then write:
\begin{align*}
\int_{E_\alpha} \Delta\psi(x)\, dx
& \leq C\alpha  \sum_k | \Sigma_k|  \\
& \leq C\alpha  |\cup_k  \Sigma_k|  \\
& \leq C\alpha|  \{ x\in \Omega \,|\, \Delta \psi(x)\geq c\alpha\}|
\end{align*}
where the second  inequality follows from the fact that the $S_k$, and therefore the $\Sigma_k$, are disjoint.
\end{proof}
%%%%%%%%%%%%%%%%%%%%%%%%%%%%%%%%%%%%%%%%%%%%%%%%%%%%%%%%%%%%%%
%%%%%%%%%%%%%%%%%%%%%%%%%%%%%%%%%%%%%%%%%%%%%%%%%%%%%%%%%%%%%%
\section{The case $\delta >0$: Proof of Theorem \ref{thm:2}}\label{sec:delta}
%%%%%%%%%%%%%%%%%%%%%%%%%%%%%%%%%%%%%%%%%%%%%%%%%%%%%%%%%%%%%%
%%%%%%%%%%%%%%%%%%%%%%%%%%%%%%%%%%%%%%%%%%%%%%%%%%%%%%%%%%%%%%
We now turn to the proof of our main result when $\delta>0$. 
We recall that we fix a kernel  $K:\R^n \to [0,\infty]$ radially symmetric, supported in the ball $B_1(0)$ and such that $\int_{B_1} |x|^2 K(x) \, dx>0$ and we define for any $r>0$
$$ 
\Delta_r\psi (x):= \frac{m_0}{r^2}   \int K_r(y) [\psi(x+y) -\psi(x)]\, dy, \qquad K_r(x) = \frac 1 {r^n} K\left(\frac x r\right)
$$
with $m_0=\frac{2}{ \int_{\R^n} y_1^2 K(y) \, dy}$.

We note that
\begin{enumerate}
\item For any $p\in \pa\psi(x)$, the radial symmetry of $K$ allows us to write
\begin{equation}\label{eq:Dp1}
\Delta_r\psi (x)= \frac{m_0}{r^2}   \int K_r(y) [\psi(x+y) -\psi(x) - p\cdot y]\, dy  
\end{equation}
This formula makes the definition of $\Delta_r$ useful when working with sections of the function $\psi$. It also implies that 
$\Delta_r\psi (x)\geq 0$ for all $x\in \R^n$ for any convex function. 
\item If $p(x)$ is an homogeneous polynomial of degree $2$, that is $p(x) = x^* A x$ for some symmetric matrix $A$, then we have (using the radial symmetry of $K$ again):
\begin{equation}\label{eq:Dp2}
\Delta_r p(x) =   \frac{m_0}{r^2}  \sum_{i,j} \int K_r(y) A_{ij} y_i y_j \, dy
=  \frac{m_0}{r^2}  \sum_{i}  A_{ii}  \int K_r(y) y_i^2  \, dy = 2 \mathrm{Tr} A = \Delta p(x).
\end{equation}
The choice of the normalization constant $m_0$ is naturally critical here.
\item Together with Taylor's formula, \eqref{eq:Dp2} also implies that if $\psi$ is  $C^2$ then
$$ 
\Delta_r\psi (x) = \frac{m_0}{r^2}  \sum_{i,j} \int K_r(y) \frac 1 2 \pa_{ij}\psi(x) y_i y_j \, dy + o(1)
= \Delta \psi(x) + o(1).
$$
\end{enumerate}
%%%%%%%%%%%%%%%%%%%%%%%%%%%%%%%%%%%%%%%%%%%% 
\subsection{Preliminary results}
%%%%%%%%%%%%%%%%%%%%%%%%%%%%%%%%%%%%%%%%%%%%
Our first task it to generalize Lemmas \ref{lem:Asup} and \ref{lem:Asub} 
using $\Delta_r$ instead of $\Delta$.
%In this section, we establish the two key Lemma which are comparable to
%Lemma \ref{lem:Asup} and \ref{lem:Asub} in the previous section.
The generalization of Lemma \ref{lem:Asup} is relatively straightforward (see Lemma \ref{lem:supd} below), 
and the main issue with  Lemma \ref{lem:Asub} will be to fatten the set $\Sigma$ so that 
it is big enough to allow us to use the estimates \eqref{eq:munu1}-\eqref{eq:munu2} when $\delta>0$ (see Propositions \ref{prop:Vh} and   \ref{prop2}).

\medskip

Given a convex function $\psi:\R^n \to \R$, we fix
 $x_0\in \R^n$ and $p_0\in \pa\psi(x_0)$.
Since we can always replace $\psi$ with $\psi(x)-\psi(x_0) - p\cdot(x-x_0)$, we can assume, without loss of generality, that $\psi(x_0)=0$ and $p_0=0$. 
%We then denote $S= S(x_0,0,t)$ (in particular, we have $\psi\geq 0$ in $\R^n$ and $\psi \leq t$ in $S$)
We recall (see Lemma \ref{lem:J}) that up to a translation, we can also assume that there is a positive definite symmetric matrix $A$ such that
\begin{equation}\label{eq:SAS1}
\{ x\in \R^n\, ;\, | x| _A \leq 1\}\subset  S(x_0,0,t) \subset \{ x\in \R^n\, ;\, | x| _A \leq n\}  .
\end{equation}

We start with the following generalization of Lemma \ref{lem:Asup}:
\begin{lemma}\label{lem:supd}
Let $\psi:\R^n \to \R$ be a convex function and $A$ be a positive definite symmetric matrix $A$ such that
the section $S= S(x_0,0,t)$ satisfies \eqref{eq:SAS1}.
There exists a constant $C$ depending only on $n$ such that \begin{equation}\label{eq:SDr}
  \fint_ S  \Delta_r  \psi (x) \, dx\leq C \mathrm{Tr} (A) \sup_{\{|x|_A\leq 2 n \}+B_r} \psi.
\end{equation}
In particular, if $r \leq \ell(S)$, then
$$
  \fint_ S  \Delta_r  \psi (x) \, dx\leq C \mathrm{Tr} (A) \sup_{ \{|x|_A\leq 3n \}} \psi.
$$
\end{lemma}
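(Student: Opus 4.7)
The plan is to follow the scheme of Lemma~\ref{lem:Asup}, but with the classical integration by parts replaced by the symmetry identity
\[
\int \phi\, \Delta_r \psi\, dx \;=\; \int \psi\, \Delta_r \phi\, dx,
\]
valid for compactly supported $\phi$ and locally bounded $\psi$; this is an immediate consequence of the change of variables $x\mapsto x+y$ combined with the radial symmetry $K_r(-y) = K_r(y)$. I would split the proof into two regimes according to whether $r\le \ell(S)$ (the substantial case) or $r\ge \ell(S)$ (a trivial pointwise estimate).

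For $r\le \ell(S)$, I take the quadratic $p(x) = 2 - \frac{x^{*}Ax}{n^{2}}$ of the original proof and apply the identity with $\phi = p_{+}$. Since $p_{+}\ge \chi_S$ and $\Delta_r \psi\ge 0$, this bounds $\int_S \Delta_r\psi$. Setting $\overline\psi := \psi - \sup_{U+B_r}\psi$ with $U := \{p > 0\} = \{|x|_A < \sqrt 2\, n\}$, the identity $\int \Delta_r p_+ = 0$ lets us replace $\psi$ by $\overline \psi$, which is $\le 0$ on $U+B_r$ (the set in which $\Delta_r p_+$ is supported). The new ingredient is the pointwise decomposition $p_{+} = p + \epsilon$ with $\epsilon := (-p)_{+}$ \emph{convex} (as the positive part of the convex function $-p$), which gives
\[
\Delta_r p_{+} \;=\; \Delta p + \Delta_r \epsilon \;=\; -\tfrac{2}{n^{2}}\Tr(A) + \Delta_r \epsilon,
\]
with $\Delta_r \epsilon \ge 0$ by convexity. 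The $\Delta_r\epsilon$ contribution to $\int \overline\psi\,\Delta_r p_+$ then has favorable sign (nonpositive, since $\overline\psi\le 0$), while the constant part is bounded above by $\frac{2}{n^{2}}\Tr(A)\,|U+B_r|\sup_{U+B_r}\psi$ using $\psi\ge 0$. The constraint $r\le \ell(S)$ forces $r\sqrt{\lambda_1}\le n$, so $U+B_r\subset \{|x|_A \le 3n\}$ and hence $|U+B_r|\le C_n|S|$; dividing by $|S|$ and using the inclusion $U\subset \{|x|_A\le 2n\}$ yields the main estimate in this regime.

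For $r\ge \ell(S)$, the nonlocal structure is not needed: the crude pointwise bound $\Delta_r\psi(x) \le \frac{m_0}{r^2}\sup_{B_r(x)}\psi$ (immediate from $\psi\ge 0$) together with $B_r(x) \subset \{|x|_A\le 2n\}+B_r$ for $x\in S$ handles this case once one absorbs $\tfrac{1}{r^2}\le \tfrac{1}{\ell(S)^2}\le \lambda_1 \le \Tr(A)$. The ``in particular'' statement is then immediate from the main bound: when $r\le \ell(S)$ one likewise has $\{|x|_A\le 2n\}+B_r \subset \{|x|_A\le 3n\}$. The main obstacle, and the only novelty compared to Lemma~\ref{lem:Asup}, is that $p_+$ is only Lipschitz; the boundary contributions that vanished in the classical integration by parts are now encoded in the nonlocal defect $\Delta_r p_+ - \Delta p = \Delta_r\epsilon$, and it is the convexity of $\epsilon$ that makes this defect harmless.
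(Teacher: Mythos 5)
Your argument is correct, and it reorganizes the nonlocal integration by parts in a genuinely different way from the paper. The paper works directly with $p$ on $\{p>0\}$: it expands $\int_{\{p>0\}}p\,\Delta_r\overline\psi$ into $\int_{\{p>0\}}\overline\psi\,\Delta_r p$ plus two explicit cross terms supported on $(\{p>0\}+B_r)\setminus\{p>0\}$, and discards those cross terms by a sign inspection ($p>0$ inside, $p<0$ outside, $\overline\psi\le 0$). You instead test against the globally defined $p_+$, use the exact adjointness $\int\phi\,\Delta_r\psi=\int\psi\,\Delta_r\phi$, and encode the discarded boundary contribution in the single term $\Delta_r\epsilon\ge 0$ with $\epsilon=(-p)_+$ convex; this is cleaner and makes the sign structure transparent. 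The price of your bookkeeping is that the constant $\Delta_r p=-\tfrac{2}{n^2}\Tr(A)$ is then integrated over all of $\{p>0\}+B_r$ rather than over $\{p>0\}$, which is what forces your case split and, in the regime $r>\ell(S)$, produces a constant proportional to $m_0$ — i.e.\ depending on the kernel $K$ and not only on $n$ as the statement asserts (the paper's single argument gives $C=C(n)$ uniformly in $r$ precisely because its volume factor is $|\{p>0\}|$).

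Both the case split and the $m_0$-dependence can be removed from your proof with one extra observation: on $(\{p>0\}+B_r)\setminus\overline{\{p>0\}}$ one has $p_+=0$, hence
\begin{equation*}
\Delta_r p_+(x)=\frac{m_0}{r^2}\int K_r(y)\,p_+(x+y)\,dy\ \ge\ 0,
\end{equation*}
so $\overline\psi\,\Delta_r p_+\le 0$ there and you may restrict the integral to $\overline{\{p>0\}}$ before splitting $\Delta_r p_+=\Delta_r p+\Delta_r\epsilon$. The constant term is then integrated over $\overline{\{p>0\}}\subset\{|x|_A\le\sqrt2\,n\}$, whose volume is $\le C_n|S|$ with no restriction on $r$, and you recover \eqref{eq:SDr} with $C=C(n)$ for all $r$ exactly as in the paper, the "in particular" statement following as you indicate.
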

\begin{proof}
We introduce the polynomial $p(x)=2-\frac{x^*Ax}{n^2}$ which satisfies (see \eqref{eq:SAS1})
$$\{ p>0\} = \{ x\in \R^n\, ;\, x^*Ax < 2n^2\}, $$
and  $p\geq 1$ in $S$.

We denote $U = \{ p>0\} +B_r $ and 
$\overline \psi := \psi -\sup_{U} \psi$. Since $\Delta_r \psi  = \Delta_r  \overline \psi$, 
a  simple change of variable gives
\begin{align*}
 \int_{S} \Delta_r \psi  \, dx  \leq \int_{\{p>0\}} p(x) \Delta_r  \overline \psi (x)\, dx 
& =\int_{\{p>0\}} \overline \psi (x)  \Delta_r p(x) \, dx  \\
& \quad + \frac{m_0}{r^2} \int_{  \{p>0\}^c } \int_{ \{p>0\} } K_r(x-y) p(y) \overline \psi(x) \, dy\, dx\\
& \quad - \frac{m_0}{r^2}  \int_{ \{p>0\} } \int_{ \{p>0\} ^c} K_r(x-y) p(y) \overline \psi(x) \, dy\, dx.
\end{align*}
Since $\mathrm{Supp} \, K_r \subset B_r$, we can write the last two terms as
$$
 \frac{m_0}{r^2} \int_{U\setminus  \{p>0\} } \int_{ \{p>0\} } K_r(x-y) p(y) \overline \psi(x) \, dy\, dx - \frac{m_0}{r^2}  \int_{ \{p>0\} } \int_{U\setminus  \{p>0\} } K_r(x-y) p(y) \overline \psi(x) \, dy\, dx
$$
 which is negative since $ \overline \psi(x)\leq 0$ in $U$, $p(y)>0$ in $ \{p>0\}$ and $p(y)<0$ in $U\setminus  \{p>0\} $.

 Using \eqref{eq:Dp2}, we deduce
\begin{align*}
 \int_{S} \Delta_r \psi  \, dx  & \leq
 \int_{\{p>0\}} \overline \psi (x)  \Delta_r p(x) \, dx\\
&= - \frac{2}{n^2} \mathrm {Tr} (A) \int_{\{p>0\}} \overline \psi (x)\, dx \\
&\leq  - \frac{2}{n^2}  \mathrm {Tr} (A)|\{p>0\}| \inf _{{\{p>0\}} } \overline \psi   .
\end{align*}
Since $ \inf _{{\{p>0\}} } \overline \psi   =  \inf _{{\{p>0\}} } \psi-\sup_{U} \psi \geq -\sup_{U}\psi$ (recall that $\psi\geq 0$ in $\R^n$) and $|\{ p>0\}| \leq C | S|$, we deduce
$$ \int_{S} \Delta_r \psi  \, dx  \leq C \mathrm {Tr} (A) |S| \sup_{U}\psi$$
for some constant $C$ depending only on $n$. The result follows.
\end{proof}
On the other hand, Lemma \ref{lem:Asub} would easily imply
\begin{lemma}\label{lem:Asubd}
Let $\psi:\R^n \to \R$ be a convex function and $A$ be a positive definite symmetric matrix $A$ such that
the section $S= S(x_0,0,t)$ satisfies \eqref{eq:SAS1}.
There exists 
a constant $c$ depending only on the dimension, a subset $V\subset\pa\psi(S) $ such that $ \pa\psi^*(V)\subset S$ and  
\begin{equation}\label{eq:papsij} |V| \geq c \frac{t^n}{|S|}
\end{equation}
and   for all $r \leq \ell(S)$ we have
\begin{equation}\label{eq:Drpsil}
\Delta_r \psi(y_0) \geq \frac t {4n^2}  \,\mathrm{Tr}(A)   \qquad  \forall y_0\in \Sigma:=  \pa\psi^*(V).
\end{equation}
\end{lemma}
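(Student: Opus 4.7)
The plan is to apply Lemma~\ref{lem:Asub} as a black box. It already produces the set $V \subset \pa\psi(S)$ and $\Sigma = \pa\psi^*(V)$ together with the bound $|V| \geq c\, t^n/|S|$ and, crucially, the pointwise quadratic lower bound \eqref{eq:psip},
\[
\psi(y) - \psi(y_0) \geq \frac{t}{8n^2}|y-y_0|_A^2 + q\cdot(y-y_0) \qquad \forall\, y \in \{|x|_A \leq 2n\},
\]
valid for every $q \in V$ and every $y_0 \in \pa\psi^*(q)$. The only genuinely new work is to turn this pointwise inequality into the claimed lower bound on $\Delta_r\psi(y_0)$.

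The first step is a simple geometric check: when $r \leq \ell(S)$, the Euclidean ball of radius $r$ around $y_0$ sits inside the enlarged ellipsoid $\{|x|_A \leq 2n\}$ where the inequality above is valid. Indeed $y_0 \in \Sigma \subset S \subset \{|x|_A \leq n\}$ by \eqref{eq:SAS1}, and the discussion following John's Lemma gives $\ell(S) \leq n/\sqrt{\lambda_{\max}(A)}$, so for $|z| \leq r \leq \ell(S)$ we get $|z|_A^2 \leq \lambda_{\max}(A)\,|z|^2 \leq n^2$ and hence $|y_0+z|_A \leq 2n$.

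The second step is a direct computation. Using the representation \eqref{eq:Dp1} at $y_0$ with a vector $q \in \pa\psi(y_0)$ coming from Lemma~\ref{lem:Asub} (i.e.\ $q \in V$ with $y_0 \in \pa\psi^*(q)$), and inserting the quadratic lower bound into the integrand,
\[
\Delta_r\psi(y_0) \geq \frac{m_0}{r^2}\int K_r(z)\,\frac{t}{8n^2}\,|z|_A^2\, dz.
\]
Radial symmetry of $K$ kills the off-diagonal terms of $z^* A z$, and rotational invariance forces $\int K_r(z)\,z_i^2\,dz$ to be independent of $i$. A change of variables combined with $m_0 = 2/\int K(y)\,y_1^2\,dy$ then yields $\int K_r(z)\,z_i^2\,dz = 2r^2/m_0$ for every $i$, hence $\int K_r(z)\,|z|_A^2\,dz = (2r^2/m_0)\,\mathrm{Tr}(A)$. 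Substituting gives $\Delta_r\psi(y_0) \geq \frac{t}{4n^2}\mathrm{Tr}(A)$, which is exactly \eqref{eq:Drpsil}.

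No serious obstacle is expected here. The entire purpose of the normalization constant $m_0$ in the definition of $\Delta_r$ is to force the identity $\Delta_r p = \Delta p$ on quadratic polynomials, as was already noted in \eqref{eq:Dp2}, and the computation above is just that identity applied to the explicit quadratic minorant of $\psi - q\cdot x$ near $y_0$. The only delicate bookkeeping point is the hypothesis $r \leq \ell(S)$: it is precisely what ensures that the support of $K_r$ lies inside the region where the quadratic lower bound holds.
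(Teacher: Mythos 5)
Your proposal is correct and follows essentially the same route as the paper: quote Lemma~\ref{lem:Asub} for $V$, $\Sigma$ and the quadratic minorant \eqref{eq:psip}, check that $r\leq\ell(S)$ forces $B_r(y_0)\subset\{|x|_A\leq 2n\}$, then integrate the minorant against $\frac{m_0}{r^2}K_r$ and use the normalization \eqref{eq:Dp2} to produce $\frac{t}{4n^2}\mathrm{Tr}(A)$. The only cosmetic difference is that the paper deduces the inclusion via $B_r(y_0)\subset 2S\subset\{|x|_A\leq 2n\}$ while you use the triangle inequality for $|\cdot|_A$ directly; both are fine.
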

\begin{proof}
Lemma \ref{lem:Asub} implies the existence of the set $V\subset \pa\psi(S) $ satisfying $ \pa\psi^*(V)\subset S$ and \eqref{eq:papsij}
and such that for all $y_0\in\Sigma$, we have 
\begin{equation}\label{eq:psilA}
 \psi(y) -\psi(y_0) \geq \frac t {8n^2} |y-y_0|_A^2+q\cdot (y-y_0) \qquad \forall  y \in  \{x\in\R^n\, ;\, |x|_A \leq 2n\}
\end{equation}
for some  $q\in \pa \psi(y_0)$.

If $r\leq \ell(S)$ then for any $y_0\in \Sigma \subset S$ we have $B_r(y_0)\subset 2S \subset \{|x|_A \leq 2n\}$. Since $K_r$ is supported in $B_r$, we can multiply \eqref{eq:psilA} by $\frac{m_0}{r^2}   K_r(y_0-y)$ and integrate with respect  to $y$ to get (using the radial symmetry of $K_r$ and  \eqref{eq:Dp2}):
% such that $B_r(y_0)\subset U$ we have
\begin{align*}
\Delta_r \psi(y_0)
&\geq \frac{m_0}{r^2}  \int 
K_r(y_0-y) \left[ \frac t {8n^2} |y-y_0|_A^2+q\cdot (y-y_0) \right]\, dy 
 \nonumber \\
& \geq  \frac t {8n^2} \frac{m_0}{r^2}  \int 
K_r(y) y^*Ay \, dy \nonumber \\
 & \geq \frac t {4n^2}  \,\mathrm{Tr}(A) .
\end{align*}
\end{proof}
This result  is however not enough for our purpose
because \eqref{eq:papsij} does not yield any control on $|\Sigma|$ (we  can have $|\Sigma|=0$, even if  $|V |>0$).
We thus need to "fatten" the set $\Sigma$ by showing that \eqref{eq:Drpsil} holds on a slightly larger set.
This will be done by considering the set
$$ \Sigma_h = \bigcup_{q_0\in V, y_0\in \pa\psi^*(q_0)} S(y_0,q_0,h)$$
for some appropriate $h$. 
\medskip

We emphasize that the choice of $\Sigma_h$ is delicate. There could {\em a priori} be many ways of enlarging $\Sigma$, for example by simply taking $\Sigma+B_h$. However we need several precise properties on $\Sigma_h$ (see Prop.~\ref{prop:Vh} later) that we are not able to prove on $\Sigma+B_h$. Roughly speaking, we need to preserve some equivalent of the duality that we have between $V$ and $\Sigma$. The definition above of $\Sigma_h$ will naturally lead to a set $V_h$ that properly enlarges $V$ and maintains the inclusion $\pa\psi^*(V_h)\subset \Sigma_h$.

\medskip

First, we need to prove  the following lemma which will allow us to extend the inequality  \eqref{eq:Drpsil} to the larger set $\Sigma_h$: 
\begin{lemma}\label{lem:DrS}
For all $q_0\in V$ and $y_0\in\pa\psi^*(q_0)$, if $x$ is such that 
$$B_r(x) \subset \{x\, ;\, |x|_A\leq 2n\} ,$$
then
$$
\Delta_r \psi(x)\geq  \frac{t}{4n^2} \mathrm{Tr}(A)  -   \frac{m_0}{r^2} (q-q_0)\cdot(x-y_0) \qquad \forall q\in\pa\psi(x).
$$
\end{lemma}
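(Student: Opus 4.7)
The plan is to combine the quadratic lower bound \eqref{eq:psip} from Lemma~\ref{lem:Asub} with the subdifferential upper bound for $\psi(x)$ coming from $q\in \partial\psi(x)$, then integrate against $K_r$ exploiting the radial symmetry of the kernel and the normalization of $m_0$.

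First I would use the identity \eqref{eq:Dp1} with $p=q$ to write
\[
\Delta_r\psi(x)=\frac{m_0}{r^2}\int K_r(z)\bigl[\psi(x+z)-\psi(x)-q\cdot z\bigr]\,dz.
\]
Since $B_r(x)\subset\{|\cdot|_A\leq 2n\}$, for every $z$ with $|z|\leq r$ the point $x+z$ lies in the region where \eqref{eq:psip} applies at $(y_0,q_0)$, giving
\[
\psi(x+z)\geq \psi(y_0)+q_0\cdot(x+z-y_0)+\frac{t}{8n^2}|x+z-y_0|_A^{2}.
\]
On the other hand $q\in\partial\psi(x)$ yields $\psi(y_0)\geq \psi(x)+q\cdot(y_0-x)$, i.e.\ $-\psi(x)\geq -\psi(y_0)+q\cdot(y_0-x)$. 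Adding these two inequalities and subtracting $q\cdot z$ produces
\[
\psi(x+z)-\psi(x)-q\cdot z \;\geq\; (q_0-q)\cdot(x-y_0)+(q_0-q)\cdot z+\frac{t}{8n^2}|x+z-y_0|_A^{2}.
\]

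Next I would integrate this pointwise bound against $K_r(z)$ and multiply by $m_0/r^2$. Because $K_r$ is radially symmetric, $\int K_r(z)\,z\,dz=0$, so the linear term $(q_0-q)\cdot z$ vanishes. For the quadratic term, expanding $|x+z-y_0|_A^2=|x-y_0|_A^2+2(x-y_0)^{*}Az+z^{*}Az$ and using radial symmetry again kills the cross term, while
\[
\int K_r(z)\,z^{*}Az\,dz=\sum_i A_{ii}\int K_r(z)z_i^{2}\,dz=\mathrm{Tr}(A)\cdot\frac{2r^{2}}{m_0}
\]
by the definition of $m_0$ (this is the same computation that underlies \eqref{eq:Dp2}). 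Collecting terms gives
\[
\Delta_r\psi(x)\;\geq\;\frac{t}{4n^2}\mathrm{Tr}(A)+\frac{m_0 t}{8n^2 r^2}|x-y_0|_A^{2}-\frac{m_0}{r^2}(q-q_0)\cdot(x-y_0),
\]
and discarding the non-negative middle term yields the claim.

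The argument is essentially a direct computation, so there is no serious obstacle; the only point requiring care is to keep track of the sign of $(q_0-q)\cdot(x-y_0)$ (note that one cannot absorb it into the quadratic term because $x$ is allowed to range outside $\Sigma$, so $q$ need not equal $q_0$) and to exploit the radial symmetry of $K$ at the right moments so that the normalization constant $m_0$ produces exactly the factor $\mathrm{Tr}(A)$ expected from \eqref{eq:Dp2}.
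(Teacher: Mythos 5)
Your proof is correct and follows essentially the same route as the paper: apply \eqref{eq:psip} at $x+z$, combine with the subgradient inequality $\psi(y_0)\geq\psi(x)+q\cdot(y_0-x)$, and integrate against $K_r$ using radial symmetry to kill the linear and cross terms and produce $\frac{t}{4n^2}\mathrm{Tr}(A)$ via the normalization of $m_0$. The only cosmetic difference is that you start from the identity \eqref{eq:Dp1} with $p=q$, whereas the paper works from the raw definition of $\Delta_r\psi$ and lets symmetry remove the $q_0\cdot y$ term; these are equivalent.
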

\begin{proof}
We fix $q_0\in V$ and $y_0\in\pa\psi^*(q_0)$ (we recall that $\pa\psi^*(q_0) = \{ y_0\}$ for $q_0\in V$).
If $x$ is such that $B_r(x) \subset \{x\, ;\, |x|_A\leq 2n\} $, then \eqref{eq:psip} applied to $x+y$ implies that
$$
 \psi(x+y) -\psi(y_0) \geq \frac t {8n^2} |x+y-y_0|_A^2+q_0\cdot (x+y-y_0) \qquad \forall  y \in  B_r(0).
$$
Given $q\in \pa \psi(x)$, we have $\psi(y_0) \geq \psi(x) + q\cdot(y_0-x)$ and so
\begin{align*}
 \psi(x+y) -\psi(x) 
 &\geq \frac t {8n^2} |x+y-y_0|_A^2+q_0\cdot (x+y-y_0) + \psi(y_0)-\psi(x)\\
 &\geq \frac t {8n^2} |x+y-y_0|_A^2+q_0\cdot (x+y-y_0) + q\cdot(y_0-x)\\
 &\geq \frac t {8n^2} |x+y-y_0|_A^2+q_0\cdot y  - (q-q_0)\cdot(x-y_0)
\qquad \forall  y \in  B_r(0).
\end{align*}
%and  \eqref{eq:sigmardef}  implies
% $$\psi(x+y) -\psi(x) 
% \geq \frac t {8n^2} |x+y-y_0|_A^2+q_0\cdot y  - \frac{1}{8n^2a} r^2 t \,\mathrm{Tr}(A). $$
We deduce
\begin{align*}
\Delta_r \psi(x) & = \frac {m_0}{r^2} \int K_r(y) [\psi(x+y)-\psi(x)]\, dy \\
%& \geq \frac {a}{r^2} \int K_r(y)\left[ \frac{t}{8n^2}|x_1+y-y_0|_A^2 + q\cdot (x_1+y-y_0) + \psi(y_0) - \psi(x_1)\right]\, dy \\
& \geq\frac{t}{8n^2}  \frac {m_0}{r^2} \int K_r(y)\left[ |x-y_0|_A^2+ |y|_A^2  \right]\, dy  -  \frac{m_0}{r^2} (q-q_0)\cdot(x-y_0)\\
& \geq  \frac{t}{4n^2} \mathrm{Tr}(A) + 
\frac {m_0}{r^2} \frac{t}{8n^2}|x-y_0|_A^2 -   \frac{m_0}{r^2} (q-q_0)\cdot(x-y_0),
\end{align*}
where we used the symmetry of $K_r$ and the normalization $\int K_r(y)\, dy=1$.
\end{proof}

\noindent{\bf The sets $V_h$ and $\Sigma_h$:}
For $h>0$ and $q_0\in V$, we introduce the set
\begin{align*} 
W_h(q_0) & := q_0 + \frac h 2 (S(y_0,q_0,h)-y_0)^\circ \\
& = \{ q\in \R^n\, ;\, (q-q_0)\cdot (x-y_0)\leq \frac h 2 , \; \forall x\in S(y_0,q_0,h)\} 
\end{align*}
where $\{y_0\}  =\pa \psi^*(q_0)$. 
We note that the polar body $(S(y_0,q_0,h)-y_0)^\circ$ of a convex set (see definition \eqref{def:polar}) is convex and contains $0$.
So $W_h(q_0)$ is expected to be a small convex set around $q_0$.
We then introduce the sets
$$ V_h = \bigcup_{q_0\in V} W_h(q_0)$$
and 
$$ \Sigma_h = \bigcup_{y_0\in \Sigma} S(y_0,q_0,h).$$

The properties of these sets are summarized in the proposition below.
\begin{proposition}\label{prop:Vh}
The following properties hold for some $k$ depending only on $n$
\item[(i)] $\pa \psi^*(W_h(q_0)) \subset S(y_0,q_0,h)$ and so $\pa\psi^*(V_h) \subset \Sigma_h$
\item[(ii)] $\ell (S(y_0,q_0,h)) \geq \frac{h}{L(\pa\psi(\Omega))}$ and  $ \ell (W_h(q_0)) \geq \frac h {4n L(\Omega)}$
\item[(iii)] If $t\geq C_0 \delta$, $h\leq Mt$ and $S(y_0,q_0,2M^kt) \subset \Omega$ for all $y_0\in \Sigma$, then 
$$ \Sigma_h \subset \theta M^k S(x_0,0,t)$$
\item[(iv)] For all $x \in S(y_0,q_0,h)$ and  $q\in \pa \psi(x) $ we have
$$ (x-y_0)\cdot (q-q_0)  \leq M^k h $$
where the constants $C_0$, $M$ and $\theta$ are the constant appearing in Proposition \ref{prop:sec}.
\end{proposition}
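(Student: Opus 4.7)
The plan is to treat the four claims in turn, using the notation $\phi(z) := \psi(z) - \psi(y_0) - q_0\cdot(z-y_0)$, which is convex and non-negative with $\phi(y_0)=0$, $S(y_0,q_0,h)=\{\phi\leq h\}$, and $q-q_0 \in \pa\phi(z)$ whenever $q\in \pa\psi(z)$. For (i), I would argue by contradiction: given $q\in W_h(q_0)$ and $x\in\pa\psi^*(q)$, suppose $\phi(x)>h$ and choose $\alpha\in(0,1)$ so that $x' := y_0+\alpha(x-y_0) \in \pa S(y_0,q_0,h)$. Convexity of $\phi$ together with $\phi(y_0)=0$ gives $h=\phi(x')\leq \alpha\phi(x)$; the subgradient inequality at $x$ gives $\phi(x)\leq (q-q_0)\cdot(x-y_0)$; and the defining inequality of $W_h(q_0)$ applied at $x'$ gives $\alpha(q-q_0)\cdot(x-y_0) = (q-q_0)\cdot(x'-y_0)\leq h/2$. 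These three bounds combine to the contradiction $h\leq h/2$. Taking the union over $q_0\in V$ yields the second statement $\pa\psi^*(V_h)\subset\Sigma_h$.

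For (ii), the bound on $\ell(S(y_0,q_0,h))$ is immediate from Proposition \ref{prop:lL} applied to $\phi$. For $W_h(q_0)$, I would use polar duality: since $y_0\in S(y_0,q_0,h)\subset\Omega$, every $z-y_0$ with $z\in S(y_0,q_0,h)$ has norm at most $\mathrm{diam}(\Omega)\leq 2L(\Omega)$, hence $(S(y_0,q_0,h)-y_0)^\circ \supset B_{1/(2L(\Omega))}(0)$. Translating by $q_0$ and rescaling by $h/2$ then gives $W_h(q_0)\supset B_{h/(4L(\Omega))}(q_0)$, from which the announced lower bound on $\ell(W_h(q_0))$ follows.

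For (iii), I would apply the engulfing property of Proposition \ref{prop:sec}(iii) twice. Since $y_0\in\Sigma\subset S(x_0,0,t)$, engulfing at height $t$ gives $x_0\in S(y_0,q_0,\theta t)$. After enlarging $M$ to $\max(M,\theta)$ if necessary so that $x_0\in S(y_0,q_0,Mt)$, a second application of engulfing at height $Mt$ — valid because $S(y_0,q_0,2M^k t)\subset\Omega$ by hypothesis — produces $S(y_0,q_0,Mt)\subset S(x_0,0,\theta Mt)$, and since $h\leq Mt$ we deduce $S(y_0,q_0,h)\subset S(x_0,0,\theta Mt)$. Iterating the second inclusion in Proposition \ref{prop:sec}(ii), which holds for every convex function, yields $S(x_0,0,M^k t)\subset M^k S(x_0,0,t)$, and choosing $k$ with $M^k\geq \theta M$ together with $\theta\geq 1$ closes the inclusion chain.

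For (iv), which will be the main obstacle, the plan is to reduce to a one-dimensional convexity estimate. Along the ray from $y_0$ through $x$, the function $f(s):=\psi(y_0+s(x-y_0))$ is convex with $a := q_0\cdot(x-y_0)\in \pa f(0)$ and $b := q\cdot(x-y_0)\in \pa f(1)$. Using $f(1)-f(0)-a\leq h$ together with the supporting line of $f$ at $s=1$, an elementary computation gives $(s-1)(b-a)\leq \phi(y_0+s(x-y_0))$ for every $s>1$. Specializing to $s=2$ and invoking Proposition \ref{prop:sec}(ii) with dilation centered at the interior point $y_0$ — as permitted by the footnote — yields $2x-y_0 \in 2S(y_0,q_0,h)\subset S(y_0,q_0,Mh)$ provided $h\geq C_0\delta$, hence $\phi(2x-y_0)\leq Mh$ and consequently $(q-q_0)\cdot(x-y_0)\leq Mh$. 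The remaining range $h<C_0\delta$ is handled by reducing to the enclosing section at scale $C_0\delta$ through monotonicity and absorbing the resulting universal overhead into the constant $M^k$. The hard part will be precisely this last reduction: keeping the exponent $k$ depending only on $n$ while maintaining a bound linear in $h$ across the full admissible range $h\leq Mt$ requires a careful alignment of the dilation and engulfing constants with the critical discrete scale $\delta$.
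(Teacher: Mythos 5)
Your proof is correct and follows the same skeleton as the paper's, but several sub-steps are carried out by different, more self-contained means. For (i) and (ii) the paper simply cites Proposition \ref{prop:secpolar} (the inclusion \eqref{eq:SoS1}) and Lemma \ref{lem:radK}; your contradiction argument for (i) re-derives the relevant half of \eqref{eq:SoS1} by hand, and your observation that $(S(y_0,q_0,h)-y_0)^\circ$ contains a ball of radius $1/(2L(\Omega))$ gives (ii) without John's lemma and with a slightly better constant. For (iii) the paper obtains $x_0\in S(y_0,q_0,M^kt)$ from the scaling property combined with Lemma \ref{lem:jh} and then engulfs once, whereas you engulf twice; this works, but note that your first application of engulfing, to the section $S(x_0,0,t)$, requires $S(x_0,0,2t)\subset\Omega$, which is not literally among the hypotheses of (iii) (it is supplied by the surrounding setup, and the paper's own proof makes a comparable implicit assumption), and "enlarging $M$" should instead be phrased as passing to a power $M^j\geq\theta$, since $M$ is fixed by Proposition \ref{prop:sec}. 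For (iv) your one-dimensional computation is exactly the content of Lemma \ref{lem:jh}, combined with $2S(y_0,q_0,h)\subset S(y_0,q_0,Mh)$, which is precisely the paper's argument; the case $h<C_0\delta$ that you flag as an unresolved difficulty is not treated in the paper either, and it does not need to be: in the only application of the proposition one takes $h=h_0$, and the conditions \eqref{eq:tC2} and $r\geq\delta^{1/2}$ force $h_0\geq C_0\delta$, so you may simply add $h\geq C_0\delta$ as a standing hypothesis rather than attempt that reduction.
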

Before proving this  proposition, we state the following lemma which follows from Proposition \ref{prop:secpolar}, but for which we also give a short proof below. 
\begin{lemma}\label{lem:jh}
Let $\psi$ be a convex function and $S=S(x_0,p_0,t)$ be a section of $\psi$.
If $x \in S$ is such that $2x-x_0\in S$ (which means that $x\in \frac 1 2 S$ where the contraction is done with respect to $x_0$) then
$$ (x-x_0)\cdot (p-p_0) \leq t \qquad \forall p\in \pa\psi(x)$$
and 
$$ x_0 \in S(x,p,t).$$
\end{lemma}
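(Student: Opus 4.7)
The plan is to reduce both assertions to algebraic combinations of three standard subdifferential/section inequalities: (a) since $p_0 \in \pa\psi(x_0)$, applying the subdifferential inequality at $y = x$ gives $\psi(x) \geq \psi(x_0) + p_0 \cdot (x-x_0)$; (b) since $p \in \pa\psi(x)$, applying the subdifferential inequality at $y = 2x - x_0$ gives $\psi(2x-x_0) \geq \psi(x) + p \cdot (x-x_0)$; and (c) the hypothesis $2x - x_0 \in S(x_0,p_0,t)$ gives $\psi(2x-x_0) \leq \psi(x_0) + 2 p_0 \cdot (x-x_0) + t$. Observe that the hypothesis $x \in S$ is not actually needed; the essential input is that the reflection of $x_0$ through $x$ lies in $S$.

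For the first inequality, I would combine (b) and (c) to obtain
\[
\psi(x) + p \cdot (x-x_0) \leq \psi(x_0) + 2 p_0 \cdot (x-x_0) + t,
\]
which rearranges into $\psi(x) - \psi(x_0) \leq (2p_0 - p) \cdot (x-x_0) + t$. Subtracting the lower bound $\psi(x) - \psi(x_0) \geq p_0 \cdot (x-x_0)$ from (a) then yields $(p - p_0) \cdot (x-x_0) \leq t$, as claimed.

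For the second inequality, the statement $x_0 \in S(x,p,t)$ unfolds to $\psi(x_0) - \psi(x) + p \cdot (x-x_0) \leq t$. Using (a) in the equivalent form $\psi(x_0) - \psi(x) \leq -p_0 \cdot (x-x_0)$, the left-hand side is bounded by $(p - p_0) \cdot (x-x_0)$, which is $\leq t$ by the first assertion just proven. No real obstacle is anticipated here; the content is essentially the duality between the section $S(x_0,p_0,t)$ and its polar body that already underlies the definition of $W_h(q_0)$ and Proposition \ref{prop:secpolar}.
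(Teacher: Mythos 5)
Your proof is correct and is essentially the paper's own argument: the paper simply normalizes $x_0=0$, $p_0=0$, $\psi(x_0)=0$ first, after which your inequalities (a), (b), (c) reduce to $\psi(x)\geq 0$, $\psi(2x)\geq \psi(x)+p\cdot x$, and $\psi(2x)\leq t$, combined in exactly the same way. Your side remark that the hypothesis $x\in S$ is not needed is also accurate (it is in any case automatic, since $x$ is the midpoint of $x_0$ and $2x-x_0$, both of which lie in the convex set $S$).
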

\begin{proof}
Without loss of generality, we can assume that $x_0=0$, $p_0=0$ and $\psi(x_0)=0$. We  are then assuming $x\in S$ and $2x\in S$, which gives $\psi(2x)\leq t$. The convexity of $\psi$ implies
$$t\geq  \psi(2x) \geq \psi(x) + p\cdot x \geq p\cdot x\qquad \forall p\in \pa\psi(x)$$
which is the first inequality in the lemma.
We can then write
$$\psi(0)\leq \psi(x) \leq \psi(x) -p\cdot x +t$$
which implies $0\in S(x,p,t)$ and concludes the proof.
\end{proof}

\begin{proof}[Proof of Proposition \ref{prop:Vh}]
The first property is an immediate consequence of the definition of $W_h(q_0)$ since 
Proposition \ref{prop:secpolar} (see \eqref{eq:SoS1}) implies
$$ \pa \psi^*(W_h(q_0)) \subset S(y_0,q_0,h).$$
Next, we note that Proposition \ref{prop:lL} gives $\ell (S(y_0,q_0,h)) \geq \frac{h}{L(\pa\psi(\Omega))}$ which is the first inequality in (ii) and  Lemma \ref{lem:radK} yields
$$ \ell (W_h(q_0)) = \frac h 2 \ell((S(y_0,q_0,h)-y_0)^\circ) \geq \frac h {4n L(S(y_0,q_0,h))} \geq \frac h {4n L(\Omega)}$$ 
which is the second inequality in (ii).

\medskip

Lemma \ref{lem:jh} together with the scaling and engulfing property of sections implies (iii). Recall that the section $S(x_0,0,t)$ is included in the ellipsoid $\{(x-x_0^*)^*\,A\,(x-x_0^*)\leq n^2\}$ for $x_0^*$ the center of mass. Since the ellipsoid $\{(x-x_0^*)^*\,A\,(x-x_0^*)\leq 1\}$ is included in the section, this implies that $2x_0^*-x_0\in n\,S(x_0,0,t)$.
Since $t\geq C_0\,\delta$ the scaling property of sections, Proposition \ref{prop:sec}-(ii), gives that  $S(x_0,0,t) \subset \frac{1}{3n} S(x_0,0,M^k t)$ for some $k$ depending only on $n$. Hence if $y_0\in S(x_0,0,t)$,
\[
2y_0-x_0=x_0^*+2(y_0-x_0^*)-(x_0-x_0^*)=x_0^*+3\,\left(\frac{2}{3}\,y_0 +\frac{1}{3}\,(2\,x_0^*-x_0)-x_0^*\right)\in 3\,n\,S(x_0,0,t).
\]
Hence we satisfy the assumption of Lemma \ref{lem:jh} for $S=S(x_0,0,M^k\,t)$ implying that $x_0 \in S(y_0,q_0,M^k\,t)$.
The engulfing property of sections, namely Proposition \ref{prop:sec}-(iii),  implies
$$ y_0\in S(y_0,q_0,M^k t) \subset S(x_0,0,\theta M^k t). $$
Since this holds for all $y_0\in \Sigma$ as $\Sigma\subset S(x_0,0,t)$, it follows that as long as $h\leq Mt$ we have 
$$\Sigma_h\subset S(x_0,0,\theta M^k t )\subset \theta\, M^k\, S(x_0,0,t ).$$

\medskip

Finally, if $x\in S(y_0,q_0,h)$ then the scaling property of sections again gives $x \in \frac{1}{3n}\, S(y_0,q_0,M^kh)$ and so 
Lemma \ref{lem:jh} implies (iv).
\end{proof}

\noindent{\bf The choice of $h$:} 
For any $x\in \Sigma_h$, we have $x\in S(y_0,q_0,t)$ for some $q_0\in V$, $y_0\in \Sigma$. So Lemma
\ref{lem:DrS} together with Proposition \ref{prop:Vh} implies that for some constant $C$
$$
\Delta_r \psi(x)\geq  \frac{t}{4n^2} \mathrm{Tr}(A)  -  C m_0 \frac{h}{r^2}\qquad \forall x\in \Sigma_h.
$$
We thus define
$$ h_0 :=  \frac{t }{8n^2 C m_0} r^2 \mathrm{Tr}(A), \qquad \widetilde \Sigma : = \Sigma_{h_0} $$
so that we have:
$$ \Delta_r \psi(x) \geq  \frac t {8n^2}  \,\mathrm{Tr}(A) \qquad \forall x\in \widetilde \Sigma$$
which is the most important property of $\widetilde \Sigma$.
In addition, unlike $\Sigma$, the set $\widetilde \Sigma$ is "large" enough for us to use Assumption \ref{ass:1} to control its size.
Indeed, 
we recall that
\begin{equation}\label{eq:SV}
 \widetilde  \Sigma = \bigcup_{y_0\in \Sigma} S(y_0,q_0,h_0),\quad \tilde V = \bigcup_{q_0\in V} W_{h_0}(q_0),
 \end{equation}
and since $\mathrm{Tr}(A) \geq \frac 1 {\ell(S)^2}$, we have
$ h_0 \geq   c  \frac{t}{\ell(S)^2} r^2  .$
In particular, Proposition \ref{prop:Vh} (ii) implies that if 
$$ r \geq \delta^{1/2} \quad\mbox{ and } \quad \frac{t}{\ell(S)^2} \geq C$$
then
\begin{equation}\label{eq:SW}
 \ell (S(y_0,q_0,h_0)) \geq \delta \quad \mbox{  and  } \quad  \ell (W_{h_0}(q_0)) \geq \delta.
 \end{equation}

We can now prove following proposition, which generalizes Proposition \ref{prop1} to the case $\delta>0$:
\begin{proposition}\label{prop2}
Let $\psi:\R^n\to \R$ be a convex function satisfying \eqref{eq:O}, 
\eqref{eq:ineq1}, \eqref{eq:ineq2}, with $\mu$ and $\nu$ satisfying \eqref{eq:munu1} and \eqref{eq:munu2} with $\delta>0$.
There exists some constants $c$, $C_1$, $C_2$ depending on $n$, $\lambda$,  $\Lambda $, $\Omega$ and $\pa\psi(\Omega)$ such that the following holds:
Given $x_0\in \Omega$, $p_0\in \pa\psi(x_0)$, $t$ such that $S(x_0,p_0,C_1 t)\subset\Omega $ and if 
$$t \geq C_0 \delta \quad\mbox{  and }\quad  \ell(S) \geq r \geq \delta^{1/2}$$ 
and 
\begin{equation}\label{eq:tC2}
 \frac{t}{\ell(S)^2} \geq C_2
\end{equation}
then 
there exists a set $  \widetilde \Sigma \subset C S(x_0,p_0, t)  $ 
such that 
$$ 
|\widetilde \Sigma | \geq c|S|$$
and 
$$ 
\Delta_r \psi (x) \geq c  \frac{t}{\ell(S)^2}  \quad \mbox{ a.e. in $\widetilde \Sigma$}.
$$
\end{proposition}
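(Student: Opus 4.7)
The plan is to follow the same scheme as for Proposition \ref{prop1} but substitute the construction of $\widetilde\Sigma$ outlined in \eqref{eq:SV} for the ``thin'' set $\Sigma$ from Lemma \ref{lem:Asubd}, so that Assumption \ref{ass:1} can be applied through Proposition \ref{prop:delta}. After reducing to $\psi(x_0)=0$, $p_0=0$ by subtracting an affine function, I would use Lemma \ref{lem:J} to produce a symmetric positive matrix $A$ satisfying \eqref{eq:SAS1}, and then invoke Lemma \ref{lem:Asubd} to obtain $V\subset\pa\psi(S)$ with $|V|\geq c\,t^n/|S|$ and $\Sigma:=\pa\psi^*(V)\subset S$. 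Note that $\mathrm{Tr}(A)\sim 1/\ell(S)^2$, so the pointwise bound in \eqref{eq:Drpsil} already gives $\Delta_r\psi(x)\geq c\,t/\ell(S)^2$ on $\Sigma$; the sole defect of $\Sigma$ is that it may have zero Lebesgue measure.

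Next I would build $\widetilde\Sigma=\Sigma_{h_0}$ and $\tilde V=V_{h_0}$ as in \eqref{eq:SV} with
\[
h_0=\frac{t r^2\mathrm{Tr}(A)}{8n^2 C m_0}\sim\frac{t r^2}{\ell(S)^2}.
\]
Combining Lemma \ref{lem:DrS} with Proposition \ref{prop:Vh}(iv) (which controls $(q-q_0)\cdot(x-y_0)\lesssim h_0$ on each $S(y_0,q_0,h_0)$) propagates the pointwise lower bound, up to constants, to all of $\widetilde\Sigma$; the choice of $h_0$ is exactly tuned so that the error term $m_0 h_0/r^2$ absorbs at most half of $t\mathrm{Tr}(A)/(4n^2)$. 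The hypothesis $r\leq\ell(S)$ gives $h_0\lesssim t$, so Proposition \ref{prop:Vh}(iii) yields the inclusion $\widetilde\Sigma\subset C\,S(x_0,p_0,t)$ (adjusting $C_1$ in the assumption $S(x_0,p_0,C_1 t)\subset\Omega$ to accommodate the $\theta^2 M^k$ coming from two applications of engulfing). Meanwhile, $r\geq\delta^{1/2}$ and $t/\ell(S)^2\geq C_2$ guarantee $h_0\gtrsim\delta$, so by Proposition \ref{prop:Vh}(ii) each $S(y_0,q_0,h_0)$ and each $W_{h_0}(q_0)$ has inner radius $\geq\delta$ as in \eqref{eq:SW}.

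The final measure bound uses the duality inequality \eqref{eq:ineq2} and Assumption \ref{ass:1}. Since $\pa\psi^*(\tilde V)\subset\widetilde\Sigma$ by Proposition \ref{prop:Vh}(i),
\[
\nu(\tilde V)\leq\mu(\pa\psi^*(\tilde V))\leq\mu(\widetilde\Sigma).
\]
Now $\widetilde\Sigma$ is a union of sections of $\psi$ with inner radii $\geq\delta$ (so Vitali applies by Corollary \ref{cor:vitali}) and $\tilde V$ is a union of convex sets $W_{h_0}(q_0)$ with inner radii $\geq\delta$. Proposition \ref{prop:delta} thus gives $\mu(\widetilde\Sigma)\leq\Lambda'|\widetilde\Sigma|$ and $\lambda'|\tilde V|\leq\nu(\tilde V)$, hence
\[
|\widetilde\Sigma|\geq\frac{\lambda'}{\Lambda'}|\tilde V|\geq\frac{\lambda'}{\Lambda'}|V|\geq c\,\frac{t^n}{|S|}\geq c\,|S|,
\]
where the last step uses Proposition \ref{prop:sec}(i). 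The main obstacle I anticipate is verifying that Proposition \ref{prop:delta} is indeed applicable to the family $\{W_{h_0}(q_0)\}_{q_0\in V}$; this requires checking that the Vitali covering lemma holds for this family, which is plausible because the polar-body construction makes $W_{h_0}(q_0)$ morally a section of $\psi^*$ at $q_0$, but this duality-based verification is the delicate technical point of the argument.
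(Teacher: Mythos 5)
Your proposal follows essentially the same route as the paper: it reproduces the construction of $\widetilde\Sigma=\Sigma_{h_0}$ and $\tilde V=V_{h_0}$ with the same tuning of $h_0$, uses Lemma \ref{lem:DrS} and Proposition \ref{prop:Vh} to propagate the lower bound on $\Delta_r\psi$, and closes with the same duality chain $\lambda|\tilde V\cap\O|\leq\nu(\tilde V\cap\O)\leq\mu(\pa\psi^*(\tilde V))\leq\mu(\widetilde\Sigma)\leq\Lambda|\widetilde\Sigma|$ combined with $|V|\gtrsim t^n/|S|$ and Proposition \ref{prop:sec}(i). The one point you flag as delicate (Vitali for the family $\{W_{h_0}(q_0)\}$, needed to invoke Proposition \ref{prop:delta}) is indeed the implicit step in the paper as well (only the intersection with $\O$ should be carried through the $\nu$-bound, since $V\subset\pa\psi(S)\subset\O$), so your treatment matches the paper's.
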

The assumptions require a short discussion:
The condition $t \geq C_0 \delta$ is necessary to use the standard properties of section Proposition \ref{prop:sec}.
%used to claim that $|S|\sim t^{n/2}$.
The condition $r<\ell(S)$ is necessary to control the nonlocal quantity $\Delta_r \psi$. The condition $r \geq \delta^{1/2}$ will then be used  to ensure that the sets $\tilde \Sigma$  and $\tilde V$ are "fat" enough to be seen by the (possibly discrete) measures $\mu$ and $\nu$.
Finally, we note that \eqref{eq:ellt} implies  $ \frac{t}{\ell(S)^2} \geq c$ for some constant $c$, but  \eqref{eq:tC2} requires this quantity to be arbitrarily large. 
Instead of \eqref{eq:tC2}, we could actually assume that $r \geq C_3 \delta^{1/2}$ for some large constant $C_3$. 
However, we will only use this proposition for sections for which $ \frac{t}{\ell(S)^2} $ is very large (see \eqref{eq:tx}), so this condition will not be an issue.

\begin{proof}
In view of \eqref{eq:SV} and \eqref{eq:SW}, Assumption \ref{ass:1} and Proposition \ref{prop:delta} imply
$$ \mu(\widetilde \Sigma)=\mu(\widetilde \Sigma\cap\Omega) \leq \Lambda |\widetilde \Sigma|\quad\mbox{ and }\quad \nu(\widetilde V\cap\O) \geq \lambda |\widetilde V\cap \O| $$
so that \eqref{eq:ineq2} and Proposition \ref{prop:Vh} (i) (which gives $\pa\psi^*(\widetilde V)\subset \widetilde\Sigma$) imply
$$\lambda |\widetilde V\cap \O| \leq \nu(\widetilde V \cap \O) \leq \mu(\pa\psi^*(\widetilde V\cap \O))  \leq \mu(\pa\psi^*(\widetilde V))  \leq \mu(\widetilde\Sigma) 
 \leq \Lambda |\widetilde \Sigma|.
 $$

Using \eqref{eq:papsij}, and the fact that $V\subset\pa\psi(S)\subset \O$, we deduce 
%and \eqref{eq:munu2} we can now write
$$c\frac{t^n}{|S|} \leq |V| \leq |\widetilde V\cap \O| \leq \frac{\Lambda}{\lambda} |\widetilde \Sigma|.
 $$
Finally, Proposition \ref{prop:sec} (i)  gives $t^n \geq c |S|^2$ which yields $  |\widetilde \Sigma |\geq c|S|$ for some constant $c$. 
 \end{proof}

%%%%%%%%%%%%%%%%%%%%%%%%%%%%%%%%%%%%%%%%%%%%%%%%%%%%%%%%%%%%%%%%%% 
\subsection{A reversed Chebychev's inequality when $\delta>0$}
%%%%%%%%%%%%%%%%%%%%%%%%%%%%%%%%%%%%%%%%%%%%%%%%%%%%%%%%%%%%%%%%%%%%
As in the case $\delta=0$, Theorem \ref{thm:2} follows from the following reversed Chebychev's inequality:
\begin{theorem}\label{thm:chebyd}
Let $\psi:\R^n\to \R$ be a convex function satisfying 
\eqref{eq:ineq1}, \eqref{eq:ineq2}, with $\mu$ and $\nu$ satisfying \eqref{eq:munu1} and \eqref{eq:munu2}) with $\delta>0$.
Let $\Omega' \subset\subset \Omega$ be such that 
there exists $\rho>0$  such that \eqref{eq:rho1} holds.
%\begin{equation}\label{eq:rhod} 
%S(x,q,\rho)\subset \Omega \quad\forall x\in \Omega' , \quad \mbox{ for some $q\in \pa\psi(x)$}.
%\end{equation}
Then there exists $\alpha_0$, $r_0$, $c$ and $C$  (depending on 
$\Omega$, $\rho$, $n$, $\lambda$, $\Lambda$, $L(\pa\psi(\Omega))$)
such that whenever
 $$\alpha>\alpha_0,\qquad  r<r_0,\qquad  \delta\leq r^2,$$
 the following holds:
\begin{equation}\label{eq:chebyd}
\int_{\{x\in \Omega'\, |\, \Delta_r \psi \geq \alpha \}} \Delta_r \psi  \, dx \leq C \alpha |  \{x\in \Omega\,|\,  \Delta_r \psi  \geq c\alpha\}|.
\end{equation}
\end{theorem}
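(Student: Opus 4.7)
The plan is to mirror the proof of Theorem~\ref{thm:cheby} from the $\delta=0$ case, using Proposition~\ref{prop2} in place of Proposition~\ref{prop1} and $\Delta_r\psi$ in place of $\Delta\psi$ throughout. The overall architecture remains: choose $\alpha_0$ large, define a stopping height $t(x)$ for each point of the super-level set $E_\alpha = \{\Delta_r\psi \geq \alpha\} \cap \Omega'$, extract a disjoint subcollection of sections via Corollary~\ref{cor:vitali}, apply Proposition~\ref{prop2} on each to obtain a ``fat'' subset $\widetilde{\Sigma}_k$ on which $\Delta_r\psi$ is already comparable to $\alpha$, and sum.

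First I would fix $r_0$ small and then $\alpha_0$ large enough that for every $x \in \Omega'$ and every $t \in [C_0 \delta, \rho]$ one has $\fint_{S(x,t)} \Delta_r\psi\,dy < \alpha_0/2$; this uses that $\int_\Omega \Delta_r\psi\,dy \leq \mathcal{H}^{n-1}(\pa\Omega)\,L(\pa\psi(\Omega))$ uniformly in $r$ (writing $\Delta_r\psi = \Delta(G_r \star \psi)$ and integrating by parts, since $\nabla(G_r \star \psi)$ inherits the Lipschitz bound on $\psi$) together with $|S(x,t)| \gtrsim (C_0\delta)^{n/2}$ from Proposition~\ref{prop:sec}~(i). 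Given $\alpha > \alpha_0$ and $x \in E_\alpha$, the continuity of $t \mapsto \fint_{S(x,t)}\Delta_r\psi\,dy$ and $\Delta_r\psi(x) \geq \alpha > \alpha_0$ let me set
\[
  t(x) := \sup\bigl\{ t \in [C_0 \delta, \rho] :  \fint_{S(x,t)}\Delta_r\psi\,dy \geq \alpha \bigr\},
\]
which satisfies $\fint_{S(x,t(x))}\Delta_r\psi = \alpha$ by the intermediate value theorem. Corollary~\ref{cor:vitali} then yields a disjoint subcollection $S_k = S(x_k, t_k)$ with $E_\alpha \subset \bigcup_k C_* S_k$, and the dilation/engulfing property~\eqref{eq:secd} gives $\int_{C_* S_k}\Delta_r\psi\,dy \leq C\alpha|S_k|$ exactly as in the $\delta=0$ case.

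The technical core is then verifying the hypotheses of Proposition~\ref{prop2} on each $S_k$, in particular $\ell(S_k) \geq r$ and $t_k/\ell(S_k)^2 \geq C_2$. Under the working assumption $\ell(S_k) \geq r$, Lemma~\ref{lem:supd} (second inequality) combined with $\sup_{\{|y|_{A_k}\leq 3n\}}\psi \lesssim t_k$ (which follows from~\eqref{eq:secd} provided $S(x_k, p_k, (3n)^\beta t_k) \subset \Omega$, ensured by $t_k \leq \eps\rho$ for small $\eps$) gives $\alpha \leq C t_k/\ell(S_k)^2$, so~\eqref{eq:tC2} holds immediately once $\alpha_0 \geq CC_2$. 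The lower bound $\ell(S_k) \geq r$ itself comes from $\ell(S_k) \geq t_k/L(\pa\psi(\Omega))$ (Proposition~\ref{prop:lL}) together with a lower bound on $t_k$ coming from $r^2 \geq \delta$ and the choice of $\alpha_0$ large; if $\ell(S_k) < r$ one first invokes the less sharp first inequality of Lemma~\ref{lem:supd} (absorbing $B_r$ into an enlarged section via~\eqref{eq:secd}) to show that this case cannot occur for $\alpha \geq \alpha_0$. Proposition~\ref{prop2} then produces $\widetilde{\Sigma}_k \subset CS_k$ with $|\widetilde{\Sigma}_k| \geq c|S_k|$ and $\Delta_r\psi \geq c'\alpha$ on $\widetilde{\Sigma}_k$.

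The main obstacle, which I expect to be the hardest piece, is that the $\widetilde{\Sigma}_k$ live in the enlarged sections $CS_k$ rather than in $S_k$, so they need not be disjoint even though the $S_k$ are. To close the argument I would establish a bounded overlap estimate for $\{CS_k\}$: using the engulfing property (Proposition~\ref{prop:sec}~(iii)), if $CS_j \cap CS_k \neq \emptyset$ with $t_j \sim t_k$, then both $S_j$ and $S_k$ are engulfed by a single section of bounded dilation, and disjointness of $S_j, S_k$ bounds the multiplicity by a constant $N = N(n,\lambda,\Lambda)$ (with a dyadic partition on scales used to handle $t_j \not\sim t_k$). With this bounded overlap in hand,
\[
 \int_{E_\alpha} \Delta_r\psi\,dx \leq \sum_k \int_{C_* S_k}\Delta_r\psi\,dx \leq C\alpha \sum_k |S_k| \leq \frac{C\alpha}{c}\sum_k |\widetilde{\Sigma}_k| \leq \frac{CN\alpha}{c}\Bigl|\bigcup_k \widetilde{\Sigma}_k\Bigr| \leq C'\alpha\bigl|\{\Delta_r\psi \geq c'\alpha\} \cap \Omega\bigr|,
\]
which is~\eqref{eq:chebyd}.
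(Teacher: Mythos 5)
Your architecture (Vitali covering by sections, Proposition~\ref{prop2} on each section, summation over a disjoint family) is the right one and matches the paper, but the stopping-time construction --- the heart of the argument --- does not survive the passage from $\Delta\psi$ to $\Delta_r\psi$, and this is exactly where the paper's proof departs from the $\delta=0$ case. Your rule $t(x)=\sup\{t\in[C_0\delta,\rho]:\fint_{S(x,t)}\Delta_r\psi\geq\alpha\}$ may be vacuous: $\Delta_r\psi(x)\geq\alpha$ is a pointwise statement at scale $r$, and the mass of $D^2\psi$ it detects can be concentrated on a set far smaller than $S(x,C_0\delta)$, so nothing forces $\fint_{S(x,t)}\Delta_r\psi\geq\alpha$ for any admissible $t\geq C_0\delta$. (In the $\delta=0$ proof the non-emptiness of this set came from Lebesgue differentiation as $t\to0$, which is no longer available once $t$ is truncated at $C_0\delta$.) For such points either no section is assigned, or, if one defaults to $t(x)=C_0\delta$, the identity $\fint_{S_k}\Delta_r\psi=\alpha$ fails and with it the lower bound $t_k/\ell(S_k)^2\geq c\alpha$ that you need in order to apply Proposition~\ref{prop2} and conclude $\Delta_r\psi\geq c'\alpha$ on $\widetilde\Sigma_k$. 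Your treatment of the hypothesis $\ell(S_k)\geq r$ has the same defect: from $t_k\geq C_0\delta$ and Proposition~\ref{prop:lL} you only get $\ell(S_k)\gtrsim\delta\leq r^2$, which is far short of $r$ when $r$ is small, and the first inequality of Lemma~\ref{lem:supd} cannot rule out the case $\ell(S_k)<r$, because enlarging the averaging region by $B_r$ adds an uncontrolled term of order $L(\pa\psi(\Omega))\,r$ to the supremum of $\psi$ there, which points in the wrong direction for a contradiction.

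The paper resolves these difficulties simultaneously by choosing a purely geometric stopping rule, \eqref{eq:txdef}: $t(x)$ is the last height at which $\max\{t/\ell(S(x,q,t))^2,\; r\alpha/d(x,\pa S(x,q,Mt))\}\geq\alpha$. The second entry guarantees that the rule always fires at small $t$ (otherwise $B_r(x)\subset S(x,q,Mt)$ for all small $t$ would force $\Delta_r\psi(x)\leq m_0Mt/r^2\to0$, contradicting $x\in E_\alpha$), yields the crucial lower bound $t(x)\geq \alpha r^2/(m_0M)\geq (\alpha_0/(m_0M))\delta\geq C_0\delta$, and supplies the inner-radius control at scale $r$ required by the nonlocal operator; the average bound $\fint_{S(x,q,t)}\Delta_r\psi\leq C\alpha$ for $t\geq t(x)$ and the lower bound $t(x)/\ell(S(x,q,t(x)))^2\geq c\alpha$ are then \emph{derived} from this rule via Lemma~\ref{lem:supd} and Lemma~\ref{lem:tla} rather than built into the definition. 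To repair your proof you would need to replace your stopping time by something equivalent to \eqref{eq:txdef}. On a secondary point: your observation that $\widetilde\Sigma_k$ lives in a dilate $CS_k$ rather than in $S_k$, so that disjointness of the $S_k$ does not immediately give disjointness of the $\widetilde\Sigma_k$, is legitimate (the paper's final display treats the $\Sigma_k$ as subsets of the $S_k$), and a bounded-overlap count via the engulfing property of the kind you sketch is a reasonable way to close that step.
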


Before proving this result, we show that it implies our main Theorem \ref{thm:2}:
\begin{proof}[Proof of Theorem \ref{thm:2}]
Since $|\Delta_r \psi|\leq \frac{C}{r_0^2} \sup\psi$ when $r\geq r_0$, we only need to prove the result when $r<r_0$. 
The proof is identical to the proof of Theorem \ref{thm:1}: Using Theorem \ref{thm:chebyd} and Fubini's theorem we can show that there exists a constant $C$ such that
$$
\int_{\Omega'} \Delta_r\psi(x)\log(2+\Delta_r\psi(x))\, dx
\leq C + C \int_{  \Omega''} \Delta_r\psi(x)\, dx,
$$
where $\Omega\subset\subset\Omega''\subset\subset \Omega$ and we can further have that $\Omega''+B(0,r)\subset \Omega$ by taking $r_0$ small enough with respect to $\rho$.

We can now observe that
\[
\Delta_r \psi(x)=L_r\star \Delta \psi(x),
\]
where $\Delta\psi$ is a non-negative Radon measure and $L_r$ is a smooth convolution kernel, bounded in $L^1$ and obtained through averages of $K_r$.

Consequently,
\[
\int_{  \Omega''} \Delta_r\psi(x)\, dx=\int_{  \Omega''} L_r\star\Delta\psi(x)\, dx\leq \int_{\Omega} \Delta \psi\leq C.
\]
\end{proof}

We now turn to the proof of Theorem  \ref{thm:chebyd}.

\begin{proof}[Proof of Theorem \ref{thm:chebyd}]
The proof follows the same idea as that of Theorem \ref{thm:cheby} but requires several adjustments: Given the set 
$$ E_\alpha = \{x\in\Omega'\, |\, \Delta_r \psi (x)>\alpha\} ,$$  
we need to identify, for each $x\in E_\alpha$ the value $t(x)$ of $t$ with the following properties:
\begin{enumerate}
\item We should have $t(x)\geq C_0\delta$ so that we are only working with sections for which the properties listed in Proposition \ref{prop:sec} are valid.
\item We need to have $\fint_{S(x,t)} \Delta_r \psi \, dy \leq C\alpha$ for all $t\in[t(x),\rho]$
\item We need to have $\frac{t(x)}{\ell(S(x,t(x))^2}\geq c \alpha$ in order to use Proposition \ref{prop2}.
\item The particular choice of $q\in \pa\psi(x)$ to define the section is not important except for the fact that we need to take $q$ such that \eqref{eq:rho1} holds.
\end{enumerate}
Lemma \ref{lem:supd} can be used to show that $\fint_{S(x,q,t)} \Delta_r \psi\, dy \leq C \frac{t}{\ell(S(x,q,t(x))^2} $ if $t\geq C_0\delta$ and $r\leq \ell(S(x,q,t(x))$. This allows us to replace items 2 and 3 
with the condition
$$
 \frac{t}{\ell(S(x,q,t(x))^2} \leq C\alpha \quad \forall t\in[t(x),\rho], \qquad \frac{t(x)}{\ell(S(x,q,t(x))^2}\geq c \alpha.
$$
Making sure that $t(x)\geq C_0\delta$ is more delicate, but we note that if $B_r(x)\subset S(x,q,t)$, then the definition of $\Delta_r$ implies $\Delta_r\psi(x)\leq m_0 \frac{t}{r^2}$. For $x\in E_\alpha$, we thus have $t \geq \frac{\alpha}{m_0} r^2 \geq  \frac{\alpha_0}{m_0} \delta$ and we get the desired inequality if $\alpha_0$ is large enough. It turns that requiring that $ B_r(x)\subset S(x,q,t(x))$ is too strong so we will require 
$$ B_r(x)\subset S(x,q,M t(x))$$
where $ M$ is the constant given in Proposition \ref{prop:sec} such that $2S(x,q,t)\subset S(x,q,Mt)$.

All these considerations lead us to introduce for all $x\in E_\alpha$,
\begin{equation}\label{eq:txdef} 
t(x) = \sup \left\{ t \leq \rho\,\Big|\, \max\left\{ \frac{t}{ \ell(S(x,q,t)) ^2},\frac{r\alpha}{d(x,\pa S(x,q,M t)) }\right\} \geq \alpha\right\}.
\end{equation}
Given  $\eps=\eps_1\eps_2>0$ (to be chosen later) we claim that if $\alpha$ is large enough and $r$ is small enough, then we must have $t(x)\leq \eps\rho$.
Indeed, we recall (see Proposition \ref{prop:lL}) that $\ell(S(x,q,t)) \geq \frac{t}{L(\pa\psi(\Omega))}$ and so
$$ \frac{t}{ \ell(S(x,q,t)) ^2}  \leq \frac{L(\pa\psi(\Omega))^2}{t}\leq\alpha_0 := \frac{L(\pa\psi(\Omega))^2}{\eps\rho }\qquad \forall x\in \Omega', \; \forall t\in[\eps\rho,\rho]$$
while Alexandrov's estimate (Proposition \ref{prop:alek}) implies 
$$ d(x,\pa S(x,q,M t)) \geq r_0 := \frac{(\eps \rho)^n}{|\pa\psi(\Omega)| L(\Omega)^{n-1}}\qquad  \forall x\in \Omega', \; \forall t\in[\eps\rho,\rho].$$
From now on, we assume that $r<r_0$ and $\alpha>\alpha_0$. We then have, for all $x\in \Omega'$, 
$$
 \max\left\{ \frac{t}{ \ell(S(x,q,t)) ^2},\frac{r\alpha}{d(x,\pa S(x,q,Mt)) }\right\} < \alpha\qquad \forall t\in[\eps\rho,\rho]
$$
and so $t(x)\leq \eps\rho$. 

The definition of $t(x)$, \eqref{eq:txdef}  implies
 \begin{equation} \label{eq:tlnf}
 \frac{t}{ \ell(S(x,q,t)) ^2} \leq \alpha \qquad\forall x\in E_\alpha, \;  \forall t\in [t(x),\rho]
 \end{equation}
and 
\begin{equation}\label{eq:ltx}
d(x,\pa S(x,q,Mt))\geq r \qquad \qquad\forall x\in E_\alpha, \;  \forall t\in [t(x),\rho].
 \end{equation}
The first condition was one of the required conditions for $t(x)$ and the second condition can be used to show that $t(x)\geq C_0\delta$.
Indeed, for $x\in E_\alpha$, condition \eqref{eq:ltx} implies that $B_r(x)\subset S(x,q,Mt)$ and so
\begin{align}
\alpha < \Delta_r\psi (x)
& = \frac{m_0}{r^2}   \int_{B_r(x)} K_r(y-x) [\psi(y) -\psi(x) - q\cdot (y-x)]\, dy\nonumber  \\
%& = \frac{m_0}{r^2}   \int_{S(x,q,t(x))}  K_r(y) [\psi(x+y) -\psi(x) - q\cdot y]\, dy \nonumber \\
& \leq  \frac{m_0}{r^2}M t   \int  K_r(y) \, dy  =  \frac{m_0M}{r^2} t\qquad  \forall t\in [t(x),\rho] \label{eq:jfh}
\end{align}
and so (with $t=t(x)$)
\begin{equation}\label{eq:txbd}
t(x) \geq \frac{\alpha}{m_0M} r^2 \geq  \frac{\alpha_0}{m_0M}  \delta.
\end{equation} 
We can always take a larger $\alpha_0$ so that $\frac{\alpha_0}{m_0M} >C_0$ (which we assume in what follows).

Finally, we can prove:
\begin{lemma}\label{lem:tla}
Let $t(x)$ be defined by \eqref{eq:txdef}. Then
\begin{equation}\label{eq:tx}
 \frac{t(x)}{ \ell(S(x,q,t(x))) ^2} \geq c \alpha,   \qquad \forall x\in E_\alpha
\end{equation}
and 
\begin{equation}\label{eq:intalpha}
 \fint_ {S(x,q,t)}   \Delta_r  \psi (y) \, dy \leq C \alpha \qquad \forall x\in E_\alpha, \quad  t\in [t(x),\eps_1\rho].
\end{equation}
\end{lemma}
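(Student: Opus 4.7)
The plan is to handle the two parts of the lemma separately.

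For part 2, namely \eqref{eq:intalpha}, I would apply Lemma \ref{lem:supd} to the section $S = S(x,q,t)$ for $t \in [t(x), \eps_1\rho]$, where the John decomposition provides a symmetric positive definite matrix $A$ satisfying \eqref{eq:SAS1} with $\mathrm{Tr}(A) \leq C/\ell(S)^2$. Two conditions must be checked. First, $r \leq \ell(S)$ (needed for the ``in particular'' form of Lemma \ref{lem:supd}): combining \eqref{eq:tlnf}, which gives $\ell(S)^2 \geq t/\alpha$, with \eqref{eq:txbd}, which gives $t \geq t(x) \geq c\alpha r^2$, yields $\ell(S) \geq c r$; the constants can be tuned by increasing $\alpha_0$ if needed. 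Second, the enlarged set $\{|y|_A \leq 3n\}$ embeds (by iterated Proposition \ref{prop:sec}(ii)) into $S(x,q,Ct)$, which is a subset of $\Omega$ as long as $Ct \leq \rho$, i.e., for $\eps_1$ small enough via \eqref{eq:rho1}. On $S(x,q,Ct)$ the recentered function $\psi(y) - \psi(x) - q\cdot(y-x)$ is bounded by $Ct$, so Lemma \ref{lem:supd} gives $\fint_S \Delta_r\psi \leq C\,\mathrm{Tr}(A)\,t \leq C\,t/\ell(S)^2 \leq C\alpha$ by \eqref{eq:tlnf}.

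For part 1, namely \eqref{eq:tx}, I would observe that $f_1(t) := t/\ell(S(x,q,t))^2$ and $f_2(t) := r\alpha/d(x, \partial S(x,q,Mt))$ depend continuously on $t$; together with the sup definition \eqref{eq:txdef}, continuity forces $\max(f_1(t(x)), f_2(t(x))) = \alpha$. In the \emph{easy case}, where $f_1(t(x)) = \alpha$, the claim is immediate with $c = 1$. In the \emph{hard case}, where $f_2(t(x)) = \alpha$ (equivalently, $d(x, \partial S(x,q,Mt(x))) = r$) while $f_1(t(x)) \leq \alpha$, one must still establish the matching lower bound $f_1(t(x)) \geq c\alpha$.

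The hard case is the main obstacle. The plan is to extract geometric information from the contact $d(x, \partial S(x,q,Mt(x))) = r$: Alexandrov's estimate (Proposition \ref{prop:alek}) applied to $u(y) = \psi(y) - \psi(x) - q\cdot(y-x)$ on $S := S(x,q,Mt(x))$ gives $L(S)^{n-1} \geq c\, t(x)^n/r$, while Proposition \ref{prop:sec}(i) gives $|S| \leq C\, t(x)^{n/2}$, and the John decomposition relates $|S|$, $\ell(S)$, and $L(S)$. Combining these with $\ell(S(x,q,t(x))) \leq \ell(S(x,q,Mt(x)))$ and $t(x) \geq c\alpha r^2$ (from \eqref{eq:txbd}) should yield the required upper bound $\ell(S(x,q,t(x)))^2 \leq C\, t(x)/\alpha$. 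The delicate dimensional bookkeeping in balancing these geometric inequalities is where I expect the bulk of the technical difficulty to lie.
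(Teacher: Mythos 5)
Your treatment of \eqref{eq:intalpha} and of the ``easy case'' of \eqref{eq:tx} matches the paper: continuity of $t\mapsto S(x,q,t)$ forces $\max(f_1(t(x)),f_2(t(x)))=\alpha$, and \eqref{eq:intalpha} follows from Lemma \ref{lem:supd} together with \eqref{eq:tlnf}, \eqref{eq:txbd} and the scaling property \eqref{eq:secd} exactly as you describe (the constant in $r\lesssim \ell(S)$ only affects the constant $C$ in \eqref{eq:intalpha}, so that part is fine).

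The hard case of \eqref{eq:tx} is where your proposal has a genuine gap: the chain ``Alexandrov $+$ volume bound $+$ John'' does not close. From Proposition \ref{prop:alek} you get $L(S)^{n-1}\geq c\,t(x)^n/r$ with $S=S(x,q,Mt(x))$, from Proposition \ref{prop:sec}(i) you get $|S|\leq C\,t(x)^{n/2}$, and John gives $|S|\geq c\,\ell(S)^{n-1}L(S)$ (note the inner radius enters to the power $n-1$, not $1$). Combining these and inserting $r^2\leq C\,t(x)/\alpha$ yields, e.g.\ for $n=2$, $\ell(S)\leq C r/t(x)$ and hence $\ell(S)^2\leq C/(\alpha\, t(x))$, which is far weaker than the required $\ell(S)^2\leq C\,t(x)/\alpha$ since $t(x)$ is small (of order $\alpha r^2$); for $n\geq 3$ the mismatch in the powers of $\alpha$ is even worse ($\alpha^{-1/(n-1)^2}$ versus the needed $\alpha^{-1}$). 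The correct argument is much more direct and bypasses Alexandrov entirely: in the hard case $d(x,\pa S(x,q,Mt(x)))=r$, so $B_r(x)\subset S(x,q,Mt(x))$ and \eqref{eq:jfh} applied at $t=t(x)$ gives $\alpha\leq m_0M\,t(x)/r^2 = m_0M\,t(x)/d(x,\pa S(x,q,Mt(x)))^2$. On the other hand, since $S(x,q,t(x))$ contains a ball of radius $\ell=\ell(S(x,q,t(x)))$ and $x\in S(x,q,t(x))$, convexity together with $2S(x,q,t(x))\subset S(x,q,Mt(x))$ (Proposition \ref{prop:sec}(ii), legitimate because $t(x)\geq C_0\delta$ by \eqref{eq:txbd}) shows that a ball of radius $c_n\ell$ centered at $x$ lies in $S(x,q,Mt(x))$, i.e.\ $\ell\leq C\,d(x,\pa S(x,q,Mt(x)))=Cr$. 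Substituting this into the previous display gives $\alpha\leq C m_0M\, t(x)/\ell(S(x,q,t(x)))^2$, which is \eqref{eq:tx}.
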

\begin{proof}[Proof of Lemma \ref{lem:tla}]
  Observe that for a fixed $x$ and $q\in \pa\psi(x)$, the map $t\to S(x,q,t)$ is continuous thanks to the convexity of $\psi$. Indeed considering a sequence $t_n\to t$ with for example $t_n<t$, we first trivially have that $\bigcup_n S(x,q,t_n)\subset S(x,q,t)$ and thus $\overline{\bigcup_n S(x,q,t_n)}\subset S(x,q,t)$. On the other hand if there exists $y\in S(x,q,t)\setminus \overline{\bigcup_n S(x,q,t_n)}$ then necessarily $q\in \pa\psi(y)$. As $q\in \pa\psi(x)$, by the convexity of $\psi$, this is possible only if $t=\psi(x)$.
  
  The definition \eqref{eq:txdef} then implies that
$$
\max\left\{ \frac{t(x)}{ \ell(S(x,q,t(x))) ^2},\frac{r\alpha}{d(x,\pa S(x,q,Mt(x)))}\right\} = \alpha.$$
In particular we either have
$ \frac{t(x)}{ \ell(S(x,q,t(x))) ^2} = \alpha$ (and \eqref{eq:tx} is true)
or $ \frac{t(x)}{ \ell(S(x,q,t(x))) ^2} < \alpha$ and $d(x,\pa S(x,q,Mt(x)))=r$.
In that last case we go back to \eqref{eq:jfh} which implies
$$\alpha \leq  m_0 M  \frac{t(x)}{r^2}  =  m_0 M \frac{t(x)}{d(x,\pa S(x,q,Mt))^2} .$$
In order to conclude, we note that (here we write $\ell = \ell(S(x,q,t(x)))$)  $B_{\ell} (0) \subset S(x,q,t(x))$ and so (using the convexity of $S$) $B_{\ell} (x) \subset 2S(x,q,t(x)) \subset S(x,q,Mt(x))$ (using Proposition \ref{prop:sec}  and \eqref{eq:txbd}). It follows that 
$\ell(S(x,q,t(x)) \leq Cd(x,\pa S(x,q,Mt))$ and the inequality above implies
$$\alpha \leq   m_0 M \frac{t(x)}{\ell(S(x,q,t(x))) ^2} $$
which is \eqref{eq:tx}.

To prove inequality  \eqref{eq:intalpha},
we take $t\in [t(x),\eps_1\rho]$ and $x\in E_\alpha$.
Lemma \ref{lem:supd}, together with the fact that   $\mathrm{Tr}(A)\leq \frac{n}{\ell(S)^2}$ and $\ell(S(x,q,t)) >  r$ implies
$$
 \fint_ {S(x,q,t)}   \Delta_r  \psi (y) \, dy \leq \frac{C}{\ell(S(x,q,t))^2} \sup_{y\in 3 n S(x,q,t)} [\psi (y)-\psi(x)-q\cdot (y-x) ] .
$$
Condition \eqref{eq:txbd}  implies that the scaling property of section \eqref{eq:secd} can be used to prove that there exists a constant $C_1=(3n)^\beta$ such that $3 n S(x,q,t)\subset S(x,q,C_1t)$ provided $S(x,q,C_1 t) \subset \Omega$ which holds if we choose 
$\eps_1 = \frac{1}{C_1}$ (since $t\leq \eps_1 \rho$).
We
deduce
$$
 \fint_ {S(x,q,t)}   \Delta_r  \psi (y) \, dy \leq \frac{C t }{\ell(S(x,q,t))^2}  .
$$
Inequality \eqref{eq:tlnf} then implies \eqref{eq:intalpha}.
\end{proof}

\medskip

The rest of the proof is similar to the case $\delta=0$:
We have $E_\alpha \subset \bigcup_{x\in E_\alpha} S(x,q,t(x))$ and
since $t(x) \geq C\delta$, we can use Vitali's covering lemma (see Corollary \ref{cor:vitali}) to extract a countable family $x_k$ such that the sections $S_k = S(x_k,q_k,t(x_k))$ are pairwise disjoint and  $E_\alpha \subset \bigcup_{k}  C_{*} S_k$.\medskip

Successive applications of Proposition \ref{prop:sec1}
gives
$ C_{*} S_k \subset S(x_k,q_k,C_{**}   t_k) $ for some constant $C_{**}$
provided 
$C_{**}   t _k\leq \rho.$
Furthermore, in order to be able to use Inequality \eqref{eq:intalpha}, we will require that $C_{**}   t _k\leq \eps_1 \rho$.
We thus choose 
$$\eps_2: = \frac1 { C_{**} } $$
The condition $t_k\leq \eps \rho$ with $\eps=\eps_1\eps_2$ then imply that $ C_{**}  t_k\in[ t_k,\eps_1\rho]$. Inequality \eqref{eq:intalpha}
 thus implies
$$  \int_{C_{*} S_k}  \Delta_r  \psi (x) \,dx  \leq \int_{S(x_k,q_k,C_{**}  t_k)}  \Delta_r  \psi (x) \,dx \leq \alpha | S(x_k,q_k,C_{**}   t_k)|$$
and Proposition \ref{prop:sec} (i) gives $| S(x_k,q_k,C_{**}   t_k)| \leq C_n |S(x_k,q_k,t_k)|$. We deduce that
$$
 \int_{C_{*} S_k} \Delta_r  \psi (x)  \,dx\leq C\alpha |S_k|.
 $$
We are ready to conclude: First, we write
\begin{align*}
\int_{E_\alpha} \Delta_r  \psi (x)  \, dx
& \leq \sum_k \int_{  C_{*} S_k} \Delta_r  \psi (x)  \, dx \leq C\alpha  \sum_k | S_k| .
\end{align*}
We then use Proposition \ref{prop2}:  We have
$\frac{t_k}{ \ell(S_k) ^2} \geq c\alpha$ (see \eqref{eq:tx}), so
 there is a subset $\Sigma_k\subset S_k$ such that 
$$|\Sigma_k|\geq c|S_k|, \qquad  
\Delta_r \psi(x) \geq c\frac{t_k}{ \ell(S_k) ^2} \geq c\alpha \quad\forall x\in\Sigma_k,$$ 
and since  the $S_k$, and therefore the $\Sigma_k$, are disjoint, we have that
\begin{align*}
\int_{E_\alpha} \Delta_r  \psi (x)  \, dx
& \leq C\alpha  \sum_k | \Sigma_k|  \\
& \leq C\alpha|  \{ x\in \Omega \,|\, \Delta_r  \psi (x)  \geq c\alpha\}|
\end{align*}
which is the desired inequality.
\end{proof}
%%%%%%%%%%%%%%%%%%%%%%%%%%%%%%%%%%%%%%%%%%%%%%%%%%
%%%%%%%%%%%%%%%%%%%%%%%%%%%%%%%%%%%%%%%%%%%%%%%%%%%
\subsection{Sobolev regularity \label{sobolev}}
%%%%%%%%%%%%%%%%%%%%%%%%%%%%%%%%%%%%%%%%%%%%%%%%%%
%%%%%%%%%%%%%%%%%%%%%%%%%%%%%%%%%%%%%%%%%%%%%%%%%%%
%\subsection{A localized reversed Chebychev's inequality}
%%%%%%%%%%%%%%%%%%%%%%%%%%%%%%%%%%%%%%%%%%%%%%%%%%
A key step in deriving Sobolev $W^{2,p}$ estimates on $\psi$ up to scale $r$ consists in localizing the reversed Chebychev's inequality of Theorem~\ref{thm:chebyd}.
\begin{theorem}\label{thm:chebydloc}
Let $\psi:\R^n\to \R$ be a convex function satisfying 
\eqref{eq:ineq1}, \eqref{eq:ineq2}, with $\mu$ and $\nu$ satisfying \eqref{eq:munu1} and \eqref{eq:munu2}) with $\delta>0$.
Let $\Omega' \subset\subset \Omega$ be such that 
there exists $\rho>0$  such that \eqref{eq:rho1} holds.
Then there exists $\alpha_0$, $r_0$, $c$ and $C$  (depending on 
$\Omega$, $\rho$, $n$, $\lambda$, $\Lambda$, $L(\pa\psi(\Omega))$)
such that whenever
 $$\alpha>\alpha_0,\qquad  r<r_0,\quad \alpha < \frac{c}{r^{1/n}}, \qquad  \delta\leq r^2,$$
 the following holds for any ball $B(x_0,R)\subset \Omega'$
\begin{equation}\label{eq:chebydloc}
\int_{\{x\in B(x_0,R)\, |\, \Delta_r \psi \geq \alpha \}} \Delta_r \psi  \, dx \leq C \alpha |  \{x\in B(x_0,R+C\,\alpha^{-1/\beta})\,|\,  \Delta_r \psi  \geq c\alpha\}|.
\end{equation}
\end{theorem}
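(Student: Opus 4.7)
The plan is to rerun the argument of Theorem \ref{thm:chebyd} restricted to the ball $B(x_0,R)$, exploiting a diameter bound on the sections produced by the selection procedure to ensure that every object constructed stays inside $B(x_0,R+C\alpha^{-1/\beta})$.

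First, for every $x\in E_\alpha:=\{\Delta_r\psi>\alpha\}\cap B(x_0,R)$ I would select $t(x)$ and $q=q(x)\in\pa\psi(x)$ via formula \eqref{eq:txdef}, with $q$ chosen so that \eqref{eq:rho1} is valid. The hypotheses $\alpha>\alpha_0$, $\delta\leq r^2$, and $r<r_0$ make the proofs of \eqref{eq:txbd} and Lemma \ref{lem:tla} available verbatim, yielding $C_0\delta\leq t(x)\leq \eps\rho$, the averaged bound $\fint_{S(x,q,t)}\Delta_r\psi\,dy\leq C\alpha$ for every $t\in[t(x),\eps_1\rho]$, and the key lower bound $t(x)/\ell(S(x,q,t(x)))^2\geq c\alpha$.

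The new ingredient is a diameter estimate on the selected sections. Combining $\ell(S(x,q,t))\geq t/L(\pa\psi(\Omega))$ from Proposition \ref{prop:lL} with \eqref{eq:tx} gives
\[
c\alpha\le \frac{t(x)}{\ell(S(x,q,t(x)))^2}\le \frac{L(\pa\psi(\Omega))^2}{t(x)},
\]
hence $t(x)\leq C/\alpha$; Lemma \ref{lem:diam} then produces
\[
L(S(x,q,t(x)))\le C\,t(x)^{1/\beta}\le C\,\alpha^{-1/\beta}.
\]
The auxiliary hypothesis $\alpha<c/r^{1/n}$ serves as a compatibility condition between the mollification scale $r$ and the size of the sections, in particular guaranteeing $\ell(S(x,q,t(x)))\geq r$ so that Proposition \ref{prop2} can be applied and that $C_{**}t(x)\leq\rho$ so that enlarged sections remain in $\Omega$. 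Applying Vitali's lemma (Corollary \ref{cor:vitali}) I extract pairwise disjoint sections $S_k=S(x_k,q_k,t(x_k))$ with $x_k\in B(x_0,R)$ and $E_\alpha\subset\bigcup_k C_* S_k$. The dilation property \eqref{eq:secd} gives $C_* S_k\subset S(x_k,q_k,C_{**}t_k)$, whose diameter is still bounded by $C\alpha^{-1/\beta}$; since $x_k\in B(x_0,R)$, this forces $C_*S_k\subset B(x_0,R+C\alpha^{-1/\beta})$.

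The remainder of the proof now parallels that of Theorem \ref{thm:chebyd}: monotonicity and the averaged bound yield $\int_{C_*S_k}\Delta_r\psi\,dx\leq C\alpha|S_k|$; Proposition \ref{prop2} produces a critical set $\widetilde\Sigma_k\subset C S_k\subset B(x_0,R+C\alpha^{-1/\beta})$ satisfying $|\widetilde\Sigma_k|\geq c|S_k|$ and $\Delta_r\psi\geq c\alpha$ on $\widetilde\Sigma_k$; and the $\widetilde\Sigma_k$ are pairwise disjoint since the $S_k$ are. Summing over $k$ produces \eqref{eq:chebydloc}. The main obstacle is verifying that the diameter estimate $L(S_k)\leq C\alpha^{-1/\beta}$ is simultaneously compatible with all the required scale constraints---namely $\ell(S_k)\geq r$, $C_{**}t_k\leq\rho$, and $t_k\geq C_0\delta$---at the critical scale $r\sim\delta^{1/2}$; the three assumptions $\alpha>\alpha_0$, $\delta\leq r^2$, and $\alpha<c/r^{1/n}$ are exactly what is needed to enforce these compatibilities together, and once they are verified the localization of the argument is a straightforward bookkeeping over the proof of Theorem \ref{thm:chebyd}.
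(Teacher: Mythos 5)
Your proposal is correct and follows essentially the same route as the paper: the only new ingredient in both is the bound $t(x)\le C/\alpha$, hence $L(S(x,q,t(x)))\le C\,\alpha^{-1/\beta}$ by Lemma \ref{lem:diam}, after which the covering argument of Theorem \ref{thm:chebyd} is rerun inside $B(x_0,R+C\alpha^{-1/\beta})$. The one (harmless) difference is that you obtain $t(x)\le C/\alpha$ from \eqref{eq:tx} combined with Proposition \ref{prop:lL}, whereas the paper derives it directly from the definition \eqref{eq:txdef} using Propositions \ref{prop:lL} and \ref{prop:alek} --- which is where the hypothesis $\alpha<c/r^{1/n}$ actually enters, rather than in the auxiliary roles you ascribe to it.
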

\begin{proof}
  The proof follows the exact same lines as the proof of Theorem~\ref{thm:chebyd} by covering the set $\{x\in B(x_0,R)\, |\, \Delta_r \psi \geq \alpha \}$ with sections $S(x,t(x))$ where $x\in B(x_0,R)$. This leads to
  \[
  \int_{\{x\in B(x_0,R)\, |\, \Delta_r \psi \geq \alpha \}} \Delta_r \psi  \, dx \leq C \alpha |  \{x\in B(x_0,R+R')\,|\,  \Delta_r \psi  \geq c\alpha\}|,
  \]
  where $R'=\sup_{x\in B(x_0,R)} L(S(x,t(x))$.

  The only difference with Theorem~\ref{thm:chebyd} therefore lies in bounding precisely $R'$. We go back to the definition~\eqref{eq:txdef} of $t(x)$ and use first again Proposition~\ref{prop:lL} which gives $\ell(S(x,q,t))\geq \frac{t}{L(\partial\psi(\Omega))}$, so that for some constant $C$ we have
  \[
\frac{t}{\ell(S(x,q,t))^2}\leq \frac{L(\partial\psi(\Omega))^2}{t}<\alpha,\quad\forall t> \frac{C}{\alpha}.
\]
Similarly using Proposition~\ref{prop:alek}, we have that
\[
d(x,\pa S(x,q,Mt))\geq \frac{t^n}{|\partial\psi(\Omega)|\,L(\Omega)^{n-1}}> r,\quad \forall t> C\,r^{1/n}.
\]
Since we assumed that $\alpha < \frac{c}{r^{1/n}}$, this last inequality holds in particular for all $t> \frac{C}{\alpha}$.
Taken together, the two inequalities and \eqref{eq:txdef} imply that  $t(x)\leq \frac{C}{\alpha}$.

To conclude, it is now enough to use Lemma~\ref{lem:diam} which shows that
\[
L(S(x,p,t))\leq C\,t(x)^{1/\beta}\leq \frac{C}{\alpha^{1/\beta}}.
\]
Since this bound holds for all $x\in B(x_0,R)$, we find $R' \leq C \alpha^{-1/\beta}$, which concludes the proof.
\end{proof}
Using Theorem~\ref{thm:chebydloc}, we can now prove the Sobolev regularity Theorem \ref{thm:sobolev}
\begin{proof}[Proof of Theorem \ref{thm:sobolev}]
The proof mostly follows the arguments presented in \cite{DF} with only minor adaptations.
Since $|\Delta_r \psi|\leq \frac{C}{r_0^2} \sup\psi$ when $r\geq r_0$, we only need to prove the result when $r<r_0$.

For some $\gamma>1$ to be chosen later, we define $\alpha_k$ by induction with $\alpha_{k+1}=\gamma\,\alpha_k$, $\alpha_1$ large enough, and $R_{k+1}=R_k-C/\alpha_k^{1/\beta}$ with $R_1=2\,R$.

As long as $\alpha_{k}\leq \frac{C}{r^{1/n}}$ and provided that $\gamma$ was chosen large enough so that $R_k\geq R$ for all $k$, we can apply Theorem~\ref{thm:chebydloc} to have that
  \[
  \int_{\{x\in B(x_0,R_{k+1})\, |\, \Delta_r \psi \geq \alpha_k/c \}} \Delta_r \psi  \, dx \leq \frac{C}{c} \alpha_k |  \{x\in B(x_0,R_k)\,|\,  \Delta_r \psi  \geq \alpha_k\}|.
  \]
  As a consequence
  \[\begin{split}
  \int_{\{x\in B(x_0,R_{k+1})\, |\, \Delta_r \psi \geq \alpha_{k+1} \}} \Delta_r \psi  \, dx \leq & \frac{C}{c} \alpha_k |  \{x\in B(x_0,R_k)\,|\,  \alpha_k\leq \Delta_r \psi  \leq \alpha_{k+1}\}|\\
  &+\frac{C}{c\,\gamma}\,\alpha_{k+1}\,|  \{x\in B(x_0,R_k)\,|\,  \Delta_r \psi  \geq \alpha_{k+1}\}|,
  \end{split}
  \]
  or, taking $\gamma\geq 2C/c$,
 \[\begin{split}
  \int_{\{x\in B(x_0,R_{k+1})\, |\, \Delta_r \psi \geq \alpha_{k+1} \}} \Delta_r \psi  \, dx \leq & \frac{C}{c}\,\int_{\{x\in B(x_0,R_{k})\, |\, \alpha_k\leq \Delta_r \psi \leq \alpha_{k+1} \}} \Delta_r \psi  \, dx \\
  &+\frac 1 2 \,\int_{\{x\in B(x_0,R_{k}))\, |\, \Delta_r \psi \geq \alpha_{k+1} \}} \Delta_r \psi  \, dx.
  \end{split}
  \]
We deduce:
 \[\begin{split}
  \int_{\{x\in B(x_0,R_{k+1})\, |\, \Delta_r \psi \geq \alpha_{k+1} \}} \Delta_r \psi  \, dx \leq & \frac{2C}{c}\,\int_{\{x\in B(x_0,R_{k})\, |\, \alpha_k\leq \Delta_r \psi \leq \alpha_{k+1} \}} \Delta_r \psi  \, dx \\
  &+ \,\int_{\{x\in B(x_0,R_{k})\setminus B(x_0,R_{k+1})\, |\, \Delta_r \psi \geq \alpha_{k+1} \}} \Delta_r \psi  \, dx.
  \end{split}
  \]
  Defining correspondingly $D_k=\{x\in B(x_0,R_{k})\, |\, \Delta_r \psi \geq \alpha_{k} \}$, this is equivalent, for a different constant $C$, to
  \[
\int_{D_{k+1}} \Delta_r \psi  \, dx\leq C\, \int_{D_{k}\setminus D_{k+1}} \Delta_r \psi  \, dx,
  \]
  or
  \[
(1+C)\,\int_{D_{k+1}} \Delta_r \psi  \, dx\leq C\, \int_{D_{k}} \Delta_r \psi  \, dx.
  \]
  This implies that
  \[
  \int_{D_{k+1}} \Delta_r \psi  \, dx\leq \tau\,\int_{D_{k}} \Delta_r \psi  \, dx,
  \]
  for $\tau=C/(1+C)<1$.

  By induction, and since $B(x_0,R)\subset B(x_0,R_k)$, we can immediately deduce that
  \begin{equation}
\int_{\{x\in B(x_0,R)\, |\, \Delta_r \psi \geq \alpha_{k} \}} \Delta_r \psi  \, dx\leq \tau^k\int_{\Omega} \Delta_r\psi\,dx\leq \frac{C}{\alpha_k^\theta},\label{inductiontau}
\end{equation}
as long as $\alpha_k\leq \frac{C}{r^{1/n}}$ and for some $\theta>0$ since $\alpha_k=\gamma^k\,\alpha_1$ (so that we can take $\theta=-\log \tau/\log \gamma$).

Denote $k_0$ the last index $k$ s.t. $\alpha_{k_0}\leq  \frac{C}{r^{1/n}}$ and decompose
\[
\begin{split}
\int_{B(x_0,R)} (\Delta_r \psi)^p\,dx=\int_{B(x_0,R)\cap\{\Delta_r\psi\leq \alpha_{k_0}\}} (\Delta_r \psi)^p\,dx+\int_{B(x_0,R)\cap\{\Delta_r\psi> \alpha_{k_0}\}} (\Delta_r \psi)^p\,dx.
\end{split}
\]
Because $\Delta_r \psi(x)\leq \frac{C}{r^2}$, the last term is in fact easy to handle,
\[\begin{split}
\int_{B(x_0,R)\cap\{\Delta_r\psi> \alpha_{k_0}\}} (\Delta_r \psi)^p\,dx&\leq \frac{C}{r^{2(p-1)}}\,\int_{B(x_0,R)\cap\{\Delta_r\psi> \alpha_{k_0}\}} \Delta_r \psi\,dx\\
&\leq \frac{C}{r^{2(p-1)}}\,\frac{1}{\alpha_{k_0}^\theta}\leq C\,\frac{r^{\theta/n}}{r^{2(p-1)}}\leq C,
\end{split}
\]
provided that $p$ is close enough to $1$ so that $\theta/n\geq 2(p-1)$.

We now recall that for any measurable function $f(x)\geq 0$ and $p>1$,
\[
\int_{B(x_0,R)} f^p\,dx=(p-1)\,\int_0^\infty \alpha^{p-2}\,\int_{\{x\in B(x_0,R)\, |\, f(x) \geq \alpha\}} f  \, dx.
\]
Hence
\[
\begin{split}
  \int_{B(x_0,R)\cap\{\Delta_r\psi\leq \alpha_{k_0}\}} (\Delta_r \psi)^p\,dx&=(p-1)\,\int_0^{\alpha_{k_0}} \alpha^{p-2}\,\int_{\{x\in B(x_0,R)\, |\, \Delta_r \psi \geq \alpha\}} \Delta_r \psi \, dx\\
  &\leq C+C\,\sum_{k=1}^{k_0} \alpha_{k}^{p-1}\,\int_{\{x\in B(x_0,R)\, |\, \Delta_r \psi \geq \alpha_k\}} \Delta_r \psi \, dx.
\end{split}
\]
By using again~\eqref{inductiontau}, we finally conclude that
\[
 \int_{B(x_0,R)\cap\{\Delta_r\psi\leq \alpha_{k_0}\}} (\Delta_r \psi)^p\,dx\leq C+C\,\sum_{k=1}^{k_0} \gamma^{k(p-1-\theta)}\leq C,
 \]
 as long as $p$ is again selected close enough to $1$ s.t. $p-1-\theta<0$.
\end{proof}
%%%%%%%%%%%%%%%%%%%%%%%%%%%%%%%%%%%%%%%%%%%%%%%%%%%%
%%%%%%%%%%%%%%%%%%%%%%%%%%%%%%%%%%%%%%%%%%%%%%%%%%%
\section{Properties of sections}\label{sec:sec}
%%%%%%%%%%%%%%%%%%%%%%%%%%%%%%%%%%%%%%%%%%%%%%%%%%%
%%%%%%%%%%%%%%%%%%%%%%%%%%%%%%%%%%%%%%%%%%%%%%%%%%%%%%
In this section, we establish some important properties of the sections of  a convex function that satisfies inequalities \eqref{eq:ineq1}-\eqref{eq:ineq2} for measures $\mu$ and $\nu$ such that Assumption \ref{ass:1} (inequalities \eqref{eq:munu1}-\eqref{eq:munu2}) holds.
Namely we establish some bounds on the volume of the sections (Proposition \ref{prop:St}), a scaling property of sections (Proposition \ref{prop:sec1}) and the engulfing property of sections (Proposition \ref{prop:engulf}).
These three  properties are classical when $\delta=0$ (see for instance
\cite{Caffarelli91},
\cite{Gutierrez00},
\cite[Chapter 3]{Gutierrez01}) and play an important role in the regularity theory for Monge-Amp\`ere equation.
When $\delta>0$, it is clear that they can only hold for large enough sections. The main interest of this section is thus to clearly identify the condition on $t$ (depending on $\delta$) for these conditions to hold.

An important tool in the proof of these results when $\delta=0$  is  the renormalization of convex sets provided by John's lemma. However, this renormalization procedure is not compatible with Assumption \ref{ass:1}
  since it changes the length scales differently depending on the direction. 
For this reason, our proofs will avoid the use of renormalization whenever Assumption~\ref{ass:1} must be used. They will make extensive use of the notion of polar body of a convex set, a notion well suited to optimal transportation. To our knowledge this approach is new even in the case $\delta=0$.
 
Throughout this section, we will assume that $\mu$ and $\nu$ satisfy the following condition, which is equivalent to Assumption \ref{ass:1} (see Proposition \ref{prop:bhkh}):
\begin{assumption}\label{ass:2}
There exist $\lambda$, $\Lambda>0$ such that
\begin{equation}\label{eq:mn3}
\lambda |K| \leq \mu(K) \leq\Lambda |K|, \quad \mbox{for all convex set $K\subset \Omega$ with $\ell(K) \geq  \delta$} 
\end{equation}
and 
\begin{equation}\label{eq:mn4}
\lambda |K| \leq \nu(K) \leq\Lambda |K|, \quad \mbox{for all convex set $K\subset \O$ with $\ell(K) \geq  \delta$.} 
\end{equation}
\end{assumption}
%%%%%%%%%%%%%%%%%%%%%%%%%%%%%%%%%%%%%%%%%
\subsection{Polar bodies of a convex set}
%%%%%%%%%%%%%%%%%%%%%%%%%%%%%%%%%%%%%%%%%
Several proofs in this section will make use of the notion of polar body of a convex set, which turns out to be a convenient tool to relate the section $S$ of a convex function $\psi$ and its image $\pa \psi (S)$.
For a convex set $K$ containing the origin, we recall that the polar body is defined  by
\begin{equation}\label{def:polar} 
K^\circ = \{ z\in \R^n\, ;\, z\cdot x \leq 1, \quad \forall x\in K\}.
\end{equation}
We recall the following basic facts about polar bodies:
\begin{enumerate}
\item If $A\subset B$ then $B^\circ \subset A^\circ$
\item $B_r(0)^\circ = B_{1/r}(0)$
\item The volume product $|K| \, |K^\circ|$ is unchanged by linear (but not affine) transformations: Indeed, 
$$ z\in (L(K))^\circ \Leftrightarrow z\cdot L(x) \leq 1 \; \forall x\in K  \Leftrightarrow L^*( z)\in K ^\circ$$
and so 
$|L(K)|\, |(L(K))^\circ| = |\det L| |K|\, |\det (L^*)^{-1}| |K^\circ| =  |K|\,  |K^\circ|$.
\end{enumerate}

There is an abundant literature devoted to the study of the polar body and in particular to the derivation of optimal lower and upper bounds for the volume product $|K| \, |K^\circ|$. We will use the following classical result:
\begin{proposition}\label{prop:polarinf}
Let $K$ be a bounded convex subset of $\R^n$ with non-empty interior.
We have:
\begin{equation}\label{eq:polarsup1} 
\inf_{x_0\in K} |K| \, |(K-x_0)^\circ | \leq n^n\omega_n^2
\end{equation}
(the unique point $x_0$ where the infimum in \eqref{eq:polarsup1} is attained is called the Santal\'o point of $K$)
and 
\begin{equation}\label{eq:polarsub1}  
|K| \, |K^\circ| \geq \frac{ \omega_n^2}{(2n)^n}
\end{equation}
where $\omega_n$ denotes the volume of the $n$-dimensional unit ball.
\end{proposition}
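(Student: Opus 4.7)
The plan is to derive both bounds from John's Lemma (Lemma \ref{lem:J}) combined with the linear invariance of the volume product stated just before the proposition (property 3). The key observation is that while the volume product is not affine invariant, one can always translate to a good base point and then reduce to a purely linear problem where the two inclusions in John's Lemma directly bound both factors.

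For the upper bound \eqref{eq:polarsup1}, apply John's Lemma to $K$ to obtain an affine map $T(x)=Nx+b$ with $B_1(0)\subset T(K)\subset B_n(0)$. Set $x_0:=-N^{-1}b$, so that $T(x_0)=0\in T(K)$, and hence $x_0\in K$. Writing $\tilde K:=N(K-x_0)=T(K)-0=T(K)$, we have $B_1(0)\subset \tilde K\subset B_n(0)$. The first inclusion gives $\tilde K^\circ\subset B_1(0)^\circ=B_1(0)$, so $|\tilde K^\circ|\leq \omega_n$; the second gives $|\tilde K|\leq n^n\omega_n$. By the linear invariance property, $|K|\cdot|(K-x_0)^\circ|=|K-x_0|\cdot|(K-x_0)^\circ|=|\tilde K|\cdot|\tilde K^\circ|\leq n^n\omega_n^2$, which suffices since $x_0\in K$.

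For the lower bound \eqref{eq:polarsub1}, the hypothesis that $K^\circ$ is taken without translating means $0\in K$ (otherwise $K^\circ$ would be unbounded and the inequality trivial). Apply John's Lemma again with $T(x)=Nx+b$ and $B_1(0)\subset T(K)\subset B_n(0)$; now $b=T(0)\in T(K)\subset B_n(0)$, so $|b|\leq n$. Setting $\tilde K:=N(K)=T(K)-b$, we see that $\tilde K\subset B_n(0)-b\subset B_{2n}(0)$, hence $\tilde K^\circ\supset B_{1/(2n)}(0)$ and $|\tilde K^\circ|\geq \omega_n/(2n)^n$. Also $|\tilde K|=|T(K)|\geq |B_1(0)|=\omega_n$. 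Linear invariance then gives $|K|\cdot|K^\circ|=|\tilde K|\cdot|\tilde K^\circ|\geq \omega_n^2/(2n)^n$.

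The whole argument is short because all the hard work is packaged inside John's Lemma and the linear invariance of the volume product. The only step requiring a moment of care is keeping track of how the affine translation $b$ is absorbed: for the upper bound by choosing the right translate $x_0$ of $K$ so that the problem becomes linear, and for the lower bound by using $0\in K$ to control $|b|$ and hence the diameter of $N(K)$ once recentered at the origin. The constant $(2n)^n$ (rather than the Santaló constant $n^n$) in the lower bound is precisely the cost of this recentering step.
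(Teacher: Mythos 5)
Your proof is correct and follows essentially the same route as the paper's: John's Lemma plus linear invariance of the volume product, with the translate $x_0=T^{-1}(0)\in K$ reducing the upper bound to the centered case, and the observation $0\in K$ (hence $|b|\leq n$, hence $N(K)\subset B_{2n}(0)$) accounting for the factor $(2n)^n$ in the lower bound. No gaps.
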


These inequalities are classical and we give a short proof below for the sake of completeness.
Note that the constants in   \eqref{eq:polarsup1} and \eqref{eq:polarsub1} are not optimal. 
We refer for example to Bourgain-Milman \cite{BM} for discussion about optimal constants.

\begin{proof}
When $K$ is a convex body that is symmetric with respect to the origin, John's Lemma \ref{lem:J} implies that there is a linear transformation $L$ such that  $B_1(0)\subset L(K)\subset B_{n}(0)$. We then have $B_{1/n}(0)\subset (L(K))^\circ \subset B_{1}(0)$ and so 
\begin{equation}\label{eq:SSo}
\frac{\omega_n^2}{n^n}\leq |L(K)|\, |(L(K))^\circ| = |K|\,  |K^\circ| \leq n^n\omega_n^2.
\end{equation}

When $K$ is not symmetric with respect to the origin, John's Lemma gives $B_1(a)\subset L(K)\subset B_{n}(a)$ for some $a=L(y)$ with $y\in K$. 
The same computation as above, applied to $K-y$ thus gives
$$\frac{\omega_n^2}{n^n}\leq  |K|\,  |(K-y)^\circ| \leq n^n\omega_n^2$$
which gives in particular \eqref{eq:polarsup1}.
 
We note that if $0\notin K$, then $|K^\circ|$ is infinite and \eqref{eq:polarsub1}   holds. So we can assume that $0\in K$ and thus $0\in L(K)\subset B_{n}(a)$. It follows that   $L(K) \subset  B_{2n}(0)$ and so $L(K)^\circ \supset  B_{1/2n}(0)$. Proceeding as above, we deduce
$$ |K|\,  |K^\circ|= |L(K)|\, |(L(K))^\circ| \geq |B_1| | B_{1/2n}|=
\frac{\omega_n^2}{(2n)^n}$$
which proves \eqref{eq:polarsub1}.  
\end{proof}

Finally, we can get the following bounds on the inner and outer radii of $K$ and $K^\circ$.
\begin{lemma}\label{lem:radK}
Let $K$ be a  bounded convex set with non-empty interior such that $0\in K$. Then
$$\ell(K) L(K^\circ) \geq \frac{1}{2n} \qquad \ell(K^\circ)L(K) \geq \frac{1}{2n }
$$
\end{lemma}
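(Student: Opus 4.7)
I would prove each inequality by a direct geometric argument, bypassing the polar product inequality \eqref{eq:polarsub1}.

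The inequality $\ell(K^\circ) L(K) \geq 1/(2n)$ is immediate. Writing $K \subset B_{L(K)}(c)$ for the smallest enclosing ball and using $0 \in K$, one sees $|c| \leq L(K)$, so the triangle inequality gives $K \subset B_{2L(K)}(0)$. Then any $z$ with $|z| \leq 1/(2L(K))$ satisfies $z \cdot x \leq 1$ for every $x \in K$, showing $B_{1/(2L(K))}(0) \subset K^\circ$ and therefore $\ell(K^\circ) \geq 1/(2L(K))$.

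For $\ell(K) L(K^\circ) \geq 1/(2n)$ I would split on the position of $0$ inside $K$. If $0 \in \partial K$, a supporting hyperplane of $K$ at $0$ with outward unit normal $u$ gives $u \cdot x \leq 0$ for every $x \in K$, so the ray $\{tu : t \geq 0\}$ lies in $K^\circ$ and $L(K^\circ) = +\infty$, which trivializes the bound. If $0 \in \mathrm{int}(K)$, set $\rho := d(0, \partial K) > 0$; then $B_\rho(0) \subset K$, so $\rho \leq \ell(K)$. Pick a nearest boundary point $y \in \partial K$ with $|y| = \rho$. Since $B_\rho(0) \subset K$ and $y \in \partial K \cap \partial B_\rho(0)$, any supporting hyperplane of $K$ at $y$ must also support $B_\rho(0)$ at $y$, hence has outward unit normal $u = y/\rho$. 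This forces $u \cdot x \leq u \cdot y = \rho$ for all $x \in K$, so $z := y/\rho^2$ belongs to $K^\circ$ with $|z| = 1/\rho$. Since $0 \in K^\circ$ as well, the segment $[0, z]$ lies in $K^\circ$, whence $L(K^\circ) \geq |z|/2 = 1/(2\rho) \geq 1/(2\ell(K))$.

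I do not foresee any real obstacle. Both arguments are elementary convex geometry and actually deliver the stronger constant $1/2$ in place of the stated $1/(2n)$. The only slightly delicate step is the tangency observation — that at a nearest boundary point of the origin in $K$, the outward normal of $K$ is radial — which follows at once from the uniqueness of the supporting hyperplane of a Euclidean ball at a boundary point together with the inclusion $B_\rho(0) \subset K$.
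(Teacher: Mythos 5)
Your proof is correct, and it takes a genuinely different route from the paper's. The paper proves this lemma through John's lemma: it sandwiches $K$ between an ellipsoid $E$ and its dilate $nE$, uses the explicit formula for the polar of a centered ellipsoid (for which $\ell(E_0^\circ)L(E_0)=L(E_0^\circ)\ell(E_0)=1$), and corrects for the fact that $nE$ need not be centered at the origin by enclosing it in a centered ellipsoid twice as large; the factor $n$ lost in John's lemma is precisely what produces the constant $\frac{1}{2n}$. Your argument bypasses John's lemma entirely. For $\ell(K^\circ)L(K)\geq \frac12$ you use $0\in K\subset B_{L(K)}(c)$ to get $K\subset B_{2L(K)}(0)$ and hence $B_{1/(2L(K))}(0)\subset K^\circ$; for $\ell(K)L(K^\circ)\geq\frac12$ you exhibit the explicit point $z=y/\rho^2\in K^\circ$ with $|z|=1/\rho$, where $y$ is the nearest point of $\partial K$ to the origin and $\rho=d(0,\partial K)\leq \ell(K)$, the case $0\in\partial K$ being trivial since $K^\circ$ is then unbounded. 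The tangency step is sound: a supporting hyperplane of $K$ at $y$ also supports $B_\rho(0)\subset K$ at $y$, and the supporting hyperplane of a Euclidean ball at a boundary point is unique with radial outward normal. What your approach buys is both simplicity and a dimension-free constant $\frac12$ in place of $\frac{1}{2n}$; since the rest of the paper only ever invokes the weaker bound (e.g.\ in Corollary \ref{cor:IL} and Proposition \ref{prop:Vh}), substituting your argument would be harmless and would even slightly improve the constants downstream.
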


\begin{proof}
John's Lemma implies the existence of an ellipsoid $E$ such that
$$ E\subset K \subset nE,$$
where we note that $nE$ denotes a dilation of $E$ about its center, which is not necessarily $0$.
In particular, we have $\ell(K)\geq \ell(E)$ and 
$ (nE)^{\circ} \subset K^\circ$.

Next, we note that the polar dual of a centered ellipsoid $E_0=\left\{ \sum_{i=1}^n \left(\frac{x_i}{a_i}\right)^2 \leq 1\right\}$ is the ellipsoid 
$E_0^\circ =\left\{ \sum_{i=1}^n\left(a_iu_i \right)^2 
 \leq 1\right\}$. In particular 
 $\ell(E_0^\circ) L(E_0)=1$ and $L(E_0^\circ)  \ell(E_0)=1$ for such an ellipsoid.

The ellipsoid $F:=nE$ might not be centered, but we have $0\in F $, so there is a centered ellipsoid $F_0$, twice as large as $F$, such that $F\subset F_0$ and $F_0^\circ \subset F^\circ$. We have
 $L(F^\circ) \geq L(F_0^\circ) =  \frac{1}{\ell(F_0)}=  \frac{1}{2\ell(F)}$  and $L(F) = \frac 1 2 L(F_0) = \frac  1{2\ell(F_0^\circ)}\geq  \frac 1 {2\ell(F^\circ)}$. We thus have:
 $$ \ell(nE) L((nE)^\circ) \geq \frac 1 2, \qquad  L(nE) \ell((nE)^\circ) \geq \frac 1 2.$$ 

Finally, we can write
$$\ell(K)\geq \ell(E) = \frac{1}{n} \ell(nE) \geq  \frac{1}{2 n L((nE)^\circ)}  \geq  \frac{1}{2 n L(K^\circ)}  $$
which gives the first inequality in Lemma \ref{lem:radK}.
 $K^\circ$ is a convex set containing zero and $(K^\circ)^\circ=K$ so the second inequality follows by applying the first inequality to $K^\circ$.
 \end{proof}
  
 This lemma readily implies. 
\begin{corollary}\label{cor:IL}
Let $\Omega$ be a bounded subset of $\R^n$ and $\psi:\Omega\to\RR$ convex. 
%such that   $\pa \psi (\Omega)$ is also a bounded subset of $\R^n$.
For any section $S \subset \Omega$, we have 
$$  \ell((S-y)^\circ) \geq \frac{1}{2n L(\Omega)} \qquad \forall y\in S.$$
\end{corollary}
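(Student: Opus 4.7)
The plan is to deduce Corollary~\ref{cor:IL} as a direct application of Lemma~\ref{lem:radK} to the translated set $K := S - y$. Given $y \in S$, the set $K = S - y$ is a bounded convex set containing $0$ (and with non-empty interior, since $S$ is a non-trivial section of a convex function). Lemma~\ref{lem:radK} therefore applies and gives, in particular,
\[
\ell(K^\circ)\, L(K) \geq \frac{1}{2n}.
\]

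Next I would observe that $L$ (the outer radius) is invariant under translations, so $L(K) = L(S - y) \leq L(\Omega - y) = L(\Omega)$ because $S \subset \Omega$ implies $S - y \subset \Omega - y$. Combining this with the inequality above yields
\[
\ell((S-y)^\circ) = \ell(K^\circ) \geq \frac{1}{2n\, L(K)} \geq \frac{1}{2n\, L(\Omega)},
\]
which is the desired estimate.

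There is essentially no obstacle here: the only thing to check is that $K$ has non-empty interior (so that Lemma~\ref{lem:radK} applies and $K^\circ$ is bounded), which holds because any section $S(x_0, p_0, t)$ with $t > 0$ has non-empty interior by the lower semi-continuity and convexity of $\psi$ (and the corollary is vacuous when $t = 0$). The remainder is the translation invariance of the outer radius together with the monotonicity $A \subset B \Rightarrow L(A) \leq L(B)$, both of which are immediate from the definition of $L$.
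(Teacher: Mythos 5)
Your proof is correct and follows essentially the same route as the paper: apply Lemma~\ref{lem:radK} to $K=S-y$ and bound $L(S-y)\leq L(\Omega)$ using $S\subset\Omega$ and translation invariance of the outer radius. The extra remarks about non-empty interior are fine but not needed beyond what the paper already assumes.
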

\begin{proof}
The result follows from  Lemma \ref{lem:radK} since we have 
$$ \frac 1 {2n} \leq \ell((S-y)^\circ)  L (S-y) \leq \ell((S-y)^\circ)  L (\Omega).$$
\end{proof}
The usefulness of  polar bodies in the context of optimal transportation is made clear by the next proposition.
We recall that for a convex function $\psi:\R^n\to \R$  and 
given $x_0\in\R^n$, $p_0\in \pa\psi(x_0)$ and $t\geq 0$, 
the section centered at $x_0$ with height $t$ is the convex set
$$ S(x_0,p_0,t) := \{ x\in\R^n\, ;\, \psi(x) \leq \psi(x_0)+ p_0\cdot(x-x_0)+t\}. $$
\begin{proposition}\label{prop:secpolar}
Let $\psi:\R^n\to \R$ be a convex function.
For all $x_0\in\R^n$, $p_0\in \pa\psi(x_0)$ and $t\geq 0$ such that $S(x_0,p_0,t)$ is bounded, we have
%$p_0+t (S-x_0)^\circ \subset \pa\psi(\Omega)$ and 
\begin{equation} \label{eq:SoS1}
 p_0+t (S-x_0)^\circ \subset \pa\psi(S)
\quad \mbox{ and } \quad \pa\psi^* (p_0+\frac t 2 (S-x_0)^\circ) \subset S \end{equation}
and
\begin{equation} \label{eq:SoS2}
\pa\psi\left(\frac 1 2 (S+y_0) \right) \subset p_0+ 2t (S-y_0)^\circ \qquad\forall y_0\in S.
\end{equation}
\end{proposition}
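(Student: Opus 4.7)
The plan is to reduce to the case $x_0 = 0 = p_0$, $\psi(x_0) = 0$ by replacing $\psi$ with $\bar\psi(x) := \psi(x_0 + x) - \psi(x_0) - p_0\cdot x$; since $p_0 \in \partial\psi(x_0)$ we then have $\bar\psi \geq 0$ with $\bar\psi(0) = 0$, and both $\bar S := \{\bar\psi \leq t\} = S - x_0$ and the polar set transform cleanly. Subgradients translate as $q \in \partial\psi(x_0 + x) \Longleftrightarrow q - p_0 \in \partial\bar\psi(x)$, reducing the three statements to $t\bar S^\circ \subset \partial\bar\psi(\bar S)$, $\partial\bar\psi^*(\tfrac{t}{2}\bar S^\circ) \subset \bar S$, and $\partial\bar\psi(\tfrac{1}{2}(\bar S + \bar y_0)) \subset 2t(\bar S - \bar y_0)^\circ$ with $\bar y_0 := y_0 - x_0 \in \bar S$. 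Since $\bar\psi$ is finite-valued and convex on $\R^n$ it is continuous, so $\bar S$ is compact and $0$ is an interior point (it belongs to the open sublevel set $\{\bar\psi < t\}$).

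For (1), I would fix $\bar q \in t\bar S^\circ$ and consider $f(x) := \bar\psi(x) - \bar q\cdot x$. By definition of the polar body $f(x) = t - \bar q\cdot x \geq 0$ on $\partial\bar S$, while $f(0) = 0$, so $f$ attains its minimum on the compact set $\bar S$ at an interior point $x^\star$ (either a strict interior minimizer when $\min f < 0$, or the point $0$ itself when the minimum value is $0$). Fermat's rule $0 \in \partial f(x^\star)$ then gives $\bar q \in \partial\bar\psi(x^\star) \subset \partial\bar\psi(\bar S)$.

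For (2), I would take $\bar q \in \tfrac{t}{2}\bar S^\circ$ and $y \in \partial\bar\psi^*(\bar q)$, equivalently $\bar q \in \partial\bar\psi(y)$. Evaluating the subgradient inequality at $z = 0$ yields $\bar q\cdot y \geq \bar\psi(y)$. If $y \notin \bar S$ then $\bar\psi(y) > t$, and convexity of $\bar\psi$ along the segment $[0, y]$ combined with $\bar\psi(0) = 0$ gives $\bar\psi(sy) \leq s\bar\psi(y)$ for $s \in [0,1]$, so the choice $s^\star := t/\bar\psi(y) \in (0, 1)$ places $s^\star y$ inside $\bar S$. The polar condition then forces $\bar q\cdot(s^\star y) \leq t/2$, i.e.\ $\bar q\cdot y \leq \bar\psi(y)/2$, contradicting the earlier lower bound since $\bar\psi(y) > 0$. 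Hence $y \in \bar S$.

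For (3), I would pick $x = (w + \bar y_0)/2$ with $w \in \bar S$ and $\bar q \in \partial\bar\psi(x)$, and take any $z \in \bar S$. The midpoint $\xi := (z + w)/2$ lies in $\bar S$ by convexity, so $\bar\psi(\xi) \leq t$, while $\bar\psi(x) \geq 0$ and $\xi - x = (z - \bar y_0)/2$; the subgradient inequality $\bar\psi(\xi) \geq \bar\psi(x) + \bar q\cdot(\xi - x)$ therefore gives $\bar q\cdot(z - \bar y_0) \leq 2t$, which is exactly $\bar q \in 2t(\bar S - \bar y_0)^\circ$.

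I expect the main subtlety to lie in (2), where one must combine a lower bound $\bar q\cdot y \geq \bar\psi(y)$ coming from the subgradient at $y$ with an upper bound obtained by rescaling $y$ back into $\bar S$ along the ray from the origin; the factor $1/2$ in the statement is exactly what closes the gap between these two bounds. Parts (1) and (3) are more mechanical in comparison: (1) is a compactness/Fermat argument on $\bar S$, and (3) hinges on the observation that the midpoint $(z + w)/2$ of two points of $\bar S$ satisfies $\xi - x = (z - \bar y_0)/2$, converting the subgradient inequality directly into the desired polar-body bound.
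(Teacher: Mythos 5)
Your proof is correct. The normalization to $x_0=0$, $p_0=0$, $\psi(x_0)=0$ and your arguments for the first inclusion in \eqref{eq:SoS1} and for \eqref{eq:SoS2} essentially coincide with the paper's: the first is the same Fermat/interior-minimum argument for the tilted function $\bar\psi-\bar q\cdot x$, and the third is the paper's midpoint computation (the paper splits it into two applications of the subgradient inequality, first bounding $\bar q\cdot(x-\bar x)$ and then specializing to $x=2\bar x-\bar y_0$, whereas you package both into the single identity $\xi-x=(z-\bar y_0)/2$; the content is identical). Where you genuinely diverge is the second inclusion $\pa\bar\psi^*(\tfrac t2\bar S^\circ)\subset\bar S$: the paper again argues variationally, observing that for $\bar q\in\tfrac t2\bar S^\circ$ the function $\bar\psi-\bar q\cdot x$ is strictly positive on $\pa\bar S$ and vanishes at $0$, so every point where $0$ lies in its subdifferential is a minimizer and hence interior to $\bar S$. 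You instead derive a direct contradiction: the subgradient inequality at the origin gives $\bar q\cdot y\geq\bar\psi(y)$, while rescaling $y$ back into $\bar S$ along the ray through the origin (using $\bar\psi(sy)\leq s\bar\psi(y)$) and the polar condition give $\bar q\cdot y\leq\bar\psi(y)/2$. Both are valid; your version is more explicitly quantitative and makes transparent why the factor $\tfrac12$ is exactly what is needed, while the paper's version is the one that generalizes directly to Lemma \ref{lem:A}, where the height $t$ is replaced by $t-\psi(y)$ for an off-center point $y$.
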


When $x_0=0$ and $p_0=0$, \eqref{eq:SoS1} can be written simply as $\pa\psi^* (\frac t 2 S ^\circ) \subset S$ and $t S ^\circ \subset \pa\psi( S)$. The proof will make it clear that we also have $\pa\psi^* ((1-\eta) t S ^\circ) \subset S$ for all $\eta>0$
and even $\pa\psi^* (t S ^\circ) \subset S$ if $\psi$ is strictly convex. 
This simple relation between the sections of $\psi$ and their dual body, combined with the volume product inequality \eqref{eq:polarsup1} readily  imply in particular the estimate 
%(under the conditions of Proposition \ref{prop:secpolar}):
\begin{equation}\label{eq:volumesectionbd}
|\pa\psi(S)|  \, |S| \geq \frac{\omega_n^2}{(2n)^n} t^n.
\end{equation}

\begin{proof}[Proof of Proposition \ref{prop:secpolar}]
Up to replacing $\psi$ with the function $\overline \psi (x) = \psi(x_0+x) - p_0 \cdot x -\psi(x_0) $,   it is enough to prove the result when $x_0=0$, $p_0=0$ and $\psi(x_0)=0$.
We then have $\bar S= S-x_0$, $\pa\overline \psi(x) = \pa \psi (x_0+x)-p_0$ and $\pa \overline \psi^* (z) = \pa\psi^*(p_0+z) -x_0$.

Let $z \in \R^n$ be such that $z\cdot x \leq t$ for all $x\in \overline S$ (that is $z\in t\overline S^\circ$). The function
$u(x)=\overline\psi(x)-z\cdot x$  is convex and satisfies
$$
u(0)=0, \qquad 
u(x) \geq 0 \quad  \mbox{ for } x\in \pa \overline S,
$$
using the fact that $\overline\psi (x) =  t $ and $z\cdot x \leq t$ for all $x\in \pa \overline S$.

In particular, there  is at least one $y\in \overline S$ such that $u$ has a minimum at $y$, and for such a point we have $0\in \pa u(y)$ and so $z\in \pa \overline\psi(y)$. 
This implies $t \overline S^\circ \subset \pa \overline \psi(\overline S)$ which is the first inclusion in \eqref{eq:SoS1}.

Next, we note that if $z \in \R^n$ is such that $z\cdot x \leq \frac t 2$ for all $x\in \overline S$ (that is $z\in \frac t 2 \overline S^\circ$), then the same function $u$ as above satisfies $u(x)>0$ for $x\in \pa \overline S$.
In particular, any  minimum $y\in \R^n$ of $u$ must lie $S$.
Since $u$ is convex, the set of points such that $0\in \pa u$ coincides with the set of points where $u$ is minimum.
So we can claim that for all $z \in \frac t 2 S^\circ$, we have $z\in \pa\psi(x) \Rightarrow x\in S$.
This implies $\pa\psi^* (\frac t 2 S^\circ) \subset S$.

\medskip

In order to prove \eqref{eq:SoS2}, we recall that for any $y_0\in \overline S$, we have
$$ (\overline S-y_0)^\circ = \{ z\in\R^n\, ;\, z\cdot (y-y_0) \leq 1\, \mbox{ for all } y\in \overline S\}.$$
We now fix $\bar x \in \frac 1 2 (\overline S+y_0)$. Then $z\in \pa \overline \psi(\bar x)$ implies $\overline \psi(\bar x) + z\cdot (x-\bar x) \leq \overline \psi(x)$ for all $x \in \R^n$ and so
\begin{equation}\label{eq:xt} 
z\cdot (x-\bar x) \leq t \quad \mbox{ for all } x\in \overline S.
\end{equation}
In particular, if we take $x=2\bar x-y_0$ in \eqref{eq:xt} (which we can do since $2\bar x-y_0\in \overline S$ with our choice of $\bar x$), we get that 
$ z \cdot (\bar x-y_0)\leq t$. Using  \eqref{eq:xt} again, we deduce:
$$ z \cdot (x-y_0) =  z \cdot (x-\bar x)+ z \cdot (\bar x-y_0) \leq 2t \quad \mbox{ for all } x\in \overline S.
$$
We proved that 
$\pa\overline \psi(\bar x) \subset 2t (\overline S-y_0)^\circ$ for all $\bar x \in \frac 1 2 (\overline S+y_0)$, which implies \eqref{eq:SoS2}.
\end{proof}
%%%%%%%%%%%%%%%%%%%%%%%%%%%%%%%%%%%%%%%%%%%%%%%%%%%%%%%%%%%%%%%%%%%%%%%%%%
\subsection{Volume of sections: Proof of Proposition \ref{prop:sec}-(i)}
%%%%%%%%%%%%%%%%%%%%%%%%%%%%%%%%%%%%%%%%%%%%%%%%%%%%%%%%%%%%%%%%%%%%%%%%%%%
We now prove the following proposition, which implies Proposition \ref{prop:sec}-(i):
\begin{proposition}\label{prop:St}
Let $\psi:\R^n\to \R$ be a convex function satisfying \eqref{eq:O}, \eqref{eq:ineq1}-\eqref{eq:ineq2} for measures $\mu$ and $\nu$ satisfying Assumption \ref{ass:1}.
Then there exist two constants $c$ and $C$ depending only on $\lambda$, $\Lambda$ and $n$ such that
if 
\begin{equation} \label{eq:t}
 t\geq  \max\{2L(\pa\psi(\Omega)) ,2nL(\Omega)\} \delta
 \end{equation}
then
\begin{equation}\label{eq:Stnf} 
c |S(x_0,p_0,t)|^2 \leq t^n \leq C|S(x_0,p_0,t)|^2
\end{equation}
for all sections with $x_0\in\Omega$, $p_0\in \pa\psi(x_0)$ and $t\geq 0$ such that $S(x_0,p_0,t)\subset \Omega$.
\end{proposition}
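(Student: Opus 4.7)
The strategy is to exploit the polar-body duality between a section $S=S(x_0,p_0,t)$ and its image under $\partial\psi$, as encoded in Proposition \ref{prop:secpolar}, combined with the Blaschke--Santal\'o volume-product inequalities from Proposition \ref{prop:polarinf}. The two bounds in \eqref{eq:Stnf} will be obtained by two symmetric chains, one using \eqref{eq:SoS1} (lower bound on $t^n$) and one using \eqref{eq:SoS2} (upper bound on $|S|^2$). Throughout, the condition $t\geq C\delta$ is exactly what is needed to guarantee that every convex set appearing in these chains has inner radius at least $\delta$, so that Assumption \ref{ass:2} can be applied.

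\emph{Step 1: $t^n\leq C|S|^2$.} Set $B:=p_0+\tfrac{t}{2}(S-x_0)^\circ$. The inclusion \eqref{eq:SoS1} reads $\partial\psi^*(B)\subset S$, so \eqref{eq:ineq2} gives $\nu(B)\leq\mu(S)$. The set $S\subset\Omega$ is convex with $\ell(S)\geq t/L(\partial\psi(\Omega))\geq\delta$ by Proposition \ref{prop:lL}, and $B\subset\overline{\mathcal{O}}$ is convex with $\ell(B)=\tfrac{t}{2}\ell((S-x_0)^\circ)\geq\tfrac{t}{4n\,L(\Omega)}\geq\delta$ by Corollary \ref{cor:IL}; both hold under \eqref{eq:t}. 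Applying Assumption~\ref{ass:2} on both sides,
\[
\lambda\,\bigl(t/2\bigr)^n\bigl|(S-x_0)^\circ\bigr|\;\leq\;\nu(B)\;\leq\;\mu(S)\;\leq\;\Lambda\,|S|.
\]
Since $x_0\in S$, the set $S-x_0$ contains the origin, so the lower volume-product bound \eqref{eq:polarsub1} gives $|(S-x_0)^\circ|\geq \omega_n^2/\bigl((2n)^n|S|\bigr)$, and rearranging yields $t^n\leq C|S|^2$.

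\emph{Step 2: $|S|^2\leq Ct^n$.} Let $y_0\in S$ be the Santal\'o point of $S$, so by \eqref{eq:polarsup1}, $|S|\cdot|(S-y_0)^\circ|\leq n^n\omega_n^2$. Set $A:=\tfrac{1}{2}(S+y_0)$, which is convex with $A\subset S\subset\Omega$ and $\ell(A)=\tfrac{1}{2}\ell(S)\geq\delta$ when $t\geq 2L(\partial\psi(\Omega))\delta$. By \eqref{eq:SoS2}, $\partial\psi(A)\subset p_0+2t(S-y_0)^\circ$, and since $\partial\psi(A)\subset\overline{\mathcal{O}}$ by \eqref{eq:O}, combining \eqref{eq:ineq1} with Assumption~\ref{ass:2} on $\mu$ applied to $A$ yields
\[
\lambda\,\frac{|S|}{2^n}\;\leq\;\mu(A)\;\leq\;\nu(\partial\psi(A))\;\leq\;\nu\bigl((p_0+2t(S-y_0)^\circ)\cap\overline{\mathcal{O}}\bigr).
\]
The last set is a convex subset of $\overline{\mathcal{O}}$; provided it has inner radius at least $\delta$, Assumption \ref{ass:2} bounds its $\nu$-measure by $\Lambda(2t)^n|(S-y_0)^\circ|$, and invoking the Santal\'o upper bound $|(S-y_0)^\circ|\leq n^n\omega_n^2/|S|$ gives $|S|^2\leq Ct^n$.

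\emph{Main obstacle.} The delicate point is the very last step: the intersection of the convex body $p_0+2t(S-y_0)^\circ$ with $\overline{\mathcal{O}}$ need not have inner radius controlled by Corollary \ref{cor:IL} alone, because the boundary of $\mathcal{O}$ may cut it in a thin way. The natural remedy is to apply Assumption \ref{ass:2} to the larger convex set $p_0+2t(S-y_0)^\circ$ itself, viewed as living in the ambient space where the relevant mass sits in $\overline{\mathcal{O}}$, or equivalently to use the more general Proposition \ref{prop:delta} which accommodates unions/intersections and requires only an inner-radius lower bound of $\beta\delta$ on the generating convex family. Once this boundary technicality is handled, both estimates follow immediately from the polar-body identities and the volume-product inequalities, and the explicit threshold on $t$ in \eqref{eq:t} comes out of collecting the conditions $\ell(S)\geq\delta$, $\ell(\tfrac{t}{2}(S-x_0)^\circ)\geq\delta$, $\ell(\tfrac{1}{2}(S+y_0))\geq\delta$, and $\ell(2t(S-y_0)^\circ)\geq\delta$, all of which reduce to $t\gtrsim \max\{L(\partial\psi(\Omega)),n L(\Omega)\}\,\delta$.
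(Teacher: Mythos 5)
Your two chains are, step for step, the paper's own proof: the bound $t^n\leq C|S|^2$ via \eqref{eq:SoS1}, \eqref{eq:ineq2}, Assumption \ref{ass:2} and the lower volume-product bound \eqref{eq:polarsub1}; and the bound $|S|^2\leq Ct^n$ via \eqref{eq:SoS2}, \eqref{eq:ineq1}, Assumption \ref{ass:2} and the Santal\'o bound \eqref{eq:polarsup1} (the paper writes this last chain for an arbitrary $y\in S$ and takes the infimum over $y$ at the very end rather than fixing the Santal\'o point at the outset, which is cosmetic). The one place you diverge is the ``main obstacle'' you flag, and there your proposed remedies do not actually close the gap. The upper bound in Assumption \ref{ass:2} is only available for convex sets \emph{contained in} $\O$, and it genuinely fails for convex sets that merely meet $\overline\O$: the hypotheses \eqref{eq:munu2} allow $\nu$ to carry concentrated mass on $\pa\O$ (only balls inside $\O$ are constrained), so a thin convex body straddling $\pa\O$ can have $\nu$-measure far larger than $\Lambda$ times its volume. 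For the same reason, invoking Proposition \ref{prop:delta} does not help, since it too requires the generating convex sets to sit inside the relevant domain. So ``apply the assumption to the larger body in the ambient space'' is not a legitimate move.

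What the paper does instead is show that the intersection $(p_0+2t(S-y)^\circ)\cap\O$ \emph{contains} a fixed fat convex body that lies entirely in $\O$: namely $p_0+t(S-y)^\circ$, which by (the argument behind) \eqref{eq:SoS1} is contained in $\pa\psi(S)\subset\O$, and which by Corollary \ref{cor:IL} has inner radius $t\,\ell((S-y)^\circ)\geq t/(2nL(\Omega))\geq\delta$ under \eqref{eq:t}. Consequently the intersection is itself a convex subset of $\O$ with inner radius at least $\delta$, Assumption \ref{ass:2} applies to it directly, and its volume is trivially at most $|p_0+2t(S-y)^\circ|=(2t)^n|(S-y)^\circ|$, which is all the chain needs before invoking \eqref{eq:polarsup1}. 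In short, the missing ingredient is not a fattening or covering device but the containment, inside the intersection, of an $\O$-interior polar body whose inner radius is already controlled; note that this same containment-in-$\O$ point (via \eqref{eq:SoS1} and \eqref{eq:O}) is also what legitimizes the $\nu$ lower bound in your Step 1, where you need $B\subset\O$ and not merely $B\subset\overline{\O}$.
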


The proof of Proposition \ref{prop:St} relies on Proposition \ref{prop:secpolar} and on the lower and upper bound on the volume product proved in Proposition \ref{prop:polarinf}.

\begin{proof}
We use \eqref{eq:SoS1}  with  \eqref{eq:ineq2} to get:
\begin{equation}\label{eq:jkfg} 
\nu \left(p_0+\frac t 2 (S-x_0)^\circ\right) \leq \mu \left(\pa\psi^* \left(p_0+\frac t 2 (S-x_0)^\circ\right)\right) \leq \mu (S).
\end{equation}
Proposition \ref{prop:lL} and condition \eqref{eq:t} imply
$ \ell(S) \geq \delta $.
Since $S$ is convex and  $S\subset \Omega$,   \eqref{eq:mn3} gives
$$\mu(S) \leq \Lambda |S|.$$
Similarly, Corollary \ref{cor:IL} and  condition \eqref{eq:t} imply $ \ell(p_0+t (S-x_0)^\circ)= t \ell((S-x_0)^\circ) \geq \delta$, 
and \eqref{eq:SoS1} implies  $ p_0+t (S-x_0)^\circ \subset \pa\psi(S)\subset \O $. 
So  \eqref{eq:mn4} gives
$$\nu \left(p_0+\frac t 2 (S-x_0)^\circ\right) \geq \lambda \left|\frac t 2 (S-x_0)^\circ\right| =  \lambda\frac {t^n}{2^n} \left| (S-x_0)^\circ\right|.$$
Going back to \eqref{eq:jkfg}, we proved that
$$t^n |(S-x_0)^\circ| \leq \frac{2^n\Lambda}{\lambda} |S|.$$
Using the lower bound on the volume product \eqref{eq:polarsub1}, we deduce
$$ |S| ^2 \geq c t^n |(S-x_0)^\circ|  |S-x_0|\geq ct^n, $$
for a constant $c$ that depends only on $n$, $\lambda$ and $\Lambda$.
This is the second inequality in \eqref{eq:Stnf}.

To prove the first inequality, we use \eqref{eq:SoS2} with \eqref{eq:ineq1}
to write (for any $y\in S$):
$$\mu\left(\frac 1 2 (S+y)\right) \leq \nu\left(\pa\psi\left(\frac 1 2 (S+y) \right)\right) \leq \nu \left((p_0+ 2t (S-y) ^\circ\right)\cap\O).$$
We note that $ \frac 1 2 (S+y)\subset S \subset \Omega$  and Proposition \ref{prop:lL} together with condition \eqref{eq:t} imply
$\ell(\frac 1 2 (S+y)) = \frac 1 2 \ell(S) \geq \delta$.
Also, \eqref{eq:SoS1} implies that $p+ t (S-y) ^\circ \subset (p_0+ 2t (S-y) ^\circ )\cap \O$, and so Corollary \ref{cor:IL}  together with  condition \eqref{eq:t}  implies
$\ell ((p_0+ 2t (S-y) ^\circ)\cap \O)\geq \delta$.

We can thus use Assumption \ref{ass:2}  and get
$$
\frac{\lambda }{2^n} |S| = \lambda  \left|\frac 1 2 (S+y)\right| \leq \Lambda    \left|p+ 2t (S-y) ^\circ\right| = \Lambda 2^n t^n   \left| (S-y) ^\circ\right|
$$
and so
$$ |S| ^2 \leq C t^n |(S-y) ^\circ||S|$$
for a constant $C$ depending only on $n$, $\lambda$ and $\Lambda$.
The result follows by taking the infimum over $y$ and using the upper bound on the volume product \eqref{eq:polarsup1}.
\end{proof}
%%%%%%%%%%%%%%%%%%%%%%%%%%%%%%%%%%%%%%%%%%%%%%%%%%%%%%%%%%%%%%%%
\subsection{Scaling properties of sections: Proof of Proposition \ref{prop:sec}-(ii)}
%%%%%%%%%%%%%%%%%%%%%%%%%%%%%%%%%%%%%%%%%%%%%%%%%%%%%%%%%%%%%%%%
For any convex function $\psi$ and any $\tau \in(0,1)$ we have 
\begin{equation} \label{eq:Stau}
\tau S(x_0,p_0,t) \subset S(x_0,p_0,\tau t).
\end{equation}
The main result of this section is the following proposition which provides control in the other direction when $\psi$ is associated to our optimal transportation problem:
\begin{proposition}\label{prop:sec1}
Let $\psi:\R^n\to \R$ be a convex function satisfying \eqref{eq:O}, \eqref{eq:ineq1}-\eqref{eq:ineq2} for measures $\mu$ and $\nu$ such that \eqref{eq:munu1}-\eqref{eq:munu2} holds.
For any fixed $\tau\in (0,\ 1)$, there exists $\gamma\in(0,1)$, depending on  $\lambda$, $\Lambda$ and $n$ such that
if  $t$ satisfies \eqref{eq:t} and 
\begin{equation}\label{eq:t2} 
t\geq  8nL(\Omega) \delta
\end{equation}
then
%Then there exists $\tau_0>0$ such that for all $\tau\in (0,\tau_0)$, there exists $\lambda\in (0,1)$ such that
\begin{equation}\label{eq:Slambda}
 S(x_0,p_0,\tau t) \subset \gamma S(x_0,p_0, t)
  \end{equation}
 for all $x_0\in\Omega$, $p_0\in \pa\psi(x_0)$ and $t\geq 0$ such that $S(x_0,p_0,t)\subset \Omega$.
%\{ y\in S\, ;\, |S||(S-y)^\circ| \leq  \frac{1}{1-\lambda} \}$$ 
\end{proposition}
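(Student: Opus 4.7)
My plan is to combine John's Lemma with Alexandrov's estimate (Proposition \ref{prop:alek}), using polar-body volume bounds for the crucial Monge--Amp\`ere-type control. After the standard reduction $x_0=0$, $p_0=0$, $\psi(0)=0$, I would apply John's Lemma to $S:=S(0,0,t)$ to produce an affine map $T$ with $T(x_0^*)=0$ and $B_1(0)\subset \tilde S:=T(S)\subset B_n(0)$. Setting $\tilde\psi:=\psi\circ T^{-1}$, the assertion $S_\tau\subset \gamma S$ (dilation about $x_0^*$) becomes $\tilde S_\tau=\{\tilde\psi\leq \tau t\}\subset \gamma \tilde S$ (dilation about the origin). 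The renormalization is used only for its geometric consequences on $\tilde S$; the OT inequalities \eqref{eq:ineq1}--\eqref{eq:ineq2} and Assumption \ref{ass:1} (which are not affine invariant) will be invoked on the un-renormalized objects through \eqref{eq:SoS1}--\eqref{eq:SoS2} and Proposition \ref{prop:St}.

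Next I would apply Alexandrov's estimate to $\tilde\psi$ on the section $\tilde S$ centered at $\tilde b:=T(0)$ (with $0\in \pa\tilde\psi(\tilde b)$):
\[
|\tilde\psi(\tilde y)-t|^n\leq c_n\,L(\tilde S)^{n-1}\,\mathrm{dist}(\tilde y,\pa\tilde S)\,|\pa\tilde\psi(\tilde S)|.
\]
Using $L(\tilde S)\leq n$, together with the change-of-variables identity $|\pa\tilde\psi(\tilde S)|=|\pa\psi(S)|\,|S|/|\tilde S|$, the lower bound $|\tilde S|\geq\omega_n$, and $|S|\leq C t^{n/2}$ from Proposition \ref{prop:St}, the argument reduces to establishing the upper bound
\[
|\pa\psi(S)|\leq C\,t^{n/2}, \qquad C=C(n,\lambda,\Lambda).
\]

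This upper bound is the main obstacle. In the classical Monge--Amp\`ere setting it is immediate from $|\pa\psi(S)|=\int_S \det D^2\psi\leq (\Lambda/\lambda)|S|$, but here that equation is unavailable. My plan is to obtain it via \eqref{eq:SoS2}: taking $y_0$ to be the Santal\'o point $\sigma(S)$ of $S$ gives $\pa\psi(\tfrac{1}{2}(S+\sigma(S)))\subset 2t(S-\sigma(S))^\circ$, whence
\[
|\pa\psi(\tfrac{1}{2}(S+\sigma(S)))|\leq (2t)^n|(S-\sigma(S))^\circ|\leq \frac{(2n)^n\omega_n^2\,t^n}{|S|}\leq C\,t^{n/2}
\]
using Santal\'o's inequality and the lower bound on $|S|$ from Proposition \ref{prop:St}. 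The delicate point is that this only bounds $|\pa\psi|$ on the half-dilation $\tfrac{1}{2}(S+\sigma(S))\subsetneq S$; to upgrade to $S$ itself I would use a covering argument (a bounded number of half-dilations $\tfrac{1}{2}(S+y_i)$ centered at well-chosen interior points suffices, with the number depending only on $n$ after John's renormalization), together with the polar-body estimate $t^n|S^\circ|\leq C|S|$ derived exactly as in the proof of Proposition \ref{prop:St} via \eqref{eq:SoS1} and \eqref{eq:ineq2}. Conditions \eqref{eq:t}--\eqref{eq:t2} on $t$ ensure throughout that the polar bodies in play have inner radius at least $\delta$, so that Assumption \ref{ass:1} is applicable.

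With the bound $|\pa\tilde\psi(\tilde S)|\leq Ct^n$ established, for $\tilde y\in \tilde S_\tau$ one has $|\tilde\psi(\tilde y)-t|\geq (1-\tau)t$, and Alexandrov yields $\mathrm{dist}(\tilde y,\pa\tilde S)\geq c_0(1-\tau)^n$ for some $c_0=c_0(n,\lambda,\Lambda)>0$. To conclude I use a ray argument: since $0\in B_1\subset \tilde S$ the set $\tilde S$ is star-shaped about the origin, so the ray from $0$ through $\tilde y$ exits $\tilde S$ at some $\tilde z$ with $|\tilde z|\leq n$; writing $\tilde y=\sigma\tilde z$, the estimate $(1-\sigma)|\tilde z|\geq \mathrm{dist}(\tilde y,\pa\tilde S)$ forces $\sigma\leq 1-c_0(1-\tau)^n/n$, hence $\tilde y\in (1-c_0(1-\tau)^n/n)\tilde S$. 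Pulling back through $T^{-1}$ gives $S_\tau\subset \gamma S$ with $\gamma:=1-c_0(1-\tau)^n/n\in(0,1)$.
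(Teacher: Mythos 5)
There is a genuine gap at the central step of your argument, namely the claimed upper bound $|\pa\psi(S)|\leq C\,t^{n/2}$. First, the covering argument you propose to pass from the half-dilation to all of $S$ cannot work: for $y_i$ in the interior of $S$ the set $\frac{1}{2}(S+y_i)$ is a \emph{compact subset of the interior} of $S$ (a nontrivial convex combination of an interior point with any point of $S$ is interior), so a finite union of such half-dilations stays at positive distance from $\pa S$ and covers neither $S$ nor $\mathrm{int}(S)$; and the bound $(2t)^n|(S-y)^\circ|$ supplied by \eqref{eq:SoS2} blows up as $y\to\pa S$ (this is precisely the content of Lemma \ref{lem:B}), so passing to a countable cover gives a divergent sum. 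Second, and more fundamentally, $|\pa\psi(S)|\leq C|S|$ is in essence the upper Alexandrov bound in \eqref{eq:MAalex}, which the paper explicitly identifies as unavailable here: extracting $|\pa\psi(A)|\leq \Lambda|A|$ from \eqref{eq:ineq2} requires $\pa\psi^*(\pa\psi(A))=A$, i.e.\ strict convexity of $\psi$, and for $\delta>0$ the potential may be piecewise affine so that $\pa\psi^*(\pa\psi(S))$ is much larger than $S$; moreover one cannot convert $\nu(\pa\psi(S))$ into a Lebesgue bound on $|\pa\psi(S)|$, since $\pa\psi(S)$ need not be a union of convex sets of inner radius $\gtrsim\delta$ to which Assumption \ref{ass:1} applies. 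So the estimate on which your Alexandrov step rests is both unproved and doubtful under the stated hypotheses; the remainder of your argument (the renormalized Alexandrov inequality and the ray argument) would be fine if that bound were available.

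The paper's proof circumvents this entirely and never bounds $|\pa\psi(S)|$ from above. It instead applies $\pa\psi^*$ to the polar body $\frac{1}{2}(t-\psi(y))(S-y)^\circ$ (Lemma \ref{lem:A}), uses \eqref{eq:ineq2} together with Assumption \ref{ass:1} to dominate the $\nu$-measure of that polar body by $\mu(S)\leq\Lambda|S|$, and then combines the quantitative blow-up $|S|\,|(S-y)^\circ|\geq c(1-\gamma)^{-1}$ for $y\in S\setminus\gamma S$ (Lemma \ref{lem:B}) with $|S|^2\leq Ct^n$ from Proposition \ref{prop:St} to force $\psi(y)\geq\tau t$ on $S\setminus\gamma S$. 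To salvage your route you would need to replace the upper bound on $|\pa\psi(S)|$ by dual information of this polar-body type.
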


First, we prove the following technical lemma for convex sets:
\begin{lemma}\label{lem:B}
Let $K$ be a compact convex set with nonempty interior such that $0\in K$.
For all $\gamma\in(0,1)$ and  $y \in K\setminus \gamma K$ we have:
$$ |K||(K-y)^\circ| \geq  \frac{c}{1-\gamma} $$
for some constant $c$ depending only on the dimension. 
\end{lemma}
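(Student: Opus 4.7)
The plan is to reduce to a normalized setting via John's Lemma, and then construct an explicit cone inside $(K-y)^\circ$ whose apex is the polar dual of the direction separating $y$ from $\gamma K$.

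I would first observe that the volume product $|K|\,|(K-y)^\circ|$ is invariant under affine transformations: if $T(x)=Ax+b$, then $T(K)-T(y)=A(K-y)$ and $(A(K-y))^\circ=(A^*)^{-1}(K-y)^\circ$, so the determinant factors cancel. Applying John's Lemma~\ref{lem:J}, I may therefore assume $B_1(0)\subset K\subset B_n(0)$. In this reduction the dilation center $0$ becomes some point $z:=T(0)\in K$ with $|z|\le n$, and the hypothesis $y\in K\setminus\gamma K$ becomes $y\in K\setminus\bigl(z+\gamma(K-z)\bigr)$, i.e.\ dilation of $K$ about $z$ rather than the origin.

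Since $(y-z)/\gamma\notin K-z$, the separating hyperplane theorem produces a unit vector $v$ with $v\cdot(y-z)>\gamma b$, where $b:=h_{K-z}(v)$. A short check shows $b>0$ (otherwise $v\cdot(y-z)\le 0$ for any $y-z\in K-z$), hence $a_+:=h_{K-y}(v)=b-v\cdot(y-z)<(1-\gamma)b$; since $K\subset B_n(0)$ and $|z|\le n$ we have $b\le 2n$, so $a_+<2n(1-\gamma)$. By definition of the polar, $v/a_+\in(K-y)^\circ$.

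Setting $H:=v^\perp$ and $P:=\pi_H(K-y)$, the cross-section identity $(K-y)^\circ\cap H=P^\circ_H$ (polar taken within $H$) follows because any $w\in H$ is orthogonal to $v$. The convex hull $C:=\mathrm{conv}\bigl(\{v/a_+\}\cup P^\circ_H\bigr)$ is therefore contained in $(K-y)^\circ$, and the standard cone formula gives $|C|=|P^\circ_H|/(n a_+)$. Since $K-y\subset B_{2n}(0)$, we have $P\subset B_{2n}^{n-1}(0)$, hence $P^\circ_H\supset B_{1/(2n)}^{n-1}(0)$ and $|P^\circ_H|\ge\omega_{n-1}/(2n)^{n-1}$. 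Combined with $|K|\ge\omega_n$ and the upper bound on $a_+$, this yields $|K|\,|(K-y)^\circ|\ge c_n/(1-\gamma)$ for some constant $c_n>0$ depending only on the dimension.

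The main subtlety I expect is correctly propagating the hypothesis $y\in K\setminus\gamma K$ through the affine reduction: dilation about $0$ does not commute with a general affine map, and one must apply the separation argument to $K-z$ rather than $K$. The degenerate case $y\in\partial K$ (where $(K-y)^\circ$ is unbounded, making the inequality trivial) is handled separately, which also ensures $a_+>0$ throughout the main argument.
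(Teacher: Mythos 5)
Your proof is correct and follows essentially the same route as the paper's: normalize $K$ by John's Lemma and then exhibit an explicit cone inside $(K-y)^\circ$ whose height is the reciprocal of the width of $K-y$ in a direction where that width is $O(1-\gamma)$. The only real deviations are cosmetic — you use an affine John map (correctly re-tracking the dilation center $z=T(0)$) and obtain the thin direction by separating $(y-z)/\gamma$ from $K-z$, whereas the paper keeps the origin fixed with a linear John map and gets the same width bound from the nearest-point projection of $y$ onto $\partial K$ together with the observation that $y/\gamma\notin K$ forces $\mathrm{dist}(y,\partial K)\leq 2L(K)(1-\gamma)$.
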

Note that $|(K-y)^\circ|\to\infty$ when $y\to \pa K$  since  $(K-y)^\circ$ is unbounded when  $y\in \pa K$. So this lemma simply quantifies how large the volume product will be when $y$ is close to $\pa K$ (which corresponds to $\gamma\to1$). 

\begin{proof}
John's lemma implies that there exists a linear transform $L:\R^n\to \R^n$ such that
$ B_{1}(b) \subset L(K) \subset B_n(b)$
for some $b\in \R^n$ (we do not take $b=0$ as translations have a strong effect on the polar body). We note that $L(K-y)  = L(K)-L(y)$ and $L(y) \in L( K) \setminus\gamma L( K)$.
Since the product volume $|K^\circ |\, |K|$ is invariant under linear transformation (although not under affine transformation), it is  enough to prove the result 
under  the condition 
 \begin{equation}\label{eq:norm}
 B_{1}(b) \subset K \subset B_n(b).
\end{equation}
We now claim that 
\begin{equation}\label{eq:Sybd}
|(K-y)^\circ | \geq \frac{c}{\mathrm{dist}(y,\pa K) L(K)^{n-1}}\geq \frac{1}{1-\gamma} \frac{c}{L(K)^{n}}.
\end{equation}
To prove \eqref{eq:Sybd}, we take $y_0\in\pa K$ such that $ |y_0-y|=\mathrm{dist}(y,\pa K) =:\eta$.
Up to a rotation, we can always assume that the vector $y_0-y$ is in the direction $e_n$ (that is $y_0-y = \eta\, e_n$) and we denote $x=(x',x_n)$ a vector in $\R^n$.
By convexity of $K$ we have $ (x-y_0)\cdot (y-y_0)\geq 0$ for all $x\in K$,
and so
$$ x_n-y_n \leq\eta\qquad \forall x\in K.$$
Given $z$ such that
$ 0< z_n < 1/\eta$ and $|z'|\leq \frac{1-z_n\,\eta}{L(K)}$ we then have, for all $x\in K$:
$$ z\cdot (x-y) = z'\cdot (x'-y') + z_n (x_n-y_n) \leq | z'| L(K) + z_n \eta \leq 1.$$
We deduce that
$$ \left\{ z\in \R^n\, ;\, 0< z_n < 1/\eta, \;  |z'|\leq \frac{1-z_n\eta}{L(K)}\right\} \subset (K-y)^\circ.$$
Since the cone in the left hand side has volume $\frac{C}{\eta\,(L(K))^{n-1}}$ 
we get the first inequality in \eqref{eq:Sybd}.

To obtain the second inequality, we use the fact that $\frac y\gamma\notin K$ to get
$$\mathrm{dist}(y,\pa K) \leq \min\left\{L(K), \left|\frac y \gamma-y\right|  \right\}
\leq L(K)\min\left\{ 1, \frac{1-\gamma}{\gamma}\right\}\leq 2 L(K)(1-\gamma)
$$
where we used the fact that $0\in K$ and so $|y|\leq L(K)$, which concludes the proof of \eqref{eq:Sybd}.

Finally, \eqref{eq:Sybd} implies
$$ 
|(K-y)^\circ | |K| \geq \frac{1}{(1-\gamma)}\frac{c_n |K|}{L(K)^{n}}, 
$$ 
and 
under condition \eqref{eq:norm} we have $\frac{ |K|}{L(K)^{n}} \geq \frac{|B_1|}{2n}$ so  the result follows.
\end{proof}

Next, we prove the following refinement of \eqref{eq:SoS1}.
\begin{lemma}\label{lem:A}
Let $\psi:\R^n\to\R$ be a convex function such that $0\in \pa\psi(0)$. For $t> 0$ and $y\in S:=S(0,0,t)$, there holds:
$$ \pa\psi^* \left(\frac 1 2  \left(t-\psi(y) \right)(S-y)^\circ\right) \subset S  .$$
\end{lemma}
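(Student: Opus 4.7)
The plan is to extend the argument used for the second inclusion of \eqref{eq:SoS1} in the proof of Proposition \ref{prop:secpolar}, but now centering the test affine function at a generic point $y\in S$ instead of the origin. The subtlety is that the ``margin'' between $\psi(y)$ and the level $t$ now depends on $y$, which is precisely what forces the dilation factor to be $\frac{1}{2}(t-\psi(y))$ rather than $\frac{t}{2}$.

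I would first reduce to the nontrivial case $\psi(y)<t$: if $\psi(y)=t$ (so $y\in\pa S$) the set $\frac{1}{2}(t-\psi(y))(S-y)^\circ$ is understood to reduce to $\{0\}$, and the inclusion $\pa\psi^*(0)\subset S$ just records that every minimizer of $\psi$ in $\pa\psi^*(0)$ lies in $S$, which follows from $0\in\pa\psi(0)$ and the fact that $0\in S$ together with a direct convexity argument. So assume $\psi(y)<t$ and fix
\[
z\in \tfrac{1}{2}(t-\psi(y))(S-y)^\circ,
\]
which by definition of the polar body means $z\cdot(w-y)\leq \frac{1}{2}(t-\psi(y))$ for every $w\in S$.

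Pick any $x\in\pa\psi^*(z)$; equivalently, $z\in\pa\psi(x)$, so the convex function $u(w):=\psi(w)-z\cdot w$ attains its global minimum at $x$. I would argue by contradiction and assume $x\notin S$. Since $y$ lies in the interior of $S$ and $x$ does not, the segment $[y,x]$ crosses $\pa S$ at some point $w_0=y+s(x-y)$ with $s\in(0,1)$, and in particular $\psi(w_0)=t$. Three inequalities can then be combined:
\begin{itemize}
\item Polar condition applied at $w_0\in S$:\quad $s\,z\cdot(x-y) = z\cdot(w_0-y)\leq \frac{1}{2}(t-\psi(y))$.
\item Convexity of $\psi$ on $[y,x]$:\quad $t=\psi(w_0)\leq (1-s)\psi(y)+s\psi(x)$, i.e.\ $\psi(x)-\psi(y)\geq \frac{t-\psi(y)}{s}$.
\item Minimum property $u(x)\leq u(y)$:\quad $\psi(x)-\psi(y)\leq z\cdot(x-y)$.
\end{itemize}
Chaining these gives $\frac{t-\psi(y)}{s}\leq z\cdot(x-y)\leq \frac{t-\psi(y)}{2s}$, which contradicts $t-\psi(y)>0$. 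Hence $x\in S$, which is the claim.

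The only delicate point is handling the boundary situation $\psi(y)=t$ and the convention $0\cdot (S-y)^\circ=\{0\}$ when $(S-y)^\circ$ is unbounded; beyond this bookkeeping, the argument is a clean two-line application of convexity plus the polar inequality, and it reduces exactly to the proof in Proposition \ref{prop:secpolar} when $y=0$ and $\psi(0)=0$.
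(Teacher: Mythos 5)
Your proof is correct and follows essentially the same route as the paper: the paper introduces $u(x)=\psi(x)-\psi(y)-z\cdot(x-y)$, notes that the polar condition forces $u>0$ on $\pa S$ while $u(y)=0$, and concludes by convexity that all minima of $u$ (i.e.\ the points of $\pa\psi^*(z)$) lie in $S$. Your contradiction argument along the segment $[y,x]$ is just the unfolded version of that same convexity step, so the two proofs are equivalent in substance.
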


\begin{proof}
We can assume that $\psi(y)<t$ (otherwise the result is trivial).
Let $z\in \frac 1 2  \left(t-\psi(y) \right)(S-y)^\circ$, that is 
$$z\cdot (x-y)\leq\frac{1}{2} (t-\psi(y)) \mbox{  for all } x\in S.$$ 
The function $u(x)= \psi(x)-\psi(y) - z\cdot(x-y)$ hence satisfies that
$u(x)> 0$ for all $x\in\pa S$. 
Since $u(y)=0$ with $y\in S$ and $u$ is convex, it follows that all minima of $u$ are in the interior of $S$.
Furthermore again since $u$ is convex, the set of points $x_0$ where $0\in \pa u(x_0)$ coincides with the set of points where $u$ has a minimum.
We deduce $\{ x_0\,;\, 0\in\pa u(x_0)\} \subset S$.
Since $0\in \pa u(x_0) \Leftrightarrow z\in\pa \psi(x_0)  \Leftrightarrow  x_0 \in \pa \psi^*(z)$. This implies
$\pa\psi^*(z) \subset S$
and the result follows.
\end{proof}

\bigskip

\begin{proof}[Proof of Proposition \ref{prop:sec1}]
Up to replacing $\psi$ with the function $\overline \psi (x) = \psi(x_0+x) - p_0 \cdot x $, we see that it is enough to prove the result when $x_0=0$ and $p_0=0$.

We recall that  Lemma \ref{lem:A} implies 
$$ \pa\psi^* \left(\frac 1 2  \left(t-\psi(y) \right)(S-y)^\circ\right) \subset S, $$
and so \eqref{eq:ineq2} gives
\begin{equation}\label{eq:hjkgk} 
\nu \left(  \frac 1 2  \left(t-\psi(y) \right)(S-y)^\circ \right) \leq \mu \left( \pa\psi^* \left(\frac 1 2  \left(t-\psi(y) \right)(S-y)^\circ\right) \right) \leq \mu(S) .\end{equation}
We now want to use Assumption \ref{ass:2}
 to conclude, but this requires $\ell(S)$ and $\ell\left( \frac 1 2  \left(t-\psi(y) \right)(S-y)^\circ \right)$ to be large enough.
For the first one, we use Proposition \ref{prop:lL} with condition \eqref{eq:t} to get
$ \ell(S) \geq \delta $.
For the second one, we note that Corollary \ref{cor:IL} implies
\begin{equation}\label{eq:bhjkg}
\ell\left( \frac 1 2  \left(t-\psi(y) \right)(S-y)^\circ \right) = \frac 1 2  (t-\psi(y) )\ell( (S-y)^\circ) \geq \frac{1}{4nL(\Omega)} (t-\psi(y) ), \end{equation}
and we need to distinguish two cases.

\noindent {\bf Case 1:}  If $\frac{1}{4nL(\Omega)} (t-\psi(y) )  \leq \delta$, then \eqref{eq:t2}  implies $ t-\psi(y) \leq 4nL(\Omega)\delta \leq \frac 1 2  t $ and so
%we have $\psi(y) \geq t - \frac{\delta}{c} \geq \left( 1 - \frac{1}{cT}\right) t $ and \eqref{eq:t2} 
% implies 
 $$ \psi(y) \geq \frac 1 2 t.$$

\noindent {\bf Case 2:} If  $\frac{1}{4nL(\Omega)} (t-\psi(y) ) \geq \delta$, then \eqref{eq:bhjkg}
gives $\ell( \frac 1 2  \left(t-\psi(y) \right)(S-y)^\circ )\geq \delta$ and so we can use
Assumption \ref{ass:2} in \eqref{eq:hjkgk} to get
\begin{align*}
\lambda \left| \frac 1 2  \left(t-\psi(y) \right)(S-y)^\circ \right| \leq  \nu \left(  \frac 1 2  \left(t-\psi(y) \right)(S-y)^\circ \right) \leq \mu(S) \leq\Lambda |S|.
\end{align*}

%(for the last inequality, we used the fact that Proposition \ref{prop:lL} and condition \eqref{eq:t} imply $ \ell(S) \geq \delta $).
 Proposition \ref{prop:St} implies 
$$ \frac{\lambda }{2^n} \left(t-\psi(y) \right)^n |(S-y)^\circ | |S|  \leq  \Lambda |S|^2 \leq \Lambda C t^n$$
and so (using Lemma \ref{lem:B}) we deduce
$$
 (t-\psi(y))^n \leq   C  |S|^2(1-\gamma)\leq C  (1-\gamma ) t^n\,, \qquad \forall y\in S\setminus\gamma  S
 $$
that is
$$ 
\psi(y) \geq \left( 1- \left( C(1-\gamma)\right)^{1/n}\right) t \, ,\qquad \forall y\in S\setminus\gamma S.
$$
If  we take $\gamma = 1-\frac 1 { C 2^n}$, then
 we find $$ \psi(y) \geq \frac 1 2  t \qquad\forall y\in S\setminus \gamma S.$$  
 We can now conclude: Combining both cases, we proved that $y\notin S(\frac 1 2 t)$  for all $y\in S\setminus\gamma S$
that is $S\setminus \gamma S \subset S\setminus S(\frac 1 2 t)$ which is equivalent to
$S(\frac 1 2 t) \subset \gamma S$.
\end{proof}
%%%%%%%%%%%%%%%%%%%%%%%%%%%%%%%%%%%%%%%%%%%%%%%%%%%%%%%%%%%%%%%%%%%%%
\subsection{Engulfing property of sections:   Proof of Proposition \ref{prop:sec}-(iii)}
%%%%%%%%%%%%%%%%%%%%%%%%%%%%%%%%%%%%%%%%%%%%%%%%%%%%%%%%%%%%%%%%%%%%%%%%
We are now able to show that the scaling property implies the engulfing property of sections through the following proposition.
\begin{proposition} \label{prop:engulf}
Let $\psi:\R^n\to\R$ be a convex function, $p_0\in\pa\psi(x_0)$ and assume that there exists $\gamma\in (0,1)$ such that 
\begin{equation}\label{eq:beta}
\gamma^{-1} S(x_0,p_0, t)\subset S(x_0,p_0, 2 t)\subset \Omega.
\end{equation}
Then there exists $\theta>1$ depending only on $n$ and $\gamma$   such that if $y \in S(x_0,p_0,t) $ then $S(x_0,p_0,t)  \subset S(y,q,\theta t)$ for all $q\in \pa\psi(y)$.
\end{proposition}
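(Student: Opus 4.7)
The plan is to first normalize by replacing $\psi$ with $x\mapsto \psi(x)-\psi(x_0)-p_0\cdot(x-x_0)$, so we may assume $x_0=0$, $p_0=0$, $\psi(x_0)=0$; this forces $\psi\geq 0$ everywhere, identifies $S:=S(x_0,p_0,t)$ with $\{\psi\leq t\}$, and turns the hypothesis into $\gamma^{-1}S\subset\{\psi\leq 2t\}$, where $\gamma^{-1}S$ is the dilation of $S$ about its centroid $x^*_0$. Fixing $y\in S$, $q\in\pa\psi(y)$, and $z\in S$, the goal reduces to bounding $\psi(z)-\psi(y)-q\cdot(z-y)$ by $\theta\,t$ for some $\theta=\theta(n,\gamma)>1$.

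The key idea is to extend past $y$ in the direction opposite to $z$. For a parameter $s>0$ to be chosen, set $w_s:=y+s(y-z)=(1+s)y-sz$. If $w_s$ lies in $\gamma^{-1}S$, then $\psi(w_s)\leq 2t$, and the subgradient inequality at $y$ applied in the direction $w_s-y=s(y-z)$ yields $s\,q\cdot(y-z)\leq \psi(w_s)-\psi(y)\leq 2t$. Combined with $\psi(z)-\psi(y)\leq t$ (since $0\leq\psi(y),\psi(z)\leq t$), this gives
$$\psi(z)-\psi(y)-q\cdot(z-y)\leq t+\frac{2t}{s},$$
so it remains to exhibit an $s$ bounded below by a constant depending only on $n$ and $\gamma$.

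To produce such an $s$, I would invoke the Minkowski centroid inequality: for any convex body $K\subset\R^n$ with centroid at the origin, $-K\subset nK$. Applied to $S-x^*_0$, this provides, for every $z\in S$, a point $v\in S$ with $z-x^*_0=-n\,(v-x^*_0)$, i.e.\ $z=(n+1)\,x^*_0-n\,v$. Since $w_s\in\gamma^{-1}S$ is equivalent to $\gamma w_s+(1-\gamma)x^*_0\in S$, substituting for $z$ gives
$$\gamma w_s+(1-\gamma)x^*_0 = \gamma(1+s)\,y+\gamma s n\,v+\bigl[1-\gamma-\gamma s(n+1)\bigr]\,x^*_0.$$
The three coefficients sum to $1$ and are simultaneously nonnegative iff $s\leq s_0:=\frac{1-\gamma}{\gamma(n+1)}$. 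Choosing $s=s_0$, the right-hand side is a convex combination of $y,v,x^*_0\in S$, so $w_{s_0}\in\gamma^{-1}S$, and the proposition follows with
$$\theta:=1+\frac{2}{s_0}=1+\frac{2\gamma(n+1)}{1-\gamma}.$$

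The main subtlety, and the reason a simple convex-combination argument does not suffice, lies in the negative coefficient $-s$ in front of $z$ in $w_s=(1+s)y-sz$: one cannot show $w_s\in S$ directly by convexity. The Minkowski centroid inequality is the natural tool that converts the reflected direction $-(z-x^*_0)$ back into a genuine element of $S-x^*_0$ at the cost of a factor $n$, and it is precisely this step that forces the dimensional dependence in $\theta$.
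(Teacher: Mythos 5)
Your proof is correct and follows essentially the same route as the paper's: extend past $y$ to $w_s=(1+s)y-sz$, use the hypothesis $\gamma^{-1}S\subset S(x_0,p_0,2t)$ to get $\psi(w_s)\leq 2t$, and apply the subgradient inequality at $y$, arriving at the same constant $\theta=1+\frac{2\gamma(n+1)}{1-\gamma}$. The only (minor) difference is that you invoke the centroid inequality $-K\subset nK$ where the paper uses John's lemma to obtain $-S\subset nS$ about the John center; your version in fact meshes more cleanly with the paper's convention that dilations of sections are taken about the center of mass.
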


Proposition \ref{prop:sec1} implies that there exists $\gamma\in(0,1)$ such that if 
$$ t\geq \max\left \{  2n L(\pa\psi(\Omega)), 8n L(\Omega)\right\}  \delta
\qquad \mbox{ and  } \qquad
S(x_0,p_0, 2 t)\subset \Omega
$$
then
$$ \gamma^{-1} S(x_0,p_0, t)\subset S(x_0,p_0, 2 t).
$$
So the assumption of Proposition \ref{prop:engulf} are satisfied in such a case and 
Proposition \ref{prop:sec}-(iii) follows.

\begin{proof}
We fix $y\in S(x_0,p_0,t)$.
Using John's lemma, we can assume (up to a translation) that $S(x_0,p_0,t)$ is almost symmetric in the sense that
\begin{equation}\label{eq:-n}
-S \subset nS.
\end{equation}

We want to prove that $\psi(x) \leq \psi(y) + q\cdot (x-y) +\theta t$ for all $x\in S(x_0,p_0,t)$.
Since $ p_0\in \pa\psi(x_0)$, we have
\begin{align*}
 \psi(y) + q\cdot (x-y) +\theta t
 & \geq \psi(x_0) + p_0\cdot (y-x_0) + q\cdot (x-y) +\theta t\\
 & \geq \psi(x_0) + p_0\cdot (x-x_0) + (q-p_0)\cdot (x-y) +\theta t\\
 & \geq \psi(x) - t  + (q-p_0)\cdot (x-y) +\theta t
\end{align*}
for all $x\in S(x_0,p_0,t)$. So the result will follow if we can show that $(q-p_0)\cdot (x-y) \geq -(\theta-1)t $  for all $ x\in S(x_0,p_0,t)$. Equivalently, we need to show that
\begin{equation}\label{eq:pq}  
(q-p_0)\cdot (y-x) \leq Ct \quad \mbox{ for all } x\in S(x_0,p_0,t).
\end{equation}
for some constant $C$.

The fact that $q\in\pa\psi(y)$ and the definition of $ S(x_0,p_0, 2 t)$ implies
$$\psi(y) + q \cdot (x-y) \leq \psi(x) \leq \psi(x_0) + p_0\cdot (x-x_0)+ 2 t\qquad \forall x\in S(x_0,p_0,2 t)$$
while the fact that $p_0\in \pa \psi(x_0)$ gives
$$\psi(x_0) + p_0\cdot(y-x_0)\leq \psi(y).$$
Adding these two inequality yields
\begin{equation}\label{eq:qpxy}
 (q-p_0)\cdot (x-y) \leq 2 t \quad \mbox{ for all } x\in S(x_0,p_0,2 t).
\end{equation}

In order to get \eqref{eq:pq}, we take $x\in S(x_0,p_),t)$ and define $\bar x = y+ \delta (y-x) = (1+\delta)y - \delta x$.
Condition \eqref{eq:-n} implies $\bar x \in (1+\delta + \delta n) S(x_0,p_0,t)$ so, by choosing $\delta$ such that $1+\delta (1+  n) =\gamma^{-1}$, we get (using  \eqref{eq:beta}):
$$ \bar x = y+ \delta (y-x) \subset \gamma^{-1} S(x_0,p_0,t) \subset S(x_0,p_0, 2 t)$$ 
We can thus take $x=\bar x$ in \eqref{eq:qpxy} and deduce \eqref{eq:pq}  with $C=\frac{2}{\delta} =\frac{2 \gamma (1+n)}{1-\gamma} $.

\end{proof}

\appendix 

%%%%%%%%%%%%%%%%%%%%%%%%%%%%%%%%%%%%%%%%%%%%%%%%%%%%%%%%%%%%%%%%%%%%
\section{Approximation of regular measures by discrete measures}\label{app:app}
%%%%%%%%%%%%%%%%%%%%%%%%%%%%%%%%%%%%%%%%%%%%%%%%%%%%%%%%%%%%%%%%%%%%
In this section, we assume that $\mu=f\, dx$ and $\nu= g \, dy$ are absolutely continuous measures with density $f$ and $g$ that are bounded above and below. 
In particular, there exists an optimal transport map $T $ which is the gradient of a convex function $\psi :\Omega \to \R^n$ (see Theorem \ref{thm:brenier}).

Given $\delta>0$, we can construct approximations $\mu_\delta$ and $\nu_\delta$ which satisfies the assumptions of Theorem~\ref{thm:2}. The construction is classical and recalled here briefly:
We take a set of points $x_i\in\Omega$, $i=1,\dots ,n$ such that $\Omega \subset \cup_{i=1}^N B_\delta (x_i)$ and consider the associated Voronoi tessellations 
$$ V_i = \{ x\in \Omega\, ;\, |x-x_i| = \min_{1\leq j\leq N} |x-x_j|\}.$$
We then set 
$$ \mu_\delta := \sum_{i=1}^N f_i \delta_{x_i} , \qquad f_i := \mu(V_i).$$
We recall that $V_i \subset B_\delta(x_i)$ for all $i$ and that $ W_2(\mu,\mu_\delta) \leq \delta.$ Consequently, $\mu_\delta$ satisfies \eqref{eq:munu1}.
We proceed similarly with $\nu$, with a set of points $\{y_j\}_{j=1}^{M}$ and $ \nu_\delta := \sum_{j=1}^M g_i \delta_{y_j}$ and obtain that $\nu_\delta$ satisfies~\eqref{eq:munu2} so that Assumption~\ref{ass:1} holds.

The (discrete) optimal transport plan between $\mu_\delta$ and $\nu_\delta$ has the form
$$
\gamma_\delta = \sum_{i=1}^N\sum_{j=1}^M  \gamma_{\delta,ij} \delta_{x_i} \delta_{y_j}
$$
and if $\psi_\delta$ is an associated Kantorovich potential, we know that $\gamma_\delta$ must be supported in the subdifferential of $\psi_\delta$, that is
$$
\gamma_{\delta,ij}>0 \Rightarrow y_j \in \pa \psi_\delta (x_i).
$$
As a consequence, for any Borel set $A\subset \Omega$, we have that
\[
\mu_\delta(A)= \sum_{i,\;x_i\in A} f_i=\sum_{i,\;x_i\in A}\sum_{j,\;y_j\in \pa\psi(x_i)} \gamma_{\delta,i,j}\leq \sum_{j,\;y_j\in \pa\psi(A)} \sum_{i}\gamma_{\delta,i,j}=\nu(\pa\psi_\delta(A)),
\]
since $y_j\in \pa\psi(x_i)$ and $x_i\in A$ implies that $y_j\in \pa\psi_\delta(A)$. This proves that $\mu_\delta,\;\nu_\delta$ and $\psi_\delta$ satisfy~\eqref{eq:ineq1}. Since $\psi^*_\delta$ is a Kantorovich potential between $\nu_\delta$ and $\mu_\delta$, the same argument also yields~\eqref{eq:ineq2}. 

We have hence verified most of assumptions of Theorem~\ref{thm:2}. We only need to show that we can find $\Omega'\subset\subset\Omega$, and $\rho$ s.t. $S(x,p,\rho)\in \Omega$ for every $x\in \Omega$. This is in fact non-trivial but we can fortunately rely on some of the existing literature:

First, we recall one of the main results of \cite{li-nochetto} which can be written as follows with our notations.
\begin{theorem}\label{thm:LN}
If  $-\psi(x)$ is $\lambda$-convex (in  that case,  the triple $(\mu,\nu,\psi)$ is said to be $\lambda$-regular), then there exists a constant $C$ such that
$$
\sum_{i=1}^N f_i \mathrm{dist} \,(\na \psi(x_i) , \pa \psi_\delta(x_i)) ^2 \leq C \lambda \delta.
$$
\end{theorem}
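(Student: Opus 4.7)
The statement quantifies how well, on average, the discrete subdifferentials $\pa\psi_\delta(x_i)$ approximate the true gradient $\na\psi(x_i)$. The natural strategy is to compare primal and dual values across the two transport problems and to use the $\lambda$-convexity hypothesis to convert a scalar duality gap into the desired pointwise quadratic bound. The key quantity to study is the duality gap of the continuous potentials evaluated on the discrete problem,
$$
D:=\int \psi\,d\mu_\delta+\int \psi^*\,d\nu_\delta-\int x\cdot y\,d\gamma_\delta,
$$
where $\gamma_\delta$ is the optimal discrete plan. Using the marginal identities $\sum_j\gamma_{\delta,ij}=f_i$ and $\sum_i\gamma_{\delta,ij}=g_j$, this rewrites as $D=\sum_{i,j}\gamma_{\delta,ij}[\psi(x_i)+\psi^*(y_j)-x_i\cdot y_j]$, a sum of nonnegative terms by Fenchel--Young.

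The first step would be to show that $D\leq C\delta$. Starting from the Kantorovich duality identity $\int\psi\,d\mu+\int\psi^*\,d\nu=\int x\cdot y\,d\gamma^*$ for the continuous optimal plan $\gamma^*$, one writes $D$ as three differences: $\int\psi\,d(\mu_\delta-\mu)$ and $\int\psi^*\,d(\nu_\delta-\nu)$, each bounded by $L\,W_1\leq L\delta$ via Lipschitz continuity of $\psi,\psi^*$ on bounded sets; and $\int x\cdot y\,d(\gamma_\delta-\gamma^*)$, which using $2\int x\cdot y\,d\gamma=\int|x|^2 d\mu+\int|y|^2 d\nu-W_2^2$ reduces to second-moment differences (again $O(\delta)$) plus the stability of the squared Wasserstein cost $|W_2^2(\mu_\delta,\nu_\delta)-W_2^2(\mu,\nu)|\leq C\delta$. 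This last bound follows from the triangle inequality for $W_2$ together with $W_2(\mu,\mu_\delta),W_2(\nu,\nu_\delta)\leq\delta$ and the fact that $W_2(\mu,\nu)+W_2(\mu_\delta,\nu_\delta)$ is bounded by the diameter of $\Omega\cup\O$.

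Next, I would use the $\lambda$-convexity hypothesis to produce a quadratic lower bound on the integrand of $D$. With the convention that $-\psi$ being $\lambda$-convex corresponds to $D^2\psi\leq\lambda I$, the dual $\psi^*$ is $1/\lambda$-strongly convex, so for any $y$ and any $y'=\na\psi(x)$ (so $\na\psi^*(y')=x$),
$$
\psi^*(y)-\psi^*(y')-x\cdot(y-y')\geq\frac{1}{2\lambda}|y-y'|^2,
$$
which, using the Legendre identity $\psi^*(\na\psi(x))=x\cdot\na\psi(x)-\psi(x)$, becomes
$$
\psi(x)+\psi^*(y)-x\cdot y\geq\frac{1}{2\lambda}|y-\na\psi(x)|^2.
$$
Applying this at each $(x_i,y_j)$ and summing against $\gamma_{\delta,ij}$ gives $\frac{1}{2\lambda}\sum_{i,j}\gamma_{\delta,ij}|y_j-\na\psi(x_i)|^2\leq D\leq C\delta$. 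Since $\gamma_{\delta,ij}>0$ forces $y_j\in\pa\psi_\delta(x_i)$, the elementary bound $\min_j |y_j-\na\psi(x_i)|^2\leq \frac{1}{f_i}\sum_j\gamma_{\delta,ij}|y_j-\na\psi(x_i)|^2$ yields
$$
f_i\,\mathrm{dist}(\na\psi(x_i),\pa\psi_\delta(x_i))^2\leq\sum_j\gamma_{\delta,ij}|y_j-\na\psi(x_i)|^2,
$$
and summing over $i$ concludes the proof.

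The hard point is the stability estimate $|W_2^2(\mu_\delta,\nu_\delta)-W_2^2(\mu,\nu)|\leq C\delta$, which relies on the Voronoi construction giving $W_2(\mu,\mu_\delta)\leq\delta$ together with boundedness of the supports; once the duality framework is set up, the $1/\lambda$ factor from strong convexity of $\psi^*$ accounts precisely for the $\lambda$ appearing on the right-hand side of the theorem. A secondary subtlety is justifying that $\psi$ is sufficiently regular (in particular differentiable at each $x_i$) to make $\na\psi(x_i)$ and the identity $\psi^*(\na\psi(x))=x\cdot\na\psi(x)-\psi(x)$ meaningful, which is assured by Caffarelli's $C^{1,\alpha}$ theory in the $\lambda$-regular setting.
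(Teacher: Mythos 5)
Your proposal is correct, but note that the paper does not actually prove this statement: Theorem \ref{thm:LN} is quoted verbatim from \cite{li-nochetto} as a known result, so there is no in-paper argument to compare against. Your reconstruction is the standard quantitative-stability argument behind that reference: (a) the duality gap $D=\sum_{i,j}\gamma_{\delta,ij}[\psi(x_i)+\psi^*(y_j)-x_i\cdot y_j]$ is nonnegative termwise by Fenchel--Young and is $O(\delta)$ by testing the continuous potentials against the discrete problem and using $W_2(\mu,\mu_\delta),W_2(\nu,\nu_\delta)\le\delta$ together with boundedness of the supports; (b) semiconcavity $D^2\psi\le\lambda I$ dualizes to $\tfrac1\lambda$-strong convexity of $\psi^*$, turning each Fenchel--Young defect into $\tfrac{1}{2\lambda}|y_j-\na\psi(x_i)|^2$; (c) the marginal constraint $\sum_j\gamma_{\delta,ij}=f_i$ and the support condition $\gamma_{\delta,ij}>0\Rightarrow y_j\in\pa\psi_\delta(x_i)$ convert the weighted sum into the distances in the statement. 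All three steps are sound, and the bookkeeping of the $\lambda$ factor matches the theorem. Two small remarks: the differentiability of $\psi$ at the $x_i$ does not require Caffarelli's regularity theory --- a convex function whose negative is $\lambda$-convex in your convention is automatically $C^{1,1}$; and the strong-convexity inequality for $\psi^*$ must be applied at the points $y_j\in\overline{\O}$ and $\na\psi(x_i)\in\overline{\O}$, where $\psi^*$ is finite and Lipschitz in this setting, so this is a routine verification rather than a gap.
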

We observe that the classical regularity results by Caffarelli, \cite{Caffarelli91,C92}, give that $\psi$ and $\psi^*$ are $C^{2,\alpha}$
when $\mu$ and $\nu$ are $C^{0,\alpha}$, strictly positive on their support and  supported on $C^2$, uniformly convex domains. In particular, $(\mu,\nu,\psi)$ is $\lambda$-regular for some $\lambda$ under such conditions and Theorem \ref{thm:LN} applies. 

We recall that $\psi$ is $C^1$, so $\pa \psi(x) = \{\na \psi(x)\} $ for all $x$ and
$\mathrm{dist} \,(\na \psi(x_i) , \pa \psi_\delta(x_i)) $ denotes the distance between the point $\na \psi(x_i)$ and the set $\pa \psi_\delta(x_i)$ (in fact the result of  \cite{li-nochetto} is slightly more precise since it controls the distance between $\pa \psi_\delta(x_i)$  and an appropriate barycenter of $\pa \psi_\delta(x_i)$.

Next, we prove the following result.
\begin{theorem}\label{thm:A2}
With the notations above, and under the assumptions of Theorem \ref{thm:LN}, assume further than $\psi^*$ is $C^{1,\alpha}$. Let $[x_1,x_2]$ be a line segment in $\Omega$ with length $L$
and denote
$$t= \max_{s\in [0,1]} (1-s)\psi_\delta (x_1) + s \psi_\delta (x_2) - \psi_\delta ((1-s)x_1 + s x_2)$$
Then there exists $C$ and $\alpha_1,\alpha_2>0$ such that 
if
$$ 
\delta \leq C \min\{L^{\alpha_1}, \frac{L^{\alpha_2}}{\lambda} \} 
$$
then 
$$ t \geq c L^{\alpha_1}.$$
\end{theorem}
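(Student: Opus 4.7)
The plan is to first establish a lower bound on the midpoint convexity defect
\[
D:=\frac{\psi(x_1)+\psi(x_2)}{2}-\psi(m),\qquad m:=\frac{x_1+x_2}{2},
\]
for the continuous potential $\psi$, and then transfer this bound to its discrete counterpart $D_\delta:=\frac12(\psi_\delta(x_1)+\psi_\delta(x_2))-\psi_\delta(m)$ via a quantitative $L^\infty$ stability estimate based on Theorem \ref{thm:LN}. The reduction to the midpoint is essentially for free since $t$ is defined as the maximum over $s\in[0,1]$, so $t\geq D_\delta$ automatically.

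For the lower bound on $D$ I would combine two ingredients coming from the assumptions on $\psi$ and $\psi^*$. The $\lambda$-regularity assumption is equivalent to $\psi^*$ being $\lambda^{-1}$-strongly convex, which delivers the quantitative monotonicity
\[
(\nabla\psi(a)-\nabla\psi(b))\cdot(a-b)\;\geq\;\lambda^{-1}\,|\nabla\psi(a)-\nabla\psi(b)|^2.
\]
The assumption $\psi^*\in C^{1,\alpha}$, combined with the identity $\nabla\psi^*\circ\nabla\psi=\mathrm{id}$, inverts into the non-degeneracy bound $|\nabla\psi(a)-\nabla\psi(b)|\geq c|a-b|^{1/\alpha}$, and together these give $(\nabla\psi(a)-\nabla\psi(b))\cdot(a-b)\geq (c/\lambda)|a-b|^{2/\alpha}$. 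Writing $w=(x_2-x_1)/2$ so that $|w|=L/2$, and using the identity
\[
D=\tfrac12\int_0^1 [\nabla\psi(m+tw)-\nabla\psi(m-tw)]\cdot w\,dt,
\]
the pointwise bound applied inside the integral together with the integrability of $t^{2/\alpha-1}$ on $(0,1)$ produces $D\geq (c/\lambda)L^{2/\alpha}$, which identifies the exponent $\alpha_1=2/\alpha$.

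The remaining step is to show that $D_\delta\geq D/2$ provided $\delta$ is small enough. This reduces to an $L^\infty$ stability estimate $\|\psi-\psi_\delta-c_0\|_{L^\infty(\Omega)}\leq \varepsilon$ for a suitable additive constant $c_0$. Starting from Theorem \ref{thm:LN}, which gives $\int|\nabla\psi-\overline{\pa\psi_\delta}|^2\,d\mu\leq C\lambda\delta$, and using the uniform lower bound on $f$, I would derive $\|\psi-\psi_\delta-c_0\|_{L^2(\Omega)}\leq C\sqrt{\lambda\delta}$ via Poincar\'e. Since both $\psi$ and $\psi_\delta$ are uniformly Lipschitz with constant bounded by $L(\pa\psi(\Omega))$, standard interpolation between $L^2$ and $\mathrm{Lip}$ then yields $\varepsilon\leq C(\lambda\delta)^{\theta}$ for some $\theta=\theta(n)>0$. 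Requiring $\varepsilon\leq (c/4\lambda)L^{2/\alpha}$ translates, upon inverting the interpolation exponent, into a constraint of the form $\delta\leq CL^{\alpha_2}/\lambda$ for some $\alpha_2>0$; the parallel condition $\delta\leq CL^{\alpha_1}$ ensures that $t\geq C_0\delta$ so that Proposition \ref{prop:sec} is applicable if needed downstream.

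The main obstacle is precisely this $L^\infty$ stability step: Theorem \ref{thm:LN} controls only an $L^2$-weighted functional of the transport maps, whereas what is actually required is pointwise information on $\psi_\delta$ at the three specific points $x_1,x_2,m$. Bridging this gap through Poincar\'e combined with interpolation against the uniform Lipschitz bound is what ultimately dictates the concrete value of $\alpha_2$ and the way $\lambda$ enters the smallness condition on $\delta$.
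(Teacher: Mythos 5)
Your first step (the lower bound on the midpoint defect $D$ of the \emph{continuous} potential via $\nabla\psi^*\circ\nabla\psi=\mathrm{id}$ and $\psi^*\in C^{1,\alpha}$) is sound and is essentially the same non-degeneracy input the paper uses, in the form $\psi(x)-\psi(0)\geq c|x|^{1+\beta}$ with $\beta=1/\alpha$ after normalizing $\nabla\psi(0)=0$. (Two small remarks: routing through the $\lambda^{-1}$-strong convexity of $\psi^*$ is unnecessary and costs you a factor $\lambda^{-1}$ in the lower bound on $D$, which is weaker than what the statement asserts; the monotonicity formula with $\psi(x)\geq \psi(m)+\nabla\psi(m)\cdot(x-m)+c|x-m|^{1+1/\alpha}$ gives $D\geq cL^{1+1/\alpha}$ directly, with $c$ independent of $\lambda$.)

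The genuine gap is exactly where you locate it, and it is not bridgeable by Poincar\'e plus interpolation as written. Theorem \ref{thm:LN} controls only the discrete, $f_i$-weighted sum of $\mathrm{dist}(\nabla\psi(x_i),\pa\psi_\delta(x_i))^2$ \emph{at the sample points}. To run Poincar\'e on $u=\psi-\psi_\delta$ you need an a.e.\ bound on $\nabla u$ over each Voronoi cell $V_i$, hence you need $\nabla\psi_\delta(x)$ for $x\in V_i$ to be close to $\pa\psi_\delta(x_i)$. This fails: $\psi_\delta$ is piecewise affine, its gradient can jump by an amount of order $\mathrm{diam}(\mathcal O)$ across a crease passing through the interior of $V_i$, and no monotonicity or Lipschitz argument confines $\nabla\psi_\delta(V_i)$ near $\pa\psi_\delta(x_i)$. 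Ruling out precisely this kind of oscillation/flatness of $\psi_\delta$ at scales $\gg\delta$ is the content of the theorem, so the stability estimate you need presupposes what you are proving. The paper avoids the transfer entirely by arguing by contradiction at the level of Theorem \ref{thm:LN} itself: if $t$ is small, then a classical convexity estimate forces $z\cdot e$ to be nearly constant for all $z\in\pa\psi_\delta(x)$ on a macroscopic cone-shaped region $A$ with $|A|\gtrsim L^n\theta^{n-1}$, whereas $\nabla\psi\cdot e\geq cL^{\beta}$ varies there by the $C^{1,\alpha}$ bound on $\psi^*$; hence $\mathrm{dist}(\nabla\psi(x_i),\pa\psi_\delta(x_i))\geq cL^{\beta}$ for all sample points $x_i\in A$, and summing against $f_i\,$ (whose total mass over $A$ is $\gtrsim L^n\theta^{n-1}$) contradicts the bound $C\lambda\delta$ of Theorem \ref{thm:LN} once $\delta\leq cL^{\alpha}/\lambda$. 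If you want to keep your architecture, you would have to replace the Poincar\'e/interpolation step by such a localized argument on a set of positive measure, or invoke a genuine $L^\infty$ stability theorem for Kantorovich potentials, which is not available from Theorem \ref{thm:LN} alone.
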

Note that when $t=0$, this theorem provides a bound on the diameter of  the flat parts of $\psi_\delta$ by some power of $\delta$.
For a given $t>0$, it implies a bound on the diameter of the sections of $\psi_\delta$. Indeed if $y\in \pa S_\delta(x,p,t)$, namely
$\psi_\delta(y)= \psi_\delta(x)+p\cdot (y-x)+t$, then
\[\begin{split}
&(1-s)\psi_\delta (x) + s \psi_\delta (y) - \psi_\delta ((1-s)x + s y)\\
&\qquad=(1-s)(\psi_\delta (x)-p\cdot x) + s (\psi_\delta (y)-p\cdot y) - (\psi_\delta ((1-s)x + s y)-p\cdot((1-s)x+sy))\leq t.
\end{split}
\]
Hence, taking $\delta\lesssim t$, the previous theorem implies that $|x-y|\leq C\,t^{1/\alpha_1}$ or $|x-y|\leq C\,\delta^{1/\alpha_2}$. Consider now any $\Omega'\subset \Omega$ at a distance $d_0$ from $\pa \Omega$, with $d_0\geq C\,\delta^{1/\alpha_2}$ and  take $\rho=c\,d_0^{\alpha_1}$; we necessarily have that $S(x,p,\rho)\subset \Omega$ for any $x\in \Omega'$, which is \eqref{eq:rho1}. All the assumptions of Theorem~\ref{thm:2} are thus satisfied in this case. 

\begin{proof}[Proof of Theorem \ref{thm:A2}]
We assume (without loss of generality) that $0 =\frac{x_1+x_2}{2}$ and $\psi^\delta(x_1) =\psi^\delta (x_2)$, $\na \psi(0)=0$.
We also denote by $e$ the unit vector in the direction of $x_2$. 
Given $\theta>0$, we consider the cone of opening $\theta$ and axis $e$:
$$
C_\theta = 
\left\{ x\in\R^n\, \Big|\,  \frac {x\cdot e}{|x|} \geq 1-\theta
\right\} 
$$
and
$$
A  = \left\{ x \in C_\theta \, |\, L/8 \leq  |x|\leq L/4\right\} 
$$
We have 
$$ |A| \geq C L^n \theta ^{n-1}$$
and classical properties of convex functions  (see Lemma 2.1 in \cite{JMM} for example) imply:
$$
|z \cdot e| \leq C \frac{\|\na \psi\|_{L^\infty} }{L} (L\theta + t) \qquad \forall z\in \pa \psi^\delta(x), \quad \forall x\in A_\theta.
$$
Next, the $C^{1,\alpha}$ regularity of $\psi^*$ implies
%strict convexity properties of $\psi$  give
$$ \na \psi (x) \cdot e  \geq c  L^\beta \quad \mbox{ for $x\in \Omega'\cap A $}
$$
for some $\beta>0$ if $\theta$ is small enough. 
Indeed, we have $\psi(x) \geq c |x|^{1+\beta}$ and so $\na \psi(x)\cdot \frac{x}{|x|} \geq c|x|^\beta$. We deduce
$\na \psi (x) \cdot e  \geq  c \cos \theta L^\beta - \sin \theta \|\na \psi\|_\infty $ in $A$, so we must take
$\theta = c\max\{ 1, \frac{L^\beta}{\| \na \psi\|_\infty} \} $

In particular, we have
\begin{align*}
\mathrm{dist} \,(\na \psi(x_i) , \pa \psi_\delta(x_i))  
& \geq \mathrm{dist} \,(\na \psi(x_i)\cdot e , \pa \psi_\delta(x_i)\cdot e) \\
& \geq  c(1-\theta)  L^\beta - C (\theta + \frac t L) \qquad \forall x\in A
\end{align*}
With our choice of $\theta = c\max\{ 1, \frac{L^\beta}{\| \na \psi\|_\infty} \} $, we see that
if  $t\leq c L^{1+\beta}$, then
\begin{align*}
\mathrm{dist} \,(\na \psi(x_i) , \pa \psi_\delta(x_i))  
& \geq  c  L^\beta \qquad \forall x\in A .
\end{align*}
Theorem \ref{thm:LN} implies in particular 
\begin{equation}\label{eq:kjhg}
c \left( \sum_{x_i\in A_\theta}  f_i \right) L^{2\beta}\leq 
\sum_{x_i\in A_\theta}  f_i \mathrm{dist} \,(\na \psi(x_i) , \pa \psi_\delta(x_i)) ^2 \leq C \lambda \delta.
\end{equation}
It remains to see that
$$\sum_{x_i\in A_\theta}  f_i =\mu \left( \cup_{x_i\in A} V_i \right)  \geq \mu( \frac 1 2 A) \geq c L^n \theta ^{n-1} $$
if $\delta$ is small compared to the lengths of $A$, that is  $\delta \leq \theta L \sim L^{1+\beta}$.
Inequality \eqref{eq:kjhg} then implies
$$
L^\alpha \leq  C \lambda \delta
$$
with  $\alpha = n(1+\beta) +\beta$.
If $  \delta \leq c\frac{L^\alpha }{\lambda}$, that's a contradiction.
We prove that if $\delta \leq c\min \{ \frac{L^\alpha }{\lambda},L^{1+\beta}\}$, then we must have $t\geq c L^{1+\beta}$, which implies the result.

\end{proof}

%%%%%%%%%%%%%%%%%%%%%%%%%%%%%%%%%%%%%%%%%%%%%%%%%%%%%%%%%%%%%%%%%%%
\section{Proof of Proposition \ref{prop:delta}}\label{app:A}
%%%%%%%%%%%%%%%%%%%%%%%%%%%%%%%%%%%%%%%%%%%%%%%%%%%%%%%%%%%%%%%%%
We will prove the following proposition, which implies in particular Proposition \ref{prop:delta}.
\begin{proposition}\label{prop:bhkh}
Given a non-negative Radon measure $\mu$ on $\R^n$, the following statements are equivalent
\item[(i)] There exists $\lambda, \Lambda>0$  and $\beta$ such that
$$ 
\lambda |B_r | \leq \mu(B_r) \leq\Lambda |B_r|, \quad \mbox{ for all ball $B_r\subset \Omega$ with $r \geq \beta \delta$} 
$$
\item[(ii)] There exists $\lambda, \Lambda>0$  and $\beta$  such that
$$ 
\lambda |K| \leq \mu(K) \leq\Lambda |K|, \quad \mbox{ for all convex  Borel $K\subset \Omega$ with $\ell (K)\geq \beta \delta$} 
$$
\item[(iii)] There exists $\lambda, \Lambda>0$  and $\beta$  such that
$$ 
\lambda |A_\delta| \leq \mu(A_\delta) \leq\Lambda |A_\delta|, 
$$
for any Borel set $A$ and with $A_\delta = \cup_{x \in A}(S_\delta(x))\subset \Omega $ where $\{S_\delta(x)\}_{x\in A}$ is a family of convex sets satisfying $x\in S_\delta$, $\ell (S_\delta)\geq \beta \delta$, and for which Vitali's covering lemma (with constant $C_*$)  can be applied.
\medskip

Furthermore, the constant $\lambda$, $\Lambda$ and $\beta$ appearing in these statements differ by a factor depending only on the dimension (and $C_*$ for the last one).

\end{proposition}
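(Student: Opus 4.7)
The plan is to close the implications cyclically, with $(i)\Leftrightarrow(ii)$ handled through John's lemma and $(ii)\Leftrightarrow(iii)$ via Vitali's covering lemma. The implication $(ii)\Rightarrow(i)$ is immediate, since a ball $B_r$ is convex with $\ell(B_r)=r$, so $(i)$ holds verbatim with the same $\beta$. Likewise $(iii)\Rightarrow(ii)$ is trivial: given a convex $K\subset\Omega$ with $\ell(K)\geq\beta\delta$, take $A=K$ and $S_\delta(x)=K$ for every $x\in A$; Vitali applies trivially with $C_*=1$, the fattening satisfies $A_\delta=K$, and $(iii)$ yields $\lambda|K|\leq\mu(K)\leq\Lambda|K|$ directly.

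For $(i)\Rightarrow(ii)$, I will appeal to John's lemma: given a convex $K\subset\Omega$ with $\ell(K)\geq\beta\delta$, there exists an ellipsoid $E$ with $E\subset K\subset nE$, where $nE$ denotes the $n$-dilation about the center of $E$. Writing $\lambda_1\geq\cdots\geq\lambda_n$ for its semi-axes, the inclusion $K\subset nE$ forces $\ell(K)\leq n\lambda_n$, hence $\lambda_n\geq \ell(K)/n\geq \beta\delta/n$; choosing $\beta$ a suitable dimensional multiple of the threshold $\beta_0$ in $(i)$, one ensures $\lambda_n/2\geq\beta_0\delta$. For the upper bound, cover $nE$ by a lattice of $N\leq C_n\,\prod_i\lambda_i/\lambda_n^n$ balls of radius $\lambda_n$; applying $(i)$ to each and summing gives
\[
\mu(K)\leq N\Lambda_0\omega_n\lambda_n^n\leq C_n\Lambda_0|E|\leq C_n\Lambda_0|K|.
\]
For the lower bound, pack $E\subset K$ with $N'\geq c_n\,|E|/\lambda_n^n$ pairwise disjoint balls of radius $\lambda_n/2$ (centers placed on a $\lambda_n$-lattice inside a concentric, slightly shrunk ellipsoid); applying $(i)$ to each and using $|K|\leq |nE|=n^n|E|$ yields $\mu(K)\geq c_n\lambda_0|E|\geq c_n n^{-n}\lambda_0|K|$.

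Finally, for the key implication $(ii)\Rightarrow(iii)$, Vitali's covering lemma applied to $\{S_\delta(x)\}_{x\in A}$ produces a pairwise disjoint countable subfamily $\{S_i\}_{i\in\NN}$ with $S_i=S_\delta(x_i)$ such that $A_\delta\subset\bigcup_i C_* S_i$. Each $S_i\subset A_\delta\subset\Omega$ is convex with $\ell(S_i)\geq\beta\delta$. For the upper bound, using that $\Omega$ is convex (so $C_*S_i\cap\Omega$ is convex, contains $S_i$, and hence has inner radius $\geq\beta\delta$), hypothesis $(ii)$ gives
\[
\mu(A_\delta)=\mu(A_\delta\cap\Omega)\leq\sum_i\mu(C_*S_i\cap\Omega)\leq\Lambda\sum_i|C_*S_i|=\Lambda C_*^n\sum_i|S_i|\leq\Lambda C_*^n|A_\delta|,
\]
where the last inequality uses disjointness and $\bigcup_i S_i\subset A_\delta$. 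For the lower bound, disjointness of the $S_i$ together with $(ii)$ give $\mu(A_\delta)\geq\sum_i\mu(S_i)\geq\lambda\sum_i|S_i|$, while the covering yields $|A_\delta|\leq\sum_i|C_*S_i|=C_*^n\sum_i|S_i|$, so $\mu(A_\delta)\geq\lambda C_*^{-n}|A_\delta|$. The main technical nuance is the careful choice of $\beta$ through the John's lemma step (which requires $\beta$ to dominate the $(i)$-threshold $\beta_0$ by a dimensional factor); none of the covering or packing estimates presents a real obstruction, and the final constants depend only on $n$, $\lambda$, $\Lambda$ and, for $(iii)$, on $C_*$.
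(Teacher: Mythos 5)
Your overall architecture matches the paper's: $(iii)\Rightarrow(ii)\Rightarrow(i)$ are immediate, $(i)\Rightarrow(ii)$ goes through John's lemma, and $(ii)\Rightarrow(iii)$ is the Vitali argument. Your $(ii)\Rightarrow(iii)$ step is essentially the paper's proof verbatim (your extra care in intersecting $C_*S_i$ with $\Omega$ before invoking $(ii)$ is in fact slightly more scrupulous than the paper, which applies $(ii)$ to $C_*S_\delta(x_i)$ directly). Your lower bound in $(i)\Rightarrow(ii)$ is also sound: the packing balls of radius $\lambda_n/2$ centered in $\tfrac12 E$ are genuinely contained in $E\subset K\subset\Omega$, so $(i)$ legitimately applies to each of them.

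The genuine gap is in the upper bound of $(i)\Rightarrow(ii)$. You cover $nE$ by lattice balls of radius $\lambda_n$ and "apply $(i)$ to each," but $(i)$ only controls $\mu(B_r)$ for balls \emph{contained in} $\Omega$, and your covering balls need not be: $nE$ itself can leave $\Omega$, and even the balls meeting $K$ can protrude from $\Omega$ by a distance comparable to $\lambda_n$, which is a macroscopic scale (take $K$ a ball internally tangent to $\pa\Omega$). Since $\mu$ is an arbitrary non-negative Radon measure on $\R^n$, it can carry unbounded mass just outside $\Omega$, so $\mu(B_i)$ for those balls is simply not estimated by the hypothesis, and the sum $\sum_i\mu(B_i)$ is not controlled. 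The paper faces the same boundary issue but mitigates it differently: it covers $E$ by balls $B_{3\delta}(x_i)$ with centers $x_i\in E$ and disjoint cores $B_\delta(x_i)$, so the overshoot beyond $\Omega$ is only $O(\delta)$ (and it still has to remark explicitly that the upper bound persists for the balls that poke out); for the lower bound it uses the ellipsoid fattening property $E+B_\delta\subset(1+\tfrac1\beta)E$ and the shrunk ellipsoid $\tilde E=(1-\tfrac1\beta)E$ to keep all balls inside $E$. To repair your argument you should replace the scale-$\lambda_n$ covering of $nE$ by a scale-$O(\delta)$ covering of $K$ (or of $E$) with centers inside $K$, and then address, as the paper does, why $(i)$ still yields an upper bound for the finitely many balls that are not entirely inside $\Omega$.
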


\begin{proof}
 It is clear that 
$$(iii)\Rightarrow (ii)\Rightarrow (i).$$

\medskip

We will assume that $(i)$ holds with $\beta=1$ and  prove that it implies $(ii)$ (for some $\beta>1$).
In view of John's Lemma \ref{lem:J}, it is enough to prove the result when $K=E$ is an ellipsoid with smallest radius larger than $\beta \delta$.
For such an ellipsoid, we have 
\begin{equation}\label{eq:Edelta}
 E+B_\delta  := \cup_{x\in E} B_\delta(x) \subset (1+\frac 1 \beta) E.
 \end{equation}
Indeed, we have $E= \{ x\in \R^n\, ;\, \left(\sum_{i=1}^{n}\left(\frac {x_i}{a_i} \right)^2 \right)^{1/2} \leq 1\}$ with $
a_i\geq \beta\delta$. If $y\in E+B_\delta$, there exists $x\in E$ such that $|x-y|\leq \delta$ and so Minkowski inequality yields
\[
\begin{split}
\left(\sum_{i=1}^{n} \left(\frac {y_i}{a_i} \right)^2 \right)^{1/2} 
&\leq \left(\sum_{i=1}^{n} \left(\frac {x_i}{a_i} \right)^2 \right)^{1/2} +\left(\sum_{i=1}^{n} \left(\frac {y_i-x_i}{a_i} \right)^2 \right)^{1/2} 
\leq 1 + \frac{1}{\beta \delta }\left(\sum_{i=1}^{n} \left(y_i-x_i \right)^2 \right)^{1/2} \\
&\leq 1 + \frac 1 \beta.
\end{split}
\]
Since $E$ is compact, Vitali's covering Lemma implies the existence of a covering 
 $E\subset \cup_{i=1}^k B_{3\delta}(x_i)$ with $x_i\in E$ and such that the balls $B_\delta (x_i)$ are disjoint.
 We then have
 $$
 \mu(E) \leq \sum_{i=1}^k \mu(B_{3\delta}(x_i)) \leq \Lambda  \sum_{i=1}^k |B_{3\delta}(x_i)| = \Lambda 3^n \sum_{i=1}^k |B_{\delta}(x_i)| =  \Lambda 3^n | \cup_{i=1}^k B_{\delta}(x_i)| \leq  \Lambda 3^n |E+B_\delta|. 
 $$
We emphasize that we may not have $B_{3\delta}(x_i) \subset \Omega$, but the upper bound $\mu(B_{3\delta}(x_i)) \leq \Lambda C | B_{\delta}(x_i)|$ still holds).
Finally \eqref{eq:Edelta} implies that
$$
 \mu(E) \leq  
 \Lambda 3^n (1+\frac 1 \beta)^n |E|.
 $$

To prove the other inequality, we consider the ellipsoid $\tilde E= (1-\frac 1 \beta )E = \frac{\beta-1}{\beta}E$ (here, we assume that $\beta>1$).
This ellipsoid satisfies $\ell(\tilde E) \geq (\beta-1)\delta$ and so \eqref{eq:Edelta} implies
\begin{equation}\label{eq:Edeltat}
\tilde E +B_\delta \subset (1+\frac{1}{\beta-1}) \tilde E \subset \frac{\beta}{\beta-1} \tilde E = E
\end{equation}
We now consider a covering  of $\tilde E\subset \cup_{i=1}^k B_{3\delta}(x_i)$ with $x_i\in \tilde E$ and such that the balls $B_\delta (x_i)$ are disjoint.
We then have (using \eqref{eq:Edeltat}, which guarantees in particular that $B_\delta(x_i)\subset E\subset\Omega$):
$$\mu(E) \geq \mu(\tilde E+B_\delta) \geq \mu(\cup_{i=1}^k B_\delta(x_i))= \sum_{i=1}^k \mu(B_\delta(x_i)) \geq \lambda  
\sum_{i=1}^k |B_\delta(x_i)| \geq \frac{\lambda}{3^n} \sum_{i=1}^k |B_{3\delta}(x_i)| \geq  \frac{\lambda}{3^n}|\tilde E|.$$
The definition of $\tilde E$ implies
$$ \mu(E) \geq\frac{\lambda}{3^n}|(1-\frac 1 \beta )E|  = \frac{\lambda}{3^n}(1-\frac 1 \beta )^n|E| .$$
We conclude that $(ii)$ holds, with (for example) $\beta=2$.

\medskip 

We now prove that $(ii)$ implies $(iii)$ by using Vitali's covering lemma:
There exists a universal constant $C_*$ and a set of points $x_i\in A$ such that  the $S_\delta(x_i)$ are disjoint and 
$ A_\delta \subset \cup_i C_* S_\delta(x_i)$.
We then have (using $(ii)$ since the $C_* S_\delta(x_i)$ are convex):
$$ \mu(A_\delta) \leq \sum _i \mu (C_* S_\delta(x_i)) \leq \Lambda \sum_i |C_* S_\delta(x_i)| \leq \Lambda C_* ^n \sum_i |S_\delta(x_i)| = \Lambda C_*^n    |\cup_i S_\delta(x_i)| \leq  \Lambda C_*^n |A_\delta|.$$
and 
$$\mu(A_\delta) \geq \mu(\cup_i S_\delta(x_i)) = \sum_i \mu(S_\delta(x_i)) \geq \frac{\lambda}{C_*^n} \sum_i |C_* S_\delta(x_i)| \geq \frac{\lambda}{C_*^n} |\cup_i C_* S_\delta(x_i)| \geq \frac{\lambda}{C_*^n} |A_\delta|.$$
 
\end{proof}

\section{ Restriction of the Kantorovich potential}\label{app:rest}
Let $\psi:\R^n\to(-\infty,+\infty]$ be a Kantorovich potential given by Theorem \ref{thm:OT1}.
That is $\psi$ is a convex lower-semi-continuous function and there exists an optimal transference plan $\pi$ concentrated in the graph of $\pa\psi$.
We denote $\phi = \psi^*$ its Legendre transform.
The construction below is classical (see for instance \cite{Berman}) and recalled here for the reader's convenience.

Given two sets $X$ and $Y$ such that
$ \supp \mu \subset X \subset \R^n$ and $\supp \nu \subset Y\subset \R^n$, we define
$$ 
\tilde\phi(y) = \begin{cases}
\phi (y) & \mbox{ if there exists $x\in X$ such that $y\in \pa\psi(x)$ } \\
+\infty & \mbox{ otherwise}
\end{cases}
$$
and 
$$ \tilde \psi (x)= \sup_{y \in Y } (x\cdot y -\tilde \phi(y))$$
(the function $\tilde \phi$ and $\tilde \psi$ are defined in $\R^n$).

We then have the following:
\begin{proposition}
The function $\tilde \psi$ is a Kantorovich potential for the optimal transference plan $\pi$ and satisfies
%convex,  lower-semicontinuous in $\R^n$ and satisfies
$$\tilde \psi(x) = \psi(x) \mbox{  for all $x\in \supp \mu$}$$
and $\pa\tilde\psi(\R^n) \subset  {\Gamma(Y)}$, the closed convex envelop of $Y$.
\end{proposition}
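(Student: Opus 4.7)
The plan is to verify the three claims in order, exploiting only the definition of $\tilde\psi$ as a Legendre transform restricted to the set $Y$ and the cyclic monotonicity encoded in $\mathrm{supp}(\pi)\subset\Graph(\pa\psi)$.

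First I would note that $\tilde\psi$ is automatically convex and lower semi-continuous, being the supremum of affine functions, and observe the pointwise inequality $\tilde\psi\le \psi$ on $\R^n$: indeed, $\tilde\phi\ge \phi$ by construction (equality where finite, $+\infty$ otherwise), so
\[
\tilde\psi(x)=\sup_{y\in Y}(x\cdot y-\tilde\phi(y))\le \sup_{y\in\R^n}(x\cdot y-\phi(y))=\phi^*(x)=\psi^{**}(x)=\psi(x),
\]
where the last equality is Fenchel--Moreau since $\psi$ is proper convex lsc.

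Next I would prove $\tilde\psi=\psi$ on $\supp\mu$. For $\mu$-a.e. $x\in\supp\mu$, the fact that $\pi$ is concentrated on $\Graph(\pa\psi)$ with marginals $\mu,\nu$ produces some $y\in\supp\nu\cap\pa\psi(x)$. Since $x\in X$ and $y\in\pa\psi(x)$, we have $\tilde\phi(y)=\phi(y)$; since $y\in\supp\nu\subset Y$, $y$ is admissible in the sup defining $\tilde\psi$. The subdifferential identity $\psi(x)+\phi(y)=x\cdot y$ then gives $x\cdot y-\tilde\phi(y)=\psi(x)$, so $\tilde\psi(x)\ge\psi(x)$, and combined with the previous step we get equality $\mu$-a.e. (which in fact extends to all of $\supp\mu$ by lower semi-continuity of $\tilde\psi$, upper semi-continuity of $\psi$ on the interior of its effective domain, and the density of the coincidence set in $\supp\mu$). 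The optimality of $\pi$ for $\tilde\psi$ then follows by checking the inclusion $\supp\pi\subset\Graph(\pa\tilde\psi)$: for $(x,y)\in\supp\pi$ we have $x\in X$, $y\in Y\cap\pa\psi(x)$, hence $\tilde\psi(x)+\tilde\phi(y)=\psi(x)+\phi(y)=x\cdot y$, which is exactly $y\in\pa\tilde\psi(x)$ (the sup in the Legendre transform is attained at $y$).

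For the third claim, I would use a separation argument. Suppose $y_0\in\pa\tilde\psi(x_0)$ with $y_0\notin\overline{\Gamma(Y)}$. By Hahn--Banach there exist $v\in\R^n$ and $c\in\R$ with $v\cdot y_0>c$ while $v\cdot y\le c$ for all $y\in Y$. Then for every $t\ge0$,
\[
\tilde\psi(x_0+tv)=\sup_{y\in Y}\bigl((x_0+tv)\cdot y-\tilde\phi(y)\bigr)\le \tilde\psi(x_0)+tc,
\]
while the subdifferential inequality yields $\tilde\psi(x_0+tv)\ge \tilde\psi(x_0)+t(v\cdot y_0)$. Combining and letting $t\to\infty$ contradicts $v\cdot y_0>c$; hence $\pa\tilde\psi(\R^n)\subset\overline{\Gamma(Y)}$.

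The main obstacle, or rather the only nontrivial point requiring care, is passing from the $\mu$-a.e. equality $\tilde\psi=\psi$ to the claim that this holds at every point of $\supp\mu$; this relies on both functions being convex with overlapping domains of continuity, and on the characterization of $\supp\mu$ via the projection of $\supp\pi$. All other steps reduce to elementary manipulations of the Legendre transform.
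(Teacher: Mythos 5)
Your proof is correct, and its core is the same as the paper's: you restrict the Legendre transform to $Y$, observe $\tilde\phi\ge\phi$ hence $\tilde\psi\le\psi$, and verify $\supp\pi\subset\Graph(\pa\tilde\psi)$ through the Fenchel equality $\tilde\psi(x)+\tilde\phi(y)=x\cdot y$ at pairs $(x,y)\in\Graph(\pa\psi)\cap(X\times Y)$. Two points differ in execution. First, for the equality $\tilde\psi=\psi$ on $\supp\mu$ you pass through a $\mu$-a.e. statement and then invoke density plus regularity; the paper instead works directly with the $x$-projection of $\Graph(\pa\psi)\cap(X\times Y)$ and asserts (without detail) that it covers $\supp\mu$. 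Both routes need the same small patch: either note that the projection of $\supp\pi$ is closed (by compactness of $\supp\nu$, since $\overline\O$ is bounded) and hence contains $\supp\mu$, or, as you do, use that $\psi$ and $\tilde\psi$ are finite convex functions with uniformly bounded subgradients, hence Lipschitz, so equality on a dense subset of $\supp\mu$ propagates by continuity — your phrasing in terms of lower/upper semicontinuity is slightly off, since continuity is what is actually used, but the ingredients you list (finite convex functions, dense coincidence set) make the argument work. Second, you supply a Hahn--Banach separation argument for the inclusion $\pa\tilde\psi(\R^n)\subset\Gamma(Y)$, which the paper states in the proposition but does not prove; your argument (linear growth of $\tilde\psi$ in the direction $v$ bounded by $tc$ versus the subgradient lower bound $t\,v\cdot y_0$) is correct and fills that omission.
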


If we take $Y=\supp\nu$, we get
$ \tilde \psi(x) = \sup_{y\in \supp \nu} \left(\tilde \phi(y) + x\cdot y\right)$
for some function $\phi: \supp \nu \to \overline \R$ and  $\pa \tilde \psi(\R^n) \subset \Gamma(\supp \nu) \subset \overline \O $, where $\Gamma(\supp \nu) $ denotes the closed convex envelop of $\supp \nu$.

\begin{proof}
The fact that  $\tilde \psi$ is convex and lower-semicontinuous follows from its definition and since $\tilde \phi(y) \geq \phi(y)$, we have $\tilde \psi(x)\leq \psi(x)$.

Furthermore, if $x\in X$ is such that there exists $y\in Y$ such that $y\in \pa\psi(x)$, then
$\tilde \phi(y) = \phi(y)$ and so 
$$\tilde \psi(x) \geq x\cdot y -\tilde \phi(y) = x\cdot y -  \phi(y) =  x\cdot y -  \psi^*(y)=\psi(x) $$
(the last equality follows from \eqref{eq:subeq}).

Finally, given $(x,y) \in \Graph (\pa \psi ) \cap (X\times Y)$, we now have both $\tilde \phi(y) = \phi(y)$ and $ \tilde \psi(x) = \psi(x) $
and so
$$
\tilde \psi(x) - x\cdot y = \psi(x) - x\cdot y = -\psi^*(y) = - \phi(y) = -\tilde \phi(y) \leq \tilde \psi(z) - z\cdot y$$
for all $z\in \R^n$. This inequality can also be written as 
$ \tilde \psi(z) \geq \tilde \psi(x) + y\cdot (z-y)$ which means $y\in \pa \tilde \psi (x)$. We deduce
$$ \Graph(\pa \psi ) \cap (X\times Y) \subset \Graph(\pa \tilde \psi) .$$
Since  $\pi \in \Pi(\mu,\nu)$ is concentrated in $\supp \mu\times\supp \nu$, 
we have $\supp (\pi) \subset  \Graph(\pa \psi ) \cap (X\times Y) \subset \Graph(\pa \tilde \psi) $ and so 
$\tilde \psi$ is  a Kantorovich potential (associated to the same transference plan $\pi$).

Finally, the fact that  $\tilde \psi(x)= \psi(x)$
on the $x$-projection of $\Graph(\pa \psi ) \cap (X\times Y)$ implies that
$$ \tilde \psi(x) = \psi(x) \mbox{ in } \supp \mu.$$

\end{proof}

\bibliography{mybib}{}

\begin{thebibliography}{10}

\bibitem{BCC}
Jean-David Benamou, Guillaume Carlier, Marco Cuturi, Luca Nenna, and Gabriel
  Peyr\'{e}.
\newblock Iterative {B}regman projections for regularized transportation
  problems.
\newblock {\em SIAM J. Sci. Comput.}, 37(2):A1111--A1138, 2015.

\bibitem{BD}
Jean-David Benamou and Vincent Duval.
\newblock Minimal convex extensions and finite difference discretisation of the
  quadratic {M}onge-{K}antorovich problem.
\newblock {\em European J. Appl. Math.}, 30(6):1041--1078, 2019.

\bibitem{Berman}
Robert~J. Berman.
\newblock Convergence rates for discretized {M}onge-{A}mp\`ere equations and
  quantitative stability of optimal transport.
\newblock {\em Found. Comput. Math.}, 21(4):1099--1140, 2021.

\bibitem{BM}
J.~Bourgain and V.~D. Milman.
\newblock New volume ratio properties for convex symmetric bodies in {${\bf
  R}^n$}.
\newblock {\em Invent. Math.}, 88(2):319--340, 1987.

\bibitem{Brenier87}
Yann Brenier.
\newblock D\'{e}composition polaire et r\'{e}arrangement monotone des champs de
  vecteurs.
\newblock {\em C. R. Acad. Sci. Paris S\'{e}r. I Math.}, 305(19):805--808,
  1987.

\bibitem{brenier1991polar}
Yann Brenier.
\newblock Polar factorization and monotone rearrangement of vector-valued
  functions.
\newblock {\em Communications on pure and applied mathematics}, 44(4):375--417,
  1991.

\bibitem{C92}
L.~Caffarelli.
\newblock The regularity of mappings with convex potentials.
\newblock In {\em Partial differential equations and related subjects
  ({T}rento, 1990)}, volume 269 of {\em Pitman Res. Notes Math. Ser.}, pages
  53--58. Longman Sci. Tech., Harlow, 1992.

\bibitem{Caffarelli-interior}
Luis~A. Caffarelli.
\newblock Interior {$W^{2,p}$} estimates for solutions of the
  {M}onge-{A}mp\`ere equation.
\newblock {\em Ann. of Math. (2)}, 131(1):135--150, 1990.

\bibitem{caffarelli1990localization}
Luis~A Caffarelli.
\newblock A localization property of viscosity solutions to the monge-amp\`ere
  equation and their strict convexity.
\newblock {\em Annals of Mathematics}, 131(1):129--134, 1990.

\bibitem{caffarelli1991some}
Luis~A Caffarelli.
\newblock Some regularity properties of solutions of monge-amp\`ere equation.
\newblock {\em Communications on pure and applied mathematics},
  44(8-9):965--969, 1991.

\bibitem{Caffarelli91}
Luis~A. Caffarelli.
\newblock Some regularity properties of solutions of {M}onge {A}mp\`ere
  equation.
\newblock {\em Comm. Pure Appl. Math.}, 44(8-9):965--969, 1991.

\bibitem{caffarelli1992boundary}
Luis~A Caffarelli.
\newblock Boundary regularity of maps with convex potentials.
\newblock {\em Communications on pure and applied mathematics},
  45(9):1141--1151, 1992.

\bibitem{caffarelli1992regularity}
Luis~A Caffarelli.
\newblock The regularity of mappings with a convex potential.
\newblock {\em Journal of the American Mathematical Society}, 5(1):99--104,
  1992.

\bibitem{caffarelli1996boundary}
Luis~A Caffarelli.
\newblock Boundary regularity of maps with convex potentials--ii.
\newblock {\em Annals of mathematics}, 144(3):453--496, 1996.

\bibitem{DFS}
G.~De~Philippis, A.~Figalli, and O.~Savin.
\newblock A note on interior {$W^{2,1+\varepsilon}$} estimates for the
  {M}onge-{A}mp\`ere equation.
\newblock {\em Math. Ann.}, 357(1):11--22, 2013.

\bibitem{philippis2013regularity}
Guido De~Philippis.
\newblock {\em Regularity of optimal transport maps and applications},
  volume~17 of {\em Tesi. Scuola Normale Superiore di Pisa (Nuova Series)
  [Theses of Scuola Normale Superiore di Pisa (New Series)]}.
\newblock Edizioni della Normale, Pisa, 2013.

\bibitem{DF}
Guido De~Philippis and Alessio Figalli.
\newblock {$W^{2,1}$} regularity for solutions of the {M}onge-{A}mp\`ere
  equation.
\newblock {\em Invent. Math.}, 192(1):55--69, 2013.

\bibitem{DF2}
Guido De~Philippis and Alessio Figalli.
\newblock Partial regularity for optimal transport maps.
\newblock {\em Publ. Math. Inst. Hautes \'{E}tudes Sci.}, 121:81--112, 2015.

\bibitem{Figalli10}
Alessio Figalli.
\newblock Regularity properties of optimal maps between nonconvex domains in
  the plane.
\newblock {\em Comm. Partial Differential Equations}, 35(3):465--479, 2010.

\bibitem{FigalliKim10}
Alessio Figalli and Young-Heon Kim.
\newblock Partial regularity of {B}renier solutions of the {M}onge-{A}mp\`ere
  equation.
\newblock {\em Discrete Contin. Dyn. Syst.}, 28(2):559--565, 2010.

\bibitem{FM}
Liliana Forzani and Diego Maldonado.
\newblock Properties of the solutions to the {M}onge-{A}mp\`ere equation.
\newblock {\em Nonlinear Anal.}, 57(5-6):815--829, 2004.

\bibitem{Goldman18}
M.~Goldman, F.~Otto, and M.~Huesmann.
\newblock A large-scale regularity theory for the monge-ampere equation with
  rough data and application to the optimal matching problem.
\newblock {\em arXiv:1808.09250}, 2018.

\bibitem{Goldman17}
Michael~P Goldman and Felix Otto.
\newblock A variational proof of partial regularity for optimal transportation
  maps.
\newblock {\em arXiv:1704.05339}, 2017.

\bibitem{Gutierrez01}
Cristian~E. Guti\'{e}rrez.
\newblock {\em The {M}onge-{A}mp\`ere equation}, volume~44 of {\em Progress in
  Nonlinear Differential Equations and their Applications}.
\newblock Birkh\"{a}user Boston, Inc., Boston, MA, 2001.

\bibitem{Gutierrez00}
Cristian~E. Guti\'{e}rrez and Qingbo Huang.
\newblock Geometric properties of the sections of solutions to the
  {M}onge-{A}mp\`ere equation.
\newblock {\em Trans. Amer. Math. Soc.}, 352(9):4381--4396, 2000.

\bibitem{JMM}
P.-E. Jabin, A.~Mellet, and M.~Molina-Fructuoso.
\newblock Local regularity result for an optimal transportation problem with
  rough measures in the plane.
\newblock {\em J. Funct. Anal.}, 281(2):Paper No. 109041, 43, 2021.

\bibitem{KMT}
Jun Kitagawa, Quentin M\'{e}rigot, and Boris Thibert.
\newblock Convergence of a {N}ewton algorithm for semi-discrete optimal
  transport.
\newblock {\em J. Eur. Math. Soc. (JEMS)}, 21(9):2603--2651, 2019.

\bibitem{Levy}
Bruno L\'{e}vy.
\newblock A numerical algorithm for {$L_2$} semi-discrete optimal transport in
  3{D}.
\newblock {\em ESAIM Math. Model. Numer. Anal.}, 49(6):1693--1715, 2015.

\bibitem{li-nochetto}
Wenbo Li and Ricardo~H. Nochetto.
\newblock Quantitative stability and error estimates for optimal transport
  plans.
\newblock {\em IMA J. Numer. Anal.}, 41(3):1941--1965, 2021.

\bibitem{merigot2019}
Quentin M\'erigot, Alex Delalande, and Fr\'ed\'eric Chazal.
\newblock Quantitative stability of optimal transport maps and linearization of
  the 2-wasserstein space, 2019.

\bibitem{MT}
Quentin M\'{e}rigot and Boris Thibert.
\newblock Optimal transport: discretization and algorithms.
\newblock In {\em Geometric partial differential equations. {P}art {II}},
  volume~22 of {\em Handb. Numer. Anal.}, pages 133--212.
  Elsevier/North-Holland, Amsterdam, [2021] \copyright 2021.

\bibitem{Merigot}
Quentin Mérigot.
\newblock A multiscale approach to optimal transport.
\newblock {\em Computer Graphics Forum}, 30(5):1583--1592, 2011.

\bibitem{oberman2015efficient}
Adam~M. Oberman and Yuanlong Ruan.
\newblock An efficient linear programming method for optimal transportation,
  2015.

\bibitem{Stein}
Elias~M. Stein.
\newblock {\em Harmonic analysis: real-variable methods, orthogonality, and
  oscillatory integrals}, volume~43 of {\em Princeton Mathematical Series}.
\newblock Princeton University Press, Princeton, NJ, 1993.
\newblock With the assistance of Timothy S. Murphy, Monographs in Harmonic
  Analysis, III.

\bibitem{villani2003topics}
C\'{e}dric Villani.
\newblock {\em Topics in optimal transportation}, volume~58 of {\em Graduate
  Studies in Mathematics}.
\newblock American Mathematical Society, Providence, RI, 2003.

\bibitem{villani2008optimal}
C{\'e}dric Villani.
\newblock {\em Optimal transport: old and new}, volume 338.
\newblock Springer Science \& Business Media, 2008.

\bibitem{Yu07}
Yifeng Yu.
\newblock Singular set of a convex potential in two dimensions.
\newblock {\em Comm. Partial Differential Equations}, 32(10-12):1883--1894,
  2007.

\end{thebibliography}
\bibliographystyle{plain}

\end{document}